\DeclareRobustCommand{\gobblefour}[5]{}
\newcommand*{\SkipTocEntry}{\addtocontents{toc}{\gobblefour}}
\providecommand{\abs}[1]{\lvert#1\rvert}
\providecommand{\id}{\textnormal{id}}
\providecommand{\Ker}{\textnormal{Ker}}
\providecommand{\IIm}{\textnormal{Im}}
\providecommand{\Coker}{\textnormal{Coker}}
\providecommand{\Cyl}{\textnormal{Cyl}}
\providecommand{\Hom}{\textnormal{Hom}}
\providecommand{\Ker}{\textnormal{Ker}}
\providecommand{\ch}{\textnormal{ch}}
\providecommand{\cpt}{\textnormal{cpt}}
\providecommand{\vol}{\textnormal{vol}}
\providecommand{\fl}{\textnormal{fl}}
\providecommand{\N}{\mathbb{N}}
\providecommand{\Z}{\mathbb{Z}}
\providecommand{\R}{\mathbb{R}}
\providecommand{\C}{\mathbb{C}}
\providecommand{\CC}{\mathcal{C}}
\providecommand{\Cc}{\mathfrak{C}}
\providecommand{\HH}{\mathcal{H}}
\providecommand{\Hh}{\mathbb{H}}
\providecommand{\M}{\mathcal{M}}
\providecommand{\h}{\mathfrak{h}}
\providecommand{\cl}{\textnormal{cl}}
\providecommand{\ex}{\textnormal{ex}}
\providecommand{\op}{\textnormal{op}}
\providecommand{\dR}{\textnormal{dR}}
\providecommand{\Td}{\textnormal{Td}}
\providecommand{\colim}{\textnormal{colim}}
\providecommand{\A}{\mathcal{A}}
\providecommand{\cov}{\textnormal{cov}}
\providecommand{\Bock}{\textnormal{Bock}}
\providecommand{\ppar}{\textnormal{par}}
\providecommand{\K}{\mathcal{K}}
\tikzset{
	curvarr/.style={
		to path={ -- ([xshift=2ex]\tikztostart.east)
			|- (#1) [near end]\tikztonodes
			-| ([xshift=-2ex]\tikztotarget.west)
			-- (\tikztotarget)}
	}
}
\newcommand\tint{\mathop{\mathpalette\tb@int{t}}\!\int}
\newcommand\bint{\mathop{\mathpalette\tb@int{b}}\!\int}
\newcommand\tb@int[2]{%
  \sbox\z@{$\m@th#1\int$}%
  \if#2t%
    \rlap{\hbox to\wd\z@{%
      \hfil
      \vrule width .35em height \dimexpr\ht\z@+1.4pt\relax depth -\dimexpr\ht\z@+1pt\relax
      \kern.05em % a small correction on the top
    }}
  \else
    \rlap{\hbox to\wd\z@{%
      \vrule width .35em height -\dimexpr\dp\z@+1pt\relax depth \dimexpr\dp\z@+1.4pt\relax
      \hfil
    }}
  \fi
}
\begin{document}

\title[Relative diff.\ cohomology and generalized CS characters]{Relative differential cohomology and generalized Cheeger-Simons characters}
\author{Fabio Ferrari Ruffino - Juan Carlos Rocha Barriga}
\address{Departamento de Matem\'atica - Universidade Federal de S\~ao Carlos - Rod.\ Washington Lu\'is, Km 235 - C.P.\ 676 - 13565-905 S\~ao Carlos, SP, Brasil}
\email{ferrariruffino@gmail.com, juanrocha@dm.ufscar.br}
\thanks{Fabio Ferrari Ruffino was supported by FAPESP (Funda\c{c}\~ao de Amparo \`a Pesquisa do Estado de S\~ao Paulo), processo 2014/03721-3.}

\begin{abstract}
We provide a suitable axiomatic framework for differential cohomology in the relative case and we deduce the corresponding long exact sequences. We also construct the relative version of the generalized Cheeger-Simons characters and we define the integration map when the fibre has a boundary.
\end{abstract}

\maketitle

\newtheorem{Theorem}{Theorem}[section]
\newtheorem{Lemma}[Theorem]{Lemma}
\newtheorem{Corollary}[Theorem]{Corollary}
\newtheorem{ThmDef}[Theorem]{Theorem - Definition}

\theoremstyle{definition}
\newtheorem{Rmk}[Theorem]{Remark}
\newtheorem{Rmks}[Theorem]{Remarks}
\newtheorem{Def}[Theorem]{Definition}
\newtheorem{Not}[Theorem]{Notation}

%%%%%%%%%%%%%%%%%%%%%%%%%%%%%%%%%%%%%%%%%%%%%%%%%%%%%%%%%%%%%%%%%%%%%%%%%%%%%%%%%%%%%%%%%%%%%%%

\tableofcontents

%%%%%%%%%%%%%%%%%%%%%%%%%%%%%%%%%%%%%%%%%%%%%%%%%%%%%%%%%%%%%%%%%%%%%%%%%%%%%%%%%%%%%%%%%%%%%%%

\section{Introduction}

A differential extension of a cohomology theory is a refinement of its restriction to the category of smooth manifolds, obtained enriching it with differential information. The basic example is provided by complex line bundles with connection on a manifold $X$. In this case, the topological information is completely grasped by the first Chern class, that is an element of $H^{2}(X; \Z)$. Adding a connection we refine such a class, providing a piece of information partially described by the corresponding curvature. The latter is a closed 2-form on $X$ with integral periods, whose de-Rham cohomology class is the real image of the first Chern class. Equivalently, a line bundle with connection can be characterized by the pair $(\chi, \omega)$, where $\chi$ is the corresponding holonomy map, defined on smooth $1$-cycles with values in $\R/\Z$, and $\omega$ is the curvature. The pair $(\chi, \omega)$ satisfies a Stokes-type formula, i.e., the value of $\chi$ on a boundary $\partial \Sigma$ is equal to the integral of $\omega$ on $\Sigma$ modulo $\Z$.

The description through pairs of the form $(\chi, \omega)$ can be easily generalized to any degree, i.e., we can consider a map $\chi$, defined on smooth $(n-1)$-cycles with values in $\R/\Z$, and a closed $n$-form $\omega$ such that the Stokes-type formula keeps on holding. It is quite easy to construct from these data the corresponding first Chern class, belonging to $H^{n}(X; \Z)$, whose real image is the de-Rham cohomology class of $\omega$ (it follows that $\omega$ has integral periods). A pair of this form is called \emph{Cheeger-Simons character}. This notion, introduced in \cite{CS} in order to find obstructions to conformal embeddings of Riemannian manifolds into the Euclidean space, provides a model for the differential refinement of singular cohomology. Another interesting model is provided by the Deligne cohomology \cite{Brylinski}. In this case the differential refinement is implemented considering the set of transition functions of a line bundle, thought of as a $\rm\check{C}$ech cocyle, and adding the local potentials that describe the connection. This model turns out to be isomorphic to the previous one. A more systematic approach towards a theory of differential refinements of singular cohomology was developed by Simons and Sullivan in \cite{SS}. There, the authors proposed a suitable axiomatic framework and settled the problem of uniqueness of the extension.

Beyond singular cohomology, the differential extension of K-theory has been studied intensively too (see \cite{FL, BS2, SS2, BS3} and many others). In the more general setting of differential refinements of arbitrary cohomology theories, foundational work was laid down by Hopkins and Singer in \cite{HS}, where the authors proposed a model that produces a differential extension out of any cohomology theory represented by a spectrum. Such a model has been completed, describing in detail the $S^{1}$-integration and the multiplicative structure, in \cite{Upmeier}. A suitable axiomatic description of this general notion of differential refinement can be found in \cite{BS}, where Bunke and Schick proved, under rather mild hypotheses, that the extension of a fixed theory is unique.

Since a cohomology theory is usually defined on pairs of spaces, it is natural to construct also the relative version of a differential refinement. About singular cohomology, in the paper \cite{BT} the authors introduced two different definitions of relative Cheeger-Simons character. Such constructions and their main properties are described in detail in \cite{BB}. In the case of Deligne cohomology, the analogous definitions have been summarized in \cite{FR3} and generalized to any cohomology theory in \cite{FR2}, adapting to the relative case the Hopkins-Singer model.\footnote{The case of parallel relative classes had already been considered in \cite{Upmeier}.} Moreover, in \cite{FR2} we have shown how to construct some families of long exact sequences naturally associated to a differential extension.

In this paper we generalize to the relative case the axiomatic framework for differential cohomology, deducing the long exact sequences directly from the axioms. Moreover, adapting the technique of Bunke and Schick, we show that, under the same hypotheses of \cite{BS}, the extension is unique. After that, we generalize to any cohomology theory the notion of Cheeger-Simons character, extending to the relative case the construction shown in \cite{FR}. Finally, we define the integration map for bundles whose fibre has a boundary.

The paper is organized as follows. In section \ref{AxiomsDC} we state the axioms for relative differential cohomology and the first properties that can be deduced from them. In section \ref{LongExSeqSec} we construct the corresponding long exact sequences and we prove the exactness in each position. In section \ref{ExUniqSec} we show that, under the same hypotheses of \cite{BS}, there exists a unique relative differential extension of a fixed cohomology theory. In section \ref{ComplementsSec} we define the differential extension with compact support and we show how to construct a long exact sequence completely made by differential cohomology groups. In section \ref{SecOrientInt} we start introducing the material that will be necessary to construct the relative generalized Cheeger-Simons characters. In particular, we recall the notion of differential orientation (for maps and manifolds) and the construction of the integration map. In section \ref{RelIntSec} we define the integration map for classes relative to the boundary and we show how to integrate such a relative class to the point. In section \ref{SecFlatPCS} we define the relative Cheeger-Simons characters, starting from the flat case. Finally, in section \ref{SecIntBd} we show how to define the (relative) integration map for bundles whose fibres have non-empty boundary.

\section{Axioms for relative differential cohomology}\label{AxiomsDC}

We are going to state the axioms of relative differential cohomology. When we use the expression ``relative cohomology'', we mean that we are considering the cohomology groups of any map of spaces, not necessarily an embedding. Thus, we start with a brief review of the axioms of (topological) cohomology for maps.

\SkipTocEntry \subsection{Relative cohomology}\label{RelCohomSec}

Let $\CC$ be the category of spaces with the homotopy type of a CW-complex or of a finite CW-complex. We call $\CC_{+}$ the category whose objects are the ones of $\CC$ with a marked point, and whose morphisms are the continuous functions that respect the marked points. Taking the quotient of the morphisms of $\CC$ and $\CC_{+}$ up to homotopy (relative to the marked point in $\CC_{+}$), we get the categories $\HH\CC$ and $\HH\CC_{+}$. Moreover, we denote by $\CC_{2}$ the category of morphism of $\CC$, defined in the following way:
\begin{itemize}
	\item an object of $\mathcal{C}_{2}$ is a morphism $\rho \colon A \to X$ of $\CC$ (i.e.\ a continuous function between objects of $\CC$);
	\item given two objects $\eta \colon B \to Y$ and $\rho \colon A \to X$, a morphism from $\eta$ to $\rho$ is a pair of continuous functions $f \colon Y \to X$ and $g \colon B \to A$, making the following diagram commutative:
	\begin{equation}\label{MorphismDiagram}
		\xymatrix{
		B \ar[r]^{\eta} \ar[d]_{g} & Y \ar[d]^{f} \\
		A \ar[r]^{\rho} & X.
	}\end{equation}
\end{itemize}
We set $I := [0, 1]$ and we call $\id_{I} \colon I \to I$ the identity map. A \emph{homotopy} between two morphisms $(f_{0}, g_{0}), (f_{1}, g_{1}) \colon \eta \to \rho$ is a morphism $(F, G) \colon \eta \times \id_{I} \to \rho$, such that, for $i = 0, 1$, we have $(F\vert_{X \times \{i\}}, G\vert_{A \times \{i\}}) = (f_{i}, g_{i})$. Taking the quotient of the morphisms of $\CC_{2}$ by homotopy we define the category $\HH\CC_{2}$. There are the following natural embeddings of categories:
\begin{itemize}
	\item $\CC \hookrightarrow \CC_{+}$ and $\HH\CC \hookrightarrow \HH\CC_{+}$, defined identifying an object $X$ with $(X_{+}, \infty)$, where $X_{+} = X \sqcup \{\infty\}$;
	\item $\CC_{+} \hookrightarrow \CC_{2}$ and $\HH\CC_{+} \hookrightarrow \HH\CC_{2}$, defined identifying the object $(X, x_{0})$ with the morphism $\rho \colon pt \to X$ such that $\rho(pt) = x_{0}$;
	\item by composition, we get the embeddings $\CC \hookrightarrow \CC_{2}$ and $\HH\CC \hookrightarrow \HH\CC_{2}$; we can also define these embeddings identifying $X$ with the empty function $\emptyset \to X$, if we consider the empty set as a manifold.
\end{itemize}
Finally, there are two natural functors $\Pi \colon \CC_{2} \to \CC$ and $\Pi \colon \HH\CC_{2} \to \HH\CC$, defined in the following way: if $\rho \colon A \to X$ is an object, then $\Pi(\rho) = A$; if $\eta \colon B \to Y$ is another object and $(f, g) \colon \rho \to \eta$ is a morphism, then $\Pi(f, g) = g$.

We call $\A_{\Z}$ the category of $\Z$-graded abelian groups. A \emph{cohomology theory} on $\CC_{2}$ is defined by a functor $h^{\bullet} \colon \HH\CC_{2} \to \A_{\Z}$ and a morphism of functors $\beta^{\bullet} \colon h^{\bullet} \circ \Pi \to h^{\bullet+1}$, satisfying the following axioms:
\begin{enumerate}
	\item \emph{Long exact sequence:} the functor $h^{\bullet}$ and the morphism of functors $\beta^{\bullet}$ define a functor from $\HH\CC_{2}$ to the category of long exact sequences of abelian groups, that assigns to an object $\rho \colon A \to X$ the sequence:
		\[\xymatrix{
		\cdots \ar[r] & h^{n}(\rho) \ar[r]^(.48){\pi^{*}} & h^{n}(X) \ar[r]^{\rho^{*}} & h^{n}(A) \ar[r]^(.45){\beta} & h^{n+1}(\rho) \ar[r] & \cdots
	}\]
($\pi$ being the natural morphism from $\emptyset \to X$ to $\rho \colon A \to X$) and to a morphism the corresponding morphism of exact sequences.
	\item \emph{Excision:} if $i \colon Z \hookrightarrow A$ and $j \colon A \hookrightarrow X$ are embeddings such that the closure of $j(i(Z))$ is contained in the interior of $j(A)$, then the morphism
		\[\xymatrix{
		A \setminus i(Z) \ar[rr]^(0.45){j'} \ar[d]_{\iota'} & & X \setminus j(i(Z)) \ar[d]^{\iota} \\
		A \ar[rr]^{j} & & X
	}\]
	induces an isomorphism between $h^{\bullet}(j)$ and $h^{\bullet}(j')$.
\end{enumerate}
If the objects of $\CC$ have the homotopy type of a finite CW-complex this is enough, otherwise we must add the multiplicativity axiom \cite{Milnor}.

Such a definition of cohomology theory is equivalent to the usual one on pairs of spaces or on spaces with a marked point. In fact, starting from a reduced cohomology theory on $\HH\CC_{+}$, the cohomology groups of a morphism $\rho \colon A \to X$ are defined as the reduced ones of the cone $C(\rho) := X \sqcup_{A} CA$, and the axioms are satisfied. Vice-versa, if we start from the axioms on the category $\HH\CC_{2}$, we can prove that $h^{\bullet}(\rho)$ is naturally isomorphic to $\tilde{h}^{\bullet}(C(\rho))$, hence the theory is the unique possible extension to $\HH\CC_{2}$ of a reduced cohomology theory on $\HH\CC_{+}$. In fact, we consider the cylinder $\Cyl(\rho) := X \sqcup_{A} \Cyl(A)$ and the following commutative diagram:
\begin{equation}\label{DiagCylCone}
	\xymatrix{
	\{*\} \ar@{^(->}[r]^{v} & C(\rho) \\
	A \ar@{^(->}[r]^(.42){i_{1}} \ar[u]^{p'} \ar[d]_{\id} & \Cyl(\rho) \ar[u]_{p} \ar[d]^{\pi} \\
	A \ar[r]^{\rho} & X.
}
\end{equation}
The point $\{*\}$ is the vertex of the cone. The projection $\pi$ shrinks the cylinder of $A$ on the base and the projection $p$ collapses the upper base of the cylinder to the vertex of the cone. Finally, the embedding $i_{1}$ sends $A$ to the upper base of the cylinder. Since $i_{1}$ is a cofibration, the projection $p$, that collapses $A$ to a point, induces an isomorphism in relative cohomology $h^{\bullet}(i_{1}) \simeq \tilde{h}^{\bullet}(C(\rho))$. Moreover, since $\pi$ is a homotopy equivalence, the pair $(\pi, \id)$ induces, by the five lemma applied to the corresponding long exact sequences, an isomorphism $h^{\bullet}(i_{1}) \simeq h^{\bullet}(\rho)$. Composing the two isomorphisms we get $h^{\bullet}(\rho) \simeq \tilde{h}^{\bullet}(C(\rho))$. Such an isomorphism is natural. In fact, given two maps $\rho \colon A \to X$ and $\eta \colon B \to Y$ and a morphism $(k, h) \colon \rho \to \eta$, from the induced morphism between the two diagrams \eqref{DiagCylCone} of $\rho$ and $\eta$, we see that the following diagram commutes:
\begin{equation}\label{IsoConeNatural}
\xymatrix{
	h^{\bullet}(\eta) \ar[rr]^{(k, h)^{*}} \ar[d]_{\simeq} & & h^{\bullet}(\rho) \ar[d]^{\simeq} \\
	\tilde{h}^{\bullet}(C(\eta)) \ar[rr]^{C(k, h)^{*}} & & \tilde{h}^{\bullet}(C(\rho)).
} \end{equation}
In particular, if $C(k, h)$ is a homotopy equivalence, then $(k, h)^{*}$ is an isomorphism, even if $(k, h)$ is not a homotopy equivalence in the category $\CC_{2}$.

In order to introduce products, we call $\mathcal{R}_{\Z}$ the category of $\Z$-graded commutative rings. There is a natural forgetful functor $\mathcal{R}_{\Z} \to \mathcal{A}_{\Z}$, that we apply when needed, without writing it explicitly. The cohomology theory $h^{\bullet}$ is called \emph{multiplicative} if it can be refined to a functor $h^{\bullet} \colon \HH\CC_{2} \to \mathcal{R}_{\Z}$ , in such a way that the product satisfies a suitable compatibility condition with the morphisms $\beta^{\bullet}$. The isomorphism $h^{\bullet}(\rho) \simeq \tilde{h}^{\bullet}(C(\rho))$ is a \emph{ring} isomorphism, hence the product in relative cohomology is canonically induced by the one on the corresponding reduced cohomology theory.

Finally, given a morphism $\rho \colon A \to X$, the group $h^{\bullet}(\rho)$ has a natural right module structure over $h^{\bullet}(X)$:
\begin{equation}\label{AbsRelMod}
	\cdot \, \colon h^{\bullet}(\rho) \otimes_{\Z} h^{\bullet}(X) \to h^{\bullet}(\rho)
\end{equation}
defined as follows. We compute the product $h^{\bullet}(\rho) \otimes h^{\bullet}(X) \simeq \tilde{h}^{\bullet}(C(\rho)) \otimes \tilde{h}^{\bullet}(X_{+}) \to \tilde{h}^{\bullet}(C(\rho) \wedge X_{+}) \simeq \tilde{h}^{\bullet}(C(\rho \times \id_{X})) \simeq h^{\bullet}(\rho \times \id_{X})$. Then we apply the pull-back via the diagonal morphism
	\[\xymatrix{
		A \ar[rr]^{\rho} \ar[d]_{(\id_{A}, \rho)} & & X \ar[d]^{\Delta_{X}} \\
		A \times X \ar[rr]^{\rho \times \id_{X}} & & X \times X.
}\]
We could construct \eqref{AbsRelMod} directly from the axioms, without passing through the cone of $\rho$, but it would be a little bit longer.

\SkipTocEntry \subsection{Fibre-wise integration and Stokes theorem}

Given a smooth map of manifolds $\rho \colon A \to X$, we call $\Omega^{\bullet}(\rho)$ the cochain complex $\Omega^{\bullet}(X) \oplus \Omega^{\bullet-1}(A)$ with coboundary $d(\omega, \eta) = (d\omega, \rho^{*}\omega - d\eta)$. We get the following short exact sequence of chain complexes:
\begin{equation}\label{ShortSeqForms}
	\xymatrix{
	0 \ar[r] & (\Omega^{\bullet-1}(A), -d^{\bullet-1}) \ar[r]^(.57){i} & (\Omega^{\bullet}(\rho), d^{\bullet}) \ar[r]^(.48){\pi} & (\Omega^{\bullet}(X), d^{\bullet}) \ar[r] & 0,
}
\end{equation}
where $i(\eta) = (0, \eta)$ and $\pi(\omega, \eta) = \omega$. The complex $\Omega^{\bullet}(\rho)$ has a natural right module structure over $\Omega^{\bullet}(X)$, defined by:
\begin{equation}\label{ModWedge}
	(\omega, \eta) \wedge \xi := (\omega \wedge \xi, \eta \wedge \rho^{*}\xi).
\end{equation}
We get correctly that $d((\omega, \eta) \wedge \xi) = d(\omega, \eta) \wedge \xi + (-1)^{\abs{\omega}} (\omega, \eta) \wedge d\xi$.

Let us fix the following data:
\begin{itemize}
	\item a smooth map $\rho \colon A \to X$ between compact manifolds, possibly with boundary;
	\item two fibre bundles $f \colon Y \to X$ and $g \colon B \to A$ with $n$-dimensional compact oriented fibres, possibly with boundary;
	\item a morphism of fibre bundles $\tilde{\rho} \colon B \to Y$ covering $\rho$ and inducing a diffeomorphism in each fibre;\footnote{Such a morphism is equivalent to a bundle isomorphism between $B$ and $\rho^{*}Y$.}
	\item an orientation of the bundle $f$, inducing an orientation of $g$.
\end{itemize}
The map $\rho$ is not necessarily neat (e.g., it can be the embedding of $\partial X$ in $X$). As well, $f$ and $g$ are not required to be neat (surely they are not when the fibres have non-empty boundary). This implies that neither $\tilde{\rho}$ is neat in general, but it is in each fibre, since it is a diffeomorphism. We get the following diagram:
\begin{equation}\label{DiagramFibInt}
\xymatrix{
	B \ar[r]^{\tilde{\rho}} \ar[d]_{g} & Y \ar[d]^{f} \\
	A \ar[r]^{\rho} & X.
}
\end{equation}

\begin{Not}\label{NotIntegral} Considering for example the fibration $f \colon Y \to X$, we denote the fibre-wise integration of a differential form $\omega \in \Omega^{\bullet}(Y)$ by $\int_{Y/X} \omega$, with the usual convention $\bigl(\int_{Y/X} \omega\bigr)_{x}(v_{1}, \ldots, v_{p}) := \int_{Y_{x}}\omega(\tilde{v}_{1}, \ldots, \tilde{v}_{p}, \,\cdot\,, \ldots, \,\cdot\,)$, where $df(\tilde{v}_{i}) = v_{i}$. We denote by $\tint_{Y/X} \omega$ the integration with the opposite convention, i.e., $\bigl(\tint_{Y/X} \omega\bigr)_{x}(v_{1}, \ldots, v_{p}) := \int_{Y_{x}}\omega(\,\cdot\,, \ldots, \,\cdot\,, \tilde{v}_{1}, \ldots, \tilde{v}_{p})$, hence $\tint_{Y/X} \omega = (-1)^{n(\abs{\omega} - 1)} \int_{Y/X} \omega$,\footnote{The sign should be $n(\abs{\omega} - n)$, but $n$ and $n^{2}$ have the same parity.} $n$ being the dimension of the fibres. In particular, if $Y = X \times F$ (or $Y = F \times X$) is the trivial fibration, $\xi$ is a form on $X$ and $\vol_{F}$ is a volume form on $F$, then $\int_{X \times F/X} \xi \wedge \vol_{F} = \xi$, while $\tint_{X \times F/X} \vol_{F} \wedge \xi  = \xi$. It follows that, when the fibre has no boundary, $\int_{Y/X}d\omega = d\int_{Y/X}\omega$, while $\tint_{Y/X}d\omega = (-1)^{n}d\tint_{Y/X}\omega$.
\end{Not}

We define the fibre-wise integration of a relative form $(\omega, \eta) \in \Omega^{\bullet}(\tilde{\rho})$ in the following way, depending on the convention:
\begin{align}
	& \label{FiberInt1} \int_{\tilde{\rho}/\rho} (\omega, \eta) := \left( \int_{Y/X} \omega, \int_{B/A} \eta \right) \\
	& \label{FiberInt2} \tint_{\tilde{\rho}/\rho} (\omega, \eta) := \left( \tint_{Y/X} \omega, (-1)^{n} \tint_{B/A} \eta \right).
\end{align}
It follows that $\tint_{\tilde{\rho}/\rho} (\omega, \eta) = (-1)^{n(\abs{\omega} - 1)} \int_{\tilde{\rho}/\rho} (\omega, \eta)$. If the fibres of $f$ and $g$ have boundary, we call $\partial f \colon Y' \to X$ and $\partial g \colon B' \to A$ the fibre bundles induced by the restrictions of $f$ and $g$ to the union of the boundaries of the fibres (if $A$ and $X$ have no boundary, then $Y' = \partial Y$ and $B' = \partial B$); moreover, we call $\partial\tilde{\rho} \colon B' \to Y'$ the corresponding restriction of $\tilde{\rho}$. We get a diagram analogous to \eqref{DiagramFibInt}. The following relative version of Stokes theorem holds \cite[formula (82) p.\ 165]{BB}:
\begin{align}
	& \label{RelativeStokes1} d\int_{\tilde{\rho}/\rho} (\omega, \eta) = \int_{\tilde{\rho}/\rho} d(\omega, \eta) + (-1)^{\abs{\omega} + n} \int_{\partial \tilde{\rho}/\rho} (\omega, \eta) \\
	& \label{RelativeStokes2} (-1)^{n}d\tint_{\tilde{\rho}/\rho} (\omega, \eta) = \tint_{\tilde{\rho}/\rho} d(\omega, \eta) - \tint_{\partial \tilde{\rho}/\rho} (\omega, \eta).
\end{align}

\SkipTocEntry \subsection{Differential extension}\label{RelDiffCoh}

Let $\M$ be the category of smooth manifolds or of smooth compact manifolds (even with boundary), and let $\mathcal{A}_{\Z}$ be the category of $\Z$-graded abelian groups. We consider a cohomology theory $h^{\bullet}$, defined on a category including $\M$. We use the following notation:
	\[\h^{\bullet} := h^{\bullet}(\{pt\}) \qquad\qquad \h^{\bullet}_{\R} := \h^{\bullet} \otimes_{\Z} \R.
\]
We consider the category $\M_{2}$ of morphisms of $\M$. For any object $\rho \colon A \to X$ of $\mathcal{M}_{2}$, we call $\ch \colon h^{\bullet}(\rho) \to H^{\bullet}_{\dR}(\rho; \h^{\bullet}_{\R})$ the generalized Chern character \cite[sec.\ 4.8 p.\ 383]{HS}.\footnote{In \cite{HS} only the absolute Chern character is discussed, using the language of spectra. Anyway, since the spaces defining a spectrum are pointed (in order to define the structure maps), we can easily define the Chern character in reduced cohomology. Considering the reduced cohomology of the cone, we get the relative Chern character.} We follow \cite[sec.\ 1]{BS}, adapting the construction to the relative case.
\begin{Def}\label{RelDiffExt} A \emph{relative differential extension} of $h^{\bullet}$ is a functor $\hat{h}^{\bullet} \colon \mathcal{M}_{2}^{\op} \to \mathcal{A}_{\Z}$, together with the following natural transformations of $\mathcal{A}_{\Z}$-valued functors:
\begin{itemize}
	\item $I \colon \hat{h}^{\bullet}(\rho) \to h^{\bullet}(\rho)$;
	\item $R \colon \hat{h}^{\bullet}(\rho) \to \Omega_{\cl}^{\bullet}(\rho; \h^{\bullet}_{\R})$, called \emph{curvature};
	\item $a \colon \Omega^{\bullet-1}(\rho; \h^{\bullet}_{\R})/\IIm(d) \to \hat{h}^{\bullet}(\rho)$,
\end{itemize}
such that:
\begin{enumerate}[{$\qquad$}({A}1)]
	\item $R \circ a = d$;
	\item the following diagram is commutative:
	\begin{equation}\label{ComDiagDC}
		\xymatrix{
		\hat{h}^{\bullet}(\rho) \ar[r]^{I} \ar[d]_{R} & h^{\bullet}(\rho) \ar[d]^{\ch} \\
		\Omega_{\cl}^{\bullet}(\rho; \h^{\bullet}_{\R}) \ar[r]^(.46){\dR} & H^{\bullet}_{\dR}(\rho; \h^{\bullet}_{\R});
	}
	\end{equation}
	\item the following sequence is exact:
	\begin{equation}\label{ExSeqDC}
		\xymatrix{
		h^{\bullet-1}(\rho) \ar[r]^(.35){\ch} & \Omega^{\bullet-1}(\rho; \h^{\bullet}_{\R})/\IIm(d) \ar[r]^(.65){a} & \hat{h}^{\bullet}(\rho) \ar[r]^{I} & h^{\bullet}(\rho) \ar[r] & 0;
	} \end{equation}
	\item calling $\cov(\rho)$ the second component of the curvature $R(\rho)$ and $\pi$ the natural morphism from $\emptyset \to X$ to $\rho \colon A \to X$, the following diagram is commutative:
		\[\xymatrix{
		\hat{h}^{\bullet}(\rho) \ar[r]^{\pi^{*}} \ar[d]_{\cov} & \hat{h}^{\bullet}(X) \ar[d]^{\rho^{*}} \\
		\Omega^{\bullet-1}(A) \ar[r]^{a} & \hat{h}^{\bullet}(A).
	}\]
\end{enumerate}
We also call $\hat{h}^{\bullet}$ \emph{relative differential cohomology theory}.
\end{Def}

\begin{Not}\label{NotXA} When $\rho \colon A \hookrightarrow X$ is a closed embedding, we denote $\hat{h}^{\bullet}(\rho)$ also by $\hat{h}^{\bullet}(X, A)$. Restricting to the case $A = \emptyset$, we obtain an absolute differential extension of $h^{\bullet}$ as usually defined in the literature \cite{BS}.
\end{Not}

A class $\hat{\alpha} \in \hat{h}^{n}(\rho)$ is \emph{flat} when $R(\hat{\alpha}) = 0$. Considering flat classes, we get the functor $\hat{h}^{\bullet}_{\fl} \colon \M_{2}^{\op} \to \mathcal{A}_{\Z}$. Thus, we get the following commutative hexagon:
\begin{equation}\label{CommHex1}
	\resizebox{0.9\textwidth}{!}{
	\xymatrix{
	& \Omega^{\bullet-1}(\rho; \h^{\bullet}_{\R})/\IIm(d) \ar[rr]^{d} \ar[dr]^{a} & & \Omega_{\cl}^{\bullet}(\rho; \h^{\bullet}_{\R}) \ar[dr]^{dR} \\
	H^{\bullet-1}_{\dR}(\rho; \h^{\bullet}_{\R}) \ar[ur] \ar[dr]^{a} & & \hat{h}^{\bullet}(\rho) \ar[ur]^{R} \ar[dr]^{I} & & H^{\bullet}_{\dR}(\rho; \h^{\bullet}_{\R}). \\
	& \hat{h}^{\bullet}_{\fl}(\rho) \ar@{^(->}[ur] \ar[rr]^{I} & & h^{\bullet}(\rho) \ar[ur]^{\ch}
}}
\end{equation}

\noindent We will see in section \ref{UniquenessSec} that, under suitable hypotheses (see \cite[Chapter 5]{BS}), we have a natural isomorphism $\hat{h}^{\bullet}_{\fl}(\rho) \simeq h^{\bullet}(\rho; \R/\Z)$. Moreover, we call $\Omega^{\bullet}_{\ch}(\rho)$ the following sub-group of $\Omega^{\bullet}_{\cl}(\rho)$. A closed relative form $(\omega, \eta)$ belongs to $\Omega^{\bullet}_{\ch}(\rho)$ if and only if the cohomology class $[(\omega, \eta)] \in H^{\bullet}_{\dR}(\rho; \h^{\bullet}_{\R})$ lies in the image of the Chern character $\ch \colon h^{\bullet}(\rho) \to H^{\bullet}_{\dR}(\rho; \h^{\bullet}_{\R})$.
\begin{Lemma} The group $\Omega^{\bullet}_{\ch}(\rho)$ is the image of the curvature functor $R$, thus we have the following exact sequence:
\begin{equation}\label{SeqExCurtaCurv}
	\xymatrix{
	0 \ar[r] & \hat{h}^{\bullet}_{\fl}(\rho) \ar[r] & \hat{h}^{\bullet}(\rho) \ar[r]^(.45){R} & \Omega^{\bullet}_{\ch}(\rho) \ar[r] & 0.
}
\end{equation}
\end{Lemma}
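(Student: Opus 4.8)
The plan is to reduce the whole statement to the single claim that the image of the curvature $R$ equals $\Omega^{\bullet}_{\ch}(\rho)$. Indeed, a class is flat exactly when its curvature vanishes, so $\hat{h}^{\bullet}_{\fl}(\rho)$ is by definition the kernel of $R$; this already gives the exactness of \eqref{SeqExCurtaCurv} at $\hat{h}^{\bullet}_{\fl}(\rho)$ (where the map is the inclusion of a subgroup, hence injective) and at $\hat{h}^{\bullet}(\rho)$. Since $R$ is a natural transformation of $\mathcal{A}_{\Z}$-valued functors, it is a homomorphism of graded abelian groups, so $\IIm(R)$ is a subgroup of $\Omega^{\bullet}_{\cl}(\rho; \h^{\bullet}_{\R})$, and the only remaining task is to identify it with $\Omega^{\bullet}_{\ch}(\rho)$, which is the relative version of the corresponding fact in \cite{BS}.

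First I would check the inclusion $\IIm(R) \subseteq \Omega^{\bullet}_{\ch}(\rho)$. Given $\hat{\alpha} \in \hat{h}^{\bullet}(\rho)$, the form $R(\hat{\alpha})$ is closed by construction, and by the commutative square (A2) its de Rham class is $\dR(R(\hat{\alpha})) = \ch(I(\hat{\alpha}))$, which lies in the image of the Chern character; by the definition of $\Omega^{\bullet}_{\ch}(\rho)$ this says precisely that $R(\hat{\alpha}) \in \Omega^{\bullet}_{\ch}(\rho)$.

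Then I would prove the reverse inclusion by a lifting argument. Let $(\omega, \eta) \in \Omega^{\bullet}_{\ch}(\rho)$, so that $[(\omega, \eta)] = \ch(u)$ for some $u \in h^{\bullet}(\rho)$. Exactness of the sequence (A3) makes $I \colon \hat{h}^{\bullet}(\rho) \to h^{\bullet}(\rho)$ surjective, so I pick $\hat{\beta}$ with $I(\hat{\beta}) = u$; by (A2) again, $\dR(R(\hat{\beta})) = \ch(u) = [(\omega, \eta)]$, hence $(\omega, \eta) - R(\hat{\beta}) = d\tau$ for some relative form $\tau \in \Omega^{\bullet-1}(\rho; \h^{\bullet}_{\R})$. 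Finally, using (A1) (i.e.\ $R \circ a = d$) together with the additivity of $R$ and $a$, the corrected class $\hat{\beta} + a([\tau])$ has curvature $R(\hat{\beta}) + d\tau = (\omega, \eta)$, so $(\omega, \eta) \in \IIm(R)$.

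Putting the two inclusions together gives $\IIm(R) = \Omega^{\bullet}_{\ch}(\rho)$, and since $\Ker(R) = \hat{h}^{\bullet}_{\fl}(\rho)$ the sequence \eqref{SeqExCurtaCurv} is exact in every position. I do not anticipate any genuine obstacle here: the argument is a short diagram chase through (A1)--(A3), and the only points worth stating carefully are that $R$ and $a$ are homomorphisms of abelian groups and that every de Rham class in $H^{\bullet}_{\dR}(\rho; \h^{\bullet}_{\R})$ admits a closed representative in the relative complex $\Omega^{\bullet}(\rho)$ --- both already built into the framework set up above.
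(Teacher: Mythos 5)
Your proposal is correct and follows essentially the same argument as the paper: the forward inclusion comes from the commutativity of diagram (A2), and the reverse inclusion is the same lift-and-correct argument using the surjectivity of $I$ from (A3) and the identity $R \circ a = d$ from (A1). The only (harmless) difference is notational — the paper writes $R(\hat{\alpha}) = (\omega,\eta) + d(\alpha,\beta)$ and subtracts $a(\alpha,\beta)$, while you add $a([\tau])$ with $(\omega,\eta) - R(\hat{\beta}) = d\tau$.
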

\begin{proof} It immediately follows from diagram \eqref{ComDiagDC} that the image of $R$ is contained in $\Omega^{\bullet}_{\ch}(\rho)$. Given a form $(\omega, \eta) \in \Omega^{\bullet}_{\ch}(\rho)$, let $\alpha \in h^{\bullet}(\rho)$ be a class such that $\ch(\alpha) = [(\omega, \eta)]$. Because of the exact sequence \eqref{ExSeqDC}, the morphism $I$ is surjective, hence there exists $\hat{\alpha} \in \hat{h}^{\bullet}(\rho)$ such that $I(\hat{\alpha}) = \alpha$. It follows from diagram \eqref{ComDiagDC} that $[R(\hat{\alpha})] = [(\omega, \eta)]$, thus there exists $(\alpha, \beta) \in \Omega^{\bullet-1}(\rho)$ such that $R(\hat{\alpha}) = (\omega, \eta) + d(\alpha, \beta)$. Then $R(\hat{\alpha} - a(\alpha, \beta)) = R(\hat{\alpha}) - d(\alpha, \beta) = (\omega, \eta)$.
\end{proof}

\begin{Lemma} The following long exact sequence holds:
\begin{equation}\label{ExSeqFlat}
	\xymatrix{
	\cdots \ar[r] & h^{\bullet}(\rho) \ar[r]^(.38){\ch} & H^{\bullet}_{\dR}(\rho; \h^{\bullet}_{\R}) \ar[r]^(.53){a} & \hat{h}^{\bullet+1}_{\fl}(\rho) \ar[r]^{I} & h^{\bullet+1}(\rho) \ar[r] & \cdots.
}
\end{equation}
\end{Lemma}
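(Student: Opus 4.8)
The plan is to deduce the three exactness statements directly from the axioms of Definition~\ref{RelDiffExt}, exactly as in the absolute case: since axioms (A1)--(A3) are already stated for an arbitrary object $\rho$ of $\M_{2}$, nothing new is needed beyond keeping track of the degree shift and of the fact that the maps $a$ and $I$ occurring in \eqref{ExSeqFlat} are restrictions of the maps with the same name in \eqref{ExSeqDC}. First I would record two preliminary observations, both visible in the hexagon \eqref{CommHex1}. (i) The arrow $a\colon H^{\bullet-1}_{\dR}(\rho;\h^{\bullet}_{\R})\to\hat{h}^{\bullet}_{\fl}(\rho)$ is the composition of the natural inclusion $\iota\colon H^{\bullet-1}_{\dR}(\rho;\h^{\bullet}_{\R})\hookrightarrow\Omega^{\bullet-1}(\rho;\h^{\bullet}_{\R})/\IIm(d)$ (which sends a de Rham class to the class of any closed representative) with the map $a$ of Definition~\ref{RelDiffExt}; by (A1) its image consists of flat classes, so it factors through $\hat{h}^{\bullet}_{\fl}(\rho)\hookrightarrow\hat{h}^{\bullet}(\rho)$, the latter being by definition the inclusion of the subgroup $\ker(R)$. (ii) The map $\iota$ is injective, since a closed form that is exact in $\Omega^{\bullet-1}(\rho;\h^{\bullet}_{\R})/\IIm(d)$ is literally exact; likewise, the map $\ch$ appearing in \eqref{ExSeqDC} is $\iota\circ\ch$, with $\ch$ the Chern character into $H^{\bullet}_{\dR}(\rho;\h^{\bullet}_{\R})$. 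Throughout I would use \eqref{ExSeqDC} with $\bullet$ replaced by $\bullet+1$.

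\emph{Exactness at $H^{\bullet}_{\dR}(\rho;\h^{\bullet}_{\R})$.} The composition $a\circ\ch$ vanishes already as a map $h^{\bullet}(\rho)\to\hat{h}^{\bullet+1}(\rho)$ by \eqref{ExSeqDC}, hence a fortiori as a map into $\hat{h}^{\bullet+1}_{\fl}(\rho)$. Conversely, if $a([\omega])=0$ in $\hat{h}^{\bullet+1}_{\fl}(\rho)$ then $a(\iota[\omega])=0$ in $\hat{h}^{\bullet+1}(\rho)$, so by exactness of \eqref{ExSeqDC} there is $\alpha\in h^{\bullet}(\rho)$ with $\iota[\omega]=\iota(\ch\,\alpha)$, whence $[\omega]=\ch(\alpha)$ by injectivity of $\iota$.

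\emph{Exactness at $\hat{h}^{\bullet+1}_{\fl}(\rho)$.} Again $I\circ a=0$ by \eqref{ExSeqDC}, restricted along $\iota$. Conversely, let $\hat{\alpha}\in\hat{h}^{\bullet+1}_{\fl}(\rho)$ with $I(\hat{\alpha})=0$; by \eqref{ExSeqDC} we may write $\hat{\alpha}=a([\omega])$ for some $\omega\in\Omega^{\bullet}(\rho;\h^{\bullet}_{\R})$, and then (A1) gives $0=R(\hat{\alpha})=d\omega$, so $\omega$ is closed and $\hat{\alpha}$ lies in the image of $a\colon H^{\bullet}_{\dR}(\rho;\h^{\bullet}_{\R})\to\hat{h}^{\bullet+1}_{\fl}(\rho)$. \emph{Exactness at $h^{\bullet+1}(\rho)$.} For $\hat{\alpha}\in\hat{h}^{\bullet+1}_{\fl}(\rho)$, diagram \eqref{ComDiagDC} gives $\ch(I\hat{\alpha})=\dR(R\hat{\alpha})=0$, since $R\hat{\alpha}=0$. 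Conversely, suppose $\alpha\in h^{\bullet+1}(\rho)$ with $\ch(\alpha)=0$. Using surjectivity of $I$ in \eqref{ExSeqDC}, choose $\hat{\beta}\in\hat{h}^{\bullet+1}(\rho)$ with $I(\hat{\beta})=\alpha$; then $\dR(R\hat{\beta})=\ch(\alpha)=0$, so $R(\hat{\beta})=d\eta$ for some $\eta\in\Omega^{\bullet}(\rho;\h^{\bullet}_{\R})$. The class $\hat{\beta}':=\hat{\beta}-a([\eta])$ satisfies $R(\hat{\beta}')=R(\hat{\beta})-d\eta=0$ by (A1) and $I(\hat{\beta}')=\alpha$ because $I\circ a=0$; thus $\hat{\beta}'\in\hat{h}^{\bullet+1}_{\fl}(\rho)$ maps to $\alpha$. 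Naturality of \eqref{ExSeqFlat} in $\rho$ is automatic, all the maps being natural transformations of functors on $\M_{2}^{\op}$.

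I do not expect any genuine obstacle here: the only points requiring care are that the various decorations of the letters $a$ and $I$ — as maps on $\Omega^{\bullet}(\rho;\h^{\bullet}_{\R})/\IIm(d)$, as their restrictions along $\iota$, and as their corestrictions to $\hat{h}^{\bullet}_{\fl}$ — are identified consistently, and that the injectivity of $\iota$ is invoked where it is actually needed; everything else is a formal diagram chase using only (A1), (A2) and the exact sequence (A3), none of which is affected by passing from the absolute to the relative setting.
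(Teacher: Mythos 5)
Your proof is correct and takes essentially the same approach as the paper, whose entire proof of this lemma is the single sentence that it ``easily follows from the axioms (A1) and (A3)'' --- you have simply written out the diagram chase in full, including the necessary observation that $a$ restricted to closed forms factors injectively through $H^{\bullet}_{\dR}(\rho;\h^{\bullet}_{\R})$. The only remark worth adding is that your argument also (correctly) invokes (A2) for the exactness at $h^{\bullet+1}(\rho)$, which the paper's one-line proof does not mention but which is genuinely needed there.
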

\begin{proof} It easily follows from the axioms (A1) and (A3) of definition \ref{RelDiffExt}.
\end{proof}

We use the following notation: if $\rho \colon A \to X$ is a map, we set
\begin{equation}\label{RhoI}
	\rho_{I} := \id_{I} \times \rho \colon I \times A \to I \times X.
\end{equation}
The inclusions $i_{0}, i_{1} \colon X \to I \times X$ and $j_{0}, j_{1} \colon A \to I \times A$ induce the following morphisms between $\rho$ and $\rho_{I}$:
	\[\xymatrix{
	A \ar[r]^{\rho} \ar@{^(->}[d]_{j_{0}} & X \ar@{^(->}[d]^{i_{0}} \\ I \times A \ar[r]^{\rho_{I}} & I \times X
} \qquad\qquad \xymatrix{
	A \ar[r]^{\rho} \ar@{^(->}[d]_{j_{1}} & X \ar@{^(->}[d]^{i_{1}} \\ I \times A \ar[r]^{\rho_{I}} & I \times X.
}\]
We set $\iota_{0} := (i_{0}, j_{0})$ and $\iota_{1} := (i_{1}, j_{1})$. Analogously, the projections $\pi_{X} \colon I \times X \to X$ and $\pi_{A} \colon I \times A \to A$ induce the morphism $(\pi_{X}, \pi_{A}) \colon \rho_{I} \to \rho$. We set $\pi := (\pi_{X}, \pi_{A})$.
\begin{Lemma}[Homotopy formula] If $\hat{\alpha} \in \hat{h}^{\bullet}(\rho_{I})$, we have (using formula \eqref{FiberInt2}):
\begin{equation}\label{HomFormula}
	\iota_{1}^{*}\hat{\alpha} - \iota_{0}^{*}\hat{\alpha} = a\left(\tint_{\rho_{I}/\rho} R(\hat{\alpha}) \right).
\end{equation}
\end{Lemma}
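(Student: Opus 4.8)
The plan is to reduce the homotopy formula to the axioms (A1)–(A4) by analyzing the class $\hat\alpha$ on $\rho_I$ through the short exact sequence structure encoded in the commutative hexagon \eqref{CommHex1}. First I would observe that both sides of \eqref{HomFormula} are natural in $\hat\alpha$, and that the difference $\iota_1^*\hat\alpha - \iota_0^*\hat\alpha$ lies in the kernel of $I$: indeed, the underlying topological class $I(\hat\alpha) \in h^\bullet(\rho_I)$ satisfies $\iota_1^*I(\hat\alpha) = \iota_0^*I(\hat\alpha)$, because $\iota_0$ and $\iota_1$ are homotopic as morphisms in $\M_2$ (via the identity homotopy on $I \times -$) and $h^\bullet$ is homotopy invariant. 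By the exactness of \eqref{ExSeqDC}, we can therefore write $\iota_1^*\hat\alpha - \iota_0^*\hat\alpha = a(\omega,\eta)$ for some relative form $(\omega,\eta) \in \Omega^{\bullet-1}(\rho;\h^\bullet_\R)/\IIm(d)$, and the task becomes identifying this form with $\tint_{\rho_I/\rho}R(\hat\alpha)$ modulo exact forms.

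To pin down $(\omega,\eta)$, I would apply the curvature $R$ and use axiom (A1), $R \circ a = d$. This gives $d(\omega,\eta) = R(\iota_1^*\hat\alpha) - R(\iota_0^*\hat\alpha) = \iota_1^*R(\hat\alpha) - \iota_0^*R(\hat\alpha)$, using that $R$ is a natural transformation. Now the right-hand side is a purely differential-form statement: for a closed relative form $\varpi := R(\hat\alpha) \in \Omega^\bullet_{\cl}(\rho_I)$, one has $\iota_1^*\varpi - \iota_0^*\varpi = d\,\tint_{\rho_I/\rho}\varpi$. This is precisely the de Rham homotopy formula, which here follows from the relative Stokes theorem \eqref{RelativeStokes2} applied to the trivial fibre bundle with fibre $I$ (so $n=1$): the boundary term $\tint_{\partial\tilde\rho/\rho}$ contributes exactly the difference of the two endpoint restrictions $\iota_1^*\varpi - \iota_0^*\varpi$ (with the correct signs, since $\partial I = \{1\} - \{0\}$), while $d(\omega,\eta)=0$ kills the interior term $\tint_{\rho_I/\rho}d\varpi$. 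Hence $\omega' := \tint_{\rho_I/\rho}R(\hat\alpha)$ satisfies $d\omega' = d(\omega,\eta)$, so $(\omega,\eta)$ and $\omega'$ differ by a closed relative form $\zeta$.

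It remains to show $a(\zeta) = 0$, i.e.\ that the closed form $\zeta = (\omega,\eta) - \tint_{\rho_I/\rho}R(\hat\alpha)$ lies in the image of $\ch \colon h^{\bullet-1}(\rho) \to \Omega^{\bullet-1}(\rho)/\IIm(d)$, which by exactness of \eqref{ExSeqDC} is exactly $\ker a$. Here I would use that $a(\zeta) = (\iota_1^*\hat\alpha - \iota_0^*\hat\alpha) - a\bigl(\tint_{\rho_I/\rho}R(\hat\alpha)\bigr)$ has vanishing image under $I$ (the first term as noted above, the second because $I \circ a = 0$ from \eqref{ExSeqDC}) and vanishing curvature $R$ (by construction of $\zeta$ as the difference measured above). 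A class that is both flat and in the image of $a$ of a closed form comes from $H^{\bullet-1}_{\dR}(\rho)$ via the lower-left $a$ in the hexagon \eqref{CommHex1}; I would then check this de Rham class itself vanishes by a naturality/collapsing argument — pulling back along the projection $\pi\colon \rho_I \to \rho$ splits $\hat h^\bullet(\rho_I)$ compatibly, and the homotopy formula is trivially true for classes pulled back from $\rho$ (both sides are zero, since $\iota_0^*\pi^* = \iota_1^*\pi^* = \id$ and $\tint$ of a pulled-back form over the fibre $I$ is zero). Writing $\hat\alpha = \pi^*\iota_0^*\hat\alpha + (\hat\alpha - \pi^*\iota_0^*\hat\alpha)$ and noting the second summand restricts to $0$ under $\iota_0^*$, one reduces to that case.

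The main obstacle I anticipate is the last step: verifying that the residual flat/de Rham ambiguity $a(\zeta)$ genuinely vanishes rather than merely having trivial image under $I$ and $R$. The splitting trick via $\pi^*\iota_0^*$ is the clean way around it, but it requires care that all the identifications ($\iota_0^* \pi^* = \id$, the module/naturality compatibilities of $a$, $R$, $I$ with the morphisms $\iota_0,\iota_1,\pi$ in $\M_2$) hold on the nose; axiom (A4) and the functoriality in Definition \ref{RelDiffExt} are what make this work, but assembling them correctly for relative classes is the delicate point. The sign bookkeeping in the relative Stokes formula \eqref{RelativeStokes2} with $n=1$ is routine but must be done consistently with convention \eqref{FiberInt2}.
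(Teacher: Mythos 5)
Your final ``splitting trick'' is exactly the paper's proof: the paper writes $\hat{\alpha} = \pi^{*}\iota_{0}^{*}\hat{\alpha} + a(\omega, \eta)$ outright (using homotopy invariance of $h^{\bullet}$ and $\iota_{0} \circ \pi \simeq \id_{\rho_{I}}$ to see that $I(\hat{\alpha} - \pi^{*}\iota_{0}^{*}\hat{\alpha}) = 0$), and then the relative Stokes computation of your second paragraph, applied to the form $(\omega, \eta)$ rather than to $R(\hat{\alpha})$, closes the argument with no residual flat ambiguity to worry about. The one imprecision is at the very end: the property of the second summand that makes the reduction work is not that it restricts to zero under $\iota_{0}^{*}$, but that its image under $I$ vanishes, so that it equals $a(\omega, \eta)$ for a relative form satisfying $d(\omega, \eta) = R(\hat{\alpha}) - \pi^{*}R(\iota_{0}^{*}\hat{\alpha})$; with that stated, your argument coincides with the paper's.
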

\begin{proof} Since $\iota_{0} \circ \pi \colon \rho_{I} \to \rho_{I}$ is homotopic to the identity of $\rho_{I}$ in the category $\mathcal{M}_{2}$, we have that $I(\hat{\alpha}) = \pi^{*}\iota_{0}^{*}I(\hat{\alpha})$. We set $\hat{\alpha}_{0} := \iota_{0}^{*}\hat{\alpha}$, so that $I(\hat{\alpha}) = \pi^{*}I(\hat{\alpha}_{0})$. It follows that $\hat{\alpha} = \pi^{*}(\hat{\alpha}_{0}) + a(\omega, \eta)$, therefore $\iota_{0}^{*}\hat{\alpha} = \hat{\alpha}_{0} + a(\iota_{0}^{*}(\omega, \eta))$ and $\iota_{1}^{*}\hat{\alpha} = \hat{\alpha}_{0} + a(\iota_{1}^{*}(\omega, \eta))$. Hence:
\begin{align*}
	\iota_{1}^{*}\hat{\alpha} - \iota_{0}^{*}\hat{\alpha} & = a\bigl(\iota_{1}^{*}(\omega, \eta) - \iota_{0}^{*}(\omega, \eta)\bigr) = a \left( \tint_{\partial \rho_{I}/\rho} (\omega, \eta) \right) \\
	& \overset{\eqref{RelativeStokes2}}= a \biggl( d \tint_{\rho_{I}/\rho} (\omega, \eta) + \tint_{\rho_{I}/\rho} d(\omega, \eta) \biggr).
\end{align*}
The term $d\tint_{\rho_{I}/\rho} (\omega, \eta)$ can be cut since, applying $a$ to an exact form, we get $0$. Moreover, $R(\hat{\alpha}) = \pi^{*}R(\hat{\alpha}_{0}) + d(\omega, \eta)$ and $\tint_{\rho_{I}/\rho} \pi^{*}R(\hat{\alpha}_{0}) = 0$, hence we get the result.
\end{proof}

\begin{Corollary} Let $\rho \colon A \to X$ and $\eta \colon B \to Y$ be two objects of $\M_{2}$ and let us consider two morphisms $(f_{0}, g_{0}), (f_{1}, g_{1}) \colon \eta \to \rho$. If $(F, G) \colon \id_{I} \times \eta \to \rho$ is a homotopy between $(f_{0}, g_{0})$ and $(f_{1}, g_{1})$, then, for any $\hat{\alpha} \in \hat{h}^{\bullet}(\rho)$, we have:
\begin{equation}\label{HomFormula2}
	(f_{1}, g_{1})^{*}\hat{\alpha} - (f_{0}, g_{0})^{*}\hat{\alpha} = a\left( \tint_{\eta_{I}/\eta} (F, G)^{*}R(\hat{\alpha}) \right).
\end{equation}
\end{Corollary}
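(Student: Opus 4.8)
The plan is to deduce the statement directly from the Homotopy formula \eqref{HomFormula}, applied to the pull-back of $\hat\alpha$ along the homotopy. Write $\eta_I := \id_I \times \eta$ as in \eqref{RhoI}. Regarding $(F, G)$ as a morphism $\eta_I \to \rho$ of $\M_2$ (the product of factors in $\id_I \times \eta$ versus $\eta \times \id_I$ being identified in the obvious way), set $\hat\beta := (F, G)^{*}\hat\alpha \in \hat h^{\bullet}(\eta_I)$. Applying the homotopy formula to $\hat\beta$ over the map $\eta$ yields
\begin{equation*}
	\iota_{1}^{*}\hat\beta - \iota_{0}^{*}\hat\beta = a\!\left( \tint_{\eta_I/\eta} R(\hat\beta) \right),
\end{equation*}
where $\iota_{0} = (i_{0}, j_{0})$ and $\iota_{1} = (i_{1}, j_{1})$ are now the inclusions of $\eta$ at the two ends of $\eta_I$.

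Next I would identify the two sides of this equation with those of \eqref{HomFormula2}. By the definition of a homotopy between morphisms of $\M_2$ recalled in Section \ref{RelCohomSec}, the restriction of $(F, G)$ to the slice at parameter $i$ is precisely $(f_i, g_i)$; equivalently, $(F, G) \circ \iota_i = (f_i, g_i)$ as morphisms $\eta \to \rho$, for $i = 0, 1$. Since $\hat h^{\bullet}$ is a contravariant functor on $\M_2$, this gives $\iota_i^{*}\hat\beta = \iota_i^{*}(F, G)^{*}\hat\alpha = \bigl((F, G)\circ\iota_i\bigr)^{*}\hat\alpha = (f_i, g_i)^{*}\hat\alpha$. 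Moreover, $R$ being a natural transformation, $R(\hat\beta) = R\bigl((F, G)^{*}\hat\alpha\bigr) = (F, G)^{*}R(\hat\alpha)$. Substituting these identities into the displayed equation produces exactly \eqref{HomFormula2}.

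There is no serious obstacle here: the corollary is a direct naturality and functoriality consequence of the homotopy formula. The only points requiring attention are (i) verifying that $(F, G)$ is genuinely a morphism $\eta_I \to \rho$ in $\M_2$ whose two endpoint restrictions are $(f_0, g_0)$ and $(f_1, g_1)$, which is the very definition of a homotopy of morphisms, together with the harmless reordering of the factors in the product $\id_I \times \eta$; (ii) keeping track of the contravariance of $\hat h^{\bullet}$, so that the composition $(F, G)\circ\iota_i$ lands on the correct side of the pull-backs; and (iii) using that $R$ is one of the natural transformations of Definition \ref{RelDiffExt} and hence commutes with pull-back, which is what legitimizes taking the fibre-wise integral $\tint_{\eta_I/\eta}$ of $(F, G)^{*}R(\hat\alpha)$ as written in the statement.
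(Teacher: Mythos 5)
Your argument is correct and is exactly the paper's proof: the paper simply says the result follows by replacing $\hat{\alpha}$ with $(F,G)^{*}\hat{\alpha}$ in formula \eqref{HomFormula}, and your write-up fills in the same substitution together with the (routine) endpoint identifications and the naturality of $R$. Nothing further is needed.
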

\begin{proof} The result follows replacing $\hat{\alpha}$ by $(F, G)^{*}\hat{\alpha}$ in formula \eqref{HomFormula}.
\end{proof}

\begin{Rmk}\label{FlatTheoryProp} Thanks to the previous corollary, the flat theory is a homotopy-invariant functor. From the exact sequence \eqref{ExSeqFlat}, it is easy to prove that it also satisfies excision and multiplicativity. In fact, both hold for $h^{\bullet}$ and $H^{\bullet}_{\dR}$, since they are cohomology theories, thus it is enough to apply the five lemma.
\end{Rmk}

We briefly recall some basic facts about $S^{1}$-integration. Given a space $A$ and fixing a marked point on $S^{1}$, we get a natural embedding $i_{1} \colon A \to S^{1} \times A$ and a natural projection $\pi_{1} \colon S^{1} \times A \to A$. Since $\pi_{1}$ is a retraction with right inverse $i_{1}$, we have the following split exact sequence:
\begin{equation}\label{SplitExactS1}
	\xymatrix{
	0 \ar[r] & h^{\bullet}(i_{1}) \ar[r]^(.42){\pi^{*}} & h^{\bullet}(S^{1} \times A) \ar[r]^(.6){i_{1}^{*}} \ar@/^1pc/[l]^{\xi} & h^{\bullet}(A) \ar[r] \ar@/^1pc/[l]^(.45){\pi_{1}^{*}} & 0.
}
\end{equation}
Here $\pi$ is the natural morphism from $\emptyset \to S^{1} \times A$ to $i_{1}$ and $\xi(\alpha) = (\pi^{*})^{-1}(\alpha - \pi_{1}^{*}i_{1}^{*}\alpha)$. Moreover, we have the following isomorphism:
\begin{equation}\label{SuspIsoS1}
	s \colon h^{\bullet}(i_{1}) \simeq \tilde{h}^{\bullet}(\Sigma(A_{+})) \simeq \tilde{h}^{\bullet-1}(A_{+}) \simeq h^{\bullet-1}(A).
\end{equation}
Thanks to this picture, we can define the following integration map:
\begin{equation}\label{IntS1Cohom}
\begin{split}
	\int_{S^{1}} \colon & h^{\bullet+1}(S^{1} \times A) \to h^{\bullet}(A) \\
	& \alpha \mapsto s \circ \xi(\alpha).
\end{split}
\end{equation}
We also have the ordinary integration map on differential forms $\int_{S^{1}} \colon \Omega^{\bullet+1}(S^{1} \times A) \to \Omega^{\bullet}(A)$. If we apply it to closed forms, we get a well-defined integration map in de-Rham cohomology, coinciding with \eqref{IntS1Cohom}.

A similar construction holds in the relative case. Given a morphism $\rho \colon A \to X$, we set $S\rho := \id_{S^{1}} \times \rho \colon S^{1} \times A \to S^{1} \times X$. Fixing a marked point on $S^{1}$, we get a natural embedding $i_{1} \colon \rho \to S\rho$ and a natural projection $\pi_{1} \colon S\rho \to \rho$. We define the groups $h^{\bullet}(i_{1})$ as follows: we consider the induced embedding $i'_{1} \colon C(\rho) \to C(S\rho)$ and we set $h^{\bullet}(i_{1}) := h^{\bullet}(i'_{1}) \simeq \tilde{h}^{\bullet}(C(S\rho)/C(\rho))$. Since the induced map $\pi'_{1} \colon C(S\rho) \to C(\rho)$ is a retraction with right inverse $i'_{1}$, we have the following split exact sequence:
	\[\xymatrix{
	0 \ar[r] & h^{\bullet}(i_{1}) \ar[r]^{\pi^{*}} & h^{\bullet}(S\rho) \ar[r]^(.53){i_{1}^{*}} \ar@/^1pc/[l]^{\xi} & h^{\bullet}(\rho) \ar[r] \ar@/^1pc/[l]^(.45){\pi_{1}^{*}} & 0.
}\]
Here $\pi \colon C(S\rho) \to C(S\rho)/C(\rho)$ and $\xi(\alpha) = (\pi^{*})^{-1}(\alpha - \pi_{1}^{*}i_{1}^{*}\alpha)$. Moreover, we have the following isomorphism:
	\[s \colon h^{\bullet}(i_{1}) \simeq \tilde{h}^{\bullet}(C(S\rho)/C(\rho)) \simeq \tilde{h}^{\bullet}(\Sigma(C(\rho)) \simeq \tilde{h}^{\bullet-1}(C(\rho)) \simeq h^{\bullet-1}(\rho).
\]
Thanks to this picture, we can define the following integration map:
\begin{equation}\label{IntS1RelCohom}
\begin{split}
	\int_{S^{1}} \colon & h^{\bullet+1}(S\rho) \to h^{\bullet}(\rho) \\
	& \alpha \mapsto s \circ \xi(\alpha).
\end{split}
\end{equation}
We also have the ordinary integration map on differential forms $\int_{S^{1}} \colon \Omega^{\bullet+1}(S\rho) \to \Omega^{\bullet}(\rho)$ defined by $\int_{S^{1}} (\omega, \eta) := \bigl( \int_{S^{1}} \omega, \int_{S^{1}} \eta \bigr)$. If we apply it to closed relative forms, we get a well-defined integration map in de-Rham cohomology, coinciding with the one constructed as \eqref{IntS1RelCohom}. Of course \eqref{IntS1Cohom} is a particular case of \eqref{IntS1RelCohom}.

\begin{Not} Given a functor $\mathcal{F} \colon \mathcal{M}_{2} \to \mathcal{C}$, for any category $\mathcal{C}$, we define the functor $S\mathcal{F} \colon \mathcal{M}_{2} \to \mathcal{C}$ by $S\mathcal{F}(\rho) := \mathcal{F}(S\rho)$ on objects ($S\rho$ being $\id_{S^{1}} \times \rho$) and $S\mathcal{F}(f, g) := \mathcal{F}(Sf, Sg)$ on morphisms. Moreover, given two morphisms $\rho \colon A \to X$ and $\varphi \colon S^{1} \to S^{1}$, we denote by $\varphi_{\#}\rho \colon S\rho \to S\rho$ the morphism $(\varphi \times \id_{X}, \varphi \times \id_{A})$.
\end{Not}

\begin{Def} A \emph{relative differential extension with integration} of $h^{\bullet}$ is a relative differential extension $(\hat{h}^{\bullet}, I, R, a)$ together with a natural transformation:
	\[\int_{S^{1}} \colon S\hat{h}^{\bullet+1} \to \hat{h}^{\bullet}
\]
such that:
\begin{itemize}
	\item $\int_{S^{1}} \circ (t_{\#}\rho)^{*} = -\int_{S^{1}}$, where $t \colon S^{1} \to S^{1}$ is defined by $t(e^{i\theta}) := e^{-i\theta}$;
	\item $\int_{S^{1}} \circ \,\pi_{1}^{*} = 0$, where $\pi_{1} \colon S\rho \to \rho$ is the projection;
	\item the following diagram is commutative:
	\begin{equation}\label{CommS1}
		\resizebox{0.8\textwidth}{!}{
\xymatrix{
		S\Omega^{\bullet}(\rho; \h^{\bullet}_{\R})/\IIm(d) \ar[r]^(.6){a} \ar[d]^{\int_{S^{1}}} & S\hat{h}^{\bullet+1}(\rho) \ar[r]^{I} \ar[d]^{\int_{S^{1}}} \ar@/^2pc/[rr]^{R} & Sh^{\bullet+1}(\rho) \ar[d]^{\int_{S^{1}}} & S\Omega_{\cl}^{\bullet+1}(\rho; \h^{\bullet}_{\R}) \ar[d]^{\int_{S^{1}}} \\
		\Omega^{\bullet-1}(\rho; \h^{\bullet}_{\R})/\IIm(d) \ar[r]^(.65){a} & \hat{h}^{\bullet}(\rho) \ar[r]^{I} \ar@/_2pc/[rr]_{R} & h^{\bullet}(\rho) & \Omega_{\cl}^{\bullet}(\rho; \h^{\bullet}_{\R}),
	}}
	\end{equation}
	where the first and last vertical arrows are defined by $\int_{S^{1}} (\omega, \eta) := \bigl( \int_{S^{1}} \omega, \int_{S^{1}} \eta \bigr)$ and the third one by \eqref{IntS1RelCohom}.
\end{itemize}
\end{Def}

Finally, we introduce products, thus we suppose that $h^{\bullet}$ is a \emph{multiplicative} cohomology theory.
\begin{Def}\label{MultDiffExt} A \emph{multiplicative relative differential extension} of $h^{\bullet}$ is a relative differential extension $(\hat{h}^{\bullet}, I, R, a)$ such that, for any map $\rho \colon A \to X$, there is a natural right $\hat{h}^{\bullet}(X)$-module structure on $\hat{h}^{\bullet}(\rho)$, in such a way that:
\begin{itemize}
	\item $I(\hat{\alpha} \cdot \hat{\beta}) = I(\hat{\alpha}) \cdot I(\hat{\beta})$, using \eqref{AbsRelMod} on the r.h.s.;
	\item $R(\hat{\alpha} \cdot \hat{\beta}) = R(\hat{\alpha}) \wedge R(\hat{\beta})$, using \eqref{ModWedge} on the r.h.s.;
	\item $\hat{\alpha} \cdot a(\omega) = a(R(\hat{\alpha}) \wedge \omega)$ for every $\hat{\alpha} \in \hat{h}^{\bullet}(\rho)$ and $\omega \in \Omega^{\bullet}(X; \h^{\bullet}_{\R})/\IIm(d)$;
	\item $a(\omega, \eta) \cdot \hat{\alpha} = a((\omega, \eta) \wedge R(\hat{\alpha}))$ for every $(\omega, \eta) \in \Omega^{\bullet}(\rho; \h^{\bullet}_{\R})/\IIm(d)$ and $\hat{\alpha} \in \hat{h}^{\bullet}(X)$.
\end{itemize}
\end{Def}

\SkipTocEntry \subsection{Parallel classes} A class $\hat{\alpha} \in \hat{h}^{\bullet}(\rho)$ is called \emph{parallel} if $\cov(\hat{\alpha}) = 0$ (we recall that $\cov$ is the second component of the curvature). We denote by $\hat{h}^{\bullet}_{\ppar}(\rho)$ the sub-group of $\hat{h}^{\bullet}(\rho)$ formed by parallel classes. Moreover, we use the following notation:
\begin{itemize}
	\item $\Omega^{\bullet}_{0}(\rho)$ is the sub-group of $\Omega^{\bullet}(X)$ containing the forms $\omega$ on $X$ such that $\rho^{*}\omega = 0$;
	\item $\Omega^{\bullet}_{\cl,0}(\rho)$ is the intersection between $\Omega^{\bullet}_{0}(\rho)$ and $\Omega^{\bullet}_{\cl}(X)$;
	\item $\Omega^{\bullet}_{\ch,0}(\rho)$ is the subgroup of $\Omega^{\bullet}_{\cl,0}(\rho)$ containing the forms $\omega$ such that the relative cohomology class $[(\omega, 0)] \in H^{\bullet}_{\dR}(\rho)$ belongs to the image of the Chern character.
\end{itemize}
If $(\omega, 0)$ is the curvature of a parallel class, then $\omega \in \Omega^{\bullet}_{\ch,0}(\rho)$. We get the functor $\hat{h}^{\bullet}_{\ppar} \colon \mathcal{M}_{2}^{\op} \to \mathcal{A}_{\Z}$, together with the following natural transformations of $\mathcal{A}_{\Z}$-valued functors:
\begin{itemize}
	\item $I' \colon \hat{h}^{\bullet}_{\ppar}(\rho) \to h^{\bullet}(\rho)$, which is the restriction of the functor $I$;
	\item $R' \colon \hat{h}^{\bullet}_{\ppar}(\rho) \to \Omega_{\cl, 0}(\rho; \h^{\bullet}_{\R})$, which is the first component of the curvature $R$; 
	\item $a' \colon \Omega^{\bullet-1}_{0}(\rho; \h^{\bullet}_{\R})/\IIm(d) \to \hat{h}^{\bullet}_{\ppar}(\rho)$, defined by $a'(\omega) := a(\omega, 0)$.
\end{itemize}
Parallel classes are well-behaved when $\rho$ is a closed embedding. In this case they satisfy four properties analogous to axioms (A1)--(A4) in definition \ref{RelDiffExt}, as the next theorem shows.

\begin{Theorem}\label{AxiomsParallel} Let $\mathcal{M}'_{2}$ be the full sub-category of $\mathcal{M}_{2}$, whose objects are closed embeddings. The functor $\hat{h}^{\bullet}_{\ppar} \colon {\mathcal{M}'_{2}}^{\op} \to \mathcal{A}_{\Z}$ satisfies the statements (A$'$1)--(A$'$4), obtained from axioms (A1)--(A4) in definition \ref{RelDiffExt}, with the following replacements:
\begin{itemize}
	\item $\hat{h}^{\bullet}$ by $\hat{h}^{\bullet}_{\ppar}$;
	\item $I$, $R$ and $a$ by $I'$, $R'$ and $a'$;
	\item $\Omega^{\bullet}$ and $\Omega^{\bullet}_{\cl}$ by $\Omega^{\bullet}_{0}$ and $\Omega^{\bullet}_{\cl, 0}$.
\end{itemize}
In particular, (A$'$4) is the statement $\rho^{*} \circ \pi^{*} = 0$. Moreover, if the functor $\hat{h}^{\bullet}$ admits $S^{1}$-integration or it is multiplicative, the same holds for $\hat{h}^{\bullet}_{\ppar}$, with the analogous axioms. 
\end{Theorem}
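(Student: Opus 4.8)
The plan is to verify the four statements (A$'$1)--(A$'$4) one at a time, in each case starting from the corresponding axiom (A1)--(A4) for the full differential extension $\hat{h}^{\bullet}$ and checking that the restriction to parallel classes and to forms in $\Omega^{\bullet}_{0}$, $\Omega^{\bullet}_{\cl,0}$ behaves correctly. First, for (A$'$1), the identity $R' \circ a' = d$ is immediate from $R \circ a = d$ together with the definitions $R'(\hat\alpha) = $ (first component of $R(\hat\alpha)$) and $a'(\omega) = a(\omega,0)$: indeed $R(a(\omega,0)) = d(\omega,0) = (d\omega, \rho^{*}\omega - 0)$, and since $\rho^{*}\omega = 0$ for $\omega \in \Omega^{\bullet-1}_{0}(\rho)$ this is $(d\omega, 0)$, whose first component is $d\omega$; one also checks $a(\omega,0)$ is parallel because its curvature has vanishing second component. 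For (A$'$2), the commutative square relating $I'$, $R'$ to the Chern character is obtained by restricting diagram \eqref{ComDiagDC}: one must only observe that for a parallel class $\hat\alpha$ the de Rham class $[R(\hat\alpha)] = [(\omega,0)]$ is represented by a form in $\Omega^{\bullet}_{\ch,0}(\rho)$, which is exactly the definition of that subgroup.

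The substantive point is (A$'$3), the exactness of
\[
	\xymatrix{
	h^{\bullet-1}(\rho) \ar[r]^(.32){\ch} & \Omega^{\bullet-1}_{0}(\rho; \h^{\bullet}_{\R})/\IIm(d) \ar[r]^(.68){a'} & \hat{h}^{\bullet}_{\ppar}(\rho) \ar[r]^{I'} & h^{\bullet}(\rho) \ar[r] & 0.
}
\]
Here I would argue each node separately. Surjectivity of $I'$: given $\alpha \in h^{\bullet}(\rho)$, lift it via the surjective $I$ to some $\hat\alpha \in \hat h^{\bullet}(\rho)$; then $\cov(\hat\alpha)$ is a closed form on $A$ whose de Rham class is $\rho^{*}\ch(\alpha)$ pulled into the second slot — more precisely, from the long exact sequence in de Rham cohomology of $\rho$ and the fact that $\rho$ is a \emph{closed embedding}, one shows $\cov(\hat\alpha)$ is exact, say $\cov(\hat\alpha) = d\eta$, and then $\hat\alpha - a(0,\eta)$ is parallel and still maps to $\alpha$. (This is the place where restricting to $\mathcal{M}'_{2}$ is used: for a closed embedding the second component of the curvature of a class lifting a topological class is forced to be exact, whereas for a general map this can fail.) Exactness at $\hat h^{\bullet}_{\ppar}(\rho)$: if $\hat\alpha$ is parallel and $I'(\hat\alpha) = 0$, then by (A3) for $\hat h$ we have $\hat\alpha = a(\omega,\eta)$ for some relative form; parallelism means $\rho^{*}\omega - d\eta = 0$, i.e. $\rho^{*}\omega = d\eta$, and I would use a collar of the closed embedding $A \hookrightarrow X$ (or the homotopy formula of the previous Lemma together with axiom (A4)) to modify $(\omega,\eta)$ by an exact form and by the image of $a$ on something in the kernel, reducing to the case $\eta = 0$, hence $\hat\alpha = a'(\omega)$ with $\omega \in \Omega^{\bullet-1}_{0}$. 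Exactness at $\Omega^{\bullet-1}_{0}/\IIm(d)$: $a'(\omega) = 0$ iff $a(\omega,0) = 0$ iff, by (A3) for $\hat h$, $(\omega,0) = \ch(\beta)$ for some $\beta \in h^{\bullet-1}(\rho)$; since the second component vanishes, $\ch(\beta) = (\omega,0)$ lies in $\Omega^{\bullet-1}_{0}$, giving the claim. Finally (A$'$4): the diagram in (A4) for $\hat h$, restricted to parallel classes, has $\cov \equiv 0$ on $\hat h^{\bullet}_{\ppar}(\rho)$, so it degenerates to the assertion that $\rho^{*} \circ \pi^{*} = 0$ on $\hat h^{\bullet}_{\ppar}$, which is what (A$'$4) states.

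For the last sentence of the theorem, if $\hat h$ admits $S^{1}$-integration I would check that $\int_{S^{1}}$ preserves parallelism: the second component of $R(\int_{S^{1}}\hat\alpha)$ is $\int_{S^{1}}$ of the second component of $R(\hat\alpha)$ by the commuting diagram \eqref{CommS1}, so if $\hat\alpha$ is parallel so is $\int_{S^{1}}\hat\alpha$; the three defining properties of a differential extension with integration then restrict verbatim, noting that $\int_{S^{1}}$ of a form in $\Omega^{\bullet}_{0}(S\rho)$ lands in $\Omega^{\bullet}_{0}(\rho)$ since $(S\rho)^{*} = \id_{S^{1}} \times \rho^{*}$ commutes with $\int_{S^{1}}$. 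In the multiplicative case, given $\hat\alpha \in \hat h^{\bullet}_{\ppar}(\rho)$ and $\hat\beta \in \hat h^{\bullet}(X)$, the second component of $R(\hat\alpha \cdot \hat\beta) = R(\hat\alpha) \wedge R(\hat\beta)$ is $\cov(\hat\alpha) \wedge \rho^{*}R(\hat\beta) = 0$, so $\hat h^{\bullet}_{\ppar}(\rho)$ is a submodule over $\hat h^{\bullet}(X)$, and the four module compatibilities of Definition \ref{MultDiffExt} restrict directly once one observes that $\Omega^{\bullet}_{0}(\rho)$ is an ideal for the wedge action \eqref{ModWedge}. The main obstacle is the surjectivity of $I'$ and the exactness at $\hat h^{\bullet}_{\ppar}(\rho)$ in (A$'$3): both hinge on being able to kill the second component of a curvature or of a representing relative form by adding $a(0,\eta)$, and this is precisely where the hypothesis that $\rho$ is a closed embedding (so that one has a collar and $H^{\bullet}_{\dR}(\rho)$ is computed by forms on $X$ vanishing on $A$) is essential; for a general smooth map the argument breaks down, which is why the theorem is stated only over $\mathcal{M}'_{2}$.
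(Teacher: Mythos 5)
There is a genuine gap in your treatment of (A$'$3), specifically in the surjectivity of $I'$. You assert that for a closed embedding the second component $\cov(\hat{\alpha})$ of the curvature of a lift $\hat{\alpha}$ of $\alpha$ is a closed, indeed exact, form on $A$, and you propose to correct $\hat{\alpha}$ by $a(0,\eta)$ with $d\eta=\cov(\hat{\alpha})$. This is false: closedness of the relative form $R(\hat{\alpha})=(\omega,\eta)$ means precisely $d\eta=\rho^{*}\omega$, so $\eta$ is in general not closed (let alone exact), and no primitive can exist unless $\rho^{*}\omega=0$, which is not given. Even granting one, $R(a(0,\mu))=(0,-d\mu)$, so a correction of the form $a(0,\mu)$ only changes $\cov(\hat{\alpha})$ by an exact form and can never annihilate a form whose differential is $\rho^{*}\omega\neq 0$. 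The mechanism that actually works --- and the real place where the closed-embedding hypothesis enters --- is the \emph{extension of forms from $A$ to $X$}: extend $\eta$ to $\tilde{\eta}\in\Omega^{\bullet-1}(X)$ and replace $\hat{\alpha}$ by $\hat{\alpha}-a(\tilde{\eta},0)$, whose curvature is $(\omega,\eta)-(d\tilde{\eta},\rho^{*}\tilde{\eta})=(\omega-d\tilde{\eta},0)$, hence parallel, with $I$ unchanged. The same extension trick settles exactness at $\hat{h}^{\bullet}_{\ppar}(\rho)$: writing $\hat{\alpha}=a(\theta,\chi)$, extend $\chi$ to $\tilde{\chi}$ and use $a(d(\tilde{\chi},0))=0$ to get $\hat{\alpha}=a(\theta-d\tilde{\chi},0)$, where $\rho^{*}(\theta-d\tilde{\chi})=\rho^{*}\theta-d\chi=0$ by parallelism; your ``collar'' remark points in this direction but the argument is not carried out, and your stated reason for needing $\mathcal{M}'_{2}$ (exactness of $\cov$ of a lift) is not the correct one.

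The remaining parts of your proposal --- (A$'$1), (A$'$2), (A$'$4), exactness at $\Omega^{\bullet-1}_{0}(\rho)/\IIm(d)$, and the restriction of the $S^{1}$-integration and of the module structure to parallel classes --- are correct and agree with the paper, which additionally notes that all of these, unlike (A$'$3), hold for an arbitrary smooth map $\rho$ and not only for closed embeddings.
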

\begin{proof} It is easy to show that (A$'$1), (A$'$2) and (A$'$4) are just a particular case of axioms (A1), (A2) and (A4) (actually, they hold even if $\rho$ is not a closed embedding). We only have to prove that (A$'$3) holds. The fact that, in the sequence obtained from \eqref{ExSeqDC}, the composition of two consecutive morphisms vanishes is again a particular case of the general statement. Let us fix $\alpha \in h^{\bullet}(\rho)$. Because of the exactness of \eqref{ExSeqDC}, there exists a class $\hat{\alpha}' \in \hat{h}^{\bullet}(\rho)$ such that $I(\hat{\alpha}') = \alpha$ and $R(\hat{\alpha}') = (\omega, \eta)$. Since $\rho$ is a closed embedding, we can extend $\eta$ to a form $\tilde{\eta}$ on the whole $X$, thus we set $\hat{\alpha} := \hat{\alpha}' - a(\tilde{\eta}, 0)$. Therefore $I(\hat{\alpha}) = I(\hat{\alpha}') - 0 = \alpha$ and $R(\hat{\alpha}) = (\omega, \eta) - (d\tilde{\eta}, \eta) = (\omega - d\tilde{\eta}, 0)$. It follows that $\hat{\alpha} \in \hat{h}^{\bullet}_{\ppar}(\rho)$ and $I(\hat{\alpha}) = \alpha$, hence $I$ is surjective. Let us fix $\hat{\alpha} \in \hat{h}^{\bullet}_{\ppar}(\rho)$ such that $I(\hat{\alpha}) = 0$. Because of the exactness of \eqref{ExSeqDC}, there exists a form $(\theta, \chi) \in \Omega^{\bullet-1}(\rho; \h^{\bullet}_{\R})$ such that $a(\theta, \chi) = \hat{\alpha}$. Since $\rho$ is a closed embedding, we can extend $\chi$ to a form $\tilde{\chi}$ on the whole $X$, thus $\hat{\alpha} = a(\theta, \chi) - a(d(\tilde{\chi}, 0)) = a((\theta, \chi) - (d\tilde{\chi}, \chi)) = a(\theta - d\tilde{\chi}, 0) = a'(\theta - d\tilde{\chi})$. Finally, the exactness in the second position follows from the one of \eqref{ExSeqDC}, since the first group remains unchanged. The axioms of $S^{1}$-integration and multiplicativity easily restricts to parallel classes (even without assuming that $\rho$ is a closed embedding).
\end{proof}

\begin{Theorem} For any smooth map $\rho \colon A \to X$, we have the following short exact sequence:
\begin{equation}\label{SeqExCurtaCurvPar}
	\xymatrix{
	0 \ar[r] & \hat{h}^{\bullet}_{\fl}(\rho) \ar[r] & \hat{h}^{\bullet}_{\ppar}(\rho) \ar[r]^(.45){R'} & \Omega^{\bullet}_{\ch, 0}(\rho) \ar[r] & 0.
}\end{equation}
Moreover, if $\rho$ is a closed embedding, we have the following short exact sequence, which splits non-canonically:
\begin{equation}\label{SeqExCurtaCovPar}
	\xymatrix{
	0 \ar[r] & \hat{h}^{\bullet}_{\ppar}(\rho) \ar[r] & \hat{h}^{\bullet}(\rho) \ar[r]^(.45){\cov} & \Omega^{\bullet-1}(A) \ar[r] & 0.
}\end{equation}
\end{Theorem}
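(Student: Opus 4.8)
The plan is to treat the two sequences independently. The first will follow by restricting the already-proved sequence \eqref{SeqExCurtaCurv} to parallel classes; the second rests on the identity $\cov\circ a=\bigl((\theta,\chi)\mapsto\rho^{*}\theta-d\chi\bigr)$ together with surjectivity of the pullback of forms along a closed embedding. I expect the genuine content to sit in the surjectivity of $\cov$ and the construction of the splitting; the rest is formal.

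For the first sequence, I would start from \eqref{SeqExCurtaCurv}, i.e.\ $0\to\hat{h}^{\bullet}_{\fl}(\rho)\to\hat{h}^{\bullet}(\rho)\xrightarrow{R}\Omega^{\bullet}_{\ch}(\rho)\to 0$, and restrict $R$ to $\hat{h}^{\bullet}_{\ppar}(\rho)$. A flat class has $R=0$, hence $\cov=0$, so it is parallel; this makes the inclusion $\hat{h}^{\bullet}_{\fl}(\rho)\hookrightarrow\hat{h}^{\bullet}_{\ppar}(\rho)$ meaningful, and it is trivially injective. On a parallel class $\hat{\alpha}$ one has $R(\hat{\alpha})=(R'(\hat{\alpha}),0)$, so $R'(\hat{\alpha})=0$ is equivalent to $R(\hat{\alpha})=0$, i.e.\ to flatness; this gives exactness at $\hat{h}^{\bullet}_{\ppar}(\rho)$. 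For surjectivity of $R'$ onto $\Omega^{\bullet}_{\ch,0}(\rho)$, observe that for $\omega\in\Omega^{\bullet}_{\ch,0}(\rho)$ the relative form $(\omega,0)$ is closed (because $\rho^{*}\omega=0$) and its de-Rham class lies in the image of $\ch$ (by definition of $\Omega^{\bullet}_{\ch,0}$), so $(\omega,0)\in\Omega^{\bullet}_{\ch}(\rho)$; by surjectivity of $R$ in \eqref{SeqExCurtaCurv} there is $\hat{\alpha}$ with $R(\hat{\alpha})=(\omega,0)$, and then $\cov(\hat{\alpha})=0$ shows $\hat{\alpha}$ is parallel with $R'(\hat{\alpha})=\omega$.

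For the second sequence, exactness at the first two positions is immediate from the definitions: the inclusion $\hat{h}^{\bullet}_{\ppar}(\rho)\hookrightarrow\hat{h}^{\bullet}(\rho)$ is injective, and $\ker(\cov)=\hat{h}^{\bullet}_{\ppar}(\rho)$ by the definition of a parallel class. For surjectivity of $\cov$ I would invoke axiom (A1): since $R\circ a=d$ and $d(\theta,\chi)=(d\theta,\rho^{*}\theta-d\chi)$, taking second components yields $\cov(a(\theta,\chi))=\rho^{*}\theta-d\chi$ for all $(\theta,\chi)\in\Omega^{\bullet-1}(\rho;\h^{\bullet}_{\R})$. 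If $\rho$ is a closed embedding, $\rho^{*}\colon\Omega^{\bullet-1}(X)\to\Omega^{\bullet-1}(A)$ is surjective (extend forms via a tubular neighbourhood retraction and a bump function equal to $1$ near $A$), so given $\eta\in\Omega^{\bullet-1}(A)$ one picks $\theta$ with $\rho^{*}\theta=\eta$ and gets $\cov(a(\theta,0))=\eta$.

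For the non-canonical splitting I would make this extension step linear: fix a linear operator $E\colon\Omega^{\bullet-1}(A;\h^{\bullet}_{\R})\to\Omega^{\bullet-1}(X;\h^{\bullet}_{\R})$ with $\rho^{*}\circ E=\id$ (for instance $E(\eta)=\phi\cdot\varpi^{*}\eta$, where $\varpi$ is the projection of a chosen tubular neighbourhood and $\phi$ a fixed bump function), and set $s(\eta):=a\bigl(E(\eta),0\bigr)$, understood as $a$ applied to the class of $(E(\eta),0)$ in $\Omega^{\bullet-1}(\rho;\h^{\bullet}_{\R})/\IIm(d)$. Since $E$, the quotient projection and $a$ are additive, $s$ is a homomorphism, and the formula for $\cov\circ a$ gives $\cov\circ s=\id$, so $s$ splits the sequence; it depends on the tubular neighbourhood and the bump function through $E$, whence ``non-canonically''. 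The delicate point is precisely this passage through $\Omega^{\bullet-1}(\rho;\h^{\bullet}_{\R})/\IIm(d)$, which is harmless, and the main obstacle is the surjectivity of $\cov$ and its linearization — the remainder being bookkeeping on top of \eqref{SeqExCurtaCurv} and the definitions.
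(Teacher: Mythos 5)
Your proposal is correct and follows essentially the same route as the paper: the first sequence is obtained by restricting \eqref{SeqExCurtaCurv} to parallel classes (noting that a preimage of $(\omega,0)$ under $R$ is automatically parallel), and the second rests on the identity $\cov(a(\theta,\chi))=\rho^{*}\theta-d\chi$ together with a linear extension operator $\Omega^{\bullet-1}(A)\to\Omega^{\bullet-1}(X)$ furnishing both surjectivity of $\cov$ and the non-canonical splitting $\eta\mapsto a(\tilde{\eta},0)$. Your write-up merely supplies more detail than the paper's, which states the first part "immediately follows."
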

\begin{proof} The exactness of \eqref{SeqExCurtaCurvPar} immediately follows from \eqref{SeqExCurtaCurv}. The exactness of the first three arrows of \eqref{SeqExCurtaCovPar} is trivial by definition of parallel classes. About the last one, given a form $\eta \in \Omega^{\bullet-1}(A)$, we can extend it to a form $\tilde{\eta}$ on $X$ and we have that $\cov(a(\tilde{\eta}, 0)) = \eta$, hence $\cov$ is surjective. In particular, if we fix an extension map $\Omega^{\bullet-1}(A) \to \Omega^{\bullet-1}(X)$, $\eta \mapsto \tilde{\eta}$, through a suitable open cover of $X$ and a corresponding partition of unity, we get the splitting $\Omega^{\bullet-1}(A) \to \hat{h}^{\bullet}(\rho)$, $\eta \mapsto a(\tilde{\eta}, 0)$.
\end{proof}

\begin{Corollary} Let us fix a manifold $Y$ with no boundary and the trivial cobordism $Y \times I$. We denote by $Y_{0} := Y \times \{0\}$ and $Y_{1} := Y \times \{1\}$ the two components of $\partial(Y \times I) \simeq Y \times \partial I$. We have the following canonical isomorphism:
\begin{equation}\label{IsoPsiI}
\begin{split}
  \Psi \colon & \hat{h}^{\bullet}(Y \times I, Y \times \partial I) \overset{\!\simeq}\longrightarrow \hat{h}^{\bullet}_{\ppar}(Y \times I, Y \times \partial I) \oplus \Omega^{\bullet-1}(Y \times \partial I; \h^{\bullet}_{\R}) \\
	& \hat{\alpha} \mapsto \bigl(\hat{\alpha} - a(t\eta_{1} + (1-t)\eta_{0}, 0), \, \eta_{0} \sqcup \eta_{1}\bigr), 
\end{split}
\end{equation}
where $\eta_{0} \sqcup \eta_{1} = \cov(\hat{\alpha})$.
\end{Corollary}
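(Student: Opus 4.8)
The plan is to identify $\Psi$ with the isomorphism produced by a particular \emph{canonical} splitting of the short exact sequence \eqref{SeqExCurtaCovPar}, applied to the closed embedding $\rho \colon Y \times \partial I \hookrightarrow Y \times I$ (which is a closed embedding, so \eqref{SeqExCurtaCovPar} is available). The key observation is that the map
\[ e \colon \Omega^{\bullet-1}(Y \times \partial I; \h^{\bullet}_{\R}) \to \Omega^{\bullet-1}(Y \times I; \h^{\bullet}_{\R}), \qquad \eta_{0} \sqcup \eta_{1} \mapsto t\,\eta_{1} + (1-t)\,\eta_{0}, \]
where $t$ is the coordinate on $I$ and $\eta_{0}, \eta_{1}$ are implicitly pulled back along the projection $Y \times I \to Y$ (using $Y_{0} \simeq Y \simeq Y_{1}$), is a well-defined linear section of the restriction $\rho^{*}$, i.e.\ $\rho^{*} \circ e = \id$: at $t = 0$ it restricts to $\eta_{0}$ and at $t = 1$ to $\eta_{1}$. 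Unlike the generic extension map used to prove surjectivity of $\cov$ in \eqref{SeqExCurtaCovPar}, this one involves no choice of open cover or partition of unity — it only uses the affine structure of $I$ — which is precisely what makes $\Psi$ canonical (indeed natural in $Y$).

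Next I would check that $\Psi$ takes values in the asserted target. Since $\cov$ is by definition the second component of the curvature $R$, and $R \circ a = d$ with $d(\omega, 0) = (d\omega, \rho^{*}\omega)$ in $\Omega^{\bullet}(\rho)$, we get $\cov\bigl(a(e(\xi), 0)\bigr) = \rho^{*}e(\xi) = \xi$ for every $\xi \in \Omega^{\bullet-1}(Y \times \partial I; \h^{\bullet}_{\R})$. Taking $\xi = \cov(\hat{\alpha}) = \eta_{0} \sqcup \eta_{1}$, so that $e(\xi) = t\eta_{1} + (1-t)\eta_{0}$, it follows that $\cov\bigl(\hat{\alpha} - a(t\eta_{1} + (1-t)\eta_{0}, 0)\bigr) = \cov(\hat{\alpha}) - \cov(\hat{\alpha}) = 0$, hence the first component of $\Psi(\hat{\alpha})$ is genuinely parallel. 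Additivity of $\Psi$ is clear, since $\cov$, $a$ and $e$ are all linear.

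It then remains to produce the inverse, namely $(\hat{\beta}, \eta_{0} \sqcup \eta_{1}) \mapsto \hat{\beta} + a(t\eta_{1} + (1-t)\eta_{0}, 0)$: composing in one order and using that $\hat{\beta}$ parallel means $\cov(\hat{\beta}) = 0$, together with $\cov(a(e(\xi),0)) = \xi$, returns the starting pair, while composing in the other order gives back $\hat{\alpha}$. Equivalently, one invokes the elementary fact that a section $s$ of a short exact sequence of abelian groups $0 \to K \to G \xrightarrow{p} Q \to 0$ yields the isomorphism $G \xrightarrow{\simeq} K \oplus Q$, $g \mapsto (g - s(p(g)), p(g))$, applied with $K = \hat{h}^{\bullet}_{\ppar}(\rho)$, $G = \hat{h}^{\bullet}(\rho)$, $p = \cov$, and $s = \xi \mapsto a(e(\xi), 0)$: exactness of $0 \to \hat{h}^{\bullet}_{\ppar}(\rho) \to \hat{h}^{\bullet}(\rho) \xrightarrow{\cov} \Omega^{\bullet-1}(Y \times \partial I)$ holds by the very definition of parallel class, and surjectivity of $\cov$ is witnessed by $s$ itself.

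I do not expect a genuine obstacle: the content is just checking that the affine interpolation $e$ is a smooth, restriction-compatible, linear extension (immediate, using that $Y$ is boundaryless so $Y \times I$ is a manifold with boundary on which forms behave as usual), after which the statement is the linear algebra of a split exact sequence. The mildest point of care is matching the identifications $Y_{0} \simeq Y \simeq Y_{1}$ so that $e(\eta_{0} \sqcup \eta_{1})$ restricts to exactly $\eta_{0}$ and $\eta_{1}$ on the two boundary components, which is what guarantees $\cov(a(e(\xi),0)) = \xi$ rather than some sign-twisted variant.
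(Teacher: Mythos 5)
Your proposal is correct and is essentially the paper's own argument: the paper also proves this by observing that $\eta_{0} \sqcup \eta_{1} \mapsto a(t\eta_{1} + (1-t)\eta_{0}, 0)$ is a canonical splitting of the sequence \eqref{SeqExCurtaCovPar} applied to $X = Y \times I$, $A = Y \times \partial I$, and then exhibiting the inverse $(\hat{\beta}, \eta_{0} \sqcup \eta_{1}) \mapsto \hat{\beta} + a(t\eta_{1} + (1-t)\eta_{0}, 0)$. You simply spell out the verification $\cov(a(e(\xi),0)) = \xi$ and the split-exact-sequence linear algebra in more detail than the paper does.
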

\begin{proof} The inverse isomorphism is defined by
	\[(\hat{\beta}, \eta_{0} \sqcup \eta_{1}) \mapsto \hat{\beta} + a(t\eta_{1} + (1-t)\eta_{0}, 0).
\]
This is due to the fact that, considering the sequence \eqref{SeqExCurtaCovPar} with $X = Y \times I$ and $A = Y \times \partial I$, the map $\Omega^{\bullet-1}(Y \times \partial I; \h^{\bullet}_{\R}) \to \hat{h}^{\bullet}(Y \times I, Y \times \partial I)$, $\eta_{0} \sqcup \eta_{1} \mapsto a(t\eta_{1} + (1-t)\eta_{0}, 0)$ is a \emph{canonical} splitting.
\end{proof}

The next lemma shows that the groups of parallel classes satisfy excision as the topological ones (see \cite[Theorem 3.8]{BBSS} about Cheeger-Simons characters).
\begin{Lemma}\label{ParExcision} If $i \colon Z \hookrightarrow A$ and $j \colon A \hookrightarrow X$ are embeddings such that the closure of $j(i(Z))$ is contained in the interior of $j(A)$, then the morphism
		\[\xymatrix{
		A \setminus i(Z) \ar[rr]^(0.45){j'} \ar[d]_{\iota'} & & X \setminus j(i(Z)) \ar[d]^{\iota} \\
		A \ar[rr]^{j} & & X
	}\]
induces an isomorphism between $\hat{h}^{\bullet}_{\ppar}(j)$ and $\hat{h}^{\bullet}_{\ppar}(j')$.
\end{Lemma}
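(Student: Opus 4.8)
The plan is to reduce the statement, via the short exact sequence \eqref{SeqExCurtaCurvPar} and the five lemma, to excision for the flat theory (already available by Remark \ref{FlatTheoryProp}) together with an elementary statement about differential forms. We may assume that $j(A)$ has non-empty interior in $X$, since otherwise the hypothesis forces $i(Z) = \emptyset$ and the excision morphism is the identity.

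Write $(\iota, \iota') \colon j' \to j$ for the morphism of $\M_2$ appearing in the statement. The sequence \eqref{SeqExCurtaCurvPar} is natural in $\rho$ — its first arrow is the inclusion of flat classes into parallel ones and its second arrow is the first component $R'$ of the curvature, both natural — so $(\iota, \iota')$ induces a commutative diagram with exact rows
\[
\xymatrix{
0 \ar[r] & \hat{h}^{\bullet}_{\fl}(j) \ar[r] \ar[d] & \hat{h}^{\bullet}_{\ppar}(j) \ar[r]^{R'} \ar[d]^{(\iota, \iota')^{*}} & \Omega^{\bullet}_{\ch,0}(j) \ar[r] \ar[d] & 0 \\
0 \ar[r] & \hat{h}^{\bullet}_{\fl}(j') \ar[r] & \hat{h}^{\bullet}_{\ppar}(j') \ar[r]^{R'} & \Omega^{\bullet}_{\ch,0}(j') \ar[r] & 0.
}
\]
The left-hand vertical arrow is an isomorphism by Remark \ref{FlatTheoryProp}, so by the five lemma it suffices to prove that the right-hand vertical arrow — which sends $(\omega, 0)$ to $(\omega\vert_{X \setminus j(i(Z))}, 0)$ — is an isomorphism.

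I would prove this in two steps. First, restriction induces an isomorphism $\Omega^{\bullet}_{\cl,0}(j) \to \Omega^{\bullet}_{\cl,0}(j')$: this is where the support hypothesis enters. A closed form $\omega$ on $X$ with $j^{*}\omega = 0$ vanishes identically on the open set $\Int(j(A)) \subseteq X$, which by hypothesis contains $\overline{j(i(Z))}$; injectivity is then immediate (the open sets $\Int(j(A))$ and $X \setminus \overline{j(i(Z))}$ cover $X$, and $\omega$ is determined by its restriction to the latter), and surjectivity follows by extending a given $\omega' \in \Omega^{\bullet}_{\cl,0}(j')$ by zero across $j(i(Z))$, the result being smooth because $\omega'$ already vanishes on $\Int(j(A)) \setminus j(i(Z))$, an ambient neighbourhood of $\overline{j(i(Z))} \setminus j(i(Z))$. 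Second, this isomorphism carries $\Omega^{\bullet}_{\ch,0}(j)$ onto $\Omega^{\bullet}_{\ch,0}(j')$: by definition $\Omega^{\bullet}_{\ch,0}(\rho)$ is the preimage under $\omega \mapsto [(\omega, 0)] \in H^{\bullet}_{\dR}(\rho; \h^{\bullet}_{\R})$ of $\IIm(\ch)$; excision holds for $h^{\bullet}$ (by the axiom) and for $H^{\bullet}_{\dR}$ (a cohomology theory, as used in Remark \ref{FlatTheoryProp}), and the two excision isomorphisms are compatible with $\ch$ and with $\omega \mapsto [(\omega, 0)]$, so the excision isomorphism $H^{\bullet}_{\dR}(j) \to H^{\bullet}_{\dR}(j')$ maps $\IIm(\ch)$ onto $\IIm(\ch)$, which gives the claim.

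The step I expect to be the main obstacle is the smoothness of the extension-by-zero in the first step: it works precisely because the curvature of a parallel class is of the form $(\omega, 0)$ with $\omega$ vanishing on a full ambient neighbourhood of $\overline{j(i(Z))}$, not merely along $\overline{j(i(Z))}$ itself — the very feature that fails for the curvature of a general class in $\hat{h}^{\bullet}(\rho)$, and the reason $\hat{h}^{\bullet}$ does not satisfy excision while $\hat{h}^{\bullet}_{\ppar}$ does. Everything else is formal: naturality, the five lemma, and excision for the already-established theories $h^{\bullet}$, $H^{\bullet}_{\dR}$ and $\hat{h}^{\bullet}_{\fl}$.
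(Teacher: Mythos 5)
Your proposal is correct and follows essentially the same route as the paper: the morphism of short exact sequences \eqref{SeqExCurtaCurvPar}, excision for $\hat{h}^{\bullet}_{\fl}$ on the left, the identification $\Omega^{\bullet}_{\cl,0}(j) \simeq \Omega^{\bullet}_{\cl,0}(j')$ plus the Chern-character diagram on the right, and the five lemma. The only difference is that you spell out the extension-by-zero argument for forms (which the paper dismisses as ``clear''), and your diagnosis of why this works for parallel classes but not for general ones is accurate.
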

\begin{proof} The morphism $(\iota, \iota')$ induces the following morphism of exact sequences of the form \eqref{SeqExCurtaCurvPar}:
	\[\xymatrix{
	0 \ar[r] & \hat{h}^{\bullet}_{\fl}(j) \ar[r] \ar[d] & \hat{h}^{\bullet}_{\ppar}(j) \ar[r]^(.45){R} \ar[d] & \Omega^{\bullet}_{\ch, 0}(j) \ar[r] \ar[d] & 0 \\
	0 \ar[r] & \hat{h}^{\bullet}_{\fl}(j') \ar[r] & \hat{h}_{\ppar}^{\bullet}(j') \ar[r]^(.45){R} & \Omega^{\bullet}_{\ch, 0}(j') \ar[r] & 0.
}\]
The left arrow is an isomorphism by the excision property of $\hat{h}^{\bullet}_{\fl}$ (remark \ref{FlatTheoryProp}). We now prove that the right one is an isomorphism too, hence the result follows from the five lemma. We identify $A$ with $j(A)$ and $Z$ with both $i(Z)$ and $j(i(Z))$. The group of closed forms $\Omega^{\bullet}_{\cl,0}(j)$ contains the forms $\omega \in \Omega^{\bullet}_{\cl}(X)$ such that $\omega\vert_{A} = 0$. Similarly, the group of closed forms $\Omega^{\bullet}_{\cl,0}(j')$ contains the forms $\omega \in \Omega^{\bullet}_{\cl}(X \setminus Z)$ such that $\omega\vert_{A \setminus Z} = 0$. It is clear that the pull-back $(\iota, \iota')^{*} \colon \Omega^{\bullet}_{\cl,0}(j) \to \Omega^{\bullet}_{\cl,0}(j')$ is an isomorphism, inducing the excision isomorphism in de-Rham cohomology. We have to show that $(\iota, \iota')^{*}(\Omega^{\bullet}_{\ch, 0}(j)) = \Omega^{\bullet}_{\ch, 0}(j')$. This is a consequence of the commutativity of the following diagram:
	\[\xymatrix{
	h^{\bullet}(j) \ar[r]^(.4){\ch} \ar[d] & H^{\bullet}_{\dR}(j; h^{\bullet}_{\R}) \ar[d] & \Omega^{\bullet}_{\cl,0}(j) \ar[l]_(.42){\dR} \ar[d] \\
	h^{\bullet}(j') \ar[r]^(.4){\ch} & H^{\bullet}_{\dR}(j'; h^{\bullet}_{\R}) & \Omega^{\bullet}_{\cl,0}(j'). \ar[l]_(.42){\dR}
}\]
The left and central vertical arrows are isomorphisms too, by excision.
\end{proof}

The following lemma will be useful in the construction of the long exact sequence of $\hat{h}^{\bullet}$.
\begin{Lemma}\label{LiftCurvS1} Using the notation of sequence \eqref{SplitExactS1}, for every $\hat{\alpha} \in \hat{h}^{\bullet}(A)$ there exists a unique class $\hat{\beta} \in \hat{h}^{\bullet+1}_{\ppar}(i_{1})$ such that $\int_{S^{1}} \pi^{*}\hat{\beta} = \hat{\alpha}$ and $R'(\hat{\beta}) = \pi_{1}^{*}R(\hat{\alpha}) \wedge dt$.
\end{Lemma}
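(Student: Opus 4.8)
The plan is to reduce everything to a single isomorphism statement, namely that
$\int_{S^{1}}\circ\,\pi^{*}\colon \hat{h}^{\bullet+1}_{\fl}(i_{1})\to \hat{h}^{\bullet}_{\fl}(A)$
is an isomorphism. This makes sense because $\int_{S^{1}}$ preserves flatness (as $R\circ\int_{S^{1}}=\int_{S^{1}}\circ\,R$ by \eqref{CommS1}) and so does $\pi^{*}$. To prove it I would apply the five lemma to the morphism between the exact sequence \eqref{ExSeqFlat} of the object $i_{1}$ and that of the object $A$ (i.e.\ $\emptyset\to A$), taking $\int_{S^{1}}\circ\,\pi^{*}$ as all five vertical arrows (this shifts degree by $-1$). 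The squares involving $a$ and $I$ commute because $\pi^{*}$ is a natural transformation and $\int_{S^{1}}$ commutes with $a$ and $I$ by \eqref{CommS1}. The squares involving $\ch$ commute once we know $\int_{S^{1}}$ commutes with $\ch$; this is the only delicate compatibility, and it follows from \eqref{ComDiagDC}, \eqref{CommS1} and the surjectivity of $I$ (axiom (A3)): given $\beta\in h^{\bullet+1}(S^{1}\times A)$, pick $\hat\beta$ with $I(\hat\beta)=\beta$, so that $\ch(\int_{S^{1}}\beta)=\ch(I(\int_{S^{1}}\hat\beta))=\dR(R(\int_{S^{1}}\hat\beta))=\dR(\int_{S^{1}}R(\hat\beta))=\int_{S^{1}}\dR(R(\hat\beta))=\int_{S^{1}}\ch(\beta)$. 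Finally, the four ``rigid'' vertical arrows, on $h^{\bullet}$ and on $H^{\bullet}_{\dR}$, are isomorphisms: since $\int_{S^{1}}=s\circ\xi$ by \eqref{IntS1Cohom} and $\xi$ is a left inverse of $\pi^{*}$ in \eqref{SplitExactS1}, we get $\int_{S^{1}}\circ\,\pi^{*}=s$, the suspension isomorphism \eqref{SuspIsoS1}, and likewise in de-Rham cohomology, which is a cohomology theory. Hence the middle arrow is an isomorphism as well.

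Granting this, I would prove existence as follows. Set $\omega_{0}:=\pi_{1}^{*}R(\hat\alpha)\wedge dt$; it is closed (because $R(\hat\alpha)$ is) and $i_{1}^{*}\omega_{0}=0$ (because $dt$ pulls back to $0$ on $A$). The first task is to check $\omega_{0}\in\Omega^{\bullet+1}_{\ch,0}(i_{1})$: under the de-Rham suspension isomorphism the class $[(\omega_{0},0)]\in H^{\bullet+1}_{\dR}(i_{1})$ maps to $[R(\hat\alpha)]=\ch(I(\hat\alpha))$ (by \eqref{ComDiagDC}), which lies in the image of $\ch$; since $\int_{S^{1}}\circ\,\pi^{*}$ is also an isomorphism on $h^{\bullet}$ and commutes with $\ch$, the class $[(\omega_{0},0)]$ itself lies in the image of $\ch\colon h^{\bullet+1}(i_{1})\to H^{\bullet+1}_{\dR}(i_{1})$, as required. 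By the surjectivity of $R'$ in \eqref{SeqExCurtaCurvPar} there is then $\hat\beta_{0}\in\hat{h}^{\bullet+1}_{\ppar}(i_{1})$ with $R'(\hat\beta_{0})=\omega_{0}$. Next set $\gamma:=\int_{S^{1}}\pi^{*}\hat\beta_{0}-\hat\alpha\in\hat{h}^{\bullet}(A)$; using \eqref{CommS1} together with the normalization $\int_{S^{1}}(\pi_{1}^{*}R(\hat\alpha)\wedge dt)=R(\hat\alpha)$ one finds $R(\gamma)=0$, so $\gamma$ is flat. By the isomorphism of the first paragraph there is a (unique) flat --- hence parallel --- class $\hat\delta\in\hat{h}^{\bullet+1}_{\fl}(i_{1})$ with $\int_{S^{1}}\pi^{*}\hat\delta=-\gamma$. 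Then $\hat\beta:=\hat\beta_{0}+\hat\delta$ is parallel, satisfies $R'(\hat\beta)=R'(\hat\beta_{0})=\pi_{1}^{*}R(\hat\alpha)\wedge dt$ (since $R'(\hat\delta)=0$), and $\int_{S^{1}}\pi^{*}\hat\beta=\hat\alpha$.

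For uniqueness, if $\hat\beta$ and $\hat\beta'$ both satisfy the conclusion, then $\hat\beta-\hat\beta'\in\hat{h}^{\bullet+1}_{\ppar}(i_{1})$ has vanishing first curvature component, hence vanishing curvature, so it is flat; and it is annihilated by $\int_{S^{1}}\circ\,\pi^{*}$. By the injectivity part of the isomorphism it is zero.

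The main obstacle is the isomorphism statement of the first paragraph, and within it the genuinely non-formal point is the commutativity of $\int_{S^{1}}$ with $\ch$; once that is secured, everything else is bookkeeping with the structural natural transformations and with the suspension isomorphisms of $h^{\bullet}$ and $H^{\bullet}_{\dR}$.
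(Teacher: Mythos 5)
Your proof is correct, but the existence part follows a genuinely different route from the paper's. The paper starts from the topological class $\beta := s^{-1}(I(\hat{\alpha}))$, picks an arbitrary parallel refinement $\hat{\beta}'$ with $I(\hat{\beta}')=\beta$, and then repairs both defects at once by an explicit manipulation of forms: it writes $R(\hat{\beta}')=(\omega,0)$ with $\omega=\pi_{1}^{*}R(\hat{\alpha})\wedge dt+d\nu$ and $i_{1}^{*}\nu=0$, compares $\int_{S^{1}}\nu$ with the error term $\chi$ in $\int_{S^{1}}\pi^{*}\hat{\beta}'=\hat{\alpha}+a(\chi)$, and corrects by $-a(\nu,0)+a(\pi_{1}^{*}\lambda\wedge dt,0)$. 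You instead first realize the prescribed curvature exactly, via the surjectivity of $R'$ onto $\Omega^{\bullet+1}_{\ch,0}(i_{1})$ in the sequence \eqref{SeqExCurtaCurvPar}, and then absorb the remaining (automatically flat) integration defect using the suspension isomorphism $\int_{S^{1}}\circ\,\pi^{*}$ on $\hat{h}^{\bullet+1}_{\fl}(i_{1})$ --- an isomorphism which the paper also establishes by the same five-lemma argument, but invokes only for uniqueness. The trade-off is that your route must separately verify that $\pi_{1}^{*}R(\hat{\alpha})\wedge dt$ lies in $\Omega^{\bullet+1}_{\ch,0}(i_{1})$ (which you do correctly, using that $s$ is bijective on $h^{\bullet}$ and $H^{\bullet}_{\dR}$ and commutes with $\ch$ --- your derivation of the latter from \eqref{ComDiagDC}, \eqref{CommS1} and the surjectivity of $I$ is a nice touch, though the paper's remark after \eqref{IntS1Cohom} already gives the de-Rham compatibility), whereas the paper's route gets the Chern-character condition for free by starting from $s^{-1}(\alpha)$ but pays for it with the hands-on bookkeeping of $\nu$, $\chi$ and $\lambda$. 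Your uniqueness argument is identical to the paper's.
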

\begin{proof} We set $\alpha := I(\hat{\alpha})$ and, applying the isomorphism \eqref{SuspIsoS1}, $\beta := s^{-1}(\alpha) \in h^{\bullet+1}(i_{1})$. It follows that $\int_{S^{1}} \pi^{*}\beta = s \circ \xi \circ \pi^{*}(\beta) = s(\beta) = \alpha$, the integral being defined by \eqref{IntS1Cohom}. We choose any parallel differential refinement $\hat{\beta}' \in \hat{h}^{\bullet+1}_{\ppar}(i_{1})$ such that $I(\hat{\beta}') = \beta$; this is possible because of property (A$'$3) of theorem \ref{AxiomsParallel} (in particular, because of the surjectivity of $I'$). From the commutativity of diagram \eqref{CommS1}, we get that
\begin{equation}\label{IntS1BetaHat}
	\int_{S^{1}} \pi^{*}\hat{\beta}' = \hat{\alpha} + a(\chi)
\end{equation}
for a suitable form $\chi \in \Omega^{\bullet-1}(A; \h^{\bullet}_{\R})$. We set $R(\hat{\beta}') = (\omega, 0)$ and $R(\hat{\alpha}) = \bar{\omega}$. It follows from \eqref{IntS1BetaHat} that
\begin{equation}\label{IntS1OmegaChi}
	\int_{S^{1}} \omega = \bar{\omega} + d\chi,
\end{equation}
thus, in de-Rham cohomology, $s([\omega, 0]) = [\bar{\omega}]$. Since also $s([\pi_{1}^{*}\bar{\omega} \wedge dt, 0]) = [\bar{\omega}]$ and $s$ is an isomorphism, we have that $[\omega, 0] = [\pi_{1}^{*}\bar{\omega} \wedge dt, 0]$, thus there exists $\nu \in \Omega_{0}^{\bullet}(i_{1})$ such that
\begin{equation}\label{IntS1OmegaNu}
	\omega = \pi_{1}^{*}\bar{\omega} \wedge dt + d\nu, \qquad i_{1}^{*}\nu = 0.
\end{equation}
Joining \eqref{IntS1OmegaChi} and \eqref{IntS1OmegaNu} we see that $d\int_{S^{1}} \nu = d\chi$, thus
\begin{equation}\label{IntS1NuLambda}
	\int_{S^{1}} \nu = \chi + \lambda, \quad d\lambda = 0.
\end{equation}
We set $\hat{\beta} := \hat{\beta}' - a(\nu, 0) + a (\pi_{1}^{*}\lambda \wedge dt, 0)$. It follows that $\int_{S^{1}} \pi^{*}\hat{\beta} = \hat{\alpha}$ and $R(\hat{\beta}) = (\omega, 0) - (d\nu, 0) =  (\pi_{1}^{*}\bar{\omega} \wedge dt, 0)$, as required. The class $\hat{\beta}$ is unique: if we choose another class $\hat{\beta}_{1}$ satisfying the statement, the difference $\hat{\beta} - \hat{\beta}_{1}$ is a flat class $\hat{u} \in \hat{h}_{\fl}^{n}(i_{1})$ such that $\int_{S^{1}} \pi^{*}\hat{u} = 0$. In the topological and de-Rham theories, the map $\int_{S^{1}} \circ \pi^{*}$ is the isomorphism \eqref{SuspIsoS1}, thus, because of the five lemma applied to the sequence \eqref{ExSeqFlat}, it is an isomorphism also in the flat theory. It follows that $\hat{u} = 0$, hence $\hat{\beta} = \hat{\beta}_{1}$.
\end{proof}

\section{Long exact sequences}\label{LongExSeqSec}

Let us fix a differential cohomology theory $\hat{h}^{\bullet}$ with $S^{1}$-integration (it can be multiplicative or not). We now state three theorems concerning the corresponding long exact sequences. The rest of this section will be devoted to the construction of the Bockstein maps and to the proofs.

\begin{Theorem} Considering the flat theory, we have the following exact sequence for every map $\rho \colon A \to X$:
\begin{equation}\label{LongExactFl}
	\cdots \longrightarrow \hat{h}^{\bullet}_{\fl}(\rho) \longrightarrow \hat{h}^{\bullet}_{\fl}(X) \longrightarrow \hat{h}^{\bullet}_{\fl}(A) \longrightarrow \hat{h}^{\bullet+1}_{\fl}(\rho) \longrightarrow \cdots.
\end{equation}
\end{Theorem}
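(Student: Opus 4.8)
The plan is to build the long exact sequence for the flat theory by comparing it with the known long exact sequences of the topological theory $h^{\bullet}$ and of the de-Rham theory $H^{\bullet}_{\dR}$, using the exact sequence \eqref{ExSeqFlat} as the bridge. Recall that \eqref{ExSeqFlat} reads
\[
\cdots \longrightarrow h^{\bullet}(\rho) \overset{\ch}\longrightarrow H^{\bullet}_{\dR}(\rho;\h^{\bullet}_{\R}) \overset{a}\longrightarrow \hat{h}^{\bullet+1}_{\fl}(\rho) \overset{I}\longrightarrow h^{\bullet+1}(\rho) \longrightarrow \cdots,
\]
and the analogous sequences hold for $X$ and for $A$. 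Since $h^{\bullet}$ is a cohomology theory on $\M_{2}$ it has the long exact sequence of the pair (axiom (1) in section \ref{RelCohomSec}) relating $h^{\bullet}(\rho)$, $h^{\bullet}(X)$, $h^{\bullet}(A)$, and the de-Rham theory $H^{\bullet}_{\dR}(-;\h^{\bullet}_{\R})$ likewise has the long exact sequence of the pair $\rho$ coming from the short exact sequence of complexes \eqref{ShortSeqForms}. These two long exact sequences, together with the three copies of \eqref{ExSeqFlat} (for $\rho$, for $X$, and for $A$), fit into a large commutative diagram; the goal is to extract from it the desired sequence \eqref{LongExactFl}.

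First I would construct the maps in \eqref{LongExactFl}. The maps $\hat{h}^{\bullet}_{\fl}(\rho)\to\hat{h}^{\bullet}_{\fl}(X)$ and $\hat{h}^{\bullet}_{\fl}(X)\to\hat{h}^{\bullet}_{\fl}(A)$ are simply $\pi^{*}$ and $\rho^{*}$ applied to the functor $\hat{h}^{\bullet}_{\fl}$, which is well defined since pullback preserves flatness (the curvature is natural). The Bockstein-type connecting map $\hat{h}^{\bullet}_{\fl}(A)\to\hat{h}^{\bullet+1}_{\fl}(\rho)$ is the one that must be built by hand; the natural candidate is to use Lemma \ref{LiftCurvS1} together with the $S^{1}$-integration, following the same pattern used in section \ref{LongExSeqSec} to produce the Bockstein for $\hat{h}^{\bullet}$ itself. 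Concretely, given $\hat{\alpha}\in\hat{h}^{\bullet}_{\fl}(A)$, Lemma \ref{LiftCurvS1} produces a unique parallel class $\hat\beta\in\hat{h}^{\bullet+1}_{\ppar}(i_{1})$ with $\int_{S^{1}}\pi^{*}\hat\beta=\hat\alpha$ and $R'(\hat\beta)=\pi_{1}^{*}R(\hat\alpha)\wedge dt$; when $\hat\alpha$ is flat this forces $R'(\hat\beta)=0$, so $\hat\beta$ is flat. Composing with the suspension identification coming from the geometry relating $i_{1}$ to $S\rho$ and with a boundary/clutching construction, one lands in $\hat{h}^{\bullet+1}_{\fl}(\rho)$. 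I would then check commutativity of this new map with the maps $\ch$ and $a$ of \eqref{ExSeqFlat}, so that the three copies of \eqref{ExSeqFlat} together with the pair sequences of $h^{\bullet}$ and $H^{\bullet}_{\dR}$ form a commutative ladder.

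Once all maps are in place and the ladder commutes, exactness of \eqref{LongExactFl} at each of its three types of positions is a diagram chase. At the position $\hat{h}^{\bullet}_{\fl}(X)$ one combines exactness of \eqref{ExSeqFlat} for $X$ with exactness of the pair sequence of $h^{\bullet}$, and similarly at $\hat{h}^{\bullet}_{\fl}(A)$; at the position $\hat{h}^{\bullet+1}_{\fl}(\rho)$ one uses exactness of \eqref{ExSeqFlat} for $\rho$ plus exactness of the de-Rham pair sequence, tracking elements through $I$ and $a$. Each such chase is a finite manipulation in the style of the five lemma applied to a braid of exact sequences; I expect no essential difficulty here.

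The main obstacle will be the precise definition of the connecting homomorphism and the verification that it is well defined and natural — in particular making rigorous the identification between the $S^{1}$-picture of Lemma \ref{LiftCurvS1} (which lives on $i_{1}\colon\rho\to S\rho$) and the suspension model $\tilde h^{\bullet}(\Sigma C(\rho))\simeq\tilde h^{\bullet-1}(C(\rho))$, so that integrating over $S^{1}$ the lifted parallel class genuinely produces a class relative to $\rho$ rather than relative to $i_{1}$. This is exactly the flat analogue of the construction of the Bockstein for $\hat h^{\bullet}$ promised at the start of section \ref{LongExSeqSec}, so the cleanest route may be to define the flat Bockstein as the restriction of that $\hat h^{\bullet}$-Bockstein to flat classes and merely observe that it lands in $\hat h^{\bullet+1}_{\fl}$, deferring the delicate $S^{1}$-bookkeeping to the general construction; then \eqref{LongExactFl} drops out of the general long exact sequence of $\hat h^{\bullet}$ by restricting to curvature-zero classes and invoking \eqref{ExSeqFlat} to control the kernels and cokernels.
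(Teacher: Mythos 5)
Your overall strategy is viable and close in spirit to the paper's, but the two diverge in how exactness is established, and one of your suggested shortcuts is circular. On the connecting map you are aligned with the paper: the Bockstein of \eqref{LongExactFl} is defined as the restriction to flat classes of the Bockstein of the full theory, which is built from Lemma \ref{LiftCurvS1} and the $S^{1}$-integration through the chain of comparisons $i_{1}\colon A\hookrightarrow S^{1}\times A$, then $i_{0,1}\colon A\sqcup A\hookrightarrow I\times A$, then $\id'\colon A\sqcup A\to A$, then $\rho'\colon X\sqcup A\to X$, and finally $\rho$ (steps (S1)--(S6)); flatness of the image is then immediate from \eqref{CurvBockstein}. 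Where you differ is the exactness argument: you propose a braid-type diagram chase combining three copies of \eqref{ExSeqFlat} with the topological and de-Rham long exact sequences of the pair, whereas the paper argues element-wise at each position, using axiom (A4), the surjectivity statements contained in \eqref{ExSeqDC}, and the explicit formulas \eqref{ABockstein} and \eqref{RhoBockstein} for the Bockstein on topologically trivial classes and on pullbacks. Your chase would work, but it buys little for free: the commutativity of the squares involving the three connecting homomorphisms (that $I$ intertwines $\Bock$ with the topological Bockstein, and that $a$ intertwines the de-Rham Bockstein with $\Bock$, which is precisely \eqref{ABockstein}) is exactly what you would still have to prove, and it amounts to the same computations the paper carries out.

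The genuine gap is in your closing suggestion that \eqref{LongExactFl} ``drops out of the general long exact sequence of $\hat h^{\bullet}$ by restricting to curvature-zero classes.'' This is circular in the paper's development: the exactness of \eqref{LongExact2} at the positions $\hat h^{\bullet}(\rho)$ and $\hat h^{\bullet}(X)$ is itself proved by reducing to the exactness of the flat sequence, so \eqref{LongExactFl} must be established first. Moreover \eqref{LongExact2} is not simply the non-flat version of \eqref{LongExactFl}: it interleaves flat, differential and purely topological groups (it contains $\hat h^{\bullet-1}(A)$ but $\hat h^{\bullet-1}_{\fl}(X)$, and degenerates to $h^{\bullet}$ after $\hat h^{\bullet}(X)$), so restricting it to curvature-zero classes does not return the sequence you want at every position. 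Keep your first route and drop this shortcut.
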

The morphisms involved are $\rho^{*} \colon \hat{h}^{\bullet}_{\fl}(X) \to \hat{h}^{\bullet}_{\fl}(A)$ and $\pi^{*} \colon \hat{h}^{\bullet}_{\fl}(\rho) \to \hat{h}^{\bullet}_{\fl}(X)$, where $\pi$ is the natural morphism from $\emptyset \to X$ to $\rho$, and the Bockstein map $\Bock \colon \hat{h}^{\bullet}_{\fl}(A) \to \hat{h}^{\bullet+1}_{\fl}(\rho)$, that we are going to construct in following paragraphs.

\begin{Rmk} The sequence \eqref{LongExactFl}, together with remark \ref{FlatTheoryProp}, shows that, if $\hat{h}^{\bullet}$ is a relative differential cohomology theory with $S^{1}$-integration, then the flat theory $\hat{h}^{\bullet}_{\fl}$ is a cohomology theory on $\M_{2}$.
\end{Rmk}

\begin{Theorem} Considering the whole groups $\hat{h}^{\bullet}$, we get long exact sequences of the following form:
\begin{equation}\label{LongExact2}
\begin{tikzcd}
\cdots \arrow[r]& \hat{h}^{\bullet-1}_{\fl}(\rho)\arrow[r]
\arrow[d, phantom, ""{coordinate, name=Z}]
& \hat{h}^{\bullet-1}_{\fl}(X) \arrow[r]& \hat{h}^{\bullet-1}(A)\arrow[dll,rounded corners=8pt,curvarr=Z]& 
\\
&\hat{h}^{\bullet}(\rho) \arrow[r]\arrow[d, phantom, ""{coordinate, name=W}] & \hat{h}^{\bullet}(X)\arrow[r]
& h^{\bullet}(A)\arrow[dll,rounded corners=8pt,curvarr=W]& 
\\
& h^{\bullet+1}(\rho) \arrow[r]& h^{\bullet+1}(X)\arrow[r]& h^{\bullet+1}(A)\arrow[r]&\cdots.
\end{tikzcd}
\end{equation}
\end{Theorem}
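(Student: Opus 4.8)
We first fix the arrows in \eqref{LongExact2}: $\pi^{*}$ is pull-back along the natural morphism from $\emptyset\to X$ to $\rho$; the arrow $\hat h^{\bullet}(X)\to h^{\bullet}(A)$ is $I\circ\rho^{*}=\rho^{*}\circ I$; the arrow $\hat h^{\bullet-1}_{\fl}(X)\to\hat h^{\bullet-1}(A)$ is $\rho^{*}$ followed by the inclusion of the flat subgroup; the arrows $h^{\bullet}(A)\to h^{\bullet+1}(\rho)$ are the topological Bockstein $\beta$; and $\Bock\colon\hat h^{\bullet-1}(A)\to\hat h^{\bullet}(\rho)$ is the differential Bockstein, still to be constructed. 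By a degree shift these sequences form a family, so it suffices to treat the displayed one. The plan is: \emph{(a)} construct $\Bock$ and establish its compatibilities with $I$, $R$, $a$ and $\pi^{*}$; \emph{(b)} prove exactness position by position, reducing everything to the flat long exact sequence \eqref{LongExactFl}, the topological long exact sequence, the form complex of \eqref{ShortSeqForms}, and the structural sequences \eqref{ExSeqDC}, \eqref{SeqExCurtaCurv}. For \emph{(a)}: given $\hat\alpha\in\hat h^{\bullet-1}(A)$, Lemma \ref{LiftCurvS1} --- here the $S^{1}$-integration hypothesis enters --- yields the unique parallel lift $\hat\beta\in\hat h^{\bullet}_{\ppar}(i_{1})$, for $i_{1}\colon A\hookrightarrow S^{1}\times A$, with $\int_{S^{1}}\pi^{*}\hat\beta=\hat\alpha$ and $R'(\hat\beta)=\pi_{1}^{*}R(\hat\alpha)\wedge dt$; this plays the role, in the smooth setting, of the topological desuspension isomorphism \eqref{SuspIsoS1}. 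Transporting $\hat\beta$ through the smooth realisation of the Puppe connecting map that relates the $S^{1}$-suspension of $A$ to the cone of $\rho$ produces $\Bock(\hat\alpha)\in\hat h^{\bullet}(\rho)$; one checks independence of the auxiliary choices and naturality in $\rho$.

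From \eqref{ComDiagDC}, \eqref{CommS1} and the naturality of $I$, $R$, $a$ one reads off: $I\circ\Bock=\beta\circ I$; $\pi^{*}\circ\Bock=0$; $R(\Bock\hat\alpha)$ has vanishing first component and second component $\pm R(\hat\alpha)$ (so $R\circ\Bock$ is the form-level connecting homomorphism composed with $R$); $\Bock(a(\nu))=a(0,\nu)$ for $\nu\in\Omega^{\bullet-2}(A;\h^{\bullet}_{\R})/\IIm(d)$; and $\Bock$ restricts on flat classes to the Bockstein of \eqref{LongExactFl}. Now for \emph{(b)}: the subsequence $\cdots\to\hat h^{\bullet-1}_{\fl}(\rho)\to\hat h^{\bullet-1}_{\fl}(X)$ of \eqref{LongExactFl} and the subsequence $h^{\bullet}(A)\to h^{\bullet+1}(\rho)\to h^{\bullet+1}(X)\to h^{\bullet+1}(A)\to\cdots$ of the topological sequence are exact, so only the five positions $\hat h^{\bullet-1}_{\fl}(X)$, $\hat h^{\bullet-1}(A)$, $\hat h^{\bullet}(\rho)$, $\hat h^{\bullet}(X)$, $h^{\bullet}(A)$ remain. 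Exactness at $\hat h^{\bullet-1}_{\fl}(X)$ follows from \eqref{LongExactFl} because the inclusion $\hat h^{\bullet-1}_{\fl}(A)\hookrightarrow\hat h^{\bullet-1}(A)$ is injective; exactness at $h^{\bullet}(A)$ follows from topological exactness because $I\colon\hat h^{\bullet}(X)\to h^{\bullet}(X)$ is onto by \eqref{ExSeqDC}. Exactness at $\hat h^{\bullet}(X)$: by axiom (A4), $\rho^{*}\pi^{*}=a\circ\cov$, and $I\circ a=0$ by \eqref{ExSeqDC}, so $\IIm(\pi^{*})\subseteq\Ker(I\rho^{*})$; conversely, from $I(\rho^{*}\hat\gamma)=0$ topological exactness gives $I(\hat\gamma)=\pi^{*}\epsilon$ with $\epsilon\in h^{\bullet}(\rho)$, and choosing $\hat\epsilon$ with $I\hat\epsilon=\epsilon$ and writing $\hat\gamma-\pi^{*}\hat\epsilon=a(\mu)$ by \eqref{ExSeqDC} one obtains $\hat\gamma=\pi^{*}\bigl(\hat\epsilon+a(\mu,0)\bigr)$, since $\pi^{*}a(\mu,0)=a(\mu)$.

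It remains to treat $\hat h^{\bullet-1}(A)$ and $\hat h^{\bullet}(\rho)$. At $\hat h^{\bullet-1}(A)$: the inclusion $\IIm\subseteq\Ker$ holds because $\Bock$ on flat classes equals $\Bock_{\fl}$ and $\Bock_{\fl}\circ\rho^{*}=0$ by \eqref{LongExactFl}; conversely, if $\Bock\hat\alpha=0$ then $R(\Bock\hat\alpha)=0$ forces $R(\hat\alpha)=0$ by the curvature formula, hence $\hat\alpha$ is flat, $\Bock_{\fl}\hat\alpha=0$, and \eqref{LongExactFl} puts $\hat\alpha$ in $\IIm(\rho^{*}\colon\hat h^{\bullet-1}_{\fl}(X)\to\hat h^{\bullet-1}_{\fl}(A))$. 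At $\hat h^{\bullet}(\rho)$: $\IIm\subseteq\Ker$ is $\pi^{*}\circ\Bock=0$; for the reverse, let $\pi^{*}\hat\delta=0$. Topological exactness gives $I\hat\delta=\beta\xi$; picking $\hat\alpha_{0}$ with $I\hat\alpha_{0}=\xi$ one gets $\hat\delta-\Bock\hat\alpha_{0}=a(\omega,\eta)$ by \eqref{ExSeqDC}, and $\pi^{*}a(\omega,\eta)=a(\omega)=0$ forces $\omega\in\Omega^{\bullet-1}_{\ch}(X)$ modulo exact by \eqref{ExSeqDC}; using that $\Bock(\rho^{*}\hat\zeta)$ equals $a(R(\hat\zeta),0)$ up to a flat class (for $\hat\zeta\in\hat h^{\bullet-1}(X)$) together with the surjectivity of $R$ onto $\Omega^{\bullet-1}_{\ch}(X)$ from \eqref{SeqExCurtaCurv}, one modifies $\hat\alpha_{0}$ so as to kill the $X$-component $\omega$; the residual $a$-term is then of the form $a(0,\eta')=\Bock(a(\eta'))$, and what is left over is a flat class in $\Ker\pi^{*}$, absorbed via \eqref{LongExactFl}.

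The main obstacle is step \emph{(a)}: defining $\Bock$ as a genuine natural transformation in the smooth category --- where the cone model available for $h^{\bullet}(\rho)$ and for \eqref{SuspIsoS1} does not directly apply to $\hat h^{\bullet}$ --- and simultaneously verifying the four compatibility properties. The closely related difficulty in \emph{(b)} is the bookkeeping of the two components of a relative curvature form in the reverse inclusion at $\hat h^{\bullet}(\rho)$, where the $X$-component must be removed by a separate correction before the problem can be reduced to the already-established flat long exact sequence. Once $\Bock$ and its properties are in hand, the remaining verifications are a finite diagram chase against \eqref{ExSeqDC}, \eqref{SeqExCurtaCurv}, \eqref{ShortSeqForms}, \eqref{LongExactFl}, axiom (A4) and the homotopy formula \eqref{HomFormula2}.
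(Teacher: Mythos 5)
Your overall architecture coincides with the paper's: construct $\Bock$ out of Lemma \ref{LiftCurvS1}, establish the formulas $R\circ\Bock(\hat{\alpha})=(0,-R(\hat{\alpha}))$, $\Bock\circ a(\eta)=a(0,\eta)$, $\Bock\circ\rho^{*}(\hat{\beta})=-a(R(\hat{\beta}),0)$ and $\pi^{*}\circ\Bock=0$, and then chase each of the five non-trivial positions against \eqref{LongExactFl}, \eqref{ExSeqDC} and the topological sequence. Your position-by-position arguments in part \emph{(b)} are sound; the one at $\hat{h}^{\bullet}(X)$ (lift $\epsilon$ to $\hat{\epsilon}$, then absorb the defect into $\pi^{*}a(\mu,0)$) is in fact more direct than the argument the paper gives at that position.

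The genuine gap is exactly where you locate it: the passage from the parallel lift $\hat{\beta}\in\hat{h}^{\bullet}_{\ppar}(i_{1})$ on $S^{1}\times A$ to a class in $\hat{h}^{\bullet}(\rho)$. "Transporting through the smooth realisation of the Puppe connecting map" is not available off the shelf, because $\hat{h}^{\bullet}$ is not homotopy invariant: the comparison morphisms between $i_{0,1}\colon A\sqcup A\hookrightarrow I\times A$, $\id'\colon A\sqcup A\to A$ and $\rho'\colon X\sqcup A\to X$ are homotopy equivalences, hence induce isomorphisms on $h^{\bullet}$ and $H^{\bullet}_{\dR}$, but they do \emph{not} induce isomorphisms on the full groups $\hat{h}^{\bullet}$, so the transport cannot simply be inverted. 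The paper's steps (S2)--(S6) resolve this in two moves. First, one subtracts the explicit form $\bar{a}(t\cdot p^{*}\pi_{1}^{*}R(\hat{\alpha}),0)$ so that the curvature of the resulting class $\hat{\gamma}$ is concentrated in the $A$-component, $(0,0\sqcup-\bar{R}(\hat{\alpha}))$. Second, one restricts to the subgroups $\hat{h}^{\bullet}_{0}$ and $\hat{h}^{\bullet}_{1}$ of classes with curvature of that constrained shape; these sit in short exact sequences $0\to\hat{h}^{\bullet}_{\fl}\to\hat{h}^{\bullet}_{0}\to\Omega^{\bullet-1}_{\ch'}\to 0$ whose outer terms \emph{are} matched isomorphically by the comparison maps (the flat part because $\hat{h}^{\bullet}_{\fl}$ is homotopy invariant, the form part because it is literally an equality), so the five lemma applies and the pull-backs become invertible. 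Without the curvature correction and the restriction to these subgroups the construction does not go through; moreover, the compatibilities $I\circ\Bock=\beta\circ I$, \eqref{CurvBockstein} and \eqref{ABockstein} --- which your entire part \emph{(b)} consumes --- are obtained in the paper precisely by tracking this explicit six-step construction, rather than being read off from \eqref{ComDiagDC} and \eqref{CommS1} alone.
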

The first line is a left-infinite part of \eqref{LongExactFl}, except for the last morphism, which is the composition between $\rho^{*} \colon \hat{h}^{\bullet-1}_{\fl}(X) \to \hat{h}^{\bullet-1}_{\fl}(A)$ and the inclusion $\hat{h}^{\bullet-1}_{\fl}(A) \hookrightarrow \hat{h}^{\bullet-1}(A)$. Similarly, from $h^{\bullet}(A)$ on we just have a part of the long exact sequence of the topological theory. The morphism $\hat{h}^{\bullet}(X) \to h^{\bullet}(A)$ is the composition between $\rho^{*} \colon \hat{h}^{\bullet}(X) \to \hat{h}^{\bullet}(A)$ and $I \colon \hat{h}^{\bullet}(A) \to h^{\bullet}(A)$. We will define the Bockstein map $\Bock \colon \hat{h}^{\bullet-1}(A) \to \hat{h}^{\bullet}(\rho)$ in the following. We remark that \eqref{LongExact2} represents a family of exact sequences, since we are free to decide at which degree we put the group $\hat{h}^{\bullet-1}(A)$ instead of the flat one.

\begin{Theorem} Considering parallel classes, if $\rho$ is a \emph{closed embedding} we get long exact sequences of the following form:
\begin{equation}\label{LongExact1}
\begin{tikzcd}
\cdots \arrow[r]& \hat{h}^{\bullet-1}_{\fl}(\rho)\arrow[r]
\arrow[d, phantom, ""{coordinate, name=Z}]
& \hat{h}^{\bullet-1}_{\fl}(X) \arrow[r]& \hat{h}^{\bullet-1}_{\fl}(A) \arrow[dll,rounded corners=8pt,curvarr=Z]& 
\\
&\hat{h}^{\bullet}_{\ppar}(\rho) \arrow[r]\arrow[d, phantom, ""{coordinate, name=W}] & \hat{h}^{\bullet}(X)\arrow[r]
& \hat{h}^{\bullet}(A)\arrow[dll,rounded corners=8pt,curvarr=W]& 
\\
& h^{\bullet+1}(\rho)  \arrow[r]& h^{\bullet+1}(X)\arrow[r]& h^{\bullet+1}(A)\arrow[r]&\cdots.
\end{tikzcd}
\end{equation}
For a generic map $\rho$, we have to stop at $\hat{h}^{\bullet}(A)$ and cut the third line.
\end{Theorem}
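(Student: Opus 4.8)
The plan is to obtain \eqref{LongExact1} from the two previous long exact sequences, \eqref{LongExactFl} and \eqref{LongExact2}, together with Theorem \ref{AxiomsParallel} and the short exact sequences relating flat, parallel and arbitrary classes; the closed-embedding hypothesis will enter only through the possibility of extending differential forms from $A$ to $X$. First I fix the connecting maps. On the whole first row, and on the second row up to $\hat{h}^{\bullet}_{\ppar}(\rho)$, the maps are those of \eqref{LongExactFl}, the arrow $\hat{h}^{\bullet-1}_{\fl}(A)\to\hat{h}^{\bullet}_{\ppar}(\rho)$ being the flat Bockstein $\Bock\colon\hat{h}^{\bullet-1}_{\fl}(A)\to\hat{h}^{\bullet}_{\fl}(\rho)$ followed by the inclusion $\hat{h}^{\bullet}_{\fl}(\rho)\hookrightarrow\hat{h}^{\bullet}_{\ppar}(\rho)$. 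The arrow $\hat{h}^{\bullet}_{\ppar}(\rho)\to\hat{h}^{\bullet}(X)$ is the restriction of $\pi^{*}$, the arrow $\hat{h}^{\bullet}(X)\to\hat{h}^{\bullet}(A)$ is $\rho^{*}$, the arrow $\hat{h}^{\bullet}(A)\to h^{\bullet+1}(\rho)$ is $\beta\circ I$ with $\beta$ the topological Bockstein, and from $h^{\bullet+1}(\rho)$ onward the sequence is the topological one.

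The single ingredient not already at our disposal is the behaviour of the differential Bockstein $\Bock\colon\hat{h}^{\bullet-1}(A)\to\hat{h}^{\bullet}(\rho)$ with respect to the curvature: I claim that $R(\Bock(\hat{\alpha}))=(0,\pm R(\hat{\alpha}))$, so that $\cov\circ\Bock=\pm R$, the map $\Bock$ carries flat classes to flat (hence parallel) classes and restricts there to the flat Bockstein, and $\operatorname{im}(\cov\circ\Bock)$ is the group $\Omega^{\bullet-1}_{\ch}(A)$ of closed forms on $A$ whose de Rham class lies in the image of $\ch$. I would prove this by unwinding the $S^{1}$-suspension construction of $\Bock$: Lemma \ref{LiftCurvS1} already records the curvature of the intermediate lift as $\pi_{1}^{*}R(\hat{\alpha})\wedge dt$, and integration over $S^{1}$ produces the $A$-component $\pm R(\hat{\alpha})$ while the $X$-component drops out. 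Getting this right, with the correct signs and conventions, is the step I expect to be the main obstacle.

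Granting it, exactness at the outer positions is formal. At $\hat{h}^{\bullet-1}_{\fl}(\rho)$ and $\hat{h}^{\bullet-1}_{\fl}(X)$ it is \eqref{LongExactFl}. At $\hat{h}^{\bullet-1}_{\fl}(A)$, the arrow into $\hat{h}^{\bullet}_{\ppar}(\rho)$ has kernel $\ker\Bock$, because $\hat{h}^{\bullet}_{\fl}(\rho)\hookrightarrow\hat{h}^{\bullet}_{\ppar}(\rho)$ is injective, and $\ker\Bock=\operatorname{im}(\rho^{*})$ by \eqref{LongExactFl}. At $h^{\bullet+1}(\rho)$, surjectivity of $I$ (axiom (A3) of Definition \ref{RelDiffExt}) gives $\operatorname{im}(\beta\circ I)=\operatorname{im}(\beta)=\ker\bigl(h^{\bullet+1}(\rho)\to h^{\bullet+1}(X)\bigr)$, and from there on the sequence coincides with the long exact sequence of the topological theory.

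It remains to treat the three positions $\hat{h}^{\bullet}_{\ppar}(\rho)$, $\hat{h}^{\bullet}(X)$ and $\hat{h}^{\bullet}(A)$, by diagram chases in the spirit of the proof of Theorem \ref{AxiomsParallel}. At $\hat{h}^{\bullet}_{\ppar}(\rho)$: the composite with $\pi^{*}$ vanishes because $\pi^{*}\circ\Bock=0$ in \eqref{LongExact2}; conversely, if $\hat{\beta}\in\hat{h}^{\bullet}_{\ppar}(\rho)$ satisfies $\pi^{*}\hat{\beta}=0$, then $\hat{\beta}=\Bock(x)$ for some $x\in\hat{h}^{\bullet-1}(A)$ by exactness of \eqref{LongExact2} at $\hat{h}^{\bullet}(\rho)$, and $\cov(\hat{\beta})=0$ forces $R(x)=0$, so $x$ is flat and $\hat{\beta}$ lies in the image of the incoming arrow. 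At $\hat{h}^{\bullet}(X)$: the composite $\rho^{*}\circ\pi^{*}$ restricted to parallel classes is $a\circ\cov$ restricted to parallel classes (axiom (A4)), hence $0$; conversely, if $\rho^{*}\hat{\gamma}=0$ then $\hat{\gamma}=\pi^{*}\hat{\delta}$ for some $\hat{\delta}\in\hat{h}^{\bullet}(\rho)$ by exactness of \eqref{LongExact2} at $\hat{h}^{\bullet}(X)$, axiom (A4) gives $a(\cov(\hat{\delta}))=\rho^{*}\pi^{*}\hat{\delta}=0$, so by \eqref{ExSeqDC} the (closed) form $\cov(\hat{\delta})$ has de Rham class in the image of $\ch$, i.e.\ $\cov(\hat{\delta})\in\Omega^{\bullet-1}_{\ch}(A)=\operatorname{im}(\cov\circ\Bock)$; subtracting a suitable $\Bock(x)$ makes $\hat{\delta}$ parallel without altering $\pi^{*}\hat{\delta}=\hat{\gamma}$. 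At $\hat{h}^{\bullet}(A)$: $\beta\circ I\circ\rho^{*}=\beta\circ\rho^{*}\circ I=0$; conversely, if $\beta(I(\hat{\alpha}))=0$, then $I(\hat{\alpha})=\rho^{*}\gamma$ by the topological long exact sequence, a lift $\hat{\gamma}\in\hat{h}^{\bullet}(X)$ of $\gamma$ exists by surjectivity of $I$, $\hat{\alpha}-\rho^{*}\hat{\gamma}=a(\eta)$ by \eqref{ExSeqDC}, and --- \emph{here the closed-embedding hypothesis is used decisively} --- $\eta$ extends to a form $\tilde{\eta}$ on $X$, whence $\hat{\alpha}=\rho^{*}\bigl(\hat{\gamma}+a(\tilde{\eta})\bigr)$. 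This extension step is exactly what is unavailable for a general $\rho$, which is why the sequence must then be truncated at $\hat{h}^{\bullet}(A)$.
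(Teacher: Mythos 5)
Your proposal is correct and follows essentially the same route as the paper: the curvature identity $R(\Bock(\hat{\alpha}))=(0,-R(\hat{\alpha}))$ that you single out as the main obstacle is precisely formula \eqref{CurvBockstein}, which the paper establishes by exactly the unwinding of the $S^{1}$-suspension construction you describe, and your diagram chases at the three middle positions (including using the closed-embedding hypothesis only to extend the form $\theta$ from $A$ to $X$ at the position $\hat{h}^{\bullet}(A)$) match the paper's. The only cosmetic difference is at $\hat{h}^{\bullet}_{\ppar}(\rho)$, where the paper observes directly that a parallel class killed by $\pi^{*}$ has vanishing curvature, hence is flat, and invokes \eqref{LongExactFl}, while you route through exactness of \eqref{LongExact2} and then deduce flatness of the Bockstein preimage; the two arguments are interchangeable.
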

The Bockstein map in the first line is the composition of the one of the flat theory with the embedding $\hat{h}^{\bullet}_{\fl}(\rho) \hookrightarrow \hat{h}^{\bullet}_{\ppar}(\rho)$. The Bockstein map in the second line is the composition of the projection $\hat{h}^{\bullet}(A) \to h^{\bullet}(A)$ with the Bockstein map of the sequence of $h^{\bullet}$. \vspace{5pt}

Historically, the first version of a long exact sequence for differential cohomology appeared in \cite[Theorem 3.2]{BT}, but it corresponds to the sequence \eqref{LongExact4} of the present paper, that we will discuss later on. The sequences \eqref{LongExact2} and \eqref{LongExact1} first appeared in \cite[Corollary 70 p.\ 67 and Theorem 15 p.\ 130]{BB} about Cheeger-Simons characters. Moreover, \eqref{LongExact1} is also introduced in \cite[Theorem 2.7 p.\ 8]{Upmeier} about the Hopkins-Singer model and both \eqref{LongExact2} and \eqref{LongExact1} are discussed in \cite{FR3} for Deligne cohomology and in \cite{FR2} for the Hopkins-Singer model. Here we deduce \eqref{LongExactFl}--\eqref{LongExact1} directly from the axioms of relative differential cohomology.

\SkipTocEntry \subsection{Construction of the Bockstein map}\label{ConstrBockSec}

We define the Bockstein map of \eqref{LongExact2} in six steps (S1)--(S6). We use the following notation:
\begin{equation}\label{BarA}
	\bar{a}(\omega, \eta) := (-1)^{\abs{\omega}} a(\omega, \eta) \qquad\qquad \bar{R}(\hat{\alpha}) := (-1)^{\abs{\hat{\alpha}}} R(\hat{\alpha}).
\end{equation}
\begin{enumerate}[{$\qquad$}({S}1)]
	\item Given $\hat{\alpha} \in \hat{h}^{\bullet-1}(A)$, thanks to lemma \ref{LiftCurvS1} there exists a unique class $\hat{\beta} \in \hat{h}^{\bullet}_{\ppar}(i_{1})$ such that $\int_{S^{1}} \pi^{*}\hat{\beta} = \hat{\alpha}$ and $R(\hat{\beta}) = \pi_{1}^{*}R(\hat{\alpha}) \wedge dt$, where $\pi_{1} \colon S^{1} \times A \to A$ is the natural projection and $i_{1} \colon A \hookrightarrow S^{1} \times A$ is the natural embedding, defined marking a point on $S^{1}$.
	\item Embedding $S^{1}$ in $\C$, we suppose that the marked point is $1$. We have a natural projection $\bar{p} \colon (I, \{0, 1\}) \to (S^{1}, \{1\})$, defined by $t \mapsto e^{2\pi i t}$, inducing the projection $p = \bar{p} \times \id_{A} \colon (I \times A, \{0,1\} \times A) \to (S^{1} \times A, \{1\} \times A)$, that can be thought of as a morphism $p \colon i_{0,1} \to i_{1}$ between the embeddings $i_{0,1} \colon A \sqcup A \hookrightarrow I \times A$ and $i_{1} \colon A \hookrightarrow S^{1} \times A$. We get the class $p^{*}\hat{\beta} \in \hat{h}^{\bullet}_{\ppar}(i_{0,1})$.
	\item We define the following class, using \eqref{BarA}:
		\[\hat{\gamma} = p^{*}\hat{\beta} - \bar{a}(t \cdot p^{*}\pi_{1}^{*}R(\hat{\alpha}), 0) \in \hat{h}^{\bullet}(i_{0,1}).
	\]
In the previous expression, within $\bar{a}(\,\cdot\,)$, we denoted by $t$ the coordinate of $I$ and by $p$ the projection $p \colon I \times A \to S^{1} \times A$, defined in the previous step, without the two subspaces.

Since $R(\hat{\beta}) = (\pi_{1}^{*}R(\hat{\alpha}) \wedge dt, 0)$ and $R \circ \bar{a}(t \cdot p^{*}\pi_{1}^{*}R(\hat{\alpha}), 0) = (p^{*}\pi_{1}^{*}R(\hat{\alpha}) \wedge dt, 0 \sqcup \bar{R}(\hat{\alpha}))$, it follows that
	\begin{equation}\label{CurvGammaHat}
		R(\hat{\gamma}) = (0, 0 \sqcup -\bar{R}(\hat{\alpha})).
	\end{equation}
	For $\epsilon = 0, 1$, we call $j_{\epsilon} \colon A \to A \times I$ the embedding with image $A \times \{\epsilon\}$. Moreover, we call $\pi$ the natural morphism from $\emptyset \to I \times A$ to $i_{0,1}$. It follows from axiom (A4) and formula \eqref{CurvGammaHat} that $j_{0}^{*}\pi^{*}\hat{\gamma} = 0$ and $j_{1}^{*}\pi^{*}\hat{\gamma} = -a(\bar{R}(\hat{\alpha}))$.
	\item Let us consider the following morphism:
	\begin{equation}\label{DiagS4}
		\xymatrix{
		A \sqcup A \ar[rr]^{i_{0, 1}} \ar[d]_{\id} & & A \times I \ar[d]^{\pi} \\
		A \sqcup A \ar[rr]^{\id'} & & A
	}\end{equation}
where $\id'$ acts as the identity on both components of the domain. Let us call $\hat{h}^{\bullet}_{0}(\,\cdot\,)$ the sub-group of $\hat{h}^{\bullet}(\,\cdot\,)$ formed by classes such that the first component of the curvature is vanishing. For any map $\rho \colon B \to Y$, we denote by $\Omega^{\bullet}_{\ch'}(\rho)$ the group of closed forms $\eta \in \Omega^{\bullet}(B)$ such that the class $[(0, \eta)] \in H^{\bullet+1}_{\dR}(\rho)$ belongs to the image of the Chern character. From diagram \eqref{DiagS4} we get the following morphism of exact sequences:
	\begin{equation}\label{DiagS42}
	\xymatrix{
	0 \ar[r] & \hat{h}^{\bullet}_{\fl}(i_{0, 1}) \ar[r] & \hat{h}^{\bullet}_{0}(i_{0, 1}) \ar[r]^(.45){\cov} & \Omega^{\bullet-1}_{\ch'}(i_{0, 1}) \ar[r] & 0 \\
	0 \ar[r] & \hat{h}^{\bullet}_{\fl}(\id') \ar[r] \ar[u]_{(\pi, \id)^{*}} & \hat{h}^{\bullet}_{0}(\id') \ar[r]^(.45){\cov} \ar[u]_{(\pi, \id)^{*}} & \Omega^{\bullet-1}_{\ch'}(\id') \ar[r] \ar@{=}[u] & 0.
	}\end{equation} 
We start proving that the left arrow is an isomorphism. Note that the vertical maps of diagram \eqref{DiagS4} are homotopy equivalences, so they induce isomorphisms in the topological theory $h^{\bullet}$. Thus, using the long exact sequences associated to the horizontal maps and applying the five lemma, we see that $(\pi, \id)^{*} \colon h^{\bullet}(\id') \to h^{\bullet}(i_{0, 1})$ is an isomorphism too. The same holds about the de-Rham theory, hence, applying again the five lemma to the exact sequence \eqref{ExSeqFlat}, we conclude that $(\pi, \id)^{*} \colon \hat{h}^{\bullet}_{\fl}(\id') \to \hat{h}^{\bullet}_{\fl}(i_{0, 1})$ is an isomorphism. The right arrow of diagram \eqref{DiagS42} is an isomorphism too, since it is an equality, therefore, applying again the five lemma, we deduce that also the central arrow is an isomorphism. Thus, we get a unique class $\hat{\delta} \in \hat{h}^{\bullet}_{0}(\id')$ whose pull-back is $\hat{\gamma}$. By construction $R(\hat{\delta}) = (0, 0 \sqcup -\bar{R}(\hat{\alpha}))$, thus, if we pull $\hat{\delta}$ back to $A \sqcup A$, by axiom (A4) it vanishes on the first component of $A \sqcup A$.
	\item Let us consider the following morphism:
	\begin{equation}\label{DiagS5}
	\xymatrix{
		A \sqcup A \ar[rr]^{\id'} \ar[d]_{\rho''} & & A \ar[d]^{\rho} \\
		X \sqcup A \ar[rr]^{\rho'} & & X. \\
	}\end{equation}
The morphism $\rho'$ acts as the identity on the first component of $X \sqcup A$ and as $\rho$ on the second. The morphism $\rho''$ is defined by $\rho''(a_{1} \sqcup a_{2}) := \rho(a_{1}) \sqcup a_{2}$. Let us call $\hat{h}^{\bullet}_{1}(\id')$ the sub-group of $\hat{h}^{\bullet}_{0}(\id')$ formed by classes whose curvature is of the form $[(0, 0 \sqcup \eta)]$. Similarly, let us call $\hat{h}^{\bullet}_{1}(\rho')$ the sub-group of $\hat{h}^{\bullet}_{0}(\rho')$ formed by classes whose curvature is of the form $[(0, 0 \sqcup \eta)]$. In both cases, we call $\Omega^{\bullet}_{\ch'}(A)$ the group of closed forms $\eta$ on $A$ such that the class $[(0, 0 \sqcup \eta)]$ belongs to the image of the Chern character. We get the following morphism of exact sequences:
	\begin{equation}\label{DiagS52}
	\xymatrix{
	0 \ar[r] & \hat{h}^{\bullet}_{\fl}(\id') \ar[r] & \hat{h}^{\bullet}_{1}(\id') \ar[r]^(.47){R} & \Omega^{\bullet-1}_{\ch'}(A) \ar[r] & 0 \\
	0 \ar[r] & \hat{h}^{\bullet}_{\fl}(\rho') \ar[r] \ar[u]_{(\rho, \rho'')^{*}} & \hat{h}^{\bullet}_{1}(\rho') \ar[r]^(.45){R} \ar[u]_{(\rho, \rho'')^{*}} & \Omega^{\bullet-1}_{\ch'}(A) \ar[r] \ar@{=}[u] & 0.
	}\end{equation}
The left arrow is an isomorphism. In fact, let us consider the mapping cones $C(\id')$ and $C(\rho'')$. The embeddings $CA \hookrightarrow C(\id')$ ($A$ being the first component of $A \sqcup A$) and $CX \hookrightarrow C(\rho')$ are cofibrations, because their images are a deformation retract of a neighbourhood. Thus, collapsing $CA$ and $CX$ to a point, we see that both $C(\id')$ and $C(\rho')$ are homotopically equivalent to the suspension $\Sigma(A_{+})$, i.e., to the double cone of $A$ with the two vertices identified. We get the following commutative diagram:
	\begin{equation}\label{DiagS53}
	\xymatrix{
	C(\id') \ar[r]^(.42){\simeq} \ar[d]_{(\rho, \rho'')} & C(\id')/CA\ar[d]^{\approx}\\
	C(\rho') \ar[r]^(.42){\simeq} & C(\rho')/CX
}\end{equation}
where `$\simeq$' denotes a homotopy equivalence and `$\approx$' an homeomorphism. This implies that $(\rho, \rho'')^{*} \colon \tilde{h}^{\bullet}(C(\rho')) \to \tilde{h}^{\bullet}(C(\id'))$ is an isomorphism, being the composition of three isomorphisms, therefore $(\rho, \rho'')^{*} \colon h^{\bullet}(\rho') \to h^{\bullet}(\id')$ is an isomorphism too. The same holds for the de-Rham cohomology, thus, applying the five lemma to the exact sequence \eqref{ExSeqFlat}, we see that the left arrow of diagram \eqref{DiagS52} is an isomorphism. The right arrow of \eqref{DiagS52} is an isomorphism too, since it is an equality, hence, again because of the five lemma, the central arrow of diagram \eqref{DiagS52} is an isomorphism as well. Therefore, we get a unique class $\hat{\varepsilon} \in \hat{h}^{\bullet}_{1}(\rho')$, whose pull-back is $\hat{\delta}$. By construction $R(\hat{\varepsilon}) = (0, 0 \sqcup -\bar{R}(\hat{\alpha}))$.
	\item Finally, let us consider the following morphism:
		\[\xymatrix{
		A \ar[rr]^{\rho} \ar[d]_{\id} & & X \ar[d]^{\id} \\
		X \sqcup A \ar[rr]^{\rho'} & & X. \\
	}\]
	The pull-back of $\hat{\varepsilon}$ is a class $\hat{\mu} \in \hat{h}^{\bullet}(\rho)$ and we set $\Bock^{\bullet-1}(\hat{\alpha}) := (-1)^{\abs{\hat{\alpha}}}\hat{\mu}$. It follows that $R(\hat{\mu}) = (0, -R(\hat{\alpha}))$.
\end{enumerate}
This completes the construction of the sequence \eqref{LongExact2}. By construction, we have that:
\begin{equation}\label{CurvBockstein}
	R \circ \Bock(\hat{\alpha}) = (0, -R(\hat{\alpha})).
\end{equation}
The next lemma shows the behaviour of the Bockstein map on topologically trivial classes.
\begin{Lemma} The following formula holds:
\begin{equation}\label{ABockstein}
	\Bock \circ a(\eta) = a(0, \eta).
\end{equation}
\end{Lemma}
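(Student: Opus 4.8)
The plan is to trace the element $a(\eta)\in\hat h^{\bullet-1}(A)$, for $\eta\in\Omega^{\bullet-2}(A;\h^{\bullet}_{\R})/\IIm(d)$, through the six steps (S1)--(S6) of the construction of $\Bock$, using at each stage an \emph{explicit} lift rather than the abstract existence/uniqueness statements. Since the topological part $I(a(\eta))=0$ vanishes, every class that appears will be topologically trivial, so each of the five-lemma arguments that produced a unique lift in (S1)--(S5) can be replaced by the observation that a lift of the form $a(\,\cdot\,)$ already exists and is therefore \emph{the} lift. Concretely: in (S1) the curvature of $a(\eta)$ is $R(a(\eta))=d\eta$, and one checks directly that $\hat\beta:=a(\pi_1^{*}\eta\wedge dt,0)\in\hat h^{\bullet}_{\ppar}(i_1)$ satisfies $\int_{S^1}\pi^{*}\hat\beta=a(\eta)$ (by the compatibility of $a$ with $\int_{S^1}$ in diagram \eqref{CommS1}, since $\int_{S^1}(\pi_1^{*}\eta\wedge dt)=\eta$) and $R(\hat\beta)=(d(\pi_1^{*}\eta\wedge dt),0)=(\pi_1^{*}(d\eta)\wedge dt,0)=\pi_1^{*}R(a(\eta))\wedge dt$; by the uniqueness in Lemma \ref{LiftCurvS1} this \emph{is} the class $\hat\beta$ produced in (S1).

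Next I would push $\hat\beta$ forward through steps (S2)--(S6). In (S2), $p^{*}\hat\beta=a(p^{*}(\pi_1^{*}\eta\wedge dt),0)=a(p^{*}\pi_1^{*}\eta\wedge d(e^{2\pi it}),0)$, and since $p^{*}\pi_1^{*}R(a(\eta))=p^{*}\pi_1^{*}(d\eta)$, the correction term $\bar a(t\cdot p^{*}\pi_1^{*}R(a(\eta)),0)$ in (S3) combines with $p^{*}\hat\beta$; a short computation — using $d(e^{2\pi it})=2\pi i\,e^{2\pi it}dt$ and $\bar a=\pm a$, together with $a$(exact form)$=0$ — shows that $\hat\gamma$ is itself of the form $a(0,0\sqcup\eta')$ for the appropriate $\eta'$ (this is consistent with \eqref{CurvGammaHat}, which gives $R(\hat\gamma)=(0,0\sqcup -\bar R(a(\eta)))=(0,0\sqcup(\pm d\eta))$, i.e.\ $\hat\gamma$ has vanishing first curvature component and is thus in $\hat h^{\bullet}_0(i_{0,1})$). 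For (S4) and (S5), the pull-back maps $(\pi,\id)^{*}$ and $(\rho,\rho'')^{*}$ are injective on the relevant subgroups (they are isomorphisms), and since $\hat\gamma$ and $\hat\delta$ are in the image of $a$ applied to a form pulled back from $A$ (respectively to a form of the shape $(0,0\sqcup\eta')$ on $\id'$ and then on $\rho'$), the unique lifts $\hat\delta$ and $\hat\varepsilon$ are again of the form $a(0,0\sqcup\eta')$, because $a$ is natural and these pull-back maps send such forms to such forms. Finally in (S6), pulling back along the displayed morphism $A\to X\sqcup A$ over $X$ sends $\hat\varepsilon=a(0,0\sqcup\eta')$ to $a(0,\eta')\in\hat h^{\bullet}(\rho)$, and one reads off from the sign bookkeeping ($\bar a$, $\bar R$, and the overall $(-1)^{\abs{\hat\alpha}}$ in the definition $\Bock(\hat\alpha)=(-1)^{\abs{\hat\alpha}}\hat\mu$) that $\eta'$ is exactly $\eta$, so $\Bock(a(\eta))=a(0,\eta)$.

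An alternative, cleaner route — which I would present if the sign-chasing above becomes unwieldy — is to use naturality of $\Bock$ together with the fact that $a$ factors through the de-Rham data: the class $a(\eta)$ is in the image of $a\colon\Omega^{\bullet-2}(A;\h^{\bullet}_{\R})/\IIm(d)\to\hat h^{\bullet-1}(A)$, and since all of the maps in steps (S1)--(S6) are built from pull-backs, $\int_{S^1}$, and $a$ applied to forms, the whole composite restricted to the image of $a$ is computed \emph{entirely within differential forms}. That is, $\Bock\circ a$ is induced by a chain-level map $\Omega^{\bullet-2}(A)\to\Omega^{\bullet-1}(\rho)$, which one identifies by running the construction on forms alone; the outcome is the inclusion $\eta\mapsto(0,\eta)$ of $\Omega^{\bullet-2}(A)$ into $\Omega^{\bullet-1}(\rho)=\Omega^{\bullet-1}(X)\oplus\Omega^{\bullet-2}(A)$, which is precisely formula \eqref{ABockstein} after applying $a$.

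The main obstacle I anticipate is purely the sign bookkeeping: keeping track of the $\bar a=(-1)^{\abs{\cdot}}a$ and $\bar R=(-1)^{\abs{\cdot}}R$ twists introduced in \eqref{BarA}, the factor $2\pi i$ from $d(e^{2\pi it})$, the Koszul sign in the wedge with $dt$, and the final $(-1)^{\abs{\hat\alpha}}$ normalization, and checking that they all cancel to leave the clean statement $\Bock\circ a(\eta)=a(0,\eta)$ with no extra sign. Everything else is a routine substitution once the explicit lift $\hat\beta=a(\pi_1^{*}\eta\wedge dt,0)$ is in place and Lemma \ref{LiftCurvS1}'s uniqueness is invoked.
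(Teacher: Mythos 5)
Your proposal follows essentially the same route as the paper's proof: set $\hat{\alpha}=a(\eta)$, observe that the explicit lift $\hat{\beta}=a(\pi_{1}^{*}\eta\wedge dt,0)$ satisfies the two conditions of Lemma \ref{LiftCurvS1} and is therefore \emph{the} class produced in (S1), then push the resulting topologically trivial classes through (S2)--(S6) using naturality of $a$. Two small points. First, the ``short computation'' you defer in (S3) is carried by one specific identity that you should make explicit: since $a$ kills exact \emph{relative} forms and $d(t\,p_{1}^{*}\eta,0)=(d(t\,p_{1}^{*}\eta),\,0\sqcup\eta)$ for the embedding $i_{0,1}$, one gets $a(d(t\,p_{1}^{*}\eta),0)=-a(0,0\sqcup\eta)$; combined with $p_{1}^{*}\eta\wedge dt+(-1)^{\abs{\eta}}t\,p_{1}^{*}d\eta=(-1)^{\abs{\eta}}d(t\,p_{1}^{*}\eta)$ this is exactly what turns $\hat{\gamma}$ into $\bar{a}(0,0\sqcup\eta)$, after which (S4)--(S6) are immediate as you say. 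Second, the factor $2\pi i$ from ``$d(e^{2\pi it})$'' is spurious: the form denoted $dt$ on $S^{1}$ is the normalized angular form, whose pull-back under $\bar{p}(t)=e^{2\pi it}$ is literally the coordinate form $dt$ on $I$, so no such constant enters the sign bookkeeping.
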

\begin{proof} Let us set $\hat{\alpha} = a(\eta)$ in step (S1). It follows that $\hat{\beta} = a(\pi_{1}^{*}\eta \wedge dt)$. In step (S3), setting $p_{1} := \pi_{1} \circ p$, we get:
\begin{align*}
	\hat{\gamma} &= a(p_{1}^{*}\eta \wedge dt, 0) - \bar{a}(t \cdot p_{1}^{*}d\eta, 0) = (-1)^{\abs{\eta}} a(dt \wedge p_{1}^{*}\eta + t \cdot p_{1}^{*}d\eta, 0) \\
	& = (-1)^{\abs{\eta}} a(d(t \cdot p_{1}^{*}\eta), 0) \overset{(\star)}= (-1)^{\abs{\eta}+1} a(0, 0 \sqcup \eta) = \bar{a}(0, 0 \sqcup \eta),
\end{align*}
the equality $(\star)$ being due to the fact that $0 = a \circ d(t \cdot p_{1}^{*}\eta, 0) = a(d(t \cdot p_{1}^{*}\eta), 0 \sqcup \eta) = a(d(t \cdot p_{1}^{*}\eta), 0) + a(0, 0 \sqcup \eta)$. Then, in step (S4), $\hat{\delta} = \bar{a}(0, 0 \sqcup \eta)$ and, in step (S5), $\hat{\varepsilon} = \bar{a}(0, 0 \sqcup \eta)$. Finally, in step (S6), we get $\hat{\mu} = (-1)^{\abs{\eta} + 1}\bar{a}(0, \eta) = a(0, \eta)$.
\end{proof}

\begin{Rmk} Formulas \eqref{CurvBockstein} and \eqref{ABockstein} are coherent with the functoriality of the exact sequence with respect to \eqref{ShortSeqForms}. In fact, the following diagram commutes:
	\[\xymatrix{
	\Omega^{\bullet-1}(A) \ar[r]^(.53){i} \ar[d]_(.45){a} & \Omega^{\bullet}(\rho) \ar[r]^(.48){\pi} \ar[d]^(.45){a} & \Omega^{\bullet}(X) \ar[d]^(.45){a} \\
	\hat{h}^{\bullet}(A) \ar[r]^(.45){\Bock} \ar[d]_(.46){-R} & \hat{h}^{\bullet+1}(\rho) \ar[r]^(.49){\pi^{*}} \ar[d]^(.46){R} & \hat{h}^{\bullet+1}(X) \ar[d]^(.46){R} \\
	\Omega^{\bullet}(A) \ar[r]^(.47){i} & \Omega^{\bullet+1}(\rho) \ar[r]^{\pi} & \Omega^{\bullet+1}(X).
}\]
Moreover, let us suppose that, in formula \eqref{ABockstein}, $d\eta = 0$. Then $a(\eta)$ only depends on the de-Rham cohomology class $[\eta]$, hence we can write $a[\eta]$. The Bockstein map in the (topological) exact sequence of de-Rham cohomology is defined by $\Bock_{\dR}[\eta] = [0, \eta]$, coherently with \eqref{ShortSeqForms}, thus formula \eqref{ABockstein} becomes $\Bock \circ a[\eta] = a \circ \Bock_{\dR}[\eta]$, coherently with the functoriality of the Bockstein map.
\end{Rmk}

If we consider the composition $\hat{h}^{\bullet-1}(X) \to \hat{h}^{\bullet-1}(A) \to \hat{h}^{\bullet}(\rho)$, in general it does not vanish. In fact, in \eqref{LongExact2} only the flat group $\hat{h}^{\bullet-1}_{\fl}(X)$ appears in this segment of the sequence. The next lemma shows the behaviour of the composition.
\begin{Lemma} The following formula holds:
\begin{equation}\label{RhoBockstein}
	\Bock \circ \rho^{*}(\hat{\beta}) = -a(R(\hat{\beta}), 0).
\end{equation}
\end{Lemma}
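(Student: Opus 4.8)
The plan is to reduce the identity to the universal case $\rho=\id_X$, where the Bockstein map is determined by its curvature alone.

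First I would check that the construction (S1)--(S6) is functorial: a morphism $(f,g)\colon\rho_1\to\rho_2$ of $\M_2$ induces $(f,g)^*\circ\Bock_{\rho_2}=\Bock_{\rho_1}\circ g^*$. Steps (S2), (S3) and (S6) involve only pull-backs, applications of $a$ and $R$, and multiplication by the $I$-coordinate $t$ (which is preserved by every map occurring), hence commute with pull-back termwise. Step (S1) produces the \emph{unique} class of Lemma~\ref{LiftCurvS1} with two prescribed properties, and the pull-back of such a class still satisfies them by naturality of $\int_{S^1}$, $\pi^*$ and $R$; similarly, the lifts $\hat\delta$ and $\hat\varepsilon$ of steps (S4)--(S5) are unique because the central vertical arrows of diagrams~\eqref{DiagS42} and~\eqref{DiagS52} are isomorphisms, so they too commute with pull-back. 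Applying this to the morphism $(\id_X,\rho)\colon\rho\to\id_X$, whose components are $\id_X\colon X\to X$ and $\rho\colon A\to X$, gives
\[
\Bock(\rho^*\hat\beta)=(\id_X,\rho)^*\,\Bock_{\id_X}(\hat\beta),
\]
$\Bock_{\id_X}\colon\hat h^{\bullet-1}(X)\to\hat h^{\bullet}(\id_X)$ being the Bockstein of the sequence~\eqref{LongExact2} attached to the object $\id_X$.

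Next I would identify $\Bock_{\id_X}(\hat\beta)$. The cone $C(\id_X)=X\sqcup_X CX$ is contractible, so $h^{\bullet}(\id_X)=\tilde h^{\bullet}(C(\id_X))=0$ in every degree; by the exact sequence~\eqref{ExSeqDC} the map $a\colon\Omega^{\bullet-1}(\id_X;\h^{\bullet}_{\R})/\IIm(d)\to\hat h^{\bullet}(\id_X)$ is therefore an isomorphism. Since $R\circ a=d$ and the complex $\Omega^{\bullet}(\id_X;\h^{\bullet}_{\R})$, with coboundary $d(\omega,\tau)=(d\omega,\omega-d\tau)$, is acyclic (a closed element $(\omega,\tau)$ has $\omega=d\tau$, hence $(\omega,\tau)=d(\tau,0)$), the curvature is injective on $\hat h^{\bullet}(\id_X)$; in particular $\hat h^{\bullet}_{\fl}(\id_X)=0$. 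Consequently $\Bock_{\id_X}(\hat\beta)$ is pinned down by its curvature, which by~\eqref{CurvBockstein} equals $(0,-R(\hat\beta))$. As $dR(\hat\beta)=0$ we have $R\bigl(-a(R(\hat\beta),0)\bigr)=-d(R(\hat\beta),0)=-(0,R(\hat\beta))=(0,-R(\hat\beta))$, and therefore $\Bock_{\id_X}(\hat\beta)=-a(R(\hat\beta),0)$.

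Finally, the morphism $(\id_X,\rho)\colon\rho\to\id_X$ acts on relative forms by $(\omega,\tau)\mapsto(\omega,\rho^*\tau)$, so
\[
\Bock(\rho^*\hat\beta)=(\id_X,\rho)^*\bigl(-a(R(\hat\beta),0)\bigr)=-a\bigl(R(\hat\beta),\rho^*0\bigr)=-a(R(\hat\beta),0)
\]
in $\hat h^{\bullet}(\rho)$, which is~\eqref{RhoBockstein}. I expect the only real work to be the naturality claim of the second paragraph: one must trace steps (S1)--(S6) and check that each auxiliary class built from $\rho^*\hat\beta$ is the pull-back of the corresponding class built from $\hat\beta$ in the construction for $\id_X$, the decisive inputs being the uniqueness clauses in (S1), (S4), (S5) and the fact that pull-backs preserve the defining properties of those lifts.
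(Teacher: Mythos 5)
Your proposal is correct and follows essentially the same route as the paper: reduce to the universal object $\id_{X}$ via naturality of the Bockstein under the morphism $(\id_{X},\rho)\colon\rho\to\id_{X}$, use $h^{\bullet}(\id_{X})=0$ together with the exact sequence \eqref{ExSeqDC} to see that every class in $\hat{h}^{\bullet}(\id_{X})$ is of the form $a(\omega,0)$, and then pin down $\Bock_{\id_{X}}(\hat{\beta})$ by comparing curvatures via \eqref{CurvBockstein}. Your extra care in verifying that the steps (S1)--(S6) commute with pull-back is a legitimate filling-in of a naturality claim the paper uses without detailed proof, and your injectivity-of-$R$ phrasing is just a repackaging of the paper's component-wise comparison.
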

\begin{proof} Let us consider the following morphism $\rho' := (\id_{X}, \rho) \colon \rho \to \id_{X}$:
	\[\xymatrix{
	A \ar[rr]^{\rho} \ar[d]_{\rho} & & X \ar[d]^{\id_{X}} \\
	X \ar[rr]^{\id_{X}} & & X.
}\]
We get the following diagram:
	\[\xymatrix{
	\hat{h}^{\bullet}(X) \ar[r]^{\rho^{*}} & \hat{h}^{\bullet}(A) \ar[r]^(.45){\Bock} & \hat{h}^{\bullet+1}(\rho) \\
	\hat{h}^{\bullet}(X) \ar[r]^{\id} \ar[u]^{\id} & \hat{h}^{\bullet}(X) \ar[r]^(.42){\Bock'} \ar[u]_{\rho^{*}} & \hat{h}^{\bullet+1}(\id_{X}) \ar[u]_{{\rho'}^{*}}.
}\]
It follows that $\Bock \circ \rho^{*}(\hat{\beta}) = {\rho'}^{*} \circ \Bock'(\hat{\beta})$. Since $h^{\bullet}(\id_{X}) = 0$, because of the sequence \eqref{ExSeqDC} we have that $\hat{h}^{\bullet}(\id_{X}) \simeq \Omega^{\bullet-1}(\rho; \h^{\bullet}_{\R})/\IIm(d)$, hence every element of $\hat{h}^{\bullet}(\id_{X})$ is of the form $a(\omega, \eta)$. Moreover, $(\omega, \eta) - d(\eta, 0) = (\omega, \eta) - (d\eta, \eta) = (\omega - d\eta, 0)$, thus every element of $\hat{h}^{\bullet}(\id_{X})$ is of the form $a(\omega, 0)$, therefore:
\begin{align*}
	& R \circ \Bock'(\hat{\beta}) \,=\, R \circ a(\omega, 0) = (d\omega, \omega) \\
	& R \circ \Bock'(\hat{\beta}) \overset{\eqref{CurvBockstein}}= (0, -R(\hat{\beta})).
\end{align*}
Comparing the second components we get $\omega = -R(\hat{\beta})$, hence $\Bock'(\hat{\beta}) = a(-R(\hat{\beta}), 0)$. It follows that $\Bock \circ \rho^{*}(\hat{\beta}) = {\rho'}^{*} \circ \Bock'(\hat{\beta}) = a(-R(\hat{\beta}), 0)$.
\end{proof}

\begin{Rmk} Let us suppose that, in formula \eqref{RhoBockstein}, $\hat{\beta} = a(\theta)$. Then we get $\Bock \circ a(\rho^{*}\theta) = -a(d\theta, 0)$. Because of formula \eqref{ABockstein} we have $\Bock \circ a(\rho^{*}\theta) = a(0, \rho^{*}\theta)$. The two results are coherent. In fact, $(d\theta, 0) + (0, \rho^{*}\theta) = (d\theta, \rho^{*}\theta) = d(\theta, 0)$, hence, since $a$ vanishes on exact forms, we have that $a(d\theta, 0) + a(0, \rho^{*}\theta) = 0$.
\end{Rmk}

The Bockstein map of \eqref{LongExact2} has been defined. The one of \eqref{LongExactFl} coincides with the one of \eqref{LongExact2}, applied to flat classes; it follows from formula \eqref{CurvBockstein} that the image of a flat class is flat. Finally, in the comments after the sequence \eqref{LongExact1}, we have already shown how to define the corresponding Bockstein maps. It remains to prove the exactness of each sequence.

\SkipTocEntry \subsection{Exactness}

We start from \eqref{LongExactFl}.

\emph{Exactness in $\hat{h}^{n}_{\fl}(X)$.} The fact that $\rho^{*} \circ \pi^{*} = 0$ is an easy consequence of axiom (A4) in definition \ref{RelDiffExt}. Let us consider a class $\hat{\alpha} \in \Ker(\rho^{*})$. We set $\alpha = I(\hat{\alpha})$. Since $\rho^{*}\alpha = 0$, because of the exactness of the topological sequence, there exists a class $\beta \in h^{\bullet}(\rho)$ such that $\pi^{*}\beta = \alpha$. Let $\hat{\beta}' \in \hat{h}^{\bullet}(\rho)$ be any differential class such that $I(\hat{\beta}') = \beta$. It follows that $\pi^{*}\hat{\beta}' = \hat{\alpha} + a(\theta)$, being $\theta \in \Omega^{\bullet-1}(X; \h^{\bullet}_{\R})$, thus $R(\pi^{*}\hat{\beta}') = d\theta$, therefore there exists a \emph{closed} form $\eta \in \Omega^{\bullet-1}(A; \h^{\bullet}_{\R})$ such that $R(\hat{\beta}') = (d\theta, \rho^{*}\theta + \eta)$. Let us prove that the de-Rham class $[\eta]$ belongs to the image of the Chern character. In fact, we have that $\rho^{*}\pi^{*}\hat{\beta}' = \rho^{*}\hat{\alpha} + \rho^{*}a(\theta) = a(\rho^{*}\theta)$ and, by axiom (A4), $\rho^{*}\pi^{*}\hat{\beta}' = a(\cov(\hat{\beta}')) = a(\rho^{*}\theta + \eta) = a(\rho^{*}\theta) + a(\eta)$. It follows that $a(\eta) = 0$, hence $\eta \in \Omega^{\bullet-1}_{\ch}(X; \h^{\bullet}_{\R})$. This implies that there exists $\hat{\gamma} \in \hat{h}^{n-1}(A)$ such that $R(\hat{\gamma}) = \eta$, thus we set $\hat{\beta} := \hat{\beta}' - a(\theta, 0) + \Bock(\hat{\gamma})$. Because of the following remark \ref{BockPi0Both}, we have that $\pi^{*} \circ \Bock = 0$, therefore $\pi^{*}\hat{\beta} = (\hat{\alpha} + a(\theta)) - a(\theta) - 0 = \hat{\alpha}$ and $R(\hat{\beta}) = (d\theta, \rho^{*}\theta + \eta) - (d\theta, \rho^{*}\theta) - (0, \eta) = (0, 0)$.

\emph{Exactness in $\hat{h}^{n}_{\fl}(A)$.} If $\hat{\beta} \in \hat{h}^{\bullet}_{\fl}(X)$, by formula \eqref{RhoBockstein} we have that $\Bock \circ \rho^{*}(\hat{\beta}) = 0$, thus $\Bock \circ \rho^{*} = 0$. Let us consider a flat class $\hat{\alpha} \in \Ker(\Bock)$. Setting $\alpha := I(\hat{\alpha})$, we have that $\Bock(\alpha) = 0$, thus there exists $\beta \in h^{\bullet}(X)$ such that $\alpha = \rho^{*}\beta$. If $\hat{\beta}' \in \hat{h}^{\bullet}(X)$ is any differential refinement of $\beta$, there exists $\theta \in \Omega^{\bullet-1}(X; \h^{\bullet}_{\R})$ such that $\rho^{*}\hat{\beta}' = \hat{\alpha} + a(\theta)$. Applying formula \eqref{ABockstein} we get $\Bock \circ \rho^{*}(\hat{\beta}') = \Bock \circ a(\theta) = a(0, \theta)$ and applying formula \eqref{RhoBockstein} we get $\Bock \circ \rho^{*}(\hat{\beta}') = -a(R(\hat{\beta}', 0))$, thus $a(R(\hat{\beta}'), \theta) = 0$. It follows that $(R(\hat{\beta}'), \theta)$ represents a class belonging to the image of the Chern character, hence there exists a class $\hat{\gamma} \in \hat{h}^{\bullet}(\rho)$ such that $R(\hat{\gamma}) = (R(\hat{\beta}'), \theta)$. We set $\hat{\beta} := \hat{\beta}' - \pi^{*}\hat{\gamma}$. We get that $R(\hat{\beta}) = R(\hat{\beta}') - R(\hat{\beta}') = 0$ and $\rho^{*}\hat{\beta} = \hat{\alpha} + a(\theta) - a(\theta) = \hat{\alpha}$.

\emph{Exactness in $\hat{h}^{n}_{\fl}(\rho)$.} It follows from the construction of the Bockstein map that $\pi^{*} \circ \Bock = 0$. In fact, if $\hat{\mu} = \Bock(\hat{\alpha})$, by the step (S6) we have that $\hat{\mu} = (\id, \id)^{*}\hat{\varepsilon}$. The pull-back $\pi^{*}\hat{\mu}$ coincides with the pull-back of $\hat{\varepsilon}$ via the following composition:
\[\resizebox{!}{5pt}{
\xymatrix{
		\emptyset \ar[rr] \ar[d] & & X \ar[d]^{\id} \\
		\emptyset \ar[rr] \ar[d] & & X \sqcup A \ar[d]^{\id \sqcup \rho} \\
		A \ar[rr]^{\rho} \ar[d]_{\id} & & X \ar[d]^{\id} \\
		X \sqcup A \ar[rr] \ar[rr]^{\id \sqcup \rho} & & X.
}}\]
The last map provides the pull-back from $\hat{\varepsilon}$ to $\hat{\mu}$ and the composition of the first two coincides with $\pi$. The composition of the last two morphism coincides with the following:
	\[\xymatrix{
		\emptyset \ar[rr] \ar[d] & & X \sqcup A \ar[d]^{\id \sqcup \rho} \\
		X \sqcup A \ar[rr] \ar[rr]^{\id \sqcup \rho} & & X.
}\]
By axiom (A4), the pull-back of $\hat{\varepsilon}$ is equal to $a(\cov(\hat{\varepsilon}))$. Since we start from a flat class $\hat{\alpha}$, we have that $\cov(\hat{\varepsilon}) = 0$, thus the pull-back vanishes.
\begin{Rmk}\label{BockPi0Both} Even if $\hat{\alpha}$ is not flat, since $\hat{\varepsilon} \in \hat{h}^{\bullet}_{1}(\id \sqcup \rho)$ by construction, it follows that $\cov(\hat{\varepsilon})$ vanishes on $X$, thus the pull-back to $\emptyset \to X$ is $0$. This proves that $\pi^{*} \circ \Bock = 0$ in \eqref{LongExactFl} and in \eqref{LongExact2}.
\end{Rmk}
Let us consider a flat class $\hat{\mu} \in \Ker(\pi^{*})$. It is enough to prove that there exists a flat class $\hat{\varepsilon} \in \hat{h}^{\bullet}_{\fl}(\id \sqcup \rho)$ such that $(\id, \id)^{*}\hat{\varepsilon} = \hat{\mu}$. In fact, dealing with flat classes, all of the steps (S1)-(S5) consist in the application of an isomorphism, thus, starting from $\hat{\varepsilon}$, we get a class $\hat{\alpha} \in \hat{h}^{\bullet-1}_{\fl}(A)$ such that $\Bock(\hat{\alpha}) = \hat{\mu}$. Applying the steps analogous to (S1)-(S5) to the topological class $\alpha$, we get the Bockstein map of the topological exact sequence, hence, in particular, we get a class $\varepsilon \in h^{\bullet}(\id \sqcup \rho)$ such that $(\id, \id)^{*}\varepsilon = \mu := I(\hat{\mu})$. Let $\hat{\varepsilon}'' \in \hat{h}^{\bullet}(\id \sqcup \rho)$ be any differential class such that $I(\hat{\varepsilon}'') = \varepsilon$. It follows that there exists a relative form $(\theta, \eta) \in \Omega^{\bullet-1}(\rho)$ such that $(\id, \id)^{*}\hat{\varepsilon}'' = \hat{\mu} + a(\theta, \eta)$. We set $\hat{\varepsilon}' := \hat{\varepsilon}'' - a(\theta, 0 \sqcup \eta)$, so that $(\id, \id)^{*}\hat{\varepsilon}' = (\hat{\mu} + a(\theta, \eta)) - a(\theta, \eta) = \hat{\mu}$. Now we have to reach a flat class with the same pull-back of $\hat{\varepsilon}'$. We have that $(\id, \id)^{*}R(\hat{\varepsilon}') = R(\hat{\mu}) = 0$, thus there exists a form $\chi \in \Omega^{\bullet-1}(X)$ such that $R(\hat{\varepsilon}') = (0, \chi \sqcup 0)$. Let us show that the de-Rham class $[\chi]$ belongs to the image of the Chern character. We consider the pull-back of $\hat{\varepsilon}'$ via the following composition:
	\[\resizebox{!}{6.5pt}{
	\xymatrix{
		\emptyset \ar[rr] \ar[d] & & X \sqcup A \ar[d]^{\id \sqcup \rho} \\
		\emptyset \ar[rr] \ar[d] & & X \ar[d]^{\id} \\
		A \ar[rr]^{\rho} \ar[d]_{\id} & & X \ar[d]^{\id} \\
		X \sqcup A \ar[rr] \ar[rr]^{\id \sqcup \rho} & & X.
}}\]

\vspace{3pt}
\noindent The pull-back of $\hat{\varepsilon}'$ from the last to the third line is $\hat{\mu}$ and the pull-back to the second line is $\pi^{*}(\hat{\mu})$, that vanishes by hypothesis. Thus, the overall pull-back is $0$. On the other side, the pull-back from the forth to the first line, by the axiom (A4), is $a(\cov(\hat{\varepsilon}')) = a(\chi) \sqcup 0$, thus $a(\chi) = 0$. This shows that $\chi \in \Omega^{\bullet-1}_{\ch}(X)$, thus there exists a class $\hat{\gamma} \in \hat{h}^{\bullet-1}(X)$ such that $R(\hat{\gamma}) = \chi$. Considering the sequence \eqref{LongExact2} associated to the last line of the previous diagram, we get the class $\Bock(\hat{\gamma} \sqcup 0) \in \hat{h}^{\bullet}(\id \sqcup \rho)$ and we set $\hat{\varepsilon} := \hat{\varepsilon}' + \Bock(\hat{\gamma} \sqcup 0)$. It follows that $R(\hat{\varepsilon}) = (0, \chi \sqcup 0) + (0, -R(\hat{\gamma} \sqcup 0)) = 0$. Moreover, because of the naturality of the Bockstein map, $(\id, \id)^{*}\Bock(\hat{\gamma} \sqcup 0) = \Bock(\id^{*}(\hat{\gamma} \sqcup 0)) = \Bock(0) = 0$, thus $(\id, \id)^{*}\hat{\varepsilon} = \hat{\mu} - 0 = \hat{\mu}$. \\

About \eqref{LongExact2}, we must prove the exactness from $\hat{h}^{\bullet-1}_{\fl}(X)$ to $h^{\bullet}(A)$. Actually, the exactness in $\hat{h}^{\bullet-1}_{\fl}(X)$ easily follows from the embedding $\hat{h}^{\bullet-1}_{\fl}(A) \hookrightarrow \hat{h}^{\bullet-1}(A)$ and the exactness of \eqref{LongExactFl}. Similarly, the exactness in $h^{\bullet}(A)$ easily follows from the surjectivity of $I \colon \hat{h}^{\bullet}(X) \to h^{\bullet}(X)$ and the exactness of the sequence associated to $h^{\bullet}$. Thus, there are three meaningful positions left.

\emph{Exactness in $\hat{h}^{n-1}(A)$.} The composition $\Bock \circ \rho^{*}$, starting from $\hat{h}^{\bullet}_{\fl}(X)$, coincides with the one of the flat sequence, hence it vanishes. Given $\hat{\alpha} \in \hat{h}^{n-1}(A)$, if $\Bock^{n-1}(\hat{\alpha}) = 0$ then $R(\hat{\alpha}) = 0$, because of formula \eqref{CurvBockstein}. Therefore the kernel of $\Bock^{n-1}$ is contained in the flat part $\hat{h}_{\fl}^{n-1}(A)$, hence the exactness follows from the one of \eqref{LongExactFl}.

\emph{Exactness in $\hat{h}^{n}(\rho)$.} We have already proven that $\pi^{*} \circ \Bock = 0$ in remark \ref{BockPi0Both}. Let us consider $\hat{\mu} \in \hat{h}^{n}(\rho)$ such that $\pi^{*}\hat{\mu} = 0$. It follows that $R(\hat{\mu}) = (0, \eta)$. Moreover, $0 = \rho^{*}\pi^{*}\hat{\mu} = a(\eta)$, thus $\eta$ represents a class belonging to the image the Chern character. It follows that there exists a class $\hat{\alpha} \in \hat{h}^{n-1}(A)$ such that $R(\hat{\alpha}) = -\eta$. Then, because of formula \eqref{CurvBockstein}, $\Bock(\hat{\alpha}) = \hat{\mu} + \hat{\mu}'$, with $\hat{\mu}' \in \hat{h}^{n}_{\fl}(\rho)$. Since $0 = \pi^{*}\Bock(\hat{\alpha}) = \pi^{*}\hat{\mu}'$, because of the exactness of \eqref{LongExactFl} there exists a class $\hat{\alpha}' \in \hat{h}^{n-1}_{\fl}(A)$ such that $\Bock(\hat{\alpha}') = \hat{\mu}'$. It follows that $\Bock(\hat{\alpha} - \hat{\alpha}') = \hat{\mu}$.

\emph{Exactness in $\hat{h}^{n}(X)$.} The pull-back to $A$ of a class in $\hat{h}^{n}(\rho)$ is topologically trivial because of the long exact sequence of $h^{\bullet}$ (or because of axiom (A4)). Let us fix a class $\hat{\nu} \in \hat{h}^{n}(X)$, such that $\rho^{*}I(\hat{\nu}) = 0$. It follows that $\rho^{*}\hat{\nu} = a(\eta)$ for a suitable $\eta \in \Omega^{\bullet-1}(A)$. We set $\omega := R(\hat{\nu})$. Then $\rho^{*}\omega = d\eta$, that is equivalent to $d(\omega, \eta) = 0$. Moreover:
\begin{align*}
	& \Bock \circ \rho^{*}(\hat{\nu}) \overset{\eqref{RhoBockstein}}= -a(\omega, 0) \\
	& \Bock \circ \rho^{*}(\hat{\nu}) = \Bock \circ a(\eta) \overset{\eqref{ABockstein}}= a(0, \eta).
\end{align*}
It follows that $a(\omega, \eta) = 0$, thus we can fix a class $\hat{\alpha}' \in \hat{h}^{n}(\rho)$ such that $R(\hat{\alpha}') = (\omega, \eta)$. It follows that $\pi^{*}(\hat{\alpha}') = \hat{\nu} + \hat{\nu}'$, with $\hat{\nu}'$ flat. Then:
\begin{align*}
	& \rho^{*}\pi^{*}(\hat{\alpha}') = a(\eta) + \rho^{*}\hat{\nu}' \\
	& \rho^{*}\pi^{*}(\hat{\alpha}') \overset{\textnormal{(A4)}}= a \circ \cov(\hat{\alpha}') = a(\eta).
\end{align*}
Thus $\rho^{*}\hat{\nu}' = 0$ and, by the exactness of the flat sequence, we can find $\hat{\alpha}''$ such that $\pi^{*}\hat{\alpha}'' = \hat{\nu}'$. Setting $\hat{\alpha} := \hat{\alpha}' - \hat{\alpha}''$, we get that $\pi^{*}\hat{\alpha} = \hat{\nu}$. \\

About \eqref{LongExact1}, we must prove the exactness from $\hat{h}^{\bullet-1}_{\fl}(A)$ to $h^{\bullet+1}(\rho)$. Actually, the exactness in $\hat{h}^{\bullet-1}_{\fl}(A)$ easily follows from the embedding $\hat{h}^{\bullet}_{\fl}(\rho) \hookrightarrow \hat{h}^{\bullet}_{\ppar}(\rho)$ and the exactness of \eqref{LongExactFl}. Similarly, the exactness in $h^{\bullet+1}(\rho)$ easily follows from the surjective map $I \colon \hat{h}^{\bullet}(A) \to h^{\bullet}(A)$ and the exactness of the sequence associated to $h^{\bullet}$. Thus, there are three meaningful positions left.

\emph{Exactness in $\hat{h}^{\bullet}_{\ppar}(\rho)$.} If a class belongs to the image of the Bockstein map, it follows from the exact sequence of the flat theory that its pull-back to $X$ vanishes. Vice-versa, let us consider $\hat{\mu} \in \hat{h}^{\bullet}(\rho)_{\ppar}$ such that $\pi^{*}\hat{\mu} = 0$. It follows that $R(\hat{\mu}) = (0, 0)$, thus the class is flat. By the sequence of the flat theory, we can find a pre-image via the Bockstein morphism.

\emph{Exactness in $\hat{h}^{n}(X)$.} The pull-back to $A$ of a class in $\hat{h}^{\bullet}_{\ppar}(\rho)$ vanishes because of the axiom (A4). Vice-versa, let us fix a class $\hat{\nu} \in \hat{h}^{\bullet}(X)$ such that $\rho^{*}\hat{\nu} = 0$. Since, in particular, $\rho^{*}I(\hat{\nu}) = 0$, by the exactness of \eqref{LongExact2} there exists $\hat{\alpha}' \in \hat{h}^{\bullet}(\rho)$ such that $\pi^{*}\hat{\alpha}' = \hat{\nu}$. We set $(\omega, \eta) := R(\hat{\alpha}')$, thus $\omega = R(\hat{\nu})$. We have that $0 = \rho^{*}\pi^{*}\hat{\alpha}' = a(\eta)$, thus there exists a class $\hat{\beta} \in h^{\bullet-1}(A)$ such that $R(\hat{\beta}) = \eta$. We set $\hat{\alpha} := \hat{\alpha}' + \Bock(\hat{\beta})$. Then, by formula \eqref{CurvBockstein}, $R(\hat{\alpha}) = (\omega, \eta) + (0, -\eta) = (\omega, 0)$, thus $\hat{\alpha}$ is parallel, and, by the exactness of \eqref{LongExact2}, $\pi^{*}\hat{\alpha} = \hat{\nu}$.

\emph{Exactness in $\hat{h}^{\bullet}(A)$.} If a class $\hat{\alpha} \in \hat{h}^{\bullet}(A)$ is the pull-back of a class in $X$, it follows from the exact sequence of $h^{\bullet}$ that it lies in the kernel of the (topological) Bockstein map. Vice-versa, if it belongs to the kernel, by the exact sequence of $h^{\bullet}$ we can find a class $\hat{\beta} \in \hat{h}^{n}(X)$ such that $\rho^{*}I(\hat{\beta}) = I(\hat{\alpha})$, hence $\rho^{*}\hat{\beta} = \hat{\alpha} + a(\theta)$ for a suitable form $\theta \in h^{\bullet-1}(A)$. If $\rho$ is a closed embedding, there exists a form $\xi \in h^{\bullet-1}(X)$ such that $\theta = \rho^{*}\xi$, thus $\rho^{*}(\hat{\beta} - a(\xi)) = \hat{\alpha}$.

\section{Existence and uniqueness}\label{ExUniqSec}

We are going to verify that any cohomology theory admits a relative differential extension, that is unique under the same hypotheses stated in \cite{BS} about the absolute case.

\SkipTocEntry \subsection{Existence}\label{ExistenceSec}

Given a cohomology theory $h^{\bullet}$, there exists a relative differential extension with $S^{1}$-integration, which is multiplicative if $h^{\bullet}$ is. This can be shown using the Hopkins-Singer model \cite{FR2}. We briefly recall the construction and verify that it satisfies the axioms.
\begin{Def}\label{DiffFunct} If $X$ is a smooth manifold, $Y$ a topological space, $V^{\bullet}$ a graded real vector space and $\kappa_{n} \in C^{n}(Y; V^{\bullet})$ a real singular cocycle, a \emph{differential function} from $X$ to $(Y, \kappa_{n})$ is a triple $(f, h, \omega)$ such that:
\begin{itemize}
	\item $f \colon X \to Y$ is a continuous function;
	\item $h \in C^{n-1}_{sm}(X; V^{\bullet})$ (`sm' means smooth);
	\item $\omega \in \Omega^{n}_{\cl}(X; V^{\bullet})$
\end{itemize}
satisfying the following condition, where $\chi \colon \Omega^{\bullet}(X; V^{\bullet}) \to C^{\bullet}(X; V^{\bullet})$ is the natural morphism:
\begin{equation}\label{DiffFunctEq}
	\delta^{n-1}h = \chi^{n}(\omega) - f^{*}\kappa_{n}.
\end{equation}
Moreover, a \emph{homotopy between two differential functions} $(f_{0}, h_{0}, \omega)$ and $(f_{1}, h_{1}, \omega)$ is a differential function $(F, H, \pi^{*}\omega) \colon X \times I \to (Y, \kappa_{n})$, such that $F$ is a homotopy between $f_{0}$ and $f_{1}$, $H\vert_{X \times \{i\}} = h_{i}$ for $i = 0,1$, and $\pi \colon X \times I \to X$ is the natural projection.
\end{Def}
We represent a fixed cohomology theory $h^{\bullet}$ via an $\Omega$-spectrum $(E_{n}, e_{n}, \varepsilon_{n})$, where $e_{n}$ is the marked point of $E_{n}$ and $\varepsilon_{n} \colon (\Sigma E_{n}, \Sigma e_{n}) \to (E_{n+1}, e_{n+1})$ is the structure map, whose adjoint $\tilde{\varepsilon}_{n} \colon E_{n} \to \Omega_{e_{n+1}} E_{n+1}$ is a homeomorphism. We also fix real singular cocycles $\iota_{n} \in C^{n}(E_{n}, e_{n}, \mathfrak{h}^{\bullet}_{\mathbb{R}})$ representing the Chern character of $h^{\bullet}$, such that $\iota_{n-1} = \int_{S^{1}} \varepsilon_{n}^{*}\iota_{n}$ \cite{Upmeier}.

\begin{Def}\label{StrongTTr} Given a differential function $(f, h, \omega) \colon X \to (E_{n}, \iota_{n})$, a \emph{strong topological trivialization} of $(f, h, \omega)$ is a homotopy $(F, H, \pi^{*}\omega) \colon X \times I \to (E_{n}, \iota_{n})$ between $(f, h, \omega)$ and a function of the form $(c_{e_{n}}, \chi(\eta), d\eta)$, where $c_{e_{n}}$ is the constant function with value $e_{n}$ and $\eta \in \Omega^{n-1}(X; \h^{\bullet}_{\R})$.
\end{Def}

Let us consider a smooth function between manifolds $\rho \colon A \to X$. We define the cylinder $\Cyl(\rho) := X \sqcup (A \times I) / \sim$, where $(a, 0) \sim \rho(a)$. We consider the following natural maps:
\begin{itemize}
 \item $\iota_{\Cyl(\rho)} \colon \Cyl(\rho) \to X \times I$, $x \mapsto (x, 0)$, $[(a, t)] \mapsto (\rho(a), t)$;
 \item $\iota_{\Cyl(A)} \colon \Cyl(A) \to \Cyl(\rho)$, $(a, t) \mapsto [(a, t)]$;
 \item $\iota_{A} \colon A \to \Cyl(\rho)$, $a \mapsto [(a, 0)]$;
 \item $\iota'_{A} \colon A \to \Cyl(\rho)$, $a \mapsto (a, 1)$;
 \item $\pi_{A} \colon I \times A \to A$, $(t, a) \mapsto a$.
\end{itemize}
In general $\Cyl(\rho)$ is not a manifold, nevertheless we will deal with differential functions $(f, h, \omega) \colon \Cyl(\rho) \to (E_{n}, \iota_{n})$, defined in the following way:
\begin{itemize}
  \item $f \colon \Cyl(\rho) \to E_{n}$ is a continuous function.
  \item $\omega \in \Omega^{n}_{cl}(X; \mathfrak{h}^{\bullet}_{\mathbb{R}})$, and it defines a smooth cocycle $\chi^{n}(\omega)$ on $\Cyl(\rho)$ as follows. Let us consider the pull-back $\pi_{X}^{*}\omega$ on $X \times I$. A simplex $\sigma \colon \Delta^{n} \to \Cyl(\rho)$ is defined to be smooth if and only if the composition $\iota_{\Cyl(\rho)} \circ \sigma \colon \Delta^{n} \rightarrow X \times I$ is. The smooth cochain $\chi^{n}(\omega)$ on $\Cyl(\rho)$ is defined by $\chi^{n}(\omega)(\sigma) := \chi^{n}(\pi_{X}^{*}\omega)(\iota_{\Cyl(\rho)} \circ \sigma)$.
  \item $h \in C^{n-1}_{sm}(\Cyl(\rho); \mathfrak{h}^{\bullet}_{\mathbb{R}})$ and it satisfies $\delta^{n-1}h = \chi^{n}(\omega) - f^{*}\iota_{n}$.
\end{itemize}
\begin{Def}\label{HHatHS} The group $\hat{h}^{n}(\rho)$ contains the equivalence classes $[(f, h, \omega, \eta)]$, where:
\begin{itemize}
  \item $(f, h, \omega) \colon \Cyl(\rho) \to (E_{n}, \iota_{n})$ is a differential function such that $\iota_{\Cyl(A)}^{*}(f, h, \omega)$ is a strong topological trivialization of $\iota_{A}^{*}(f, h, \omega)$ verifying the relation $(\iota'_{A})^{*}(f, h, \omega) = (c_{e_{n}}, \chi(\eta), d\eta)$;
  \item $(f, h, \omega, \eta)$ is equivalent to $(g, k, \omega, \eta)$ if the differential functions $(f, h, \omega)$ and $(g, k, \omega)$ are homotopic relatively to the upper base of the cylinder. This means that a homotopy $(F, H, \pi^{*}\omega) \colon \Cyl(\rho_{I}) \to (E_{n}, \iota_{n})$ between the two functions is required to satisfy $(\iota'_{A})_{I}^{*}(F, H, \pi^{*}\omega) = \pi_{A}^{*}(\iota'_{A})^{*}(f, h, \omega)$, where $(\iota'_{A})_{I}$ is defined as in formula \eqref{RhoI}.\footnote{Since $I$ is (locally) compact, $I \times \Cyl(\rho)$ is canonically homeomorphic to $\Cyl(\rho_{I})$. We will apply this homeomorphism when necessary, without stating it explicitly.}
\end{itemize}
\end{Def}
We set:
\begin{equation}\label{IRDef}
 I[(f, h, \omega)] := [f] \qquad\qquad R[(f, h, \omega, \eta)] := (\omega, \eta),
\end{equation}
where $[f] \in [(\Cyl(\rho), A \times \{1\}), (E_{n}, e_{n})] \simeq h^{n}(\rho)$. Moreover, we define the map $a$ in the following way. Given a form $\omega \in \Omega^{n}(X; \h^{\bullet}_{\R})$, we set $\tilde{\omega} := \pi_{A}^{*}\rho^{*}\omega \in \Omega^{n}(\Cyl(A); \h^{\bullet}_{\R})$ and, given a form $\eta \in \Omega^{n-1}(A; \h^{\bullet}_{\R})$, we set $\tilde{\eta} := \pi_{A}^{*}\eta \in \Omega^{n-1}(\Cyl(A); \h^{\bullet}_{\R})$. We define the smooth singular cochain $\chi^{n}(\omega, \eta) \in C^{n}_{sm}(\rho; \h^{\bullet}_{\R})$ as follows. We fix a real number $\varepsilon \in (0, 1)$ and we take a smooth non-decreasing function $\theta \colon I \to I$ such that $\theta(t) = 0$ for $t \leq \varepsilon$ and $\theta(1) = 1$. We fix the open cover $\{U,W\}$ of $\Cyl(\rho)$ defined by $U := A \times (\frac{\varepsilon}{3},1]$ and $W := A\times [0,\frac{\varepsilon}{2}) \sqcup_{\rho} X$. For each smooth chain $\sigma \colon \Delta^{n} \to \Cyl(\rho)$, we take the iterated barycentric subdivision, so that the image of each sub-chain is contained in $U$ or in $W$; then, for each small chain $\sigma'$, we set
  \[\chi^{n}(\omega, \eta)(\sigma') = \left\{ \begin{array}{ll} \chi^{n} \bigl(\tilde{\omega}-d(\theta\tilde{\eta})\bigr)(\sigma') & \mbox{if } \sigma' \subset U \\
  \chi^{n}(\omega)(\pi_{X}\circ\sigma') & \mbox{if } \sigma' \subset W, \end{array} \right.
\]
where $\pi_{X} \colon W \to X$ is the natural projection defined by $[a,t] \mapsto \rho(a)$ and $[x]\mapsto x$. Note that the morphism is well defined for $\sigma' \subset U \cap W$, since $\theta(t) = 0$ for $t \leq \varepsilon$. Finally, we define
\begin{equation}\label{ADef}
\begin{split}
	a \colon & \Omega^{n-1}(\rho; \h^{\bullet}_{\R})/\IIm(d)\to \hat{h}^{n}(\rho) \\
	& [(\omega,\eta)] \mapsto [(c_{e_n}, \chi^{n-1}(\omega, \eta), d\omega, \rho^{*}\omega - d\eta)].
\end{split}
\end{equation}
The cochain $\chi^{n-1}(\omega, \eta)$ depends on the choice of the function $\theta$, but the equivalence class $[(c_{e_n}, \chi^{n-1}(\omega, \eta), d\omega, \rho^{*}\omega - d\eta)]$ does not, since two different functions $\theta$ lead to homotopic representatives.

Given two maps $\rho \colon A \to X$ and $\eta \colon B \to Y$ and a morphism $(\varphi, \psi) \colon \rho \to \eta$, there is a natural induced map $(\varphi, \psi) \colon \Cyl(\rho) \to \Cyl(\eta)$, $x \mapsto \varphi(x)$, $[(a, t)] \mapsto [(\psi(a), t)]$. We define the pull-back $(\varphi, \psi)^{*}[(f, h, \omega, \eta)] := [(f \circ \varphi, \varphi^{*}h, \varphi^{*}\omega, \psi^{*}\eta)]$.

Let us verify that axioms (A1)--(A4) of definition \ref{RelDiffExt} hold. The first one is a direct consequence of the definitions \eqref{ADef} and \eqref{IRDef}. About axiom (A4), we have that
\[\rho^{*}\pi^{*}[(f, h, \omega, \eta)] = \rho^{*}[(f\vert_{X}, h\vert_{X}, \omega)] = \iota_{A}^{*}[(f, h, \omega)] \overset{(*)}= a(\eta) = a \circ \cov([(f, h, \omega, \eta)]),
\]
the equality $(*)$ being due to the fact that, by definition, $\iota_{\Cyl(A)}^{*}(f, h, \omega)$ is a homotopy between $\iota_{A}^{*}(f, h, \omega)$ and $a(\eta) = (c_{e_{n}}, \chi(\eta), d\eta)$. In order to verify (A2), we observe that a representative $(f, h, \omega, \eta)$ of an element of $\hat{h}^{n}(\rho)$, as defined in \ref{HHatHS}, can be described in the following equivalent way:
\begin{itemize}
  \item $f \colon (\Cyl(\rho), A \times \{1\}) \to (E_{n},e_{n})$ is a map of pairs;
  \item $h \in C^{n-1}(\Cyl(\rho); \h^{\bullet}_{\R})$;
  \item $(\omega,\eta) \in \Omega^{n}_{\cl}(\rho; \h^{\bullet}_{\R})$;
  \item $\delta^{n-1}(h,0) = (\chi^{n}(\omega),\chi^{n-1}(\eta)) - (f^{*}\iota_{n},0)$, the boundary $\delta$ being the one of the mapping cone complex $C^{\bullet}(\Cyl(\rho)) \oplus C^{\bullet-1}(A)$ associated to the embedding of the upper base $\iota'_{A} \colon A \to \Cyl(\rho)$.
\end{itemize} 
The condition $\delta^{n-1}(h,0) = (\chi^{n}(\omega), \chi^{n-1}(\eta)) - (f^{*}\iota_{n}, 0)$ implies that $[(\omega,\eta)]_{\dR} = [f^{*}\iota_{n}] = \ch[f]$, where $f$ is a map of pairs. Finally, in order to show that axiom (A3) holds, the proof is similar to \cite[Theorems 2.4 and 2.5]{Upmeier}, applied to the pair $(\Cyl(\rho), A \times \{1\})$.

This differential extension has a natural $S^{1}$-integration, defined integrating each component of the differential function $(f, h, \omega, \eta)$ as shown in \cite[Chapter 3]{Upmeier}. Moreover, if $h^{\bullet}$ is multiplicative, then $\hat{h}^{\bullet}$ is multiplicative too, the product being defined as in \cite[Chapter 4]{Upmeier}.

\SkipTocEntry \subsection{Uniqueness for smooth cofibrations}\label{UniquenessSec} We are going to show that the uniqueness result of \cite{BS} holds even in the relative case, under the same hypotheses. Our aim consists in extending to the relative case the construction of the morphism between any two differential extensions of $h^{\bullet}$, described in \cite[Section 3]{BS}. It will easily follow that it induces a morphism between the corresponding long exact sequences of the form \eqref{LongExact2}, therefore it is an isomorphism because of the five lemma.

We use the results of \cite[Section 2]{BS} about approximation of spectra through manifolds, assuming the same hypotheses therein. Let  $h^{\bullet}$ be a cohomology theory and consider a spectrum $\{E_{n},e_{n}, \varepsilon_{n}\}_{n\in \mathbb{Z}}$ representing it. For a fixed $n \in \Z$, we take a sequence of pointed manifolds $\{\mathcal{E}_{i}, a_{i}\}_{i \in \Z}$ and two sequences of maps 
\begin{align*}
  x_{i} \colon (\mathcal{E}_{i}, a_{i}) \to (E_{n}, e_{n}) \qquad\qquad k_{i} \colon (\mathcal{E}_{i}, a_{i}) \to (\mathcal{E}_{i+1}, a_{i+1})
\end{align*}
such that $x_{i} = x_{i+1}\circ k_{i}$. Moreover, for a given class $u \in h^{\bullet}(E_{n}, e_{n})$, we fix a family of closed forms $\omega_{i} \in \Omega^{\bullet}(\mathcal{E}_{i}, a_{i}; \h^{\bullet}_{\R})$ such that $\omega_{i} = k_{i}^{*}\omega_{i+1}$ and, in the reduced cohomology with marked point $a_{i}$, we have $[\omega_{i}]_{\dR} = \ch(x_{i}^{*}u)$. Finally, we fix a family of differential classes $\hat{u}_{i} \in \hat{h}^{\bullet}_{\ppar}(\mathcal{E}_{i}, a_{i})$ such that $I'(\hat{u}_{i}) = x^{*}_{i}(u)$ and $R'(\hat{u}_{i}) = \omega_{i}$.\footnote{The notation $\hat{h}^{\bullet}_{\ppar}(\mathcal{E}_{i}, a_{i})$ refers to the cohomology of the embedding $\{a_{i}\} \hookrightarrow E_{i}$.}
 
Let $(\hat{h}^{\bullet}, I, R, a)$ and $(\hat{h}'^{\bullet}, I', R', a')$ be two differential extensions of $h^{\bullet}$. The morphism from $\hat{h}^{\bullet}$ to $\hat{h}'^{\bullet}$ we are looking for is much easier to construct in the case of a closed embedding $\rho \colon A \hookrightarrow X$, since $\rho$ is a smooth cofibration. In this case, for any $\hat{v} \in \hat{h}^{n}(\rho)$, there exists a morphism in the category $\CC_{2}$
\begin{equation}\label{RelativeSpectrum}
	\xymatrix{ A \ar@{^(->}[r]^{\rho} \ar[d] & X \ar[d]^{f} \\ e_{n} \ar@{^(->}[r]^{i_{e_{n}}} & E_{n}
}
\end{equation}
such that $I(\hat{v})$ is represented by the homotopy class $[f] \in [\rho, i_{e_{n}}]$. It follows that $I(\hat{v}) = f^{*}(u)$, where $u \in h^{n}(i_{e_{n}})$ is the tautological class represented by the identity map of $(E_{n}, e_{n})$. By the approximation lemmas \cite[Prop.\ 2.1 and 2.3]{BS}, there exist a based manifold $(\mathcal{E}_{i}, a_{i})$ and a map
\[\xymatrix{ A \ar[r]^{\rho} \ar[d] & X \ar[d]^{f_{i}} \\ a_{i} \ar@{^(->}[r]^{i_{a_{i}}} & \mathcal{E}_{i}
}\]
such that $f = x_{i} \circ f_{i}$. Note that $I(\hat{v}) = f^{*}(u) = f_{i}^{*}x_{i}^{*}(u) = f_{i}^{*}(I(\hat{u}_{i})) = I(f_{i}^{*}\hat{u}_{i})$, hence we have 
  \[\hat{v} = f_{i}^{*}\hat{u} + a(\zeta,\nu)
\]
for a unique $(\zeta,\nu) \in \Omega^{n-1}(\rho; \h^{\bullet}_{\R})/\IIm(\ch)$. Repeating the same construction for $\hat{h}'^{\bullet}$, we define:
  \[\begin{split}
  \Phi \colon & \hat{h}^{\bullet}(\rho) \to \hat{h}'^{\bullet}(\rho) \\
  & f_{i}^{*}\hat{u} + a(\zeta,\nu) \mapsto f^{*}_{i}\hat{u}_{i}' + a'(\zeta,\nu).
\end{split}\]
In order to show that $\Phi$ is well defined, we must verify that it is independent of the choice of the functions $f_{i}$. Let us fix $\hat{v}\in \hat{h}^{n}(\rho)$. As in the absolute case, we may reduce the problem to the case of two homotopic functions $f_{i}, \tilde{f}_{i} \colon \rho \to (\mathcal{E}_{i},a_{i})$ such that $I(\hat{v}) = f_{i}^{*}x_{i}^{*}(u) = \tilde{f}_{i}^{*}x_{i}^{*}(u)$; we consider a homotopy $F \colon \rho_{I} \to (\mathcal{E}_{i}, a_{i})$. To each $f_{j}$ and $\tilde{f}_{j}$ we associate as above the forms $(\zeta,\nu)$ and $(\tilde{\zeta},\tilde{\nu})$ and the morphisms $\Phi$ and $\tilde{\Phi}$ respectively. We define $(\alpha,\beta) = \int_{\rho_{I}/\rho} F^{*}(\omega_{i},0)$. We note that $F^{*}(\omega_{i},0) = R(F^{*}\hat{u}_{i})$, so by the homotopy formula we obtain
  \[f_{i}^{*} \hat{u}_{i} - \tilde{f}_{i}^{*} \hat{u}_{i} = a(\alpha,\beta) \qquad\qquad f_{i}^{*} \hat{u'}_{i} - \tilde{f}_{i}^{*} \hat{u'}_{i} = a'(\alpha,\beta).
\]
Since $\hat{v} = f_{i}^{*}\hat{u}' + a'(\zeta,\nu) = \tilde{f}_{i}^{*}\hat{u}' + a'(\tilde{\zeta}, \tilde{\nu})$, the homotopy formula also implies that $a'(\tilde{\zeta},\tilde{\nu}) = a'(\zeta,\nu)+a'(\alpha,\beta)$, thus:
  \[\Phi(\hat{v}) = f^{*}_{i}\hat{u}'_{i} + a'(\zeta, \nu) = f^{*}_{i}\hat{u}'_{i} + a'(\tilde{\zeta},\tilde{\nu}) - a'(\alpha,\beta) = a'(\tilde{\zeta},\tilde{\nu}) + \tilde{f}^{*}_{i}\hat{u}'_{i} = \tilde{\Phi}(\hat{v}).
\] 
In order to show that $\Phi$ induces a morphism of long exact sequences it is enough to show that $\Phi$ commutes with the Bockstein map. This easily follows from the naturality of $\Phi$, since it commutes with each of the steps (S1)--(S6). From the five lemma, it follows that it is an isomorphism.

This implies in particular that, at least on closed embeddings, any differential extension of $h^{\bullet}$ is naturally isomorphic to the Hopkins-Singer model, summarized in section \ref{ExistenceSec}. Since in such a model the flat theory is isomorphic to $h^{\bullet}(\rho; \R/\Z)$, the latter being defined via the Moore spectrum (see \cite[Chapter 5]{BS}), it follows that the same isomorphism holds about $h^{\bullet}_{\fl}$.\footnote{In \cite{BS} the authors gave an independent proof of this result, since the details of the integration map in the Hopkins-Singer model had not been worked out yet. Now we do not have this problem any more (see \cite{Upmeier}).}

\SkipTocEntry \subsection{Uniqueness for generic maps (sketch)}

For a generic smooth map $\rho \colon A \to X$, the construction of $\Phi$ is less trivial, since we cannot represent any element of $h^{\bullet}(\rho)$ via a morphism of the form \eqref{RelativeSpectrum}. We just sketch such a construction, deferring the details to a future paper. In order to use the language of spectra in this general context, we must use cones or cylinders. In particular, we can replace \eqref{RelativeSpectrum} by a map of the following form:
\begin{equation}\label{RelativeSpectrum2}
	\xymatrix{ A \ar@{^(->}[r]^(.4){i_{1}} \ar[d] & \Cyl(\rho) \ar[d]^{f} \\ e_{n} \ar@{^(->}[r]^{i_{e_{n}}} & E_{n},
}
\end{equation}
where $i_{1}$ is the embedding of $A$ in the upper base of the cylinder. Now we should apply the same idea used above, but of course we must deal with the fact that $\Cyl(\rho)$ is not a manifold in general. We can extend the groups $\hat{h}^{\bullet}$ to cylinders following \cite{AH2}. We start defining smoothness.
\begin{Def} Given a smooth map $\rho \colon A \to X$, we consider the map $\iota_{\Cyl(\rho)} \colon \Cyl(\rho) \to X \times I$, $x \mapsto (x, 0)$ and $[(a, t)] \mapsto (\rho(a), t)$.
\begin{itemize}
	\item Given a manifold $Y$ and a function $f \colon Y \to \Cyl(\rho)$, the function $f$ is \emph{smooth} if and only if $\iota_{\Cyl(\rho)} \circ f \colon Y \to X \times I$ is smooth.
	\item Given a cylinder $\Cyl(\eta)$ (in particular, it can be a manifold) and a function $g \colon \Cyl(\rho) \to \Cyl(\eta)$, the function $g$ is \emph{smooth} if and only if, for any manifold $Y$ and any smooth map $f \colon Y \to \Cyl(\rho)$, the composition $g \circ f \colon Y \to \Cyl(\eta)$ is smooth.
	\item A singular chain $\sigma \colon \Delta^{\bullet} \to \Cyl(\rho)$ is \emph{smooth} if and only if it is smooth as a function from the manifold $\Delta^{\bullet}$ to $\Cyl(\rho)$. We call $C_{\bullet}^{sm}(\Cyl(\rho))$ the corresponding group. A \emph{smooth real singular cochain} is an element of $C^{\bullet}_{sm}(\Cyl(\rho)) := \Hom(C_{\bullet}^{sm}(\Cyl(\rho)), \R)$.
	\item A \emph{differential form} on $\Cyl(\rho)$ is a smooth singular cochain $\varphi \in C^{\bullet}_{sm}(\Cyl(\rho))$ such that, for any manifold $Y$ and any smooth map $f \colon Y \to \Cyl(\rho)$, the pull-back $f^{*}\varphi$ is (the image of) a differential form on $Y$. We call $\Omega^{\bullet}(\Cyl(\rho))$ the corresponding group.
\end{itemize}
\end{Def}
Now we can show how to extend to cylinders the group $\hat{h}^{\bullet}$. Let us consider a smooth map between cylinders $\varphi \colon \Cyl(\nu) \to \Cyl(\xi)$. We define the category $\CC_{\varphi}$ in the following way. An object is a diagram of the form
\begin{equation}\label{MorphismDiagramCyl}
	\xymatrix{
	\Cyl(\nu) \ar[r]^{\varphi} & \Cyl(\xi) \\
	A \ar[r]^{\rho} \ar[u]^(.45){v} & X, \ar[u]_(.45){u}
}\end{equation}
where $\rho$ is a smooth map between manifolds and $u$ and $v$ are smooth too. We denote such an object by $(u, v) \colon \rho \to \varphi$. A morphism from the object $(u', v') \colon \eta \to \varphi$ to the object $(u, v) \colon \rho \to \varphi$ is a diagram of the form \eqref{MorphismDiagram}, such that $u \circ f = u'$ and $v \circ g = v'$.
\begin{Def} An element of the group $\hat{h}^{\bullet}(\varphi)$, with curvature $(\omega, \eta) \in \Omega^{\bullet}(\varphi)$, is a functor $\hat{\alpha} \colon \CC_{\varphi} \to \A_{\Z}$ with the following properties. We set $(u, v)^{!}\hat{\alpha} := \hat{\alpha}((u, v) \colon \rho \to \varphi)$.
\begin{itemize}
	\item Given $(u, v) \colon \rho \to \varphi$, we have that $(u, v)^{!}\hat{\alpha} \in \hat{h}^{\bullet}(\rho)$.
	\item Given a morphism $(f, g) \colon \eta \to \rho$, we have that $(f, g)^{*}(u, v)^{!}\hat{\alpha} = (u \circ f, v \circ g)^{!}\hat{\alpha}$.
	\item Given $(u, v), (u', v') \colon \rho \to \Cyl(\varphi)$ and a smooth homotopy $(U, V) \colon \rho_{I} \to \varphi$ between $(u, v)$ and $(u', v')$, we have that $(u, v)^{!}\hat{\alpha} - (u', v')^{!}\hat{\alpha} = a \bigl( \int_{I}(U, V)^{*}(\omega, \eta) \bigr)$.
\end{itemize}
We set $R(\hat{\alpha}) := (\omega, \eta)$. Moreover, given $(\omega', \eta') \in \Omega^{\bullet-1}(\varphi)$, we define $a(\omega', \eta')$ as the class such that $(u, v)^{!}(a(\omega', \eta')) = a((u, v)^{*}(\omega', \eta'))$.
\end{Def}
Given a morphism from $\psi \colon \Cyl(\nu') \to \Cyl(\xi')$ to $\varphi \colon \Cyl(\nu) \to \Cyl(\xi)$, i.e.\ a pair of smooth functions $h \colon \Cyl(\xi') \to \Cyl(\xi)$ and $k \colon \Cyl(\nu') \to \Cyl(\nu)$ such that $\varphi \circ k = h \circ \psi$, we define the pull-back $(h, k)^{*} \colon \hat{h}^{\bullet}(\varphi) \to \hat{h}^{\bullet}(\psi)$ by $(h, k)^{*}(f, g)^{!}(\hat{\alpha}) := (f \circ h, g \circ k)^{!}(\hat{\alpha})$. In this way we get the functor $\hat{h}^{\bullet}$ from the category of smooth maps between cylinders to $\A_{\Z}$, that can be easily endowed with the corresponding $S^{1}$-integration.

Given a smooth map $\rho \colon A \to X$ between manifolds, we call $\hat{h}^{\bullet}_{0}(i_{1})$ the subgroup of $\hat{h}^{\bullet}(i_{1})$ formed by the classes whose curvature $(\omega, \eta) \in \Omega^{\bullet}(i_{1})$ verifies $\iota_{\Cyl(A)}^{*}\omega = \pi_{A}^{*}d\eta$. In particular, we call $\hat{h}^{\bullet}_{\ppar, 0}(i_{1})$ the subgroup of $\hat{h}^{\bullet}_{0}(i_{1})$ formed by the classes whose curvature $(\omega, 0) \in \Omega^{\bullet}(i_{1})$ verifies $\iota_{\Cyl(A)}^{*}\omega = 0$. We have the natural isomorphisms:
\begin{equation}\label{EmdHatCyl}
	\hat{h}^{\bullet}(\rho) \overset{\!\simeq}\longrightarrow \hat{h}^{\bullet}_{0}(i_{1}) \qquad\qquad \hat{h}^{\bullet}_{\ppar}(\rho) \overset{\!\simeq}\longrightarrow \hat{h}_{\ppar, 0}^{\bullet}(i_{1}),
\end{equation}
induced by pull-back from the following diagram:
\begin{equation}\label{MapRhoCyl}
	\xymatrix{ A \ar@{^(->}[r]^(.4){i_{1}} \ar@{=}[d] & \Cyl(\rho) \ar[d]^{\pi} \\ A \ar[r]^{\rho} & X.
}
\end{equation}
In order to prove that they are isomorphisms, we construct the long exact sequence \eqref{LongExact1} (truncated at $\hat{h}^{\bullet}(A)$ in general), replacing $\hat{h}^{\bullet}_{\ppar}(\rho)$ by $\hat{h}_{\ppar, 0}^{\bullet}(i_{1})$, and we apply the five lemma. The morphism $\hat{h}_{\ppar, 0}^{\bullet}(i_{1}) \to \hat{h}^{\bullet}(X)$ is the pull-back via the inclusion of $X$ in $\Cyl(\rho)$, while the Bockstein map is defined by $(u, v)^{!}(\Bock(\alpha)) := \Bock(v^{*}(\hat{\alpha}))$. The sequence is exact because the kernel of $\hat{h}_{\ppar, 0}^{\bullet}(i_{1}) \to \hat{h}^{\bullet}(X)$ is made by flat classes vanishing on $X$, hence, by the suspension isomorphism in the flat theory, the Bockstein map is an isomorphism from $\hat{h}^{\bullet}_{\fl}(A)$ to such a kernel. Now the five lemma implies that the second map of \eqref{EmdHatCyl} is an isomorphism. Now we consider the sequence \eqref{SeqExCurtaCurvPar} and the corresponding sequence
	\[\xymatrix{
	0 \ar[r] & \hat{h}_{\ppar, 0}^{\bullet}(i_{1}) \ar[r] & \hat{h}^{\bullet}_{0}(i_{1}) \ar[r]^(.45){R'} & \Omega^{\bullet}_{\ch, 0}(\rho) \ar[r] & 0.
}\]
Applying the five lemma again we get the first isomorphism of \eqref{EmdHatCyl}.

Now we construct the morphism $\Phi$ as above. For any $\hat{v} \in \hat{h}^{n}(\rho)$, we embed it in $\hat{h}^{n}(i_{1})$ via \eqref{EmdHatCyl}, hence there exists a morphism of the form \eqref{RelativeSpectrum2}, such that $I(\hat{v})$ is represented by the homotopy class $[f] \in [i_{1}, i_{e_{n}}]$. It follows that $I(\hat{v}) = f^{*}(u)$, where $u \in h^{n}(i_{e_{n}})$ is the tautological class represented by the identity map of $(E_{n}, e_{n})$. By the approximation lemmas, there exist a based manifold $(\mathcal{E}_{i}, a_{i})$ and a map
\[\xymatrix{ A \ar@{^(->}[r]^(.4){i_{1}} \ar[d] & \Cyl(\rho) \ar[d]^{f_{i}} \\ a_{i} \ar@{^(->}[r]^{i_{a_{i}}} & \mathcal{E}_{i}
}\]
such that $f = x_{i} \circ f_{i}$. Note that $I(\hat{v}) = f^{*}(u) = f_{i}^{*}x_{i}^{*}(u) = f_{i}^{*}(I(\hat{u}_{i})) = I(f_{i}^{*}\hat{u}_{i})$, hence we have 
  \[\hat{v} = f_{i}^{*}\hat{u} + a(\zeta,\nu)
\]
for a unique $(\zeta,\nu) \in \Omega^{n-1}(i_{1}; \h^{\bullet}_{\R})/\IIm(\ch)$. Repeating the same construction for $\hat{h}'^{\bullet}$, we define:
  \[\begin{split}
  \Phi \colon & \hat{h}^{\bullet}(\rho) \to \hat{h}'^{\bullet}(\rho) \\
  & f_{i}^{*}\hat{u} + a(\zeta,\nu) \mapsto f^{*}_{i}\hat{u}_{i}' + a'(\zeta,\nu).
\end{split}\]
Applying the five lemma to the exact sequence \eqref{LongExact2} we get the uniqueness of the relative extension. In particular, the isomorphism with the Hopkins-Singer model provides the isomorphism $h^{\bullet}_{\fl}(\rho) \simeq h^{\bullet}(\rho; \R/\Z)$.

\section{Complements on differential extensions}\label{ComplementsSec}

We show some other interesting versions of the notion of differential refinement.

\SkipTocEntry \subsection{Differential cohomology with compact support}\label{DiffCohCpt}

We are going to define the compactly-supported version of differential cohomology. This has been done about ordinary differential cohomology in \cite{BBSS}, using the language of Cheeger-Simons characters. Here we generalize the construction to any cohomology theory, within the axiomatic setting. Given a smooth manifold $X$, we denote by $\K_{X}$ the directed set formed by the compact subsets of $X$, the partial ordering being given by set inclusion. We think of $\K_{X}$ as a category, whose objects are the compact subsets of $X$ and such that the set of morphisms from $K$ to $H$ contains one element if $K \subset H$ and is empty otherwise. There is a natural functor $\Cc_{X} \colon \K_{X} \to \M_{2}^{\op}$, assigning to an object $K$ the open embedding $i_{K} \colon X \setminus K \hookrightarrow X$ and to a morphism $K \subset H$ the natural morphism $i_{KH} \colon i_{H} \to i_{K}$ defined by the following diagram:
\begin{equation}\label{DiagCptX}
\xymatrix{
  X \setminus H \ar@{^(->}[d] \ar@{^(->}[r]^(.6){i_{H}} & X \ar@{=}[d] \\
  X \setminus K \ar@{^(->}[r]^(.6){i_{K}} & X.
}
\end{equation}
Given a cohomology theory $h^{\bullet}$ and a differential extension $\hat{h}^{\bullet} \colon \M_{2}^{\op} \to \A_{\Z}$, the corresponding compactly-supported differential extension $\hat{h}^{\bullet}_{\cpt}(X)$ is the colimit of the composition functor $\hat{h}^{\bullet}_{\ppar} \circ \Cc_{X} \colon \K_{X} \to \A_{\Z}$:
\begin{equation}
  \hat{h}^{\bullet}_{\cpt}(X) := \colim \bigl( \, \hat{h}_{\ppar}^{\bullet} \circ \Cc_{X} \colon \K_{X} \to \A_{\Z} \, \bigr).
\end{equation}
Since $\hat{h}_{\ppar}^{\bullet}$ and $\Cc_{X}$ are both contravariant, the composition is covariant. Concretely, an element $\hat{\alpha}_{\cpt} \in \hat{h}^{\bullet}_{\cpt}(X)$ is an equivalence class $\hat{\alpha}_{\cpt} = [\hat{\alpha}]$, represented by a parallel class $\hat{\alpha} \in \hat{h}_{\ppar}^{\bullet}(X, X \setminus K)$, $K$ being a compact subset of $X$. The colimit is taken over the groups $\hat{h}_{\ppar}^{\bullet}(X, X \setminus K)$, where, if $K \subset H$, the corresponding morphism in the direct system is the pull-back $i_{KH}^{*} \colon \hat{h}_{\ppar}^{\bullet}(X, X \setminus K) \to \hat{h}_{\ppar}^{\bullet}(X, X \setminus H)$. If $X$ itself is compact, we get a canonical isomorphism $\hat{h}^{\bullet}(X) \simeq \hat{h}^{\bullet}_{\cpt}(X)$, identifying $\hat{\alpha} \in \hat{h}^{\bullet}(X) = \hat{h}^{\bullet}_{\ppar}(X, X \setminus X)$ with the class represented by $\hat{\alpha}$ itself for $K = X$.

We have defined the group associated to a manifold $X$. We can extend this definition to the category $\M'$ whose objects are smooth manifolds (the same of the category $\M$) and whose morphisms are \emph{open embeddings}. In fact, let us fix an open embedding $\iota \colon Y \hookrightarrow X$. For any compact subset $K \subset Y$, from the embedding of pairs $\iota_{K} \colon (Y, Y \setminus K) \hookrightarrow (X, X \setminus \iota(K))$, we get the induced morphism $\iota_{K}^{*} \colon \hat{h}_{\ppar}^{\bullet}(X, X \setminus \iota(K)) \to \hat{h}_{\ppar}^{\bullet}(Y, Y \setminus K)$. By the excision property of parallel classes \ref{ParExcision}, it follows that $\iota_{K}^{*}$ is an isomorphism. If $K \subset H$, the following diagram commutes:
\[\xymatrix{
  \hat{h}_{\ppar}^{\bullet}(Y, Y \setminus K) \ar[rr]^{(\iota_{K}^{*})^{-1}} \ar[d]^{i_{KH}^{*}} & & \hat{h}_{\ppar}^{\bullet}(X, X \setminus \iota(K)) \ar[d]^{i_{KH}^{*}} \\
  \hat{h}_{\ppar}^{\bullet}(Y, Y \setminus H) \ar[rr]^{(\iota_{H}^{*})^{-1}} & & \hat{h}_{\ppar}^{\bullet}(X, X \setminus \iota(H))
}\]
therefore we get an induced morphism between the colimits, i.e., $\iota_{*} \colon \hat{h}^{\bullet}_{\cpt}(Y) \to \hat{h}^{\bullet}_{\cpt}(X)$.

We can define as above the following functors:
\begin{align*}
  & h^{\bullet}_{\cpt}(X) := \colim \bigl( \, h^{\bullet} \circ \Cc_{X} \colon \K_{X} \to \A_{\Z} \, \bigr) & & \Omega^{\bullet}_{\cpt}(X) := \colim \bigl( \, \Omega^{\bullet} \circ \Cc_{X} \colon \K_{X} \to \A_{\Z} \, \bigr) \\
  & \Omega^{\bullet}_{\cl, \cpt}(X) := \colim \bigl( \, \Omega^{\bullet}_{\cl} \circ \Cc_{X} \colon \K_{X} \to \A_{\Z} \, \bigr) & & \Omega^{\bullet}_{\ex, \cpt}(X) := \colim \bigl( \, \Omega^{\bullet}_{\ex} \circ \Cc_{X} \colon \K_{X} \to \A_{\Z} \, \bigr) \\
  & \Omega^{\bullet}_{\ch, \cpt}(X) := \colim \bigl( \, \Omega^{\bullet}_{\ch} \circ \Cc_{X} \colon \K_{X} \to \A_{\Z} \, \bigr).
\end{align*}
A relative form $\omega \in \Omega^{\bullet}(X, X \setminus K)$ is defined as a form $\omega \in \Omega^{\bullet}(X)$ whose restriction to $X \setminus K$ vanishes. Since the pull-back $i_{KH}^{*} \colon \Omega^{\bullet}(X, X \setminus K) \to \Omega^{\bullet}(X, X \setminus H)$ is injective, an element of $\Omega^{\bullet}_{\cpt}(X)$ is a form $\omega$ on $X$ whose support is compact, i.e., such that $\omega$ vanishes on the complement of a compact subset of $X$. The same holds for closed forms. In the case of exact forms, an element of $\Omega^{\bullet}_{\ex, \cpt}(X)$ is a form $\omega$ on $X$ such that there exists a form \emph{with compact support} $\eta \in \Omega^{\bullet-1}_{\cpt}(X)$ satisfying the identity $\omega = d\eta$. Finally, an element of $\Omega^{\bullet}_{\ch, \cpt}(X)$ is a form $\omega$ on $X$ whose support $K$ is compact and such that the \emph{relative} de-Rham class $[\omega] \in H^{\bullet}_{\dR}(X, X \setminus K)$ belongs to the image of the Chern character. We also recall that colimit on abelian groups is an exact functor, hence the colimit of a quotient is the quocient of the colimits. For example, $H^{\bullet}_{\dR, \cpt}(X) = \Omega^{\bullet}_{\cl, \cpt}(X)/\Omega^{\bullet}_{\ex, \cpt}(X)$.

We easily get the following natural transformations of $\mathcal{A}_{\Z}$-valued functors:
\begin{itemize}
	\item $I_{\cpt} \colon \hat{h}^{\bullet}_{\cpt}(X) \to h^{\bullet}_{\cpt}(X)$;
	\item $R_{\cpt} \colon \hat{h}^{\bullet}_{\cpt}(X) \to \Omega_{\cl, \cpt}^{\bullet}(X; \h^{\bullet}_{\R})$;
	\item $a_{\cpt} \colon \Omega^{\bullet-1}_{\cpt}(X; \h^{\bullet}_{\R})/\IIm(d) \to \hat{h}^{\bullet}_{\cpt}(X)$.
\end{itemize}
These transformations satisfy axioms analogous to (A1)--(A3) of definition \ref{RelDiffExt}, replacing the functors and the natural transformations involved with the corresponding compactly-supported version. The proof of (A1) and (A2) is straightforward and the proof of (A3) is analogous to \cite[Theorem 4.2]{BBSS}.

If $\hat{h}^{\bullet}$ is multiplicative, the module structure stated in definition \ref{MultDiffExt} induces the following natural module structure:
\[\begin{split} \cdot\, \colon & \hat{h}^{m}_{\cpt}(X) \times \hat{h}^{n}(X) \to \hat{h}^{n+m}_{\cpt}(X) \\
& ([\hat{\alpha}], \hat{\beta}) \mapsto [\hat{\alpha} \cdot \hat{\beta}].
\end{split}\]
Here naturality consists in the commutativity of the following diagram for any open embedding $\iota \colon Y \to X$:
\[\xymatrix{
  \hat{h}^{m}_{\cpt}(Y) \times \hat{h}^{n}(X) \ar[rr]^{\id \times \iota^{*}} \ar[d]_{\iota_{*} \times \id} & & \hat{h}^{m}_{\cpt}(Y) \times \hat{h}^{n}(Y) \ar[r] & \hat{h}^{n+m}_{\cpt}(Y) \ar[d]^{\iota_{*}} \\
  \hat{h}^{m}_{\cpt}(X) \times \hat{h}^{n}(X) \ar[rrr] & & & \hat{h}^{n+m}_{\cpt}(X).
}\]
Finally, we have the following natural homomorphism, which in general is neither injective nor surjective:
\[\begin{split}
   S \colon & \hat{h}^{\bullet}_{\cpt}(X) \to \hat{h}^{\bullet}(X) \\
   & [\hat{\alpha}] \mapsto \pi_{X}^{*}\hat{\alpha},
\end{split}\]
where $\pi_{X} \colon (X, \emptyset) \hookrightarrow (X, X \setminus K)$ is the natural inclusion of pairs and the naturality consists in the commutativity of the following diagram for any open embedding $\iota \colon Y \to X$:
\[\xymatrix{
  \hat{h}^{\bullet}_{\cpt}(Y) \ar[r]^{S} \ar[d]_{\iota_{*}} & \hat{h}^{\bullet}(Y) \\
  \hat{h}^{\bullet}_{\cpt}(X) \ar[r]^{S} & \hat{h}^{\bullet}(X). \ar[u]_{\iota^{*}}
}\]

\SkipTocEntry \subsection{Relative cohomology with compact support}\label{RelDiffCohCpt}

We can also define the relative version of compactly-supported cohomology. We only treat the case of a closed embedding $i \colon A \hookrightarrow X$. We define the functor $\Cc_{X, A} \colon \K_{X} \to \M_{2}^{\op}$, assigning to an object $H$ the function $i'_{H} \colon (X \setminus H) \sqcup A \to X$, that acts as $i_{H}$ on $X \setminus H$ and as the embedding $i$ on $A$, and to a morphism $K \subset H$ the natural morphism $i'_{KH} \colon i'_{H} \to i'_{K}$ defined by the following diagram:
\begin{equation}\label{DiagCptXY}
\xymatrix{
  (X \setminus H) \sqcup A \ar@{^(->}[d] \ar[rr]^(.6){i'_{H}} & & X \ar@{=}[d] \\
  (X \setminus K) \sqcup A \ar[rr]^(.6){i'_{K}} & & X.
}
\end{equation}
We call $\hat{h}_{0}^{\bullet}(i'_{K})$ the group formed by the classes $\hat{\alpha}$ such that $\cov(\hat{\alpha})$ vanishes on $X \setminus H$ (but not necessarily on $A$). We define:
\begin{equation}
  \hat{h}^{\bullet}_{\cpt}(X, A) := \colim \bigl( \, \hat{h}_{0}^{\bullet} \circ \Cc_{X, A} \colon \K_{X} \to \A_{\Z} \, \bigr).
\end{equation}
We can extend this definition to the category $\M'_{2}$ whose objects are smooth manifold pairs and whose morphisms are open embeddings. Let us fix a morphism $\iota \colon (Y, B) \hookrightarrow (X, A)$. For any compact subset $K \subset Y$, from the embedding of pairs $\iota_{K} \colon (Y, Y \setminus K) \hookrightarrow (X, X \setminus \iota(K))$, we get the induced morphism ${\iota'}_{K}^{*} \colon \hat{h}_{0}^{\bullet}(i'_{K, X}) \to \hat{h}_{0}^{\bullet}(i'_{K, Y})$, where we denoted by $i'_{K, X}$ the morphism $i'_{K}$ on the manifold $X$. The excision property of parallel classes \ref{ParExcision} easily extends to $\hat{h}_{0}$, hence ${\iota'}_{K}^{*}$ is an isomorphism. If $K \subset H$, the following diagram commutes:
\[\xymatrix{
  \hat{h}_{\ppar}^{0}(i'_{K, Y}) \ar[rr]^{({\iota'}_{K}^{*})^{-1}} \ar[d]^{i_{KH}^{*}} & & \hat{h}_{\ppar}^{\bullet}(i'_{K, X}) \ar[d]^{i_{KH}^{*}} \\
  \hat{h}_{\ppar}^{\bullet}(i'_{H, Y}) \ar[rr]^{({\iota'}_{H}^{*})^{-1}} & & \hat{h}_{\ppar}^{\bullet}(i'_{H, X}),
}\]
therefore we get an induced morphism between the colimits, i.e., $\iota_{*} \colon \hat{h}^{\bullet}_{\cpt}(Y, B) \to \hat{h}^{\bullet}_{\cpt}(X, A)$.

\SkipTocEntry \subsection{Extension with differential long exact sequence}

In \cite{BT} the authors proposed two definitions of relative Cheeger-Simons character. The first one corresponds to the relative differential extension of singular cohomology that we described above. The second one is interesting because it fits into a long exact sequence completely formed by differential cohomology groups and it can be generalized to any cohomology theory in the following way (see \cite{FR3} for Deligne cohomology and \cite{FR2} for the Hopkins-Singer model of any cohomology theory):
\begin{equation}\label{TypeIVDef}
	\hat{\hat{h}}^{\bullet}(\rho) := \frac{\Ker\bigl(\hat{h}^{\bullet}(\rho) \to \hat{h}^{\bullet}(A)\bigr)}{\IIm\bigl(\hat{h}^{\bullet-1}(X) \to \hat{h}^{\bullet}(\rho)\bigr)}.
\end{equation}
The morphism appearing in the numerator is the composition between the map $\hat{h}^{\bullet}(\rho) \to \hat{h}^{\bullet}(X)$, appearing in \eqref{LongExact2}, and the pull-back $\rho^{*} \colon \hat{h}^{\bullet}(X) \to \hat{h}^{\bullet}(A)$. By axiom (A4), it sends a class $\hat{\alpha}$, with curvature $(\omega, \eta)$, to $a(\eta)$. The morphism appearing in the denominator is the composition between the pull-back $\rho^{*} \colon \hat{h}^{\bullet-1}(X) \to \hat{h}^{\bullet-1}(A)$ and the Bockstein map of the sequence \eqref{LongExact2}, described explicitly by formula \eqref{RhoBockstein}.

\begin{Theorem} Given a smooth map $\rho \colon A \to X$, the following sequence is exact:
\begin{equation}\label{LongExact4}
	\cdots \longrightarrow \hat{\hat{h}}^{\bullet-1}(\rho) \longrightarrow \hat{h}^{\bullet-1}(X) \longrightarrow \hat{h}^{\bullet-1}(A) \longrightarrow \hat{\hat{h}}^{\bullet}(\rho) \longrightarrow \cdots.
\end{equation}
\end{Theorem}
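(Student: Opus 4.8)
The plan is to write down the three homomorphisms of \eqref{LongExact4} explicitly and then deduce exactness at each of the three recurring positions directly from the already-established exactness of \eqref{LongExact2}, the only extra input being the vanishing $\pi^{*}\circ\Bock=0$ of remark \ref{BockPi0Both}; the explicit formula \eqref{RhoBockstein} is handy for bookkeeping but is not essential. Concretely, the map $\hat{h}^{\bullet-1}(X)\to\hat{h}^{\bullet-1}(A)$ is $\rho^{*}$; the map $\hat{h}^{\bullet-1}(A)\to\hat{\hat{h}}^{\bullet}(\rho)$ is the composition of $\Bock\colon\hat{h}^{\bullet-1}(A)\to\hat{h}^{\bullet}(\rho)$ with the quotient projection in \eqref{TypeIVDef} --- this is legitimate because $\rho^{*}\pi^{*}\Bock(\hat\alpha)=0$ by remark \ref{BockPi0Both}, so $\Bock(\hat\alpha)$ lies in the numerator; and the map $\hat{\hat{h}}^{\bullet-1}(\rho)\to\hat{h}^{\bullet-1}(X)$ is induced by $\pi^{*}$, which is well defined since a representative $\hat\alpha$ of a class of $\hat{\hat{h}}^{\bullet-1}(\rho)$ satisfies $\rho^{*}\pi^{*}\hat\alpha=0$ (so $\pi^{*}\hat\alpha\in\Ker\rho^{*}$) and since $\pi^{*}$ annihilates the denominator $\IIm(\Bock\circ\rho^{*})$, again by remark \ref{BockPi0Both}.

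Next I would verify exactness at the three positions; by reindexing it suffices to treat $\hat{h}^{\bullet-1}(X)$, $\hat{h}^{\bullet-1}(A)$ and $\hat{\hat{h}}^{\bullet}(\rho)$, and each of these amounts to one tautological inclusion plus one inclusion that is a single application of the exactness of \eqref{LongExact2}. \emph{At $\hat{h}^{\bullet-1}(X)$:} the inclusion $\IIm\subseteq\Ker$ is immediate from the previous paragraph; conversely, if $\rho^{*}\hat\beta=0$ then $I(\rho^{*}\hat\beta)=0$, so by exactness of \eqref{LongExact2} at $\hat{h}^{\bullet-1}(X)$ (the instance whose relevant segment is $\hat{h}^{\bullet-1}(\rho)\to\hat{h}^{\bullet-1}(X)\to h^{\bullet-1}(A)$) there is $\hat\alpha\in\hat{h}^{\bullet-1}(\rho)$ with $\pi^{*}\hat\alpha=\hat\beta$, and since $\rho^{*}\pi^{*}\hat\alpha=\rho^{*}\hat\beta=0$ the class $[\hat\alpha]$ lies in $\hat{\hat{h}}^{\bullet-1}(\rho)$ and is sent to $\hat\beta$. \emph{At $\hat{h}^{\bullet-1}(A)$:} $\Bock(\rho^{*}\hat\beta)$ lies tautologically in the denominator of \eqref{TypeIVDef}, so the composition vanishes; conversely, if $\Bock(\hat\alpha)=\Bock(\rho^{*}\hat\gamma)$ for some $\hat\gamma\in\hat{h}^{\bullet-1}(X)$, then $\hat\alpha-\rho^{*}\hat\gamma\in\Ker\Bock$, and exactness of \eqref{LongExact2} at $\hat{h}^{\bullet-1}(A)$ produces a flat class $\hat\delta\in\hat{h}^{\bullet-1}_{\fl}(X)$ with $\rho^{*}\hat\delta=\hat\alpha-\rho^{*}\hat\gamma$, whence $\hat\alpha=\rho^{*}(\hat\gamma+\hat\delta)$. \emph{At $\hat{\hat{h}}^{\bullet}(\rho)$:} $\pi^{*}\circ\Bock=0$ gives the composition zero; conversely, if $[\hat\mu]$ has $\pi^{*}\hat\mu=0$, exactness of \eqref{LongExact2} at $\hat{h}^{\bullet}(\rho)$ yields $\hat\alpha\in\hat{h}^{\bullet-1}(A)$ with $\Bock(\hat\alpha)=\hat\mu$, and then $\hat\alpha\mapsto[\hat\mu]$.

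The main obstacle I expect is organizational rather than conceptual: \eqref{LongExact2} is a whole family of long exact sequences, and at two of the three positions of \eqref{LongExact4} the group that occurs (the \emph{full} group $\hat{h}^{\bullet-1}(X)$, respectively $\hat{h}^{\bullet-1}(A)$) is strictly larger than the \emph{flat} group occurring at the corresponding place of \eqref{LongExact2}. One must therefore select the correct member of the family and, as in the $\hat{h}^{\bullet-1}(A)$ computation above, absorb the mismatch into an auxiliary class $\hat\gamma\in\hat{h}^{\bullet-1}(X)$ together with a flat correction $\hat\delta$. Once the three maps are correctly identified and the vanishing $\pi^{*}\circ\Bock=0$ is in hand, everything else reduces to routine diagram chasing, so no separate estimate or new construction is required.
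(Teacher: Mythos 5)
Your argument is correct and is essentially the intended one: the paper itself only cites \cite[Theorem 2.1]{FR3} for this proof, and that proof is precisely the diagram chase you carry out, namely deducing exactness of \eqref{LongExact4} position by position from the exactness of \eqref{LongExact2} (choosing the right member of the family at each spot) together with the vanishing $\pi^{*}\circ\Bock=0$ of remark \ref{BockPi0Both}. Your identification of the three maps and the absorption of the mismatch between the flat and full groups via the auxiliary classes $\hat{\gamma}$ and $\hat{\delta}$ match what is needed, so no gap remains.
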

\begin{proof} The same of \cite[Theorem 2.1]{FR3}.
\end{proof}

\begin{Rmk} In the papers \cite{FR3} and \cite{FR2} we also considered the extension that we called ``of type III''. For completeness, we briefly show how to reproduce it axiomatically. We need to recover the following exact sequences (see \cite{FR2}, sequences (3) and (5), and \cite{FR3}, sequences (6) and (8)):
\begin{small}
	\[\xymatrix{
	\cdots \ar[r] & \hat{h}^{\bullet-1}_{\fl}(X) \ar[r]^{\rho^{*}} \ar@{=}[d] & \hat{h}^{\bullet-1}(A) \ar[r]^{\beta} \ar@{->>}[d]^{I} & \hat{h}^{\bullet}(\rho) \ar[r] \ar@{->>}[d]^{\pi} & \hat{h}^{\bullet}(X) \ar[r] \ar@{=}[d] & h^{\bullet}(A) \ar[r] \ar@{=}[d] & \cdots \\
	\cdots \ar[r] & \hat{h}^{\bullet-1}_{\fl}(X) \ar[r] & h^{\bullet-1}(A) \ar[r]^{\beta'} & \hat{h}^{\bullet}_{III}(\rho) \ar[r] & \hat{h}^{\bullet}(X) \ar[r] & h^{\bullet}(A) \ar[r] & \cdots.
}\]
\end{small}

It follows by diagram chasing that we have an embedding $\frac{\hat{h}^{\bullet-1}(A)}{\rho^{*}\hat{h}^{\bullet-1}_{\fl}(X)} \hookrightarrow \hat{h}^{\bullet}(\rho)$ whose image contains $\Ker(\pi)$. Moreover, by exactness $\pi(\beta(\hat{\alpha})) = 0$ if and only if $\beta'(I(\hat{\alpha})) = 0$, if and only if $I(\hat{\alpha})$ is a restriction of a torsion class on $X$. Therefore, we define $\hat{h}^{\bullet}_{III}(\rho)$ as the quotient of $\hat{h}^{\bullet}(\rho)$ by the subgroup of $\frac{\hat{h}^{\bullet-1}(A)}{\rho^{*}\hat{h}^{\bullet-1}_{\fl}(X)}$ formed by the classes $[\hat{\alpha}]$ such that $I(\hat{\alpha})$ extends to a torsion class on $X$.
\end{Rmk}

\section{Orientation and integration}\label{SecOrientInt}

Following \cite[sec.\ 4.8-4.10]{Bunke}, we briefly recall the topological notions of orientation and integration and we extend them to the differential case, as we did in \cite[chap.\ 3]{FR}; actually, we slightly modify some definitions, in order to describe in a more unitary way the cases of manifolds without boundary, with boundary and (partially) with corners. We will use the expression ``compact manifold'' to indicate a compact smooth manifold with corners (in particular, with or without boundary). All of the statements can be easily generalized removing the compactness hypothesis, but this is the only case we will need.

\SkipTocEntry \subsection{Topological orientation of a vector bundle}

Let $X$ be a compact manifold and $\pi \colon E \to X$ a real vector bundle of rank $n$. The bundle $E$ is \emph{orientable} with respect to a multiplicative cohomology theory $h^{\bullet}$ if there exists a \emph{Thom class} $u \in h^{n}_{\cpt}(E)$ \cite[p.\ 253]{Rudyak}. We define the \emph{Thom isomorphism} $T \colon h^{\bullet}(X) \to h^{\bullet+n}_{\cpt}(E)$, $\alpha \mapsto u \cdot \pi^{*}\alpha$, and we call \emph{integration map} its inverse $\int_{E/X} \colon h^{\bullet}_{\cpt}(E) \to h^{\bullet-n}(X)$, $u \cdot \pi^{*}\alpha \mapsto \alpha$. If the characteristic of $\h^{\bullet}$ is $0$, the $n$-degree component of $\ch \,u$ defines an orientation of $E$ in the usual sense, hence it is possible to integrate a compactly-supported form fibre-wise. We define the \emph{Todd class} $\Td(u) := \int_{E/X} \ch \, u \in H^{0}_{\dR}(X; \h^{\bullet}_{\R})$. The following formula holds:
\begin{equation}\label{ChIntegral}
	\int_{E/X} \ch \, \alpha = \Td(u) \cdot \biggl( \ch \int_{E/X} \alpha \biggr).
\end{equation}
\begin{Lemma}[2x3 principle]\label{Rule23Top} Given two bundles $E, F \to X$, we call $p_{E} \colon E \oplus F \to E$ and $p_{F} \colon E \oplus F \to F$ the natural projections. Let $(u, v, w)$ be a triple of Thom classes on $E$, $F$ and $E \oplus F$ respectively, such that $w = p_{E}^{*}u \cdot p_{F}^{*}v$. Two elements of such a triple uniquely determine the third one.
\end{Lemma}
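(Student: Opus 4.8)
The plan is to use the multiplicativity of the Thom isomorphism together with the fact that, once a Thom class exists, it is determined by its restriction to a fibre, or equivalently by the unit element under the Thom isomorphism. First I would recall that a Thom class $u \in h^n_{\cpt}(E)$ is characterized by the property that the map $T_u \colon h^\bullet(X) \to h^{\bullet+n}_{\cpt}(E)$, $\alpha \mapsto u \cdot \pi_E^* \alpha$, is an isomorphism; in particular $u = T_u(1)$, so a Thom class is uniquely recovered from the associated Thom isomorphism. The key algebraic identity is that, if $u$ and $v$ are Thom classes for $E$ and $F$, then $w := p_E^* u \cdot p_F^* v \in h^{2n}_{\cpt}(E \oplus F)$ (here I am writing $n$ loosely for the two ranks) is a Thom class for $E \oplus F$, and the corresponding Thom isomorphism is the composition $T_w = T_v^{E\oplus F/E} \circ T_u$ under the identification $E \oplus F \cong p_E^* F$ as a bundle over $E$. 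This is the standard compatibility of Thom classes with direct sums.

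Given this, I would argue the three cases separately but identically in spirit. Suppose $u$ and $v$ are given; then $w = p_E^* u \cdot p_F^* v$ is forced, so $w$ is determined. Suppose instead $u$ and $w$ are given: restricting $w$ to a fibre $E_x \oplus F_x$ and using that $u$ restricts to a generator of the reduced cohomology of the one-point compactification of $E_x$, one sees that $w = p_E^* u \cdot p_F^* v$ forces $p_F^* v$ to be the unique class whose product with $p_E^* u$ equals $w$; more precisely, the Thom isomorphism $T_u \colon h^\bullet(E) \to h^{\bullet+n}_{\cpt}(p_E^*F)$ — now taken over the base $E$ rather than $X$ — is an isomorphism, and $w$ lies in its image, say $w = T_u(w')$ with $w' \in h^{\bullet}(E)$ uniquely determined; pulling $w'$ back along the zero section $X \hookrightarrow E$ (which is a homotopy equivalence, and $p_E^*F$ restricted to $X$ is $F$) yields $v$, and one checks $v$ is a Thom class because $w$ is, using that the zero-section pullback is an isomorphism on cohomology and is compatible with the multiplicative structure. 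The case where $v$ and $w$ are given is symmetric, swapping the roles of $E$ and $F$.

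The main obstacle, and the step deserving the most care, is the second and third cases: showing that when $w$ and one of $u, v$ are given, the remaining class not only exists but is a \emph{Thom class}, i.e.\ that the associated map is an isomorphism — not merely that it is \emph{some} class solving the equation $w = p_E^* u \cdot p_F^* v$. I expect to handle this by the observation that $T_w = T_{v} \circ T_u$ as maps on the appropriate cohomology groups (a short computation with the module structure and the projection formula $\pi^*(\alpha) \cdot (u \cdot \pi^*\beta) = u \cdot \pi^*(\alpha\beta)$), so that if $T_w$ and $T_u$ are isomorphisms then so is $T_v$, and conversely; this reduces the "Thom class" condition to an elementary two-out-of-three statement for isomorphisms of abelian groups, from which the lemma's name is of course taken. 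The identifications $E \oplus F \cong p_E^* F$ over $E$ and the homotopy invariance of $h^\bullet$ along the zero section are routine and I would not spell them out in detail.
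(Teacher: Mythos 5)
Your plan is sound, and it is essentially the standard argument; note that the paper itself gives no proof of this lemma but simply cites Rudyak (Prop.\ 1.10, p.\ 307), where the proof runs along exactly the lines you describe: multiplicativity of Thom classes under direct sums, plus a two-out-of-three statement for the composed Thom isomorphisms. So you are supplying the argument the paper outsources. One place in your write-up needs repair, though it is a notational slip rather than a conceptual gap. In the case where $u$ and $w$ are given, the relevant ``division'' map is not a Thom isomorphism over the base $E$: you should view $E\oplus F$ as the pullback bundle $\pi_{F}^{*}E$ over $F$ (via $p_{F}$), for which $p_{E}^{*}u$ is a Thom class, and use the compactly supported Thom isomorphism $\mu_{u}\colon h^{\bullet}_{\cpt}(F)\to h^{\bullet+n}_{\cpt}(E\oplus F)$, $\beta\mapsto p_{E}^{*}u\cdot p_{F}^{*}\beta$ (available as a colimit of the relative Thom isomorphisms over compact subsets of $F$). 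Then $v:=\mu_{u}^{-1}(w)$ exists and is unique directly in $h^{m}_{\cpt}(F)$, with no zero-section detour needed; and the identity $T_{w}=\mu_{u}\circ T_{v}$ (your projection-formula computation, up to the sign from graded commutativity when reordering $p_{E}^{*}u$ and $p_{F}^{*}v$) gives that $T_{v}$ is an isomorphism, i.e.\ that $v$ is genuinely a Thom class. The case where $v$ and $w$ are given is symmetric, as you say. With that correction your proof is complete and matches the cited source in substance.
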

For the proof see \cite[prop.\ 1.10 p.\ 307]{Rudyak}.

\SkipTocEntry \subsection{Topological orientation of smooth maps}

\begin{Def}\label{TopOrientedMap} A \emph{representative of an $h^{\bullet}$-orientation} of a smooth neat map between compact manifolds $f \colon Y \to X$ is the datum of:
\begin{itemize}
	\item a neat embedding $\iota \colon Y \hookrightarrow X \times \R^{N}$, for any $N \in \N$, such that $\pi_{X} \circ \iota = f$;
	\item a Thom class $u$ of the normal bundle $N_{\iota(Y)}(X \times \R^{N})$;
	\item a diffeomorphism $\varphi \colon N_{\iota(Y)}(X \times \R^{N}) \to U$, for $U$ a neat tubular neighbourhood of $\iota(Y)$ in $X \times \R^{N}$.
\end{itemize}
\end{Def}
We now introduce a suitable equivalence relation among representatives of orientations. Let us consider a representative $(J, U, \Phi)$ of an $h^{\bullet}$-orientation of $\id \times f: I \times Y \rightarrow I \times X$ and a neighbourhood $V \subset I$ of $\{0, 1\}$. We say that the representative is \emph{proper on $V$} if a vector $(x,v)_{(t,y)} \in N_{\iota(V \times Y)}(V \times X \times \R^{N})_{\iota(t,y)}$ is sent by $\Phi$ to a point $\Phi((x,v)_{(t,y)}) \in V \times X \times \R^{N}$ whose first component is $t$. This means that the following diagram commutes:
\begin{equation}\label{PropRepres2}
	\xymatrix{
	N_{\iota(V \times Y)}(V \times X \times \R^{N}) \ar[r]^(.77){\Phi} \ar[d]_{\pi_{N}} & U \ar[d]^{\pi_{I}} \\
	\iota(V \times Y) \ar[r]^(.6){\pi_{I}} & I.
}\end{equation}
In this case, calling $f_{0} := \id_{\{0\}} \times f$ and $f_{1} := \id_{\{1\}} \times f$, we can define the restrictions $(J, U, \Phi)\vert_{f_{0}}$ and $(J, U, \Phi)\vert_{f_{1}}$.

\begin{Def}\label{HomotopyOrientations} A \emph{homotopy} between two representatives $(\iota, u, \varphi)$ and $(\iota', u', \varphi')$ of an $h^{\bullet}$-orientation of $f \colon Y \to X$ is a representative $(J, U, \Phi)$ of an $h^{\bullet}$-orientation of $\id \times f \colon I \times Y \to I \times X$, such that:
\begin{itemize}
	\item $(J, U, \Phi)$ is proper over a neighbourhood $V \subset I$ of $\{0, 1\}$;
	\item $(J, U, \Phi)\vert_{f_{0}} = (\iota, u, \varphi)$ and $(J, U, \Phi)\vert_{f_{1}} = (\iota', u', \varphi')$.
\end{itemize}
\end{Def}
On the trivial bundle $X \times \R^{N}$ there is a canonical Thom class, defined in the following way. On $pt \times \R^{N}$, whose compactification is $pt \times S^{N}$, we put the class $u_{0} \in \tilde{h}^{N}(S^{N})$ corresponding to the suspension of $1 \in \h^{0}$. Then, we put on $X \times \R^{N}$ the class $\pi_{\R^{N}}^{*}u_{0}$.
\begin{Def}\label{EquivStab} Let us consider a representative $(\iota, u, \varphi)$, with $\iota: Y \hookrightarrow X \times \R^{N}$.
\begin{itemize}
	\item For any $L \in \N$, we define $\iota' \colon Y \hookrightarrow X \times \R^{N + L}$ by $\iota'(y) := (\iota(y), 0)$. Then $N_{\iota'(Y)}(X \times \R^{N+L}) \simeq N_{\iota(Y)}(X \times \R^{N}) \oplus (\iota(Y) \times \R^{L})$.
	\item We put the canonical orientation $u_{0}$ on the trivial bundle $\iota(Y) \times \R^{L}$, and the orientation $u'$ induced by $u$ and $u_{0}$ on $N_{\iota'(Y)}(X \times \R^{N+L})$.
	\item Finally, for $v_{y} \in N_{\iota(Y)}(X \times \R^{N})$ and $w \in \R^{L}$, we define $\varphi'(v_{y}, w) := (\varphi(v_{y}), w) \in X \times \R^{N+L}$.
\end{itemize}
The representative $(\iota', u', \varphi')$ is called \emph{equivalent by stabilization} to $(\iota, u, \varphi)$. \end{Def}

\begin{Def} A \emph{$h^{\bullet}$-orientation} on $f \colon Y \to X$ is an equivalence class $[\iota, u, \varphi]$ of representatives, up to the equivalence relation generated by homotopy and stabilization.
\end{Def}

Because of the uniqueness up to homotopy and stabilization of the tubular neighbourhood and of the diffeomorphism with the normal bundle, the class $[\iota, u, \varphi]$ does not depend on $\varphi$, hence we denote it by $[\iota, u]$. Moreover, any two embeddings $\iota$ and $\iota'$ become equivalent by homotopy and stabilization, therefore the meaningful datum is $u$.

\begin{Rmk}\label{OrInducedBdFunction} If $X$ and $Y$ are manifolds with boundary, an orientation on $f \colon Y \to X$ canonically induces an orientation on $\partial f := f\vert_{\partial Y} \colon \partial Y \to \partial X$. In fact, fixing a representative $(\iota, u, \varphi)$ for $f$, by neatness $\iota$ restricts to $\iota' \colon \partial X \hookrightarrow \partial Y \times \R^{N}$. The normal bundle and the tubular neighbourhood, being neat, restrict to the boundary too, hence we get a representative $(\iota', u', \varphi')$ for $\partial f$. Any homotopy of representatives, being neat, determines a homotopy on the boundary, therefore the resulting orientation of $\partial f$ is well-defined. A similar remark holds when $X$ and $Y$ have corners, but we need to be more careful in defining $\partial f$. We omit the details, since they are irrelevant for the present paper.
\end{Rmk}

\begin{Def}\label{OrientedMapComposition} Let $f \colon Y \to X$ and $g \colon X \to W$ be $h^{\bullet}$-oriented neat maps, with orientations $[\iota, u]$ and $[\kappa, v]$, where $\iota \colon Y \hookrightarrow X \times \R^{N}$ and $\kappa \colon X \hookrightarrow W \times \R^{L}$. There is a naturally induced $h^{\bullet}$-orientation on $g \circ f \colon Y \to W$, that we denote by $[\kappa, v][\iota, u]$, defined in the following way:
\begin{itemize}
	\item we choose the embedding $\xi = (\kappa, \id_{\R^{N}}) \circ \iota \colon Y \hookrightarrow W \times \R^{L+N}$;
	\item on the normal bundle $N_{\xi(Y)}(W \times \R^{L+N}) \simeq N_{\iota(Y)}(X \times \R^{N}) \oplus N_{\kappa(X) \times \R^{N}}(W \times \R^{L+N})\vert_{\xi(Y)} \simeq N_{\iota(Y)}(X \times \R^{N}) \oplus (\pi^{*}_{L}N_{\kappa(X)}(W \times \R^{L}))\vert_{\xi(Y)}$, for $\pi_{L} \colon \R^{L+N} \to \R^{L}$, we put the Thom class $w$ induced from the ones on $N_{\iota(Y)}(X \times \R^{N})$ and $N_{\kappa(X)}(W \times \R^{L})$.
\end{itemize}
We set $[\kappa, v][\iota, u] := [\xi, w]$.
\end{Def}
The following lemma is a consequence of lemma \ref{Rule23Top} and of the uniqueness up to homotopy and stabilization of the embedding $\iota$.
\begin{Lemma}[2x3 principle]\label{Rule23TopMaps} Let $f \colon Y \to X$ and $g \colon X \to W$ be $h^{\bullet}$-oriented neat maps, with orientations $[\iota, u]$ and $[\kappa, v]$, and let $[\xi, w] := [\kappa, v][\iota, u]$ be the orientation induced on $g \circ f$. Two elements of the triple $([\iota, u], [\kappa, v], [\xi, w])$ uniquely determine the third one.
\end{Lemma}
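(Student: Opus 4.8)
The plan is to deduce the statement from the $2\times 3$ principle for Thom classes of vector bundles, Lemma \ref{Rule23Top}, after using the uniqueness up to homotopy and stabilization of the embeddings in order to bring the three orientations into a compatible normal form. Since an $h^{\bullet}$-orientation of a smooth neat map depends only on the Thom class and not on the chosen neat embedding, I may assume the representatives are set up as in Definition \ref{OrientedMapComposition}, i.e.\ $\iota \colon Y \hookrightarrow X \times \R^{N}$, $\kappa \colon X \hookrightarrow W \times \R^{L}$ and $\xi = (\kappa, \id_{\R^{N}}) \circ \iota \colon Y \hookrightarrow W \times \R^{L+N}$. With this choice one has the canonical splitting over $\xi(Y) \simeq Y$
\[
	N_{\xi(Y)}\bigl(W \times \R^{L+N}\bigr) \;\simeq\; N_{\iota(Y)}\bigl(X \times \R^{N}\bigr) \;\oplus\; \bigl(\pi_{L}^{*} N_{\kappa(X)}(W \times \R^{L})\bigr)\big|_{\xi(Y)},
\]
and, writing $p_{1}, p_{2}$ for the two projections of this Whitney sum, by construction $w = p_{1}^{*} u \cdot p_{2}^{*} \bar{v}$, where $\bar{v}$ is the pull-back of the Thom class $v$ along $\xi(Y) \simeq Y \xrightarrow{f} X$.

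Now the three implications. That $[\iota, u]$ and $[\kappa, v]$ determine $[\xi, w]$ is Definition \ref{OrientedMapComposition} itself; here I would only need to verify that the induced class is independent of the auxiliary choices, which follows because a homotopy of representatives of $[\iota, u]$ or $[\kappa, v]$ induces a homotopy of the data defining $[\xi, w]$, and stabilizations are compatible with the composition thanks to the behaviour of Lemma \ref{Rule23Top} on the canonical Thom class of a trivial factor. To recover $[\iota, u]$ from $[\kappa, v]$ and $[\xi, w]$, I would apply Lemma \ref{Rule23Top} to the splitting above over the base $\xi(Y) \simeq Y$: the pair $(\bar{v}, w)$ determines a unique Thom class $u$ on the first summand $N_{\iota(Y)}(X \times \R^{N})$ with $w = p_{1}^{*} u \cdot p_{2}^{*} \bar{v}$, and since $u$ already lives on the normal bundle of $\iota(Y)$ this directly gives back the orientation of $f$, with no further descent to perform. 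To recover $[\kappa, v]$ from $[\iota, u]$ and $[\xi, w]$, Lemma \ref{Rule23Top} applied to the same splitting yields a unique Thom class $\bar{v}$ on the second summand; by the normal form this $\bar{v}$ is the pull-back of an orientation of $g$, and it remains to argue that $\bar{v}$ in fact determines $v$ itself.

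The routine part is the bookkeeping with homotopies and stabilizations showing that each reconstruction descends to equivalence classes of orientations rather than to representatives; this goes through because Lemma \ref{Rule23Top} is natural and respects the canonical Thom class of a trivial factor, so the moves identifying two representatives of one orientation translate into moves identifying the corresponding representatives of the other two. The main obstacle I anticipate is precisely the last step above: passing from the pulled-back Thom class $\bar{v}$ on $\bigl(\pi_{L}^{*} N_{\kappa(X)}(W \times \R^{L})\bigr)|_{\xi(Y)}$ to a genuine Thom class $v$ on $N_{\kappa(X)}(W \times \R^{L})$ over $X$, and showing that $v$ is uniquely pinned down. This is exactly where the uniqueness of the embedding $\iota$ up to homotopy and stabilization enters: one is free to replace $\iota$ by a sufficiently spread-out neat embedding into a large $X \times \R^{N}$, for which the restriction along which $v$ is pulled back is faithful on Thom classes, so that $\bar{v}$ indeed recovers $v$. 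I would devote most of the write-up to making this point precise; everything else is a direct application of Lemma \ref{Rule23Top} together with Definition \ref{OrientedMapComposition}.
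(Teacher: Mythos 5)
Your reduction --- normalize the representatives as in Definition \ref{OrientedMapComposition}, split $N_{\xi(Y)}(W\times\R^{L+N})$ as a Whitney sum over $Y$, and apply Lemma \ref{Rule23Top} together with the uniqueness of the embeddings up to homotopy and stabilization --- is exactly the route the paper indicates; the paper itself offers nothing beyond this one-line reduction and a citation to Karoubi. The first two implications are correct as you present them: well-definedness of the composed orientation, and the recovery of $[\iota,u]$ from $[\kappa,v]$ and $[\xi,w]$, where both $u$ and $w$ are Thom classes of bundles over the \emph{same} base $Y$, so Lemma \ref{Rule23Top} applies verbatim with no descent problem. That second implication is moreover the only one used later in the paper (Lemmas \ref{Rule23Subm} and \ref{Rule23ManBd}: orienting $f$ from orientations of $X$ and $Y$).

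The third implication is where your argument has a genuine gap, and the proposed fix does not repair it. The bundle carrying $\bar v$ is pulled back from $X$ along the map $\xi(Y)\to\kappa(X)$ induced by the projection, and under the identification $\xi(Y)\simeq Y$ this map is $\kappa\circ f$ for \emph{every} choice of neat embedding $\iota$; choosing a ``more spread-out'' embedding or enlarging $N$ cannot change this. What you actually need is injectivity of $f^{*}$ on Thom classes of $N_{\kappa(X)}(W\times\R^{L})$, and this fails in general. Concretely, take $h^{\bullet}=K^{\bullet}$, $f\colon \{pt\}\hookrightarrow S^{2}$ and $g=p_{S^{2}}$: two Thom classes of the stable normal bundle of $S^{2}$ differing by the unit $H\in K^{0}(S^{2})$ (the Hopf bundle, with $H\neq 1$ but $H\vert_{pt}=1$) give inequivalent orientations of $g$ that compose with the orientation of $f$ to the same orientation of $g\circ f$. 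So the direction $([\iota,u],[\xi,w])\Rightarrow[\kappa,v]$ cannot be established by your argument (nor by the cited reduction) without an additional hypothesis such as injectivity of $f^{*}$ on the relevant classes; a correct write-up must either add such a hypothesis or restrict the lemma to the two directions you do prove.
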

For the proof see \cite[theorem 5.24 p.\ 233]{Karoubi}. Finally, let us consider two smooth neat maps $f, g \colon Y \to X$, with representatives $(\iota, u, \varphi)$ and $(\iota', u', \varphi')$ respectively. A \emph{homotopy} between $(\iota, u, \varphi)$ and $(\iota', u', \varphi')$ is defined as in \ref{HomotopyOrientations}, replacing $\id \times f$ with a smooth neat homotopy $F \colon I \times Y \to I \times X$ between $f$ and $g$.\footnote{A homotopy is usually defined as a function $F' \colon I \times Y \to X$, but we consider the function $F \colon I \times Y \to I \times X$, $(t, y) \mapsto (t, F'(t, y))$.} The existence of such a homotopy only depends on the equivalence classes $[\iota, u]$ and $[\iota', u']$, therefore we can give the following definition.

\begin{Def}\label{HomotopyOriented} Two smooth neat $h^{\bullet}$-oriented maps $f, g \colon Y \to X$ are \emph{homotopic as $h^{\bullet}$-oriented maps} if there exists a homotopy between any two representatives of the orientations of $f$ and $g$.
\end{Def}

\begin{Rmk}\label{RmkHomBd} We remark that, since a homotopy must be neat from $I \times Y$ to $I \times X$ by definition, it restricts to the boundary, thus it is a homotopy of maps of pairs $f, g \colon (Y, \partial Y) \to (X, \partial X)$. In particular, it induces a homotopy between $\partial f$ and $\partial g$.
\end{Rmk}

\SkipTocEntry \subsection{Topological orientation of smooth manifolds}

In this subsection we discuss separately the cases of manifolds without boundary, with boundary and (partially) with corners.
\begin{Def}\label{OrientedManifold} An $h^{\bullet}$-orientation of a manifold without boundary $X$ is an $h^{\bullet}$-orientation of the map $p_{X} \colon X \to \{pt\}$.
\end{Def}
By definition, giving an orientation to $p_{X}$ means fixing an orientation $u$ on the (stable) normal bundle of $X$; when $u$ has been fixed, we set $\Td(X) := \Td(u)$.

Given a manifold with boundary $X$, we recall that a \emph{defining function for the boundary} is a smooth neat map $\Phi \colon X \to I$ such that $\partial X = \Phi^{-1}\{0\}$ (by neatness, it follows that $\Phi^{-1}\{1\} = \emptyset$).
\begin{Def}\label{OrientedManifoldBoundary} An \emph{$h^{\bullet}$-orientation} on a smooth manifold with boundary $X$ is a homotopy class of $h^{\bullet}$-oriented defining functions for the boundary (see def.\ \ref{HomotopyOriented}).
\end{Def}
It easy to verify that any two defining functions are neatly homotopic, therefore the only meaningful datum is again the Thom class $u$.
\begin{Rmk}\label{OrManBdRmk} We set $\Hh^{N} := \{(x_{1}, \ldots, x_{N}) \in \R^{N}: x_{N} \geq 0\}$ (it is the local model of an $n$-dimensional manifold with boundary). Definition \ref{OrientedManifoldBoundary} is equivalent to fixing a neat embedding $\iota \colon X \hookrightarrow \Hh^{N}$, a Thom class on the normal bundle and a difeomorphism with a neat tubular neighbourhood, up to homotopy and stabilization. In fact, if we fix an $h^{\bullet}$-orientation $[\iota, u, \varphi]$ of a defining map $\Phi \colon X \to I$, following definition \ref{OrientedManifoldBoundary}, we have that $\iota \colon X \hookrightarrow I \times \R^{N}$. Since $\Phi^{-1}\{1\} = \emptyset$, the image of $\iota$ is contained in $[0, 1) \times \R^{N} \simeq \Hh^{N+1}$. This confirms that definition \ref{OrientedManifoldBoundary} is natural.
\end{Rmk}

\begin{Rmk}\label{OrInducedBd} It follows from remark \ref{OrInducedBdFunction} that an orientation on a manifold with boundary canonically induces an orientation on the boundary. In particular, let us fix a defining function $\Phi \colon X \to I$ and an orientation $[\iota, u]$, with $\iota \colon X \hookrightarrow I \times \R^{N}$. We call $i_{\partial X} \colon \partial X \hookrightarrow X$ the natural embedding and we set $\iota' := \iota \circ i_{\partial X} \colon \partial X \hookrightarrow \{0\} \times \R^{N}$ and $u' := u\vert_{\partial X}$. We get the orientation $[\iota', u']$ of $\partial X$.
\end{Rmk}

\begin{Rmk}\label{BdPartCase} If we apply definition \ref{OrientedManifoldBoundary} to a manifold without boundary (which is a particular case of a manifold with boundary), we get a function $\Phi \colon X \to I$ whose image is contained in $(0, 1)$, the latter being diffeomorphic to $\R$. A representative $(\iota, u, \varphi)$ of an orientation of $\Phi$ is constructed from the embedding $\iota \colon X \hookrightarrow (0,1) \times \R^{N} \simeq \{pt\} \times \R^{N+1}$, therefore it can be thought of as a representative of an orientation of $p_{X} \colon X \to \{pt\}$. Any two such defining functions are homotopic, $(0, 1)$ being contractible, and a homotopy between them determines a homotopy of representatives of an orientation of $p_{X} \colon X \to \{pt\}$. This shows that definition \ref{OrientedManifold} is (equivalent to) a particular case of definition \ref{OrientedManifoldBoundary}.
\end{Rmk}

With respect to manifold with corners, we just consider the following case, that will be useful in order to define the generalized Cheeger-Simons characters.
\begin{Def}\label{DefSplitBd} A \emph{manifold with split boundary} is a triple of manifolds $(X, M, N)$ such that:
\begin{itemize}
	\item $X$ is a manifold with corners and $M$ and $N$ are manifolds with boundary;
	\item $\partial X = M \cup N$, $M$ and $N$ being embedded sub-manifolds (not neat in general) of $\partial X$ of codimension $0$;
	\item $\partial M = \partial N = M \cap N$;
	\item $\{\textnormal{corners of } X\} \subset M \cap N$.
\end{itemize}
\end{Def}
A \emph{defining function for the boundary} of $(X, M, N)$ is a smooth neat map $\Phi \colon X \to I \times I$ such that $M = \Phi^{-1}(I \times \{0\})$ and $N = \Phi^{-1}(\{0\} \times I)$ (by neatness, it follows that $\Phi^{-1}(I \times \{1\}) = \Phi^{-1}(\{1\} \times I) = \emptyset$). The definition of \emph{$h^{\bullet}$-orientation} is analogous to \ref{OrientedManifoldBoundary}. Remark \ref{OrManBdRmk} keeps on holding, replacing $\Hh^{N}$ by $\Hh^{N,2} := \{(x_{1}, \ldots, x_{N}) \in \R^{N}: x_{N-1}, x_{N} \geq 0\}$. Remark \ref{OrInducedBd} holds in the sense that an orientation of $(X, M, N)$ induces an orientation of $M$ and one of $N$, with defining functions (up to homotopy) $\Phi_{M} := \Phi\vert_{M} \colon M \to I \times \{0\} \approx I$ and $\Phi_{N} := \Phi\vert_{N} \colon N \to \{0\} \times I \approx I$ respectively. Finally, remark \ref{BdPartCase} holds too, in the sense that, setting $N = \emptyset$, we recover the notion of orientation for a manifold with boundary.

\SkipTocEntry \subsection{Topological integration}

Let $f \colon Y \to X$ be a neat map. If we fix a representative $(\iota, u, \varphi)$ of an orientation of $f$, the \emph{Gysin map} $f_{!} \colon h^{\bullet}(Y) \to h^{\bullet - n}(X)$, for $n = \dim Y - \dim X$, is defined as:
\begin{equation}\label{GysinMapTop}
	f_{!}(\alpha) = \int_{\R^{N}}i_{*}\varphi_{*}(u \cdot \pi^{*}\alpha),
\end{equation}
$i$ being the natural inclusion of the tubular neighbourhood $i \colon U \hookrightarrow X \times \R^{N}$, inducing a push-forward in compactly-supported cohomology. The Gysin map $f_{!}$ only depends on the $h^{\bullet}$-orientation $[\iota, u]$, not on the specific representative (\cite[theorem 5.24 p.\ 233]{Karoubi}, \cite[sec.\ 4.9]{Bunke}). If $Y$ and $X$ are oriented, because of the 2x3 principle a map $f \colon Y \to X$ inherits an orientation, hence the Gysin map is well-defined.

\begin{Theorem}\label{fpTopProperties} Let $f \colon Y \to X$ be a neat $h^{\bullet}$-oriented map  of compact manifolds.
\begin{itemize}
	\item The Gysin map $f_{!}$ only depends on the homotopy class of $f$ as an $h^{\bullet}$-oriented map.
	\item The Gysin map is a morphism of $h^{\bullet}(X)$-modules, i.e., given $\alpha \in h^{\bullet}(Y)$ and $\beta \in h^{\bullet}(X)$:
	\[f_{!}(\alpha \cdot f^{*}\beta) = f_{!}\alpha \cdot \beta.
\]
	\item Given another neat $h^{\bullet}$-oriented map $g \colon Z \to Y$ and endowing $f \circ g$ of the naturally induced orientation (def.\ \ref{OrientedMapComposition}), we have $(f \circ g)_{!} = f_{!} \circ g_{!}$.
\end{itemize}
\end{Theorem}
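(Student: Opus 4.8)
The plan is to establish the three assertions in turn, adapting the standard arguments (cf.\ \cite{Karoubi}, \cite{Bunke}) to the present setting with boundary and corners; throughout I may use that $f_{!}$ is already known to depend only on the $h^{\bullet}$-orientation $[\iota,u]$ and not on the chosen representative.

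For the homotopy invariance, I would start from a neat homotopy $F\colon I\times Y\to I\times X$ between $f$ and $g$ together with a homotopy of representatives of their orientations as in Definition \ref{HomotopyOriented}; by Remark \ref{RmkHomBd} this is in particular a neat homotopy of maps of pairs, and the representative of the $h^{\bullet}$-orientation of $F$ realizing it is proper over a neighbourhood of $\{0,1\}\subset I$. Hence its restrictions to the two ends recover the given representatives of $f$ and $g$, and so the Gysin map $F_{!}$ satisfies the base-change identities $\iota_{\epsilon}^{*}\circ F_{!} = f_{!}\circ j_{\epsilon}^{*}$ ($\epsilon=0$) and $\iota_{\epsilon}^{*}\circ F_{!} = g_{!}\circ j_{\epsilon}^{*}$ ($\epsilon=1$) for the end-inclusions $\iota_{\epsilon}$ of $X$ and $j_{\epsilon}$ of $Y$; this is just naturality of \eqref{GysinMapTop} along the (transverse) squares cutting out the ends of the cylinder, which in turn follows from the compatibility of the tubular-neighbourhood data with such restrictions. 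Since $\iota_{0}^{*}=\iota_{1}^{*}$ and $j_{0}^{*}=j_{1}^{*}$ on cohomology, both being inverse to the projection, we conclude $f_{!}=g_{!}$.

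For the projection formula, I would read off from \eqref{GysinMapTop} that $f_{!}$ is the composition of three maps, each of which is a morphism of modules over the appropriate ring: the Thom isomorphism of the normal bundle $N_{\iota(Y)}(X\times\R^{N})$, $\alpha\mapsto u\cdot\pi^{*}\alpha$, which is an $h^{\bullet}(Y)$-module map by construction; the push-forward $i_{*}\circ\varphi_{*}$ along the open embedding of the tubular neighbourhood, an $h^{\bullet}(X\times\R^{N})$-module map; and the integration $\int_{\R^{N}}$, an $h^{\bullet}(X)$-module map by the multiplicative properties of the canonical Thom class $u_{0}$ on $X\times\R^{N}$. The map $\pi_{X}\circ i\circ\varphi$ from the normal bundle to $X$ and the map $f\circ\pi$ agree after precomposition with the zero section, hence induce the same homomorphism on cohomology since $\pi$ is a homotopy equivalence; this makes the three module structures compatible along the relevant pullbacks, and a short diagram chase then yields $f_{!}(\alpha\cdot f^{*}\beta)=f_{!}(\alpha)\cdot\beta$.

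For the functoriality under composition, I would unwind Definition \ref{OrientedMapComposition}: it builds the orientation of the composite from an embedding obtained by juxtaposing the two given ones, a normal bundle that splits as a direct sum of the inner normal bundle with a pullback of the outer one, and a Thom class which is the product of the pullbacks of the two factor Thom classes, this last being forced by the $2\times3$ principle (Lemmas \ref{Rule23Top} and \ref{Rule23TopMaps}). The identity $(f\circ g)_{!}=f_{!}\circ g_{!}$ then amounts to the statement that integrating first over the fibres of the inner map and then over those of the outer map equals integrating at once over the fibres of the composite, which follows because the product structure of the composite Thom class makes its Thom isomorphism factor as the two factor Thom isomorphisms in succession, and because the tubular neighbourhood of the composite embedding can be chosen, up to the homotopy and stabilization equivalences, as the image of the inner tubular neighbourhood inside the outer one. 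I expect this last point --- matching up the iterated push-forwards of the tubular-neighbourhood data --- to be the main obstacle; it is resolved using the uniqueness of tubular neighbourhoods up to neat homotopy and stabilization together with the representative-independence of $f_{!}$, and it is precisely here that the presence of boundary and corners must be watched, since all embeddings and homotopies have to stay neat throughout.
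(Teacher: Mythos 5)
The paper gives no proof of this theorem beyond the citation ``For the proof see \cite[theorem 5.24 p.\ 233]{Karoubi}'', and your sketch is a faithful reconstruction of that standard argument: homotopy invariance via the Gysin map of the cylinder restricted to its two ends (using properness of the representative near $\{0,1\}$ so that the data restricts, as in \eqref{RestrictionBoundaryGysin}), the projection formula by factoring $f_{!}$ through the Thom isomorphism, the open push-forward and $\int_{\R^{N}}$, and functoriality via the product Thom class and the 2x3 principle together with uniqueness of tubular neighbourhoods up to neat homotopy and stabilization. Your proposal is correct and takes essentially the same route as the proof the paper relies on.
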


For the proof see \cite[theorem 5.24 p.\ 233]{Karoubi}. If $X$ and $Y$ are manifolds with boundary, considering remark \ref{OrInducedBdFunction}, one has, for every $\alpha \in h^{\bullet}(Y)$:
\begin{equation}\label{RestrictionBoundaryGysin}
	(\partial f)_{!}(\alpha\vert_{\partial Y}) = (f_{!}\alpha)\vert_{\partial X}.
\end{equation}
Such a formula is due to the fact that all the structures involved in the definition of $(\partial f)_{!}$ are the restrictions to the boundary of the corresponding structures for $f_{!}$. A similar result holds when $X$ and $Y$ have corners.

\SkipTocEntry \subsection{Differential orientation of a vector bundle}

If we consider a differential refinement $\hat{h}^{\bullet}$ of $h^{\bullet}$, in order to orient a vector bundle one just has to refine a Thom class $u$ to a \emph{differential Thom class}.
\begin{Def} Let $\hat{h}^{\bullet}$ be a multiplicative differential extension of $h^{\bullet}$. A \emph{differential Thom class} of $E$ is a compactly supported class $\hat{u} \in \hat{h}^{n}_{\cpt}(E)$ such that $I(\hat{u}) \in h^{n}_{\cpt}(E)$ is a Thom class for $h^{\bullet}$.
\end{Def}
Using the product $\hat{h}^{\bullet}_{\cpt}(E) \otimes_{\Z} \hat{h}^{\bullet}(E) \rightarrow \hat{h}^{\bullet}_{\cpt}(E)$, we define the differential Thom morphism, which is not surjective any more, as $\hat{\alpha} \mapsto \hat{u} \cdot \pi^{*}\hat{\alpha}$. We define the \emph{Todd class} $\Td(\hat{u}) := \int_{E/X} R(\hat{u}) \in \Omega^{0}_{\cl}(X; \h^{\bullet}_{\R})$. It follows that $I(\Td(\hat{u})) = \Td(I(\hat{u}))$.

\begin{Def}\label{HomotopyThom} Let $\pi_{X} \colon I \times X \to X$ be the natural projection and $i_{0}, i_{1} \colon X \to I \times X$ the natural embeddings. Two differential Thom classes $\hat{u}, \hat{u}' \in \hat{h}^{n}_{\cpt}(E)$ are \emph{homotopic} if there exists a Thom class $\hat{U} \in \hat{h}^{n}_{\cpt}(\pi_{X}^{*}E)$ such that $i_{0}^{*}\hat{U} = \hat{u}$, $i_{1}^{*}\hat{U} = \hat{u}'$ and $\Td(\hat{U}) = \pi_{X}^{*}\Td(\hat{u})$.
\end{Def}

\begin{Lemma}[2x3 principle]\label{Rule23Diff} Given two bundles $E, F \to X$, with projections $p_{E} \colon E \oplus F \to E$ and $p_{F} \colon E \oplus F \to F$, we consider a triple $(\hat{u}, \hat{v}, \hat{w})$ of differential Thom classes on $E$, $F$ and $E \oplus F$ respectively, such that $\hat{w}$ is homotopic to $p_{E}^{*}\hat{u} \cdot p_{F}^{*}\hat{v}$. Two elements of such a triple uniquely determine the third one up to homotopy.
\end{Lemma}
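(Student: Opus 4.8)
The plan is to reduce the differential $2\times 3$ principle to its topological counterpart (Lemma \ref{Rule23Top}) together with a rigidity statement, which I would isolate first: \emph{any two differential Thom classes of a bundle $E\to X$ that refine the same topological Thom class and have the same Todd form are homotopic in the sense of Definition \ref{HomotopyThom}.} To prove this, note that since the two classes $\hat u_0,\hat u_1$ have the same image under $I$, the compactly-supported version of axiom (A3) gives $\hat u_1=\hat u_0+a_\cpt(\theta)$ for some $\theta\in\Omega^{\rk E-1}_\cpt(E;\h^\bullet_\R)$, and equality of Todd forms forces, via (A1) and Stokes on the (boundaryless, compactly supported) fibres, $\int_{E/X}d\theta=d\int_{E/X}\theta=0$. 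Writing $\rho\colon\pi_X^*E\to E$ for the bundle map over $\pi_X\colon I\times X\to X$ and choosing a smooth $\chi\colon I\to I$ with $\chi(0)=0$, $\chi(1)=1$, I would set $\hat U:=\rho^*\hat u_0+a_\cpt(\chi(t)\,\rho^*\theta)$, obtained by pulling $\hat u_0$ back to the bundle $\pi_X^*E$ and adding an interpolating $a_\cpt$-term. One checks that $I(\hat U)=\rho^*I(\hat u_0)$ is a Thom class (pullback of a Thom class along a fibrewise isomorphism), that the restrictions to $i_0,i_1$ are $\hat u_0$ and $\hat u_1$ since $\chi(0)=0$, $\chi(1)=1$, and — the crucial point — that $\Td(\hat U)=\pi_X^*\Td(\hat u_0)$: in $R(\hat U)=\rho^*R(\hat u_0)+\chi'(t)\,dt\wedge\rho^*\theta+\chi(t)\,\rho^*d\theta$ the first term fibre-integrates to $\pi_X^*\Td(\hat u_0)$, the second has vanishing fibre integral because $dt$ restricts to zero on the fibres of $\pi_X^*E\to I\times X$, and the third contributes $\chi(t)\,\pi_X^*\!\int_{E/X}d\theta=0$. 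Hence $\hat U$ is a homotopy as required.

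Next I would record two elementary facts. Homotopic differential Thom classes have the same Todd form (built into Definition \ref{HomotopyThom}) and the same image under $I$ (the two sections $i_0,i_1$ of $\pi_X$ induce the same map in compactly supported cohomology, by homotopy invariance along the proper retraction $\rho$). And the Todd form is multiplicative, $\Td(p_E^*\hat u\cdot p_F^*\hat v)=\Td(\hat u)\cdot\Td(\hat v)$, by multiplicativity of the curvature (Definition \ref{MultDiffExt}) and Fubini for fibre integration, exactly as for the topological Todd class; moreover $\Td(\hat u)$ is invertible in $\Omega^0_\cl(X;\h^\bullet_\R)$, its degree-zero part being the unit.

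With these in place the three cases are routine. If $\hat u,\hat v$ are given, I take $\hat w:=p_E^*\hat u\cdot p_F^*\hat v$, which refines $p_E^*I(\hat u)\cdot p_F^*I(\hat v)$, a Thom class by Lemma \ref{Rule23Top}; any competing admissible third element, being homotopic to $p_E^*\hat u\cdot p_F^*\hat v$, refines $I(\hat w)$ with Todd form $\Td(\hat w)$, hence agrees with $\hat w$ up to homotopy by the rigidity lemma. If $\hat u,\hat w$ are given, Lemma \ref{Rule23Top} produces the unique topological Thom class $v$ on $F$ with $p_E^*I(\hat u)\cdot p_F^*v=I(\hat w)$; I then prescribe the Todd form $\Td_0:=\Td(\hat u)^{-1}\Td(\hat w)$, whose de Rham class equals $\Td(v)$ by multiplicativity of the topological Todd class and Lemma \ref{Rule23Top}. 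Starting from any refinement $\hat v_0$ of $v$, the form $\Td_0-\Td(\hat v_0)$ is exact, say $d\beta$; since fibre integration $\int_{F/X}\colon\Omega^{\rk F-1}_\cpt(F;\h^\bullet_\R)\to\Omega^{-1}(X;\h^\bullet_\R)$ is surjective (use a fibrewise compactly supported top-degree form in each negative-degree component of $\h^\bullet_\R$, patched by a partition of unity on the compact base), write $\beta=\int_{F/X}\theta$ and set $\hat v:=\hat v_0+a_\cpt(\theta)$, so $I(\hat v)=v$ and $\Td(\hat v)=\Td_0$. Then $p_E^*\hat u\cdot p_F^*\hat v$ refines $I(\hat w)$ with Todd form $\Td(\hat u)\Td_0=\Td(\hat w)$, hence is homotopic to $\hat w$ by the rigidity lemma; and any admissible $\hat v_1$ has $I(\hat v_1)=v$ (again by Lemma \ref{Rule23Top}) and $\Td(\hat v_1)=\Td_0$, hence is homotopic to $\hat v$. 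The case of given $\hat v,\hat w$ is identical with $E$ and $F$ interchanged.

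The main obstacle is the rigidity lemma, and within it the verification $\Td(\hat U)=\pi_X^*\Td(\hat u_0)$: this is exactly where the equality-of-Todd-forms hypothesis — which is part of the very notion of homotopy of differential Thom classes — is indispensable, and it is why the interpolation along $I$ can be arranged without disturbing the Todd form. The only other slightly delicate point is the surjectivity of fibre integration used in the last two cases, which is genuinely nontrivial only because $\h^\bullet_\R$ may be nonzero in negative degrees; everything else is a formal consequence of the compactly-supported versions of (A1)–(A3) and the topological $2\times 3$ principle.
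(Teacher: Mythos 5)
The paper does not actually prove this lemma in the text: it defers to \cite[prob.\ 4.187]{Bunke} and \cite[cor.\ 3.19]{FR}. Your argument is a correct, self-contained proof that follows essentially the same route as those references: all the content is concentrated in your rigidity lemma (two differential Thom classes with equal underlying topological Thom class and equal Todd forms are homotopic), after which the three cases reduce to the topological $2\times 3$ principle (Lemma \ref{Rule23Top}) plus the multiplicativity and invertibility of $\Td$. The two delicate points you single out --- that in $\Td(\hat{U})$ the $dt$-term dies because $dt$ vanishes on the fibres and the $\rho^{*}d\theta$-term dies because $\int_{E/X}d\theta=d\int_{E/X}\theta=0$, and the surjectivity of fibre-wise integration used to move $\Td(\hat{v}_{0})$ to the prescribed representative $\Td_{0}$ of its de Rham class --- are exactly where care is needed, and your treatment of both is sound.
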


\begin{Lemma}\label{CanOrientTriv} On the trivial bundle $X \times \R^{N}$ there is a canonical homotopy class of differential Thom classes, refining the canonical topological one.
\end{Lemma}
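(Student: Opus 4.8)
The plan is to derive existence from the surjectivity of $I_{\cpt}$ and to derive canonicity by proving that \emph{any} two differential refinements of the canonical topological Thom class on $X\times\R^{N}$ are homotopic in the sense of Definition~\ref{HomotopyThom}; the resulting unique homotopy class is then exactly the one the statement asks for, and naturality is automatic once all refinements of the (natural) topological class are known to be homotopic.

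First, since $X$ is compact, $E:=X\times\R^{N}$ is exhausted by the cofinal family of compact subsets $X\times\bar B_{r}$ ($r>0$), so $\hat{h}^{N}_{\cpt}(E)=\colim_{r}\hat{h}^{N}_{\ppar}\bigl(E,E\setminus(X\times\bar B_{r})\bigr)$ is eventually constant and one may argue with representatives in $\hat{h}^{N}_{\ppar}\bigl(E,E\setminus(X\times\bar B_{r})\bigr)$ at a fixed large $r$, where $\hat{h}^{\bullet}_{\ppar}$ is a functor on $\M_{2}^{\op}$. Let $u_{0}=\pi_{\R^{N}}^{*}\bar u_{0}\in h^{N}_{\cpt}(E)$ be the canonical topological Thom class recalled above, $\bar u_{0}\in\tilde h^{N}(S^{N})$ being the $N$-fold suspension of $1\in\h^{0}$. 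Because the compactly supported transformations satisfy the analogue of axiom (A3) (Section~\ref{DiffCohCpt}), $I_{\cpt}$ is surjective, so I would pick some $\hat u_{0}\in\hat{h}^{N}_{\cpt}(E)$ with $I_{\cpt}(\hat u_{0})=u_{0}$. By the analogue of (A2), $R_{\cpt}(\hat u_{0})$ is a closed, fibrewise compactly supported $N$-form whose de Rham class is $\ch(u_{0})$, i.e.\ a fibrewise Thom form; thus $\hat u_{0}$ is a differential Thom class (and $\Td(\hat u_{0})=\int_{E/X}R_{\cpt}(\hat u_{0})$ is the locally constant function $1$, refining $\Td(u_{0})$). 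Moreover $\hat u_{0}$ is automatically parallel, since a compactly supported curvature vanishes on $E\setminus(X\times\bar B_{r})$.

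Next I would show that every refinement $\hat u_{0}'$ with $I_{\cpt}(\hat u_{0}')=u_{0}$ is homotopic to $\hat u_{0}$. By exactness of the (A3)-analogue, $\hat u_{0}'-\hat u_{0}=a_{\cpt}([\mu])$ for some $\mu\in\Omega^{N-1}_{\cpt}(E;\h^{\bullet}_{\R})$. Choose a smooth $\chi\colon I\to I$ with $\chi(0)=0$, $\chi(1)=1$, let $q\colon I\times E\to E$ be the projection, so that $I\times E=\pi_{X}^{*}E$ with $\pi_{X}\colon I\times X\to X$, and set
\[
\hat U:=q^{*}\hat u_{0}+a_{\cpt}\bigl([\,\chi(t)\,q^{*}\mu\,]\bigr)\in\hat{h}^{N}_{\cpt}(\pi_{X}^{*}E).
\]
Then $I_{\cpt}(\hat U)=q^{*}u_{0}$ is a topological Thom class, so $\hat U$ is a differential Thom class on $\pi_{X}^{*}E$; restricting to the two ends gives $i_{0}^{*}\hat U=\hat u_{0}$ (using $\chi(0)=0$) and $i_{1}^{*}\hat U=\hat u_{0}+a_{\cpt}([\mu])=\hat u_{0}'$. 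Finally $R_{\cpt}(\hat U)=q^{*}R_{\cpt}(\hat u_{0})+d(\chi(t)\,q^{*}\mu)$, and since $q^{*}\mu$ has fibre-degree at most $N-1$, strictly below the fibre dimension $N$, fibrewise integration annihilates every term of $d(\chi(t)\,q^{*}\mu)$ (Notation~\ref{NotIntegral}); hence $\Td(\hat U)=\pi_{X}^{*}\int_{E/X}R_{\cpt}(\hat u_{0})=\pi_{X}^{*}\Td(\hat u_{0})$, which is precisely the normalization demanded by Definition~\ref{HomotopyThom}. Therefore $\hat u_{0}\sim\hat u_{0}'$, the homotopy class of $\hat u_{0}$ is well defined, it refines the canonical topological Thom class, and (all refinements of the latter being homotopic) it is compatible with pull-back along open embeddings of the base.

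The one slightly delicate point is the verification that the interpolating class $\hat U$ is again a differential Thom class \emph{and} meets the Todd-class normalization of Definition~\ref{HomotopyThom}; both statements reduce to the elementary observation, recalled in Notation~\ref{NotIntegral}, that fibrewise integration kills forms whose fibre-degree is below the fibre dimension.
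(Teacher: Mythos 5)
There is a genuine gap in the canonicity half of your argument, which is the actual content of the lemma. Your strategy is to show that \emph{every} differential refinement of the canonical topological Thom class $u_{0}$ is homotopic to the one you picked; this is false for a general multiplicative theory. Definition \ref{HomotopyThom} forces homotopic Thom classes to have \emph{equal Todd forms on the nose}: if $\hat{U}$ is a homotopy from $\hat{u}$ to $\hat{u}'$, then $\Td(\hat{u}') = \int_{E/X} i_{1}^{*}R(\hat{U}) = i_{1}^{*}\Td(\hat{U}) = i_{1}^{*}\pi_{X}^{*}\Td(\hat{u}) = \Td(\hat{u})$. On the other hand, two refinements of $u_{0}$ differ by $a_{\cpt}([\mu])$ and their Todd forms differ by $\int_{E/X} d\mu = d\int_{E/X}\mu$. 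Your claim that this vanishes because ``$\mu$ has fibre-degree at most $N-1$'' overlooks the $\Z$-grading of the coefficients: $\mu \in \Omega^{N-1}_{\cpt}(E;\h^{\bullet}_{\R})$ has components in $\Omega^{N-1-k}_{\cpt}(E)\otimes\h^{k}_{\R}$ for \emph{all} $k$, so for $k\leq -1$ the form-degree is $\geq N$ and the fibre-degree can equal $N$; by Stokes the fibrewise integral of $d(\chi(t)q^{*}\mu)$ is $d\bigl(\chi(t)\,\pi_{X}^{*}\int_{E/X}\mu\bigr)$ with $\int_{E/X}\mu \in \bigoplus_{j\geq 0}\Omega^{j}(X)\otimes\h^{-1-j}_{\R}$, which is neither zero nor closed in general. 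For complex $K$-theory ($\h^{-2}_{\R}\neq 0$) and $\dim X\geq 2$ one easily produces $\mu$ with $d\int_{E/X}\mu \neq 0$, so the two refinements have different Todd forms and cannot be homotopic. Your argument is correct for ordinary cohomology, where $\h^{k}_{\R}=0$ for $k\neq 0$, but the lemma is stated for arbitrary theories.

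Consequently, picking an arbitrary preimage of $u_{0}$ under $I_{\cpt}$ does not yield a canonical homotopy class; the homotopy class of a differential Thom class is a strictly finer invariant than the underlying topological class (indeed, even two refinements with the same curvature need not be homotopic, since their difference $a(\mu)$ with $d\mu=0$ is obstructed by the class of $\int_{E/X}\mu$ modulo the image of $\ch$). The paper's proof is delegated to \cite[prob.\ 4.187]{Bunke} and \cite[cor.\ 3.19]{FR}, where the class is \emph{constructed}: one builds it first over a point with normalized curvature (a fixed product of compactly supported bump $1$-forms, so that the Todd form is exactly $1$), characterizes it there, e.g.\ through compatibility with the $S^{1}$-integration, and then pulls back along $\pi_{\R^{N}}$; uniqueness up to homotopy is proved only within that normalized family, not among all refinements of $u_{0}$. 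Your existence step (surjectivity of $I_{\cpt}$ plus the compactly supported analogue of (A2)) and your verification of the endpoint conditions $i_{0}^{*}\hat{U}=\hat{u}_{0}$, $i_{1}^{*}\hat{U}=\hat{u}_{0}'$ are fine; the Todd normalization is where the proof breaks.
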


For the proofs see \cite[prob.\ 4.187]{Bunke} and \cite[cor.\ 3.19]{FR}.

\SkipTocEntry \subsection{Differential orientation of smooth maps}

We define a \emph{representative of an $\hat{h}^{\bullet}$-orientation of $f$} as in definition \ref{TopOrientedMap}, but considering a differential Thom class. Fixing such a representative $(\iota, \hat{u}, \varphi)$, the Gysin map $f_{!} \colon \hat{h}^{\bullet}(Y) \to \hat{h}^{\bullet - n}(X)$ is well-defined via formula \eqref{GysinMapTop} applied to differential classes. Moreover, we have the following natural map on differential forms, called \emph{curvature map}:
\begin{equation}\label{CurvatureMap}
\begin{split}
	R_{(\iota, \hat{u}, \varphi)} \colon & \Omega^{\bullet}(Y; \h^{\bullet}_{\R}) \to \Omega^{\bullet-n}(X; \h^{\bullet}_{\R}) \\
	& \omega \mapsto \int_{X \times \R^{N}/X} i_{*}\varphi_{*}(R(\hat{u}) \wedge \pi^{*}\omega).
\end{split}
\end{equation}
The following definition is analogous to \ref{HomotopyOrientations}, but it takes into account the curvature map.
\begin{Def}\label{HomotopyDiffOrientations} A \emph{homotopy} between two representatives $(\iota, \hat{u}, \varphi)$ and $(\iota', \hat{u}', \varphi')$ of an $\hat{h}^{\bullet}$-orientation of $f \colon Y \to X$ is a representative $(J, \hat{U}, \Phi)$ of an $\hat{h}^{\bullet}$-orientation of $\id \times f \colon I \times Y \to I \times X$, such that:
\begin{itemize}
	\item $(J, I(\hat{U}), \Phi)$ is proper over a neighbourhood $V \subset I$ of $\{0, 1\}$;
	\item $(J, \hat{U}, \Phi)\vert_{f_{0}} = (\iota, \hat{u}, \varphi)$ and $(J, \hat{U}, \Phi)\vert_{f_{1}} = (\iota', \hat{u}', \varphi')$;
	\item $\pi_{X}^{*} \circ R_{(\iota, \hat{u}, \varphi)} = R_{(J, \hat{U}, \Phi)} \circ \pi_{Y}^{*}$.
\end{itemize}
\end{Def}
In particular, it follows that $R_{(\iota, \hat{u}, \varphi)} = R_{(\iota', \hat{u}', \varphi')}$. Thanks to lemma \ref{CanOrientTriv}, we define the equivalence of representatives up to stabilization as in the topological framework (def.\ \ref{EquivStab}).
\begin{Def} An \emph{$\hat{h}^{\bullet}$-orientation} on $f \colon Y \to X$ is an equivalence class $[\iota, \hat{u}, \varphi]$ of representatives, up to the equivalence relation generated by homotopy and stabilization.
\end{Def}
\begin{Rmk} By construction the curvature map \eqref{CurvatureMap} only depends on the orientation, not on the specific representative, therefore we will denote it by $R_{[\iota, \hat{u}, \varphi]}$.
\end{Rmk}

Remark \ref{OrInducedBdFunction} keeps on holding. Now we need to extend to the differential setting the fundamental properties of topological orientation, in particular definition \ref{OrientedMapComposition} and lemma \ref{Rule23TopMaps}. This can be done adding the following hypothesis, that will force us to focus on submersions. Let us consider a vector $v_{y} \in N_{\iota(Y)}(X \times \R^{N})_{\iota(y)}$. It is sent by $\varphi$, as defined in \ref{TopOrientedMap}, to a point $\varphi(v_{y}) \in X \times \R^{N}$. If $f$ is a submersion, we can require that the first component of $\varphi(v_{y})$ is $f(y)$. This means that the following diagram commutes:
\begin{equation}\label{PropRepres}
	\xymatrix{
	N_{\iota(Y)}(X \times \R^{N}) \ar[r]^(.7){\varphi} \ar[d]_{\pi_{N}} & U \ar[d]^{\pi_{X}} \\
	\iota(Y) \ar[r]^{\pi_{X}} & X.
}\end{equation}
\begin{Def}\label{PropDef} A representative of an $\hat{h}^{\bullet}$-orientation of a smooth neat map $f \colon Y \to X$ is \emph{proper} if diagram \eqref{PropRepres} commutes.\footnote{The same definition could be given for a representative of an $h^{\bullet}$-orientation, but it is more relevant in the differential framework.}
\end{Def}

\begin{Lemma} If $(\iota, \hat{u}, \varphi)$ is proper, then:
\begin{equation}\label{IntProp}
	R_{[\iota, \hat{u}, \varphi]}(\omega) = \int_{Y/X} \Td(\hat{u}) \wedge \omega.
\end{equation}
\end{Lemma}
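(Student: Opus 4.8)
The plan is to unwind the definition \eqref{CurvatureMap} of the curvature map and use the properness hypothesis \eqref{PropRepres} to turn the integral over $X \times \R^{N}$ into an iterated fibre-wise integration along $E \xrightarrow{\pi} Y \xrightarrow{f} X$, where $\pi \colon E \to Y$ is the bundle projection of the normal bundle $E := N_{\iota(Y)}(X \times \R^{N})$, regarded as a vector bundle over $Y$. With this identification, $\Td(\hat u) = \int_{E/Y} R(\hat u) \in \Omega^{0}_{\cl}(Y; \h^{\bullet}_{\R})$ is just the definition of the differential Todd class of $E$. Observe first that the integrand $R(\hat u) \wedge \pi^{*}\omega$ has compact support on $E$ (because $\hat u \in \hat h^{\bullet}_{\cpt}(E)$ and $Y$ is compact), so that $i_{*}\varphi_{*}\bigl(R(\hat u) \wedge \pi^{*}\omega\bigr)$ is supported inside the tubular neighbourhood $U$.

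The first step is to record what properness buys us: the commutativity of \eqref{PropRepres} says exactly that $\pi_{X} \circ \varphi = f \circ \pi$, i.e.\ $\varphi$ is an isomorphism of fibre bundles over $X$, from $E$ with projection $f \circ \pi$ (fibre over $x$ equal to $E\vert_{f^{-1}(x)} \cong \R^{N}$) onto $U$ with projection $\pi_{X}\vert_{U}$. Hence, by naturality of fibre-wise integration of forms under fibre-preserving diffeomorphisms, together with the fact that extending a form supported in $U$ by zero and integrating over the $\R^{N}$-fibres of $X \times \R^{N} \to X$ equals integrating over the fibres of $\pi_{X}\vert_{U}$, one reduces to $R_{[\iota, \hat u, \varphi]}(\omega) = \int_{E/X}\bigl(R(\hat u) \wedge \pi^{*}\omega\bigr)$, the integral being now fibre-wise integration for the submersion $f \circ \pi \colon E \to X$ (that $f$ is a submersion is implicit, since only then can a proper representative exist). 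The second step is to factor $f \circ \pi$ through $Y$: by functoriality of fibre-wise integration with respect to composition of submersions, $\int_{E/X}\alpha = \int_{Y/X}\bigl(\int_{E/Y}\alpha\bigr)$ (with the sign conventions of Notation~\ref{NotIntegral}), and by the projection formula, since $\pi^{*}\omega$ is pulled back from $Y$, $\int_{E/Y}\bigl(R(\hat u) \wedge \pi^{*}\omega\bigr) = \bigl(\int_{E/Y} R(\hat u)\bigr) \wedge \omega = \Td(\hat u) \wedge \omega$. Combining the two steps gives $R_{[\iota, \hat u, \varphi]}(\omega) = \int_{Y/X} \Td(\hat u) \wedge \omega$; that the left-hand side depends only on the orientation is the remark following Definition~\ref{HomotopyDiffOrientations}.

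I expect the only real obstacle to be the bookkeeping of signs and orientations: one must keep track of the parity of the normal rank $N-n$ against the fibre dimension $n$, reconcile the $\int$ and $\tint$ conventions of Notation~\ref{NotIntegral} across the composition law and the projection formula, and check that the Thom class's orientation enters consistently with $\varphi$ in the change of variables of the first step. This is eased by two observations: $\Td(\hat u)$ has total degree $0$, so the sign in the projection formula is harmless, and the formula \eqref{CurvatureMap} together with the chosen orientation convention for $\int_{E/X}$ is arranged precisely so that the composition law for fibre integration contributes no sign; thus the care is confined to that single point. The computation is the differential analogue of the topological identity \eqref{ChIntegral}, and it parallels the argument in \cite[chap.\ 3]{FR}.
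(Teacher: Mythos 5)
The paper gives no proof of this lemma, deferring to \cite[Lemma 3.25]{FR} and \cite[Problems 4.221 and 4.224]{Bunke}; your argument --- properness identifies the integral over the $\R^{N}$-fibres of $X\times\R^{N}$ with fibre-wise integration along $E\to Y\to X$ for $E=N_{\iota(Y)}(X\times\R^{N})$, after which the composition law and the projection formula give $\int_{Y/X}\Td(\hat u)\wedge\omega$ --- is exactly the computation carried out there, and it mirrors the manipulations the paper itself performs in establishing formula \eqref{RBdTodd} and Lemma \ref{GysinC10}. Your proposal is correct, and your identification of the sign bookkeeping in the projection formula as the one genuinely delicate point is apt, given that the paper's conventions (Notation \ref{NotIntegral}) are themselves only pinned down up to the parity remark in its footnote.
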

\begin{Corollary}\label{HomotTHomOr} Let $(\iota, \hat{u}, \varphi)$ and $(\iota, \hat{u}', \varphi')$ be two \emph{proper} representatives of an $\hat{h}^{\bullet}$-orientation of a smooth neat map $f \colon Y \to X$, such that $\hat{u}$ and $\hat{u}'$ are homotopic as differential Thom classes. Then the two representatives are homotopic (independently of $\varphi$ and $\varphi'$), thus $[\iota, \hat{u}, \varphi] = [\iota, \hat{u}', \varphi']$.
\end{Corollary}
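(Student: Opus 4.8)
The plan is to produce an explicit homotopy of representatives in the sense of Definition \ref{HomotopyDiffOrientations} by taking the product of everything with $\id_I$ in the first two slots and using the given homotopy of differential Thom classes in the third. Concretely, I would set $J := \id_I \times \iota \colon I \times Y \hookrightarrow I \times X \times \R^N$, which is a neat embedding covering $\id_I \times f$ and satisfies the compatibility $\pi_{I\times X} \circ J = \id_I \times f$; its normal bundle is canonically $\pi_Y^{*}N_{\iota(Y)}(X \times \R^N)$, where $\pi_Y \colon I \times Y \to Y$ is the projection. For the differential Thom class I take the class $\hat U \in \hat h^{n}_{\cpt}\bigl(\pi_Y^{*}N_{\iota(Y)}(X\times\R^N)\bigr)$ furnished by the hypothesis that $\hat u$ and $\hat u'$ are homotopic as differential Thom classes (Definition \ref{HomotopyThom}), so that $i_0^{*}\hat U = \hat u$, $i_1^{*}\hat U = \hat u'$ and $\Td(\hat U) = \pi_Y^{*}\Td(\hat u)$. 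For the tubular-neighbourhood datum I would first reduce to the case $\varphi = \varphi'$: any two proper tubular neighbourhoods of $\iota(Y)$ are neatly isotopic, and such an isotopy, arranged to be proper for each parameter value and constant near the endpoints, is itself a homotopy of representatives connecting $(\iota,\hat u,\varphi)$ to $(\iota,\hat u,\varphi')$ — the curvature condition being automatic there since, by the previous lemma, the curvature map of a proper representative does not depend on $\varphi$. After this reduction I take $\Phi := \id_I \times \varphi$, which is a proper tubular-neighbourhood diffeomorphism for $J$ over all of $I$, so in particular diagram \eqref{PropRepres} — and a fortiori diagram \eqref{PropRepres2} — commutes.

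It then remains to check that $(J,\hat U,\Phi)$ meets all the conditions of Definition \ref{HomotopyDiffOrientations}. The restrictions to the ends are exactly $(\iota,\hat u,\varphi)$ and $(\iota,\hat u',\varphi')$ by construction, and $(J,I(\hat U),\Phi)$ is proper over all of $I$ because $J$ and $\Phi$ are products with $\id_I$. The only condition carrying genuine differential content is the curvature identity $\pi_X^{*}\circ R_{(\iota,\hat u,\varphi)} = R_{(J,\hat U,\Phi)}\circ \pi_Y^{*}$. Here I would invoke formula \eqref{IntProp} twice: since both $(\iota,\hat u,\varphi)$ and $(J,\hat U,\Phi)$ are proper, one has $R_{(\iota,\hat u,\varphi)}(\omega) = \int_{Y/X}\Td(\hat u)\wedge\omega$ and $R_{(J,\hat U,\Phi)}(\xi) = \int_{(I\times Y)/(I\times X)}\Td(\hat U)\wedge\xi$. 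Using $\Td(\hat U) = \pi_Y^{*}\Td(\hat u)$ and the standard fact that fibre-wise integration along $I\times Y \to I\times X$ of a form pulled back from $Y \to X$ equals the pull-back along $I\times X \to X$ of its fibre integral along $Y\to X$, one computes $R_{(J,\hat U,\Phi)}(\pi_Y^{*}\omega) = \pi_X^{*}\int_{Y/X}\bigl(\Td(\hat u)\wedge\omega\bigr) = \pi_X^{*}R_{(\iota,\hat u,\varphi)}(\omega)$, as required. Thus $(J,\hat U,\Phi)$ is a homotopy between the two representatives, and by definition of an $\hat h^{\bullet}$-orientation $[\iota,\hat u,\varphi] = [\iota,\hat u',\varphi']$.

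I expect the one delicate point to be the reduction away from the choice of $\varphi$: one must produce a neat isotopy between two proper tubular-neighbourhood diffeomorphisms for the same $\iota$ that stays proper throughout and is constant near the endpoints, so that it qualifies as a homotopy of representatives with a trivial curvature check. This is exactly where the clause ``independently of $\varphi$ and $\varphi'$'' is used; it rests on the neat tubular-neighbourhood uniqueness theorem already invoked in the topological part, together with the observation — again from formula \eqref{IntProp} — that properness makes the curvature map insensitive to $\varphi$. Everything else is a routine verification that forming products with $\id_I$ preserves neatness, properness, and the relevant compatibilities.
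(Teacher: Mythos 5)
Your argument is correct and coincides with the intended one: the paper gives no proof of this corollary, deferring to \cite[Lemma 3.25]{FR} and \cite[Problems 4.221 and 4.224]{Bunke}, where the homotopy of representatives is built exactly as you describe, from the homotopy $\hat{U}$ of differential Thom classes (definition \ref{HomotopyThom}) together with a product tubular neighbourhood, the curvature condition reducing via formula \eqref{IntProp} and $\Td(\hat{U})=\pi^{*}\Td(\hat{u})$ to the compatibility of fibre-wise integration with base change. The only genuinely delicate point, which you correctly isolate, is that the isotopy between the two proper tubular-neighbourhood diffeomorphisms must itself be proper (fibre-wise over $X$), so that \eqref{IntProp} applies to the connecting representative and not merely to its endpoints; this is the parametrized form of the tubular-neighbourhood uniqueness underlying lemma \ref{SubProp}, and it is available here because the existence of proper representatives already forces $f$ to be a submersion.
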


For the proof see \cite[Lemma 3.25]{FR} and \cite[Problems 4.221 and 4.224]{Bunke}.

\begin{Lemma}\label{SubProp} Let $f \colon Y \to X$ be a neat submersion. For any neat embedding $\iota \colon Y \hookrightarrow X \times \R^{N}$ and any differential Thom class $\hat{u}$ of the normal bundle, there exists a \emph{proper} representative $(\iota, \hat{u}, \varphi)$ of an $\hat{h}^{\bullet}$-orientation of $f$.
\end{Lemma}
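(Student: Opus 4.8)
The plan is to exploit the fact that $f$ is a submersion in order to build the tubular neighbourhood ``vertically'', i.e.\ moving points only in the $\R^{N}$-directions, so that diagram \eqref{PropRepres} commutes by construction; this is the relative-to-a-fibration refinement of the usual tubular-neighbourhood theorem and runs parallel to \cite[Lemma 3.25]{FR} and \cite[Problem 4.224]{Bunke}.

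First I would write $\iota = (f, \iota_{2}) \colon Y \hookrightarrow X \times \R^{N}$, where $\iota_{2} \colon Y \to \R^{N}$ is smooth, and introduce the vertical subbundle $\mathcal{V} := \ker d\pi_{X} \subset T(X \times \R^{N})$, whose fibre at each point is $\{0\} \oplus \R^{N}$. Along $\iota(Y)$ one has $d\iota(TY) \cap \mathcal{V}|_{\iota(Y)} = d\iota(\ker df)$, which is a smooth subbundle of rank $\dim Y - \dim X$ because $f$, being a submersion, has constant rank. I would then equip $\mathcal{V}|_{\iota(Y)} = \iota(Y) \times \R^{N}$ with the standard Euclidean fibre metric and take $\nu$ to be the orthogonal complement of $d\iota(\ker df)$ inside it. A dimension count, together with the fact that $d\pi_{X} \circ d\iota = df$ is surjective, shows that $\nu$ is a smooth subbundle of $T(X \times \R^{N})|_{\iota(Y)}$ complementary to $d\iota(TY)$ and contained in $\mathcal{V}|_{\iota(Y)}$; via the canonical projection it is identified with the normal bundle $N_{\iota(Y)}(X \times \R^{N})$, so the given differential Thom class $\hat{u}$ may be regarded as living on $\nu$.

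Next I would produce $\varphi$ by fibrewise translation. Identifying each $\nu_{y}$ with a linear subspace of $\R^{N}$, the map $\Psi_{0}(v_{y}) := (f(y),\, \iota_{2}(y) + v_{y})$ has invertible differential along the zero section: surjectivity of $d\Psi_{0}$ follows from $d\iota(TY) + \nu = T(X \times \R^{N})|_{\iota(Y)}$, and injectivity from $\nu \cap d\iota(TY) = 0$ together with a dimension count. By the tubular-neighbourhood theorem for neat submanifolds of manifolds with corners, $\Psi_{0}$ restricts to a diffeomorphism from an open neighbourhood of the zero section onto an open neighbourhood $U$ of $\iota(Y)$ in $X \times \R^{N}$; composing with a fibrewise radial diffeomorphism of $N_{\iota(Y)}(X \times \R^{N}) \cong \nu$ onto that neighbourhood of the zero section yields the required $\varphi \colon N_{\iota(Y)}(X \times \R^{N}) \to U$.

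Finally I would check that $U$ is a \emph{neat} tubular neighbourhood and that $(\iota, \hat{u}, \varphi)$ is proper. Neatness rests on $\nu \subset \mathcal{V}$: over $\partial Y$ — and over each corner stratum of $Y$, which by neatness of $f$ is carried into the corresponding stratum of $X$ — the subbundle $\nu$ is tangent to $\partial X \times \R^{N}$ (resp.\ to the analogous stratum of $X \times \R^{N}$), while $\Psi_{0}$ sends $\nu|_{\iota(y)}$ into that stratum exactly when $f(y)$ lies in the corresponding stratum of $X$; hence $U$ meets each boundary stratum of $X \times \R^{N}$ in a tubular neighbourhood of the corresponding stratum of $\iota(Y)$. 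Properness is then immediate, since $\pi_{X}(\varphi(v_{y})) = f(y) = \pi_{X}(\iota(y)) = \pi_{X}(\pi_{N}(v_{y}))$, which is precisely the commutativity of \eqref{PropRepres}. I expect the only delicate points to be the smoothness and complementarity of the vertical complement $\nu$ (both consequences of $f$ being a submersion, so that $\ker df$ has locally constant rank) and keeping track of the corner strata in the tubular-neighbourhood step; the translation construction is exactly what makes \eqref{PropRepres} automatic, which is why the properness clause forces $f$ to be a submersion.
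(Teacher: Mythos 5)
Your proposal is correct and follows essentially the same route as the paper's cited sources (\cite[Lemma 3.23]{FR} and the discussion preceding \cite[Problem 4.219]{Bunke}): realize the normal bundle as a complement of $d\iota(\ker df)$ inside the vertical bundle $\{0\}\oplus\R^{N}$ — which exists precisely because $f$ is a submersion — and build the tubular neighbourhood by fibrewise translation in the $\R^{N}$-directions, so that commutativity of \eqref{PropRepres} and neatness of $U$ hold by construction.
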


For the proof see \cite[Lemma 3.23]{FR} and \cite[paragraph before Problem 4.219]{Bunke}. Because of lemma \ref{SubProp} and corollary \ref{HomotTHomOr}, given a neat submersion $f \colon Y \to X$, a neat embedding $\iota \colon Y \hookrightarrow X \times \R^{N}$ and any differential Thom class $\hat{u}$, the $\hat{h}^{\bullet}$-orientation $[\iota, \hat{u}]$ is well-defined, extending $(\iota, \hat{u})$ to any proper representative $(\iota, \hat{u}, \varphi)$. The orientation $[\iota, \hat{u}]$ only depends on the homotopy class of $\hat{u}$. Moreover, if $f \colon Y \to X$ and $g \colon X \to W$ are $\hat{h}^{\bullet}$-oriented neat submersions, there is a naturally induced $\hat{h}^{\bullet}$-orientation on $g \circ f \colon Y \to W$, defined as in \ref{OrientedMapComposition}. The following lemma is a consequence of lemma \ref{Rule23Diff} and of the uniqueness up to homotopy and stabilization of the embedding $\iota$.
\begin{Lemma}[2x3 principle]\label{Rule23Subm} Let $f \colon Y \to X$ and $g \colon X \to W$ be $\hat{h}^{\bullet}$-oriented neat submersions, with orientations $[\iota, \hat{u}]$ and $[\kappa, \hat{v}]$, and let $[\xi, \hat{w}]$ be the orientation induced on $g \circ f$. Two elements of the triple $([\iota, \hat{u}], [\kappa, \hat{v}], [\xi, \hat{w}])$ uniquely determine the third one.
\end{Lemma}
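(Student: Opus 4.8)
The plan is to reproduce the proof of the topological statement, Lemma \ref{Rule23TopMaps}, replacing the $2\times3$ principle for vector bundles, Lemma \ref{Rule23Top}, by its differential refinement Lemma \ref{Rule23Diff}, and checking along the way that the induced curvature maps are the prescribed ones. First I would normalise the embeddings: since any two neat embeddings of a compact manifold into a trivial bundle are equivalent up to homotopy and stabilisation, and stabilisation does not affect orientations by Lemma \ref{CanOrientTriv}, I may assume the representatives chosen so that $\iota \colon Y \hookrightarrow X \times \R^{N}$, $\kappa \colon X \hookrightarrow W \times \R^{L}$ and $\xi = (\kappa \times \id_{\R^{N}}) \circ \iota \colon Y \hookrightarrow W \times \R^{L+N}$, exactly as in Definition \ref{OrientedMapComposition}. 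With this choice the normal bundle of $\xi(Y)$ splits, over $Y$, as $N_{\xi(Y)}(W \times \R^{L+N}) \simeq N_{\iota(Y)}(X \times \R^{N}) \oplus f^{*}N_{\kappa(X)}(W \times \R^{L})$, and $\hat{w}$ is, up to homotopy of differential Thom classes, the class induced from $\hat{u}$ and $f^{*}\hat{v}$ via this Whitney sum.

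The first case, in which $[\iota,\hat{u}]$ and $[\kappa,\hat{v}]$ are prescribed and $[\xi,\hat{w}]$ is to be determined, is nothing but Definition \ref{OrientedMapComposition}. In the second case $[\kappa,\hat{v}]$ and $[\xi,\hat{w}]$ are prescribed: I would apply Lemma \ref{Rule23Diff} to the splitting above, the two known differential Thom classes being $\hat{w}$ on the total sum and $f^{*}\hat{v}$ on the second summand; it produces, uniquely up to homotopy, a differential Thom class on $N_{\iota(Y)}(X \times \R^{N})$, which is forced to be $\hat{u}$. As this class lives over $Y$ for the fixed embedding $\iota$ and $f$ is a neat submersion, Lemma \ref{SubProp} and Corollary \ref{HomotTHomOr} promote it to a well-defined $\hat{h}^{\bullet}$-orientation $[\iota,\hat{u}]$, and by construction $[\kappa,\hat{v}][\iota,\hat{u}] = [\xi,\hat{w}]$.

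The third and most delicate case is the one in which $[\iota,\hat{u}]$ and $[\xi,\hat{w}]$ are prescribed and $[\kappa,\hat{v}]$ must be recovered. Applying Lemma \ref{Rule23Diff} to the same splitting, now with known classes $\hat{u}$ and $\hat{w}$, produces a differential Thom class $\hat{t}$ on $f^{*}N_{\kappa(X)}(W \times \R^{L})$ over $Y$, unique up to homotopy; the content is that $\hat{t}$ descends to a differential Thom class $\hat{v}$ on $N_{\kappa(X)}(W \times \R^{L})$ over $X$ with $f^{*}\hat{v}$ homotopic to $\hat{t}$, and that $\hat{v}$ is unique up to homotopy. Existence of such a $\hat{v}$ is part of the hypotheses of the statement. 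For uniqueness I would pass to the connected components of $X$ (the image of the neat submersion $f$ is open and closed, hence a union of components, so one may assume $f$ surjective), observe that the topological reductions of two candidates $\hat{v},\hat{v}'$ must agree up to homotopy by the topological $2\times3$ principle, Lemma \ref{Rule23TopMaps}, so that $\hat{v}$ and $\hat{v}'$ differ only by data in the image of $a$, governed by differential forms on $X$, and conclude: since $f$ is a surjective submersion, $f^{*}$ is injective on differential forms, hence $f^{*}\hat{v}\simeq f^{*}\hat{v}'$ already forces $\hat{v}\simeq\hat{v}'$.

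The main obstacle I expect is precisely this last point, the injectivity of the pull-back of differential Thom classes along the submersion $f$; everything else is a routine transcription of the topological argument of Lemma \ref{Rule23TopMaps}. One should also check, in the second and third cases, that the recovered orientation carries the prescribed curvature map, but this is automatic: the curvature map depends only on the orientation (the remark after Definition \ref{HomotopyDiffOrientations}), and under fibre integration the Whitney-sum splitting is compatible with the product of differential Thom classes, so the curvature map of $[\xi,\hat{w}]$ factors through those of $[\iota,\hat{u}]$ and $[\kappa,\hat{v}]$ as required.
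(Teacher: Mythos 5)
Your strategy coincides with the paper's: the paper disposes of this lemma in a single sentence, as a consequence of Lemma \ref{Rule23Diff} and of the uniqueness up to homotopy and stabilization of the embedding $\iota$ (with Karoubi cited for the topological model), and your normalization of embeddings, Whitney-sum splitting of $N_{\xi(Y)}(W\times\R^{L+N})$ over $Y$, and appeal to the bundle-level differential $2\times 3$ principle is a faithful expansion of exactly that reduction. Cases one and two are fine as you present them: in the second case all three differential Thom classes ($\hat{w}$, $f^{*}\hat{v}$ and the unknown $\hat{u}$) live on bundles over $Y$, so Lemma \ref{Rule23Diff} applies verbatim and Lemma \ref{SubProp} and Corollary \ref{HomotTHomOr} do the rest.

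The gap is in the third case, precisely where you predicted it, and your proposed fix does not close it. After Lemma \ref{Rule23Diff} produces $\hat{t}$ on $f^{*}N_{\kappa(X)}(W\times\R^{L})$ over $Y$, you must show that a class $\hat{v}$ over $X$ with $f^{*}\hat{v}$ homotopic to $\hat{t}$ is unique up to homotopy. Injectivity of $f^{*}$ on differential forms handles the comparison of curvatures, but after that step two candidates $\hat{v},\hat{v}'$ with the same topological reduction and the same curvature differ by $a(\omega)$ with $\omega$ \emph{closed}, and by the exact sequence \eqref{ExSeqDC} the residual ambiguity is the class of $\omega$ in $H^{\bullet-1}_{\dR}/\IIm(\ch)$. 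The hypothesis only gives you that $f^{*}\omega$ dies in the corresponding quotient over $Y$, not that $f^{*}\omega=0$ as a form, and $f^{*}$ need not be injective on $H^{\bullet}_{\dR}/\IIm(\ch)$ even for a surjective submersion with compact fibres: for the Hopf fibration $f\colon S^{3}\to S^{2}$ one has $f^{*}H^{2}_{\dR}(S^{2})=0$, so $a(f^{*}\omega)=0$ for every closed $2$-form $\omega$ on $S^{2}$, including forms with $a(\omega)\neq 0$. To repair the step you need something beyond form-level injectivity --- for instance the Todd-class constraint built into Definition \ref{HomotopyThom}, or recovering $\hat{v}$ directly from $\hat{w}$ by inverting the differential Thom morphism of $\hat{u}$ fibrewise --- and the paper gives no such argument either. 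Note, however, that the paper only ever invokes this lemma in the first two directions (inducing the composite orientation, and inducing an orientation on $f$ from those of $X$ and $Y$), so the problematic third case is not load-bearing; but as written your proof of it does not go through.
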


Finally, definition \ref{HomotopyOriented} can be easily adapted to the differential framework, considering a smooth neat homotopy with a differential orientation. When such a definition holds, two maps $f, g \colon Y \to X$ are \emph{homotopic as $\hat{h}^{\bullet}$-oriented maps}.

\SkipTocEntry \subsection{Differential orientation of smooth manifolds}

We define the notion of differential orientation of a manifold without boundary as in the topological case (def.\ \ref{OrientedManifold}); when the orientation $\hat{u}$ of the stable normal bundle has been fixed, we set $\Td(X) := \Td(\hat{u})$. When $X$ has a boundary, we have to take into account that a defining map for the boundary is not a submersion in general, therefore we cannot apply many results cited above. For this reason, we slightly modify the definition of orientation. Following definition \eqref{CurvatureMap}, the curvature map should be $\omega \mapsto \int_{I \times \R^{N}/I} i_{*}\varphi_{*}(R(\hat{u}) \wedge \pi^{*}\omega)$, but we also integrate on $I$ the result:
\begin{equation}\label{CurvMapBd}
\begin{split}
	R^{\partial}_{(\iota, \hat{u}, \varphi)} \colon & \Omega^{\bullet}(X; \h^{\bullet}_{\R}) \to \Omega^{\bullet-n}(pt; \h^{\bullet}_{\R}) \\
	& \omega \mapsto \int_{0}^{1} \int_{I \times \R^{N}/I} i_{*}\varphi_{*}(R(\hat{u}) \wedge \pi^{*}\omega).
\end{split}
\end{equation}
\begin{Def}\label{OrientedDiffManifoldBoundary} An \emph{$\hat{h}^{\bullet}$-orientation} on a smooth manifold with boundary $X$ is a homotopy class of $\hat{h}^{\bullet}$-oriented defining functions for the boundary, considering the curvature map \eqref{CurvMapBd} in the definition of homotopy.
\end{Def}
This means that the curvature map from $X$ to the point must be constant along the homotopy, not the one from $X$ to $I$, as would follow from the definition without replacing the curvature map. The double integral in \eqref{CurvMapBd} is equivalent to the integral on the whole $I \times \R^{N}$. Considering remark \ref{OrManBdRmk}, we are just integrating on $\Hh^{N+1}$. It follows that:
\begin{equation}\label{RBdTodd}
	R^{\partial}_{(\iota, \hat{u}, \varphi)}(\omega) = \int_{N_{\iota(X)}\Hh^{N+1}} R(\hat{u}) \wedge \pi^{*}\omega = \int_{X} \biggl(\int_{N_{\iota(X)}\Hh^{N+1}/X}R(\hat{u})\biggr) \wedge \omega) = \int_{X} \Td(X) \wedge \omega.
\end{equation}
This result is analogous to formula \eqref{IntProp}, therefore we can state the following corollary, analogous to \ref{HomotTHomOr}.
\begin{Corollary}\label{HomotThomOr} Let $(\iota, \hat{u}, \varphi)$ and $(\iota, \hat{u}', \varphi')$ be two representatives of an $\hat{h}^{\bullet}$-orientation of the defining function $\Phi \colon X \to I$, such that $\hat{u}$ and $\hat{u}'$ are homotopic as differential Thom classes. Then the two representatives are homotopic (independently of $\varphi$ and $\varphi'$), thus $[\iota, \hat{u}, \varphi] = [\iota, \hat{u}', \varphi']$.
\end{Corollary}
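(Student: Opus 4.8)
The statement to prove is Corollary \ref{HomotThomOr}: two representatives $(\iota, \hat{u}, \varphi)$ and $(\iota, \hat{u}', \varphi')$ of an $\hat{h}^{\bullet}$-orientation of a defining function $\Phi \colon X \to I$, with $\hat{u}$ and $\hat{u}'$ homotopic as differential Thom classes, are homotopic as representatives (independently of $\varphi, \varphi'$).

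The plan is to mimic the proof of Corollary \ref{HomotTHomOr}, which covers the submersion case, but using the curvature map $R^{\partial}$ of \eqref{CurvMapBd} rather than $R$ of \eqref{CurvatureMap}. First I would unwind what must be produced: a representative $(J, \hat{U}, \Phi')$ of an $\hat{h}^{\bullet}$-orientation of $\id \times \Phi \colon I \times X \to I \times I$ (the defining function for the boundary of $I \times X$) whose restrictions over the endpoints of the first $I$ recover the two given representatives, whose underlying topological representative is proper over a neighbourhood $V$ of $\{0,1\}$ — in the sense of diagram \eqref{PropRepres2} — and which has constant curvature map to the point along the homotopy, i.e. $\pi_{pt}^{*} \circ R^{\partial}_{(\iota, \hat{u}, \varphi)} = R^{\partial}_{(J, \hat{U}, \Phi')} \circ \pi_{X}^{*}$. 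The embedding $J$ is built from $\iota$ in the standard way (the space of neat embeddings into a large enough Euclidean half-space is connected, and one can arrange the embedding to be independent of the first coordinate near $\{0,1\}$), and the diffeomorphism $\Phi'$ with a tubular neighbourhood is likewise interpolated, using the uniqueness up to homotopy and stabilization of tubular neighbourhoods; the only genuinely new datum is the differential Thom class $\hat{U}$. Here I would use the given homotopy $\hat{U}_{0} \in \hat{h}^{n}_{\cpt}(\pi_{X}^{*}E)$ between $\hat{u}$ and $\hat{u}'$ as differential Thom classes (Definition \ref{HomotopyThom}), which in particular satisfies $\Td(\hat{U}_{0}) = \pi_{X}^{*}\Td(\hat{u})$, and pull it over to the normal bundle $N_{J(I\times X)}(I \times I \times \R^{N})$ via the bundle isomorphism induced by $J$ and the interpolation of tubular neighbourhoods.

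The key point is then to check the curvature-map condition. By formula \eqref{RBdTodd}, for a proper representative one has $R^{\partial}_{(\iota, \hat{u}, \varphi)}(\omega) = \int_{X} \Td(X) \wedge \omega = \int_{X} \Td(\hat{u}) \wedge \omega$, and the analogous identity for the homotopy representative reduces the curvature condition to the statement that $\int_{I \times X}\Td(\hat U) \wedge \pi_X^*\omega$ equals $\int_X \Td(\hat u)\wedge\omega$ (integrated over the whole $\Hh^{N+2}$), which follows immediately from $\Td(\hat{U}) = \pi_{X}^{*}\Td(\hat{u})$ and Fubini, since integrating $\pi_X^*(\Td(\hat u)\wedge\omega)$ over the $I$-fibre gives back $\Td(\hat u)\wedge\omega$. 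Thus the homotopy of differential Thom classes automatically produces a homotopy of representatives in the sense of Definition \ref{OrientedDiffManifoldBoundary}. The independence of $\varphi$ and $\varphi'$ is then the same observation as in the submersion case: once the curvature constraint is met, any two choices of tubular-neighbourhood diffeomorphism are connected through an admissible homotopy, because the ambiguity does not affect $R^{\partial}$.

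The main obstacle, as in the cited proofs from \cite{FR} and \cite{Bunke}, is the bookkeeping needed to make the properness condition \eqref{PropRepres2} compatible with the interpolations of $J$ and $\Phi'$ near $\{0,1\}$: one must arrange the embedding and tubular neighbourhood of $I \times X$ to be genuinely a product over a neighbourhood $V$ of the endpoints so that the restrictions $(J,\hat U,\Phi')|_{f_0}$ and $(J,\hat U,\Phi')|_{f_1}$ are literally the given representatives, not merely homotopic to them. This is where the replacement of $R$ by $R^{\partial}$ matters, and it is precisely the modification built into Definition \ref{OrientedDiffManifoldBoundary}; granting that definition, the rest is a straightforward adaptation of the submersion argument, so I would present it briefly and refer to \cite[Lemma 3.25]{FR} and \cite[Problems 4.221 and 4.224]{Bunke} for the geometric details that are unchanged.
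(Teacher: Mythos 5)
Your proposal is correct and follows essentially the same route as the paper: the whole content of the corollary is that the modified curvature map $R^{\partial}_{(\iota,\hat{u},\varphi)}$ of \eqref{CurvMapBd} depends, by formula \eqref{RBdTodd}, only on $\Td(\hat{u})$ (which is preserved along a homotopy of differential Thom classes), after which the construction of the homotopy of representatives is the same as in Corollary \ref{HomotTHomOr}. The paper gives no further detail beyond ``analogous to \ref{HomotTHomOr}'' and the references to \cite{FR} and \cite{Bunke}, so your write-up, including the Fubini check that $\Td(\hat{U})=\pi_{X}^{*}\Td(\hat{u})$ forces the curvature condition and the bookkeeping of properness near the endpoints, is simply a more explicit version of the intended argument.
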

It follows that an orientation of $X$ only depends on $\iota$ and $\hat{u}$, therefore an orientation on a neat submersion $f \colon Y \to X$ and an orientation on $X$ induce an orientation on $Y$ by definition \ref{OrientedMapComposition}. Because of corollary \ref{HomotTHomOr} and the uniqueness up to homotopy and stabilization of the embedding $\iota$, we get the following lemma, analogous to \ref{Rule23Subm}.
\begin{Lemma}[2x3 principle]\label{Rule23ManBd} Let $f \colon Y \to X$ be a neat submersions between manifolds with boundary. Let $[\iota, \hat{u}]$ be an orientation of $f$, $[\kappa, \hat{v}]$ an orientation of $X$, and let $[\xi, \hat{w}]$ be the orientation induced on $Y$. Two elements of the triple $([\iota, \hat{u}], [\kappa, \hat{v}], [\xi, \hat{w}])$ uniquely determine the third one.
\end{Lemma}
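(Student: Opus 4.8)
The plan is to reduce this $2\times 3$ principle, just as for Lemmas \ref{Rule23TopMaps} and \ref{Rule23Subm}, to the $2\times 3$ principle for differential Thom classes (Lemma \ref{Rule23Diff}), together with the uniqueness up to homotopy and stabilization of the auxiliary embeddings. The only genuinely new point is that $X$ and $Y$ are manifolds with boundary, oriented through defining functions, which are not submersions; but Remark \ref{OrManBdRmk}, Definition \ref{OrientedDiffManifoldBoundary} and formula \eqref{RBdTodd} were set up precisely so that the bounded case behaves, for the purposes of this argument, exactly like the submersion case — in particular Corollary \ref{HomotThomOr} plays the role of Corollary \ref{HomotTHomOr}.

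First I would make the three orientations concrete. By Remark \ref{OrManBdRmk} I represent the orientation $[\kappa, \hat v]$ of $X$ by a neat embedding $\kappa \colon X \hookrightarrow \Hh^{L}$ and a differential Thom class $\hat v$ of $N_{\kappa(X)}(\Hh^{L})$, and I represent $[\iota, \hat u]$ of $f$ by a neat embedding $\iota \colon Y \hookrightarrow X \times \R^{N}$ with $\pi_{X} \circ \iota = f$ and a differential Thom class $\hat u$ of $N_{\iota(Y)}(X \times \R^{N})$. Following Definition \ref{OrientedMapComposition}, the induced orientation $[\xi, \hat w]$ of $Y$ is then carried by $\xi := (\kappa, \id_{\R^{N}}) \circ \iota \colon Y \hookrightarrow \Hh^{L} \times \R^{N} \simeq \Hh^{L+N}$, together with the differential Thom class $\hat w$ on
\[
	N_{\xi(Y)}(\Hh^{L+N}) \simeq N_{\iota(Y)}(X \times \R^{N}) \oplus \bigl(\pi^{*}N_{\kappa(X)}(\Hh^{L})\bigr)\big|_{\xi(Y)}
\]
built from $\hat u$ and the pull-back of $\hat v$ as in that definition.

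Next I would note that, since the embeddings $\iota$, $\kappa$, $\xi$ are unique up to homotopy and stabilization (given $f$, given $X$, and given the two of them respectively), and since homotopic differential Thom classes yield the same orientation — by Corollary \ref{HomotTHomOr} where a submersion is involved and by Corollary \ref{HomotThomOr} where a manifold with boundary is involved, both available thanks to the identifications \eqref{IntProp} and \eqref{RBdTodd} of the relevant curvature maps with integration against the Todd class — the triple of orientations is faithfully encoded in the triple of differential Thom classes $(\hat u, \pi^{*}\hat v, \hat w)$ on $E := N_{\iota(Y)}(X \times \R^{N})$, $F := (\pi^{*}N_{\kappa(X)}(\Hh^{L}))|_{\xi(Y)}$ and $E \oplus F$, subject to $\hat w$ being homotopic to $p_{E}^{*}\hat u \cdot p_{F}^{*}(\pi^{*}\hat v)$. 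Lemma \ref{Rule23Diff} then says that any two of $\hat u$, $\pi^{*}\hat v$, $\hat w$ determine the third up to homotopy; and passing from $\pi^{*}\hat v$ back to $\hat v$, which is needed only to recover the orientation of $X$ from those of $f$ and $Y$, is the standard observation that a Thom class is detected on the fibres, exactly as in the purely topological case \cite[theorem 5.24 p.\ 233]{Karoubi}. Assembling these facts gives the claim.

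The step I expect to be the main obstacle is verifying carefully that the normal-bundle splitting above — with the orientation of a manifold with boundary understood via the modified curvature map \eqref{CurvMapBd} — really matches Definition \ref{OrientedMapComposition} on the nose, so that the hypothesis ``$\hat w$ homotopic to $p_{E}^{*}\hat u \cdot p_{F}^{*}(\pi^{*}\hat v)$'' is exactly the relation among the three Thom classes. Once this compatibility is in place, the rest is a formal consequence of Lemma \ref{Rule23Diff} and the cited uniqueness statements, with no further computation.
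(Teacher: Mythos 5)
Your proposal is correct and follows essentially the same route as the paper, which justifies Lemma \ref{Rule23ManBd} in one line as a consequence of Corollary \ref{HomotThomOr} (orientations depend only on the embedding and the homotopy class of the differential Thom class), the uniqueness of the embedding up to homotopy and stabilization, and the 2x3 principle for differential Thom classes (Lemma \ref{Rule23Diff}). Your write-up merely spells out this reduction in more detail than the paper does, including the normal-bundle splitting compatibility that the paper leaves implicit.
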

With this definition of the curvature map, remark \ref{BdPartCase} extends to the differential setting, i.e., an orientation of a manifold without boundary can be thought of as a particular case of an orientation of a manifold with boundary. This confirms the naturality of the definition. As well, remark \ref{BdPartCase} keeps on holding. Finally, in the case of manifolds with split boundary, we define a $\hat{h}^{\bullet}$-orientation in the same way, the curvature map \eqref{CurvMapBd} being defined integrating over $I \times I$.

\SkipTocEntry \subsection{Differential integration} 

The Gysin map $f_{!} \colon \hat{h}^{\bullet}(Y) \to \hat{h}^{\bullet - n}(X)$, for $n = \dim Y - \dim X$, is defined similarly to \eqref{GysinMapTop}, starting from a representative of an $\hat{h}^{\bullet}$-orientation:
\begin{equation}\label{GysinMapDif}
	f_{!}(\hat{\alpha}) = \int_{\R^{N}}i_{*}\varphi_{*}(\hat{u} \cdot \pi^{*}\hat{\alpha}).
\end{equation}
The integration map $\int_{\R^{N}} \colon \hat{h}^{\bullet+N}_{\cpt}(X \times \R^{N}) \to \hat{h}^{\bullet}(X)$ is defined as follows. The open embedding $j \colon \R^{N} \hookrightarrow (S^{1})^{N}$, defined through the embedding $\R \hookrightarrow \R^{+} \simeq S^{1}$ in each coordinate, induces the push-forward $(\id \times j)_{*} \colon \hat{h}^{\bullet}_{\cpt}(X \times \R^{N}) \to \hat{h}^{\bullet}(X \times (S^{1})^{N})$, thus we define
\begin{equation}\label{IntSn}
	\int_{\R^{N}}\hat{\alpha} := \int_{S^{1}} \cdots \int_{S^{1}} (\id \times j)_{*}\hat{\alpha}.
\end{equation}
It is easy to prove from the axioms that:
	\[R(f_{!}\hat{\alpha}) = R_{(\iota, \hat{u}, \varphi)}(R(\hat{\alpha})) \qquad f_{!}a(\omega) = a(R_{(\iota, \hat{u}, \varphi)}(\omega)),
\]
thus the following diagram commutes:
\begin{equation}\label{CurvAMaps}
	\xymatrix{
	\Omega^{\bullet-1}(Y; \h^{\bullet}_{\R})/\IIm(d) \ar[r]^(.65){a} \ar[d]^{R_{(\iota, \hat{u}, \varphi)}} & \hat{h}^{\bullet}(Y) \ar[r]^{I} \ar[d]^{f_{!}} \ar@/^2pc/[rr]^{R} & h^{\bullet}(Y) \ar[d]^{f_{!}} & \Omega_{\cl}^{\bullet}(Y; \h^{\bullet}_{\R}) \ar[d]^{R_{(\iota, \hat{u}, \varphi)}} \\
		\Omega^{\bullet-n-1}(X; \h^{\bullet}_{\R})/\IIm(d) \ar[r]^(.65){a} & \hat{h}^{\bullet-n}(X) \ar[r]^{I} \ar@/_2pc/[rr]_{R} & h^{\bullet-n}(X) & \Omega_{\cl}^{\bullet-n}(X; \h^{\bullet}_{\R}).
	}
\end{equation}
As a consequence of formula \eqref{HomFormula}, $f_{!}$ only depends on the $\hat{h}^{\bullet}$-orientation of $f$, not on the specific representative \cite[sec.\ 4.10]{Bunke}. We now consider a submersion $f \colon Y \to X$. In this case the Gysin map provides a good notion of integration.

\begin{Theorem}\label{fpDifProperties} Let $f \colon Y \to X$ be a neat $\hat{h}^{\bullet}$-oriented submersion between compact manifolds.
\begin{itemize}
	\item The Gysin map $f_{!}$ only depends on the homotopy class of $f$ as an $\hat{h}^{\bullet}$-oriented map.
	\item The Gysin map is a morphism of $\hat{h}^{\bullet}(X)$-modules, i.e., given $\hat{\alpha} \in h^{\bullet}(Y)$ and $\hat{\beta} \in \hat{h}^{\bullet}(X)$:
	\[f_{!}(\hat{\alpha} \cdot f^{*}\hat{\beta}) = f_{!}\hat{\alpha} \cdot \hat{\beta}.
\]
	\item Given another neat $\hat{h}^{\bullet}$-oriented map $g \colon Z \to Y$ and endowing $f \circ g$ of the naturally induced orientation (def.\ \ref{OrientedMapComposition}), we have $(f \circ g)_{!} = f_{!} \circ g_{!}$.
	\item We have that:
	\begin{equation}\label{RAIntSubmersion}
	R(f_{!}\hat{\alpha}) = \int_{Y/X} \Td(\hat{u}) \wedge R(\hat{\alpha}) \qquad f_{!}(a(\omega)) = a \biggl( \int_{Y/X} \Td(\hat{u}) \wedge \omega \biggr).
	\end{equation}
\end{itemize}
\end{Theorem}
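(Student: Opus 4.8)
The plan is to follow the topological proofs of Theorem~\ref{fpTopProperties} (as in \cite{Karoubi, Bunke}, and \cite{FR} for the differential refinements), carrying the curvature data along. As a preliminary I would record that each of the three building blocks of \eqref{GysinMapDif} --- the differential Thom morphism $\hat{\alpha} \mapsto \hat{u} \cdot \pi^{*}\hat{\alpha}$, the push-forward $(\id \times j)_{*}$ in compactly-supported differential cohomology, and the iterated $S^{1}$-integration --- is natural under pull-back and, when defined on a product with a distinguished factor $X$, is a morphism of $\hat{h}^{\bullet}(X)$-modules; these are standard features of the Hopkins-Singer model, worked out in \cite{Upmeier}. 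Granting this, the projection formula (second bullet) is immediate: one rewrites $\hat{u} \cdot \pi^{*}(\hat{\alpha} \cdot f^{*}\hat{\beta})$ as $(\hat{u} \cdot \pi^{*}\hat{\alpha}) \cdot \pi^{*}f^{*}\hat{\beta}$, pushes forward, and pulls the factor coming from $\hat{\beta}$ out of $i_{*}\varphi_{*}$ and of $\int_{\R^{N}}$. The functoriality (third bullet) I would deduce exactly as in the topological case, from the $2\times 3$ principle of Lemma~\ref{Rule23Subm} together with the multiplicativity of differential Thom classes under direct sum (Lemma~\ref{Rule23Diff}): up to homotopy, the orientation induced on $f \circ g$ has Thom class the product of the two given ones, and for a product Thom class the associated push-forwards compose.

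For the homotopy invariance (first bullet) I would argue as follows. Suppose $f, g \colon Y \to X$ are homotopic as $\hat{h}^{\bullet}$-oriented maps, and fix a representative $(J, \hat{U}, \Phi)$ of an $\hat{h}^{\bullet}$-orientation of a neat homotopy $F \colon I \times Y \to I \times X$ restricting to the given orientations of $f$ and $g$ at the two ends. Given $\hat{\alpha} \in \hat{h}^{\bullet}(Y)$, set $\hat{\gamma} := F_{!}(\pi_{Y}^{*}\hat{\alpha}) \in \hat{h}^{\bullet-n}(I \times X)$. Naturality of the Gysin map under the endpoint inclusions $X \hookrightarrow I \times X$ and $Y \hookrightarrow I \times Y$ gives that the two restrictions of $\hat{\gamma}$ are $f_{!}\hat{\alpha}$ and $g_{!}\hat{\alpha}$. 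On the other hand, the curvature square \eqref{CurvAMaps} applied to $F$ gives $R(\hat{\gamma}) = R_{(J, \hat{U}, \Phi)}(\pi_{Y}^{*}R(\hat{\alpha}))$, and by the third clause of Definition~\ref{HomotopyDiffOrientations} this equals $\pi_{X}^{*}\bigl(R_{(\iota, \hat{u}, \varphi)}(R(\hat{\alpha}))\bigr)$; being pulled back from $X$, it has vanishing fibre integral over the $I$-factor. The homotopy formula \eqref{HomFormula} then expresses $g_{!}\hat{\alpha} - f_{!}\hat{\alpha}$ as the image under $a$ of that fibre integral, whence $g_{!}\hat{\alpha} = f_{!}\hat{\alpha}$. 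The same computation with the tautological homotopy re-proves that $f_{!}$ is independent of the chosen representative.

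Finally, the formulas \eqref{RAIntSubmersion} I would obtain by specialising to a \emph{proper} representative the general identities $R(f_{!}\hat{\alpha}) = R_{(\iota, \hat{u}, \varphi)}(R(\hat{\alpha}))$ and $f_{!}(a(\omega)) = a\bigl(R_{(\iota, \hat{u}, \varphi)}(\omega)\bigr)$ of diagram \eqref{CurvAMaps}: by Lemma~\ref{SubProp} a proper representative exists, and for it \eqref{IntProp} identifies the curvature map with $\omega \mapsto \int_{Y/X} \Td(\hat{u}) \wedge \omega$, so substitution yields \eqref{RAIntSubmersion} directly. I expect the delicate point to be the homotopy invariance: everything there hinges on the third clause of Definition~\ref{HomotopyDiffOrientations} having been designed to keep the curvature map constant along homotopies of differential orientations --- precisely what kills the correction term in \eqref{HomFormula} --- so without it $f_{!}$ would not be homotopy-invariant. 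The remaining work, namely checking the module- and naturality-compatibilities of the three building blocks, is routine given \cite{Upmeier}.
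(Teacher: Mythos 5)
Your proposal is correct and follows essentially the same route as the paper, which itself only cites \cite[Lemmas 3.24 and 3.27]{FR} and \cite[Problems 4.219 and 4.233]{Bunke} for the first three items and derives \eqref{RAIntSubmersion} exactly as you do, from \eqref{IntProp} applied to a proper representative together with the commutativity of diagram \eqref{CurvAMaps}. Your fleshed-out homotopy-invariance argument (the correction term in \eqref{HomFormula} is $a$ applied to the fibre integral over $I$ of a form pulled back from $X$, which vanishes precisely because the third clause of Definition \ref{HomotopyDiffOrientations} keeps the curvature map constant) is the intended one.
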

For the proof see \cite[Lemmas 3.24 and 3.27]{FR} and \cite[Problems 4.219 and 4.233]{Bunke}. Equations \eqref{RAIntSubmersion} follows from formula \eqref{IntProp} and the commutativity of diagram \eqref{CurvAMaps}. Moreover, formula \eqref{RestrictionBoundaryGysin} keeps on holding.

\begin{Rmk} Let us consider a submersion $f \colon Y \to X$ between $\hat{h}^{\bullet}$-oriented manifolds. If $X$ and $Y$ have no boundary, since $p_{Y} = p_{X} \circ f$, it follows from lemma \ref{Rule23Subm} that $f$ inherits a unique orientation from the ones of $X$ and $Y$. Hence, the integration map $f_{!} \colon \hat{h}^{\bullet}(Y) \to \hat{h}^{\bullet-n}(X)$ is well-defined for submersions between compact $\hat{h}^{\bullet}$-oriented manifolds without boundary. If $X$ and $Y$ have boundary, the same result follows from \ref{Rule23ManBd}.
\end{Rmk}

\SkipTocEntry \subsection{Flat classes}\label{FlGysin}

The Gysin map $f_{!} \colon \hat{h}^{\bullet}(Y) \to \hat{h}^{\bullet - n}(X)$, defined in the previous section, depends on the $\hat{h}^{\bullet}$-orientation of $f$, but, if we restrict it to flat classes, it only depends on the topological $h^{\bullet}$-orientation. In fact, $\hat{h}^{\bullet}_{\fl}(X)$ has a natural graded-module structure over $h^{\bullet}(X)$, i.e., the product $h^{\bullet}(X) \otimes_{\Z} \hat{h}^{\bullet}_{\fl}(X) \to \hat{h}^{\bullet}_{\fl}(X)$ is well-defined. This can be easily proven in the two following steps.
\begin{itemize}
	\item The product of differential classes $\hat{h}^{\bullet}(X) \otimes \hat{h}^{\bullet}(X) \to \hat{h}^{\bullet}(X)$ restricts to the product $\hat{h}^{\bullet}(X) \otimes \hat{h}^{\bullet}_{\fl}(X) \to \hat{h}^{\bullet}_{\fl}(X)$, since, the curvature being multiplicative, if one of the two factors has vanishing curvature, also the result has.
	\item The product $\hat{\alpha} \cdot \hat{\beta}$, when $\hat{\beta}$ is flat, only depends on $I(\hat{\alpha})$. In fact, if $I(\hat{\alpha}) = 0$, then $\hat{\alpha} = a(\omega)$. Because of definition \ref{MultDiffExt}, we have $a(\omega) \cdot \hat{\beta} = a(\omega \wedge R(\hat{\beta})) = a(0) = 0$.
\end{itemize}
We can show in the same way that also the product $\hat{h}^{\bullet}_{\cpt}(E) \otimes_{\Z} \hat{h}^{\bullet}(E) \to \hat{h}^{\bullet}_{\cpt}(E)$ can be refined to $h^{\bullet}_{\cpt}(E) \otimes_{\Z} \hat{h}^{\bullet}_{\fl}(E) \to \hat{h}^{\bullet}_{\fl,\cpt}(E)$, therefore, given a real vector bundle $\pi \colon E \to X$ of rank $n$ with (topological) Thom class $u$, we define the Thom isomorphism:
	\[\begin{split}
	T_{\fl} \colon & \hat{h}^{\bullet}_{\fl}(X) \to \hat{h}^{\bullet + n}_{\fl,\cpt}(E) \\
	& \hat{\alpha} \mapsto u \cdot \pi^{*}\hat{\alpha}.
\end{split}\]
From this it easily follows that the Gysin map $f_{!}$, when applied to a flat class, only depends on the topological orientation of $f$. Lemma \ref{fpTopProperties} keeps on holding, with the same proof (for any $f$, not necessarily a submersion). With respect to the commutativity of diagram \eqref{CurvAMaps} (that, in the case of a submersion, leads to the last item of theorem \ref{fpDifProperties}), obviously the behaviour of the curvature is trivial in the flat case. About the map $a$, the commutativity of the diagram is equivalent to the following lemma (and, in the case of a submersion, to the right-hand side of equation \eqref{RAIntSubmersion}).
\begin{Lemma}\label{GysinC10} Given a $h^{\bullet}$-oriented smooth neat map $f \colon Y \to X$ and a class $\theta \in H^{\bullet-1}_{\dR}(Y; \h^{\bullet}_{\R})$, we have:
\begin{equation}\label{PushForwardDR}
	f_{!}(a(\theta)) = a(f_{!}(\Td(u) \wedge \theta)).
\end{equation}
Equivalently, for any $\alpha \in h^{\bullet}(X) \otimes_{\Z} \R$:\footnote{In equation \eqref{PushForwardChernC} we are considering the Chern character as defined on $h^{\bullet}(X) \otimes_{\Z} \R$, in which case it is an isomorphism. If we consider it as defined on $h^{\bullet}(X)$, then $a(\ch \alpha) = 0$, and formula \eqref{PushForwardChernC} implies coherently that $f_{!}(a(\ch\alpha)) = 0$.}
\begin{equation}\label{PushForwardChernC}
	f_{!}(a(\ch\alpha)) = a(\ch(f_{!}\alpha)).
\end{equation}
\end{Lemma}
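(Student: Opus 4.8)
The plan is to reduce everything, via diagram \eqref{CurvAMaps}, to a computation in de Rham cohomology, and then to identify the resulting class using $[R(\hat u)]_\dR=\ch(u)$ together with the integration formula \eqref{ChIntegral}. First I would observe that, since $\theta$ is closed, the class $a(\theta)\in\hat h^{\bullet}_{\fl}(Y)$ is flat and depends only on the de Rham cohomology class of $\theta$; hence, by the discussion preceding the lemma, $f_{!}(a(\theta))$ depends only on the topological $h^{\bullet}$-orientation of $f$, and I am free to compute it from a \emph{proper} representative $(\iota,\hat u,\varphi)$ built from \emph{any} differential Thom class $\hat u$ refining $u$. Such a refinement exists: by the compactly--supported analogue of axiom (A3) the map $I_{\cpt}$ is surjective, and by definition any class in $\hat h^{\bullet}_{\cpt}$ lifting a topological Thom class is a differential Thom class.

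Next I would apply the commutativity of diagram \eqref{CurvAMaps}, which gives $f_{!}(a(\theta))=a\bigl(R_{(\iota,\hat u,\varphi)}(\theta)\bigr)$, where $R_{(\iota,\hat u,\varphi)}$ is the curvature map \eqref{CurvatureMap}. Since $\theta$ and $R(\hat u)$ are closed and fibre integration over $\R^{N}$ of a compactly--supported closed form is again closed, $R_{(\iota,\hat u,\varphi)}(\theta)$ is closed; because $a$ annihilates exact forms, $a\bigl(R_{(\iota,\hat u,\varphi)}(\theta)\bigr)$ depends only on the de Rham class $\bigl[R_{(\iota,\hat u,\varphi)}(\theta)\bigr]$. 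To compute that class I would use diagram \eqref{ComDiagDC}, which yields $[R(\hat u)]_\dR=\ch(I(\hat u))=\ch(u)$; since all the operations in \eqref{CurvatureMap} (pull--back, $\varphi_{*}$, $i_{*}$, $\int_{\R^{N}}$) are compatible with passage to de Rham cohomology, one gets $\bigl[R_{(\iota,\hat u,\varphi)}(\theta)\bigr]=\int_{X\times\R^{N}/X}i_{*}\varphi_{*}\bigl(\ch(u)\wedge\pi^{*}\theta\bigr)$. By the projection formula together with the normalization $\int_{N_{\iota(Y)}/Y}\ch(u)=\Td(u)$ — that is, by the de Rham form of \eqref{ChIntegral} for the Thom isomorphism of the normal bundle, carried along $\iota$ exactly as in \eqref{GysinMapTop} — this equals $f_{!}(\Td(u)\wedge\theta)$, the induced de Rham Gysin map applied to $\Td(u)\wedge\theta$. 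Combining the three steps gives \eqref{PushForwardDR}. For the equivalent form \eqref{PushForwardChernC} I would then write $\theta=\ch\alpha$ with $\alpha\in h^{\bullet-1}(Y)\otimes_{\Z}\R$ (legitimate since $\ch$ is an isomorphism there), and invoke \eqref{ChIntegral} once more — in its version for maps, obtained from the vector--bundle case by the functoriality of $f_{!}$ through $\iota$ — to get $f_{!}(\Td(u)\wedge\ch\alpha)=\ch(f_{!}\alpha)$, hence $f_{!}(a(\ch\alpha))=a(\ch(f_{!}\alpha))$.

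The point that requires genuine care, rather than bookkeeping, is the identification in the third step: one must check that the de Rham push--forward appearing on the right--hand side of \eqref{PushForwardDR} is the \emph{geometric} fibre--integration operator, so that exactly one Todd factor appears and is not double counted, and correspondingly one must apply \eqref{ChIntegral} at the right level of generality (for the map $f$, not merely for a vector bundle), which is what the factorization through the neat embedding $\iota$ and the functoriality of $f_{!}$ provide. Everything else is harmless: the only Stokes argument used concerns the boundaryless fibre $\R^{N}$, and when $X$ and $Y$ have boundary the neatness of $f$ guarantees that the embedding, the normal bundle, the tubular neighbourhood and the Thom class all restrict compatibly to the boundary, so the construction and the identities above go through verbatim.
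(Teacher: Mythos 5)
Your argument is correct and follows essentially the same route as the paper's proof: reduce $f_{!}(a(\theta))$ to $a$ applied to the de Rham push-forward via the multiplicativity axiom (equivalently, the commutativity of diagram \eqref{CurvAMaps}), identify $[R(\hat{u})]$ with $\ch(u)$, and use $\ch(u)=\ch^{(n)}u\wedge\Td(u)$ to recognize the result as $a(f_{!}(\Td(u)\wedge\theta))$, with \eqref{PushForwardChernC} following from the map-level version of \eqref{ChIntegral}. The only blemish is your appeal to a \emph{proper} representative, which the paper only guarantees for submersions (lemma \ref{SubProp}) whereas $f$ is merely a neat map here; since properness plays no role in any subsequent step of your computation, this is harmless and should simply be dropped.
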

\begin{proof} Let us consider a differential Thom class $\hat{u}$ of $N_{\iota(Y)}(X \times \R^{N})$ refining the orientation $u$ induced by the ones of $X$ and $Y$. We have:
\begin{align*}
	f_{!}(a(\theta)) &= \int_{\R^{N}} i_{*}\varphi_{*}(\hat{u} \cdot \pi^{*}a(\theta)) = \int_{\R^{N}} i_{*}\varphi_{*}(a([R(\hat{u})] \wedge \pi^{*}\theta)) = a\left(\int_{\R^{N}} i_{*}\varphi_{*}(\ch\,u \wedge \pi^{*}\theta)\right) \\
	&= a\left(\int_{\R^{N}} i_{*}\varphi_{*}(\ch^{(n)}u \wedge \pi^{*}(\Td(u) \wedge \theta)\right) = a(f_{!}(\Td(u) \wedge \theta)).
\end{align*}
This proves \eqref{PushForwardDR}. In order to prove \eqref{PushForwardChernC}, we observe that, since the fibre-wise integration and the Thom isomorphism are inverse to each other in de-Rham cohomology, we have that $\ch(u) = \ch^{(n)}u \wedge \Td(u)$, where $\ch^{n}(u)$ is the $n$-degree component of $\ch(u)$, that provides an ordinary orientation to $N_{\iota(Y)}(X \times \R^{N})$. Hence:
\begin{align*}
	\ch(f_{!}\alpha) &= \ch\int_{\R^{N}} i_{*}\varphi_{*} (u \cdot \pi^{*}\alpha) = \int_{\R^{N}} i_{*}\varphi_{*} \bigl( \ch(u) \wedge \pi^{*}\ch(\alpha) \bigr) \\
	&= \int_{\R^{N}} i_{*}\varphi_{*} \bigl( \ch^{(n)}u \wedge \Td(u) \wedge \pi^{*}\alpha \bigr) = f_{!}(\Td(u) \wedge \pi^{*}\alpha),
\end{align*}
therefore
	\[a(\ch(f_{!}\alpha)) = a(f_{!}(\Td(u) \wedge \pi^{*}\alpha)) \overset{\eqref{PushForwardDR}}= f_{!}(a(\ch\alpha)). \qedhere
\]
\end{proof}
\begin{Corollary}\label{GysinC10Cor} The Gysin map associated to a $h^{\bullet}$-oriented smooth map $f \colon Y \to X$ induces the following morphism of exact sequences of $\h^{\bullet}$-modules:
	\[\xymatrix{
	\cdots \ar[r] & h^{\bullet}(Y) \ar[r] \ar[d] & h^{\bullet}(Y) \otimes_{\Z} \R \ar[r] \ar[d] & \hat{h}^{\bullet+1}_{\fl}(Y) \ar[r] \ar[d] & h^{\bullet+1}(Y) \ar[r] \ar[d] & \cdots \\
	\cdots \ar[r] & h^{\bullet}(X) \ar[r] & h^{\bullet}(X) \otimes_{\Z} \R \ar[r] & \hat{h}^{\bullet+1}_{\fl}(X) \ar[r] & h^{\bullet+1}(X) \ar[r] & \cdots
}\]
where the map $h^{\bullet}(X) \otimes_{\Z} \R \rightarrow \hat{h}^{\bullet+1}_{\fl}(X)$ is defined by $\alpha \mapsto a(\ch \alpha)$.
\end{Corollary}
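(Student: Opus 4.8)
The plan is to recognize each of the two rows as an instance of the exact sequence \eqref{ExSeqFlat} and then to check commutativity square by square, the only nontrivial square being handled by Lemma \ref{GysinC10}. First I would rewrite \eqref{ExSeqFlat}, for the absolute objects $Y$ and $X$, using that the Chern character induces an isomorphism $h^{\bullet}(-)\otimes_{\Z}\R \simeq H^{\bullet}_{\dR}(-;\h^{\bullet}_{\R})$: under this identification the map $\ch\colon h^{\bullet}\to H^{\bullet}_{\dR}$ becomes the canonical map $\alpha\mapsto\alpha\otimes 1$, the map $a\colon H^{\bullet}_{\dR}\to\hat{h}^{\bullet+1}_{\fl}$ becomes $\alpha\mapsto a(\ch\alpha)$, and the last map is $I$. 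This produces the two rows, which are exact; each of their terms carries a natural $\h^{\bullet}$-module structure obtained by restricting the $h^{\bullet}(Y)$- (resp.\ $h^{\bullet}(X)$-) module structure along the map to a point, and all horizontal maps are $\h^{\bullet}$-linear. By the periodicity of \eqref{ExSeqFlat}, every square of the ladder is of one of three types.

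The vertical maps are the Gysin map $f_{!}$ on $h^{\bullet}$, its scalar extension $f_{!}\otimes\id_{\R}$ on $h^{\bullet}\otimes\R$, the Gysin map $f_{!}$ on $\hat{h}^{\bullet}_{\fl}$ (which, as recalled in Section \ref{FlGysin}, depends only on the topological orientation of $f$), and again $f_{!}$ on $h^{\bullet+1}$; each is $\h^{\bullet}$-linear by the module property of the Gysin map (Theorem \ref{fpTopProperties}, in the flat version recalled in Section \ref{FlGysin}) together with naturality with respect to $X\to\{pt\}$. I would then verify the three types of squares. The square between $h^{\bullet}(Y)$ and $h^{\bullet}(Y)\otimes_{\Z}\R$ commutes trivially, its top map being $\alpha\mapsto\alpha\otimes 1$ and its right vertical $f_{!}\otimes\id_{\R}$. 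The square between $h^{\bullet}(Y)\otimes_{\Z}\R$ and $\hat{h}^{\bullet+1}_{\fl}(Y)$ is exactly the identity $f_{!}(a(\ch\alpha))=a(\ch(f_{!}\alpha))$, i.e.\ formula \eqref{PushForwardChernC} of Lemma \ref{GysinC10}. The square between $\hat{h}^{\bullet+1}_{\fl}(Y)$ and $h^{\bullet+1}(Y)$ amounts to $I\circ f_{!}=f_{!}\circ I$, which follows from the defining formula \eqref{GysinMapDif}: $I$ is a multiplicative natural transformation sending the differential Thom class $\hat{u}$ to the topological Thom class appearing in \eqref{GysinMapTop}, and it commutes with the push-forward $(\id\times j)_{*}$ by naturality and with $S^{1}$-integration by diagram \eqref{CommS1}; equivalently this is the $I$-part of the commutative diagram \eqref{CurvAMaps}. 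Assembling these, the ladder commutes and consists of $\h^{\bullet}$-module maps, which is the claim.

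I do not expect a real obstacle: the substance is already contained in Lemma \ref{GysinC10} and diagram \eqref{CurvAMaps}. The only points needing care are the explicit identification of the middle column as $f_{!}\otimes\id_{\R}$ under the $\ch$-presentation of the rows, and the bookkeeping of the degree shift by $n=\dim Y-\dim X$ in each column.
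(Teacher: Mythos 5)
Your proposal is correct and follows exactly the route the paper intends: the corollary is stated as an immediate consequence of Lemma \ref{GysinC10}, with the only nontrivial square being formula \eqref{PushForwardChernC} and the remaining squares and the $\h^{\bullet}$-linearity handled by the module property of the Gysin map and the $I$-part of diagram \eqref{CurvAMaps}, just as you describe.
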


\section{Relative integration and integration to the point}\label{RelIntSec}

We are going to show that the Gysin map, both in the topological and in the differential case, can also be defined for classes relative to the boundary. Moreover, in this section we will define the integration of relative classes to the point, that will be used to define the relative generalized Cheeger-Simons characters and the integration map when the fibres have non-empty boundary. As far as we know, these constructions have not been considered in the literature up to now.

\SkipTocEntry \subsection{Relative Thom (iso)morphism}

If $\pi \colon E \to X$ is a real vector bundle of rank $n$ and $A \subset X$ is a topological subspace, fixing a Thom class $u$ of $E$ we also get the relative version of the Thom isomorphism, i.e., $T \colon h^{\bullet}(X, A) \to h^{\bullet+n}_{\cpt}(E, E\vert_{A})$, $\alpha \mapsto u \cdot \pi^{*}\alpha$ \cite[Theorem 11.7.34]{AGP}. When the bundle is smooth and $A$ is a closed submanifold of $X$, the same construction holds in the differential setting, getting the relative Thom morphism $T \colon \hat{h}^{\bullet}(X, A) \to \hat{h}^{\bullet+n}_{\cpt}(E, E\vert_{A})$, $\hat{\alpha} \mapsto \hat{u} \cdot \pi^{*}\hat{\alpha}$. In this section we will apply such an (iso)morphism to the following particular case: $\pi \colon E \to X$ is a smooth vector bundle, $X$ being a manifold with boundary, and $A = \partial X$. It follows that $E\vert_{A} = \partial E$, therefore we get the Thom (iso)morphism for classes relative to the boundary, i.e., $T \colon h^{\bullet}(X, \partial X) \to h^{\bullet+n}_{\cpt}(E, \partial E)$ and $T \colon \hat{h}^{\bullet}(X, \partial X) \to \hat{h}^{\bullet+n}_{\cpt}(E, \partial E)$.

\SkipTocEntry \subsection{Relative topological integration}

Let $f \colon Y \to X$ be a smooth neat map and let us fix a representative of an $h^{\bullet}$-orientation $(\iota, u, \varphi)$. We define the Gysin map on classes relative to the boundary:
\begin{equation}\label{GysinTopBd}
	f_{!!} \colon h^{\bullet}(Y, \partial Y) \to h^{\bullet - n}(X, \partial X)
\end{equation}
where $n = \dim Y - \dim X$. The definition is analogous to \eqref{GysinMapTop}, but applying the \emph{relative} Thom isomorphism on the normal bundle. Even in this case the map $f_{!!}$ only depends on the orientation $[\iota, u]$, not on the specific representative. With the same technique of \cite[Prop.\ 5.24]{Karoubi} one can prove the following theorem.
\begin{Theorem}\label{fpTopRelProperties} Let $f \colon Y \to X$ be a neat $h^{\bullet}$-oriented map  of compact manifolds.
\begin{itemize}
	\item The Gysin map $f_{!!}$ only depends on the homotopy class of $f$ as an $h^{\bullet}$-oriented map (see remark \ref{RmkHomBd}).
	\item The Gysin map is a morphism of $h^{\bullet}(X, \partial X)$-modules, i.e., given $\alpha \in h^{\bullet}(Y, \partial Y)$ and $\beta \in h^{\bullet}(X, \partial X)$:
	\[f_{!!}(\alpha \cdot f^{*}\beta) = f_{!!}\alpha \cdot \beta.
\]
	\item Given $\alpha \in h^{\bullet}(Y)$ and $\beta \in \hat{h}^{\bullet}(X, \partial X)$:
	\[f_{!!}(\alpha \cdot f^{*}\beta) = f_{!}\alpha \cdot \beta.
\]
	\item Given another neat $h^{\bullet}$-oriented map $g \colon Z \to Y$ and endowing $f \circ g$ of the naturally induced orientation (def.\ \ref{OrientedMapComposition}), we have $(f \circ g)_{!!} = f_{!!} \circ g_{!!}$.
\end{itemize}
\end{Theorem}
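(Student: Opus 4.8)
The plan is to transcribe, almost word for word, the proof of Theorem~\ref{fpTopProperties} (i.e.\ the technique of \cite[Prop.\ 5.24]{Karoubi}), systematically replacing the absolute Thom isomorphism on the normal bundle by the relative one $T\colon h^{\bullet}(Y,\partial Y)\to h^{\bullet+\cdot}_{\cpt}(N,\partial N)$ introduced at the beginning of this section. The one structural fact that makes this possible is that $f$ is \emph{neat}: the embedding $\iota\colon Y\hookrightarrow X\times\R^{N}$, the tubular neighbourhood $U$, the diffeomorphism $\varphi$ and (by Remark~\ref{OrInducedBdFunction}) the Thom class $u$ all restrict to the analogous data for $\partial f\colon\partial Y\to\partial X$, since $\partial Y=\iota^{-1}(\partial X\times\R^{N})$ and $N\vert_{\partial Y}=N_{\iota(\partial Y)}(\partial X\times\R^{N})$. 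Hence the push-forwards $\varphi_{*}$ and $i_{*}$ and the integration $\int_{\R^{N}}$ all make sense for the pairs $(U,\partial U)$ and $(X\times\R^{N},\partial X\times\R^{N})$, so that \eqref{GysinTopBd} is well defined and $f_{!!}$ fits into a commutative ladder between the long exact sequences of the pairs $(Y,\partial Y)$ and $(X,\partial X)$, with vertical maps $f_{!!}$, $f_{!}$ and $(\partial f)_{!}$; this ladder refines \eqref{RestrictionBoundaryGysin} and is convenient for reducing statements to the already known absolute ones.

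\emph{Independence of the representative and homotopy invariance (first bullet).} As in the absolute case, two representatives of the same $h^{\bullet}$-orientation of $f$ differ by a finite chain of stabilizations and homotopies. A stabilization changes nothing on the boundary, since the canonical Thom class on a trivial factor $\iota(Y)\times\R^{L}$ is pulled back from $\R^{L}$ and restricts trivially. A homotopy of representatives is a representative $(J,U,\Phi)$ of an orientation of $\id\times f\colon I\times Y\to I\times X$, again neat, hence restricting to $I\times\partial Y\to I\times\partial X$; the standard argument — $\iota_{0}^{*}=\iota_{1}^{*}$ as maps $h^{\bullet}(I\times Y,\,I\times\partial Y)\to h^{\bullet}(Y,\partial Y)$ — then gives equality of the two relative Gysin maps. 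Invariance of $f_{!!}$ under a homotopy of $h^{\bullet}$-oriented maps follows from Remark~\ref{RmkHomBd}: such a homotopy restricts to a homotopy of maps of pairs $(Y,\partial Y)\to(X,\partial X)$, and one applies the previous argument to $F\colon I\times Y\to I\times X$.

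\emph{Projection formulae and functoriality (remaining bullets).} The projection formula $f_{!!}(\alpha\cdot f^{*}\beta)=f_{!!}\alpha\cdot\beta$ is proved exactly as in \cite[Prop.\ 5.24]{Karoubi}: one expands $f_{!!}$ through the relative Thom isomorphism, uses that $T$ is a morphism of modules, and applies the projection formula for the push-forward $i_{*}$ in compactly supported cohomology of pairs; the computation is identical symbol-for-symbol, the only change being that the relevant product is now $h^{\bullet}(Y,\partial Y)\otimes h^{\bullet}(Y,\partial Y)\to h^{\bullet}(Y,\partial Y)$ (resp.\ over $X$). The mixed formula $f_{!!}(\alpha\cdot f^{*}\beta)=f_{!}\alpha\cdot\beta$ for $\alpha\in h^{\bullet}(Y)$ and $\beta\in h^{\bullet}(X,\partial X)$ is the same computation, with the \emph{absolute} Thom class paired with the $\alpha$-factor, so that $f_{!}$ rather than $f_{!!}$ is produced. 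Finally, $(f\circ g)_{!!}=f_{!!}\circ g_{!!}$ follows from the $2\times3$ principle~\ref{Rule23TopMaps}: the orientation on $f\circ g$ of Definition~\ref{OrientedMapComposition} is realized on the direct sum of the two normal bundles, whose relative Thom isomorphism is the composite of the relative Thom isomorphisms of the summands; neatness of both normal bundles lets all the data restrict to the boundary, and the absolute argument goes through unchanged.

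\emph{Main obstacle.} There is no genuinely new idea here; the work consists in a careful bookkeeping of relative versus absolute structures. The step requiring the most attention is to set up the relative push-forward and the relative integration $\int_{\R^{N}}$ in compactly supported cohomology of the pairs $(U,\partial U)$ and $(X\times\R^{N},\partial X\times\R^{N})$, and to check there the projection formula and the compatibility of the relative Thom isomorphism with products of relative classes. Once this is in place — together with the observation that neatness forces every structure to restrict to the boundary — each item of the theorem becomes a verbatim transcription of the corresponding item of Theorem~\ref{fpTopProperties}.
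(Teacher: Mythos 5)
Your proposal is correct and follows exactly the route the paper takes: the paper's entire proof is the remark that the theorem follows ``with the same technique of [Karoubi, Prop.\ 5.24]'', i.e.\ by rerunning the absolute argument with the relative Thom isomorphism, and your write-up simply makes explicit the bookkeeping (neatness forcing all data to restrict to the boundary, the module structures for the mixed projection formula) that this citation leaves implicit. No discrepancy to report.
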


We remark that, if $L_{X} \colon h^{\bullet}(X) \to h_{n-\bullet}(X, \partial X)$ is the Lefschetz duality \cite{Switzer}, then $f_{!} = L_{X}^{-1} \circ f_{*} \circ L_{Y}$. If we consider the duality in the form $L'_{X} \colon h^{\bullet}(X, \partial X) \to h_{n-\bullet}(X)$, then $f_{!!} = {L'_{X}}^{-1} \circ f_{*} \circ L'_{Y}$.

\SkipTocEntry \subsection{Relative differential integration}

The Gysin map $f_{!!} \colon \hat{h}^{\bullet}(Y, \partial Y) \rightarrow \hat{h}^{\bullet - n}(X, \partial X)$, for $n = \dim Y - \dim X$, is defined similarly to \eqref{GysinMapTop}, starting from a representative of an $\hat{h}^{\bullet}$-orientation. As a consequence of formula \eqref{HomFormula}, it only depends on the corresponding orientation. Considering the following version of diagram \eqref{DiagramFibInt}:
\[\xymatrix{
	\partial X \times \R^{N} \ar@{^(->}[r] \ar[d]_{\pi_{X}\vert_{\partial X \times \R^{N}}} & X \times \R^{N} \ar[d]^{\pi_{X}} \\
	\partial X \ar@{^(->}[r] & X
}
\]
and applying formulas \eqref{FiberInt1} and \eqref{ModWedge}, we define the \emph{relative curvature map}:
\begin{equation}\label{CurvatureMapRel}
\begin{split}
	R^{rel}_{(\iota, \hat{u}, \varphi)} \colon & \Omega^{\bullet}(Y, \partial Y; \h^{\bullet}_{\R}) \to \Omega^{\bullet-n}(X, \partial X; \h^{\bullet}_{\R}) \\
	& (\omega, \rho) \mapsto \int_{X \times \R^{N}/X} i_{*}\varphi_{*}(R(\hat{u}) \wedge \pi^{*}(\omega, \rho)).
\end{split}
\end{equation}
Calling $\partial (\iota, \hat{u}, \varphi)$ the representative induced on the boundary by $(\iota, \hat{u}, \varphi)$, it follows that
\begin{equation}\label{RRelSum}
	R^{rel}_{(\iota, \hat{u}, \varphi)}(\omega, \rho) = \bigl( R_{(\iota, \hat{u}, \varphi)}(\omega), R_{\partial (\iota, \hat{u}, \varphi)}(\rho) \bigr).
\end{equation}
It is easy to prove from the axioms that:
	\[R(f_{!!}\hat{\alpha}) = R^{rel}_{(\iota, \hat{u}, \varphi)}(R(\hat{\alpha})) \qquad f_{!!}a(\omega, \rho) = a(R^{rel}_{(\iota, \hat{u}, \varphi)}(\omega, \rho)),
\]
thus the following diagram commutes:
\begin{footnotesize}
\begin{equation}\label{CurvAMapsRel}
	\xymatrix{
	\Omega^{\bullet-1}(Y, \partial Y; \h^{\bullet}_{\R})/\IIm(d) \ar[r]^(.65){a} \ar[d]^{R^{rel}_{(\iota, \hat{u}, \varphi)}} & \hat{h}^{\bullet}(Y, \partial Y) \ar[r]^{I} \ar[d]^{f_{!!}} \ar@/^2pc/[rr]^{R} & h^{\bullet}(Y, \partial Y) \ar[d]^{f_{!!}} & \Omega_{\cl}^{\bullet}(Y, \partial Y; \h^{\bullet}_{\R}) \ar[d]^{R^{rel}_{(\iota, \hat{u}, \varphi)}} \\
		\Omega^{\bullet-n-1}(X, \partial X; \h^{\bullet}_{\R})/\IIm(d) \ar[r]^(.65){a} & \hat{h}^{\bullet-n}(X, \partial X) \ar[r]^{I} \ar@/_2pc/[rr]_{R} & h^{\bullet-n}(X, \partial X) & \Omega_{\cl}^{\bullet-n}(X, \partial X; \h^{\bullet}_{\R}).
	}
\end{equation}
\end{footnotesize}

We now consider a submersion $f \colon Y \to X$, choosing proper representatives of orientations.

\begin{Theorem}\label{fpDifPropertiesRel} Let $f \colon Y \to X$ be a neat $\hat{h}^{\bullet}$-oriented submersion between compact manifolds.
\begin{itemize}
	\item The Gysin map $f_{!!}$ only depends on the homotopy class of $f$ as an $\hat{h}^{\bullet}$-oriented map (see remark \ref{RmkHomBd}).
	\item The Gysin map is a morphism of $\hat{h}^{\bullet}(X)$-modules, i.e., given $\hat{\alpha} \in h^{\bullet}(Y, \partial Y)$ and $\hat{\beta} \in \hat{h}^{\bullet}(X)$:
	\[f_{!!}(\hat{\alpha} \cdot f^{*}\hat{\beta}) = f_{!!}\hat{\alpha} \cdot \hat{\beta}.
\]
	\item Given $\hat{\alpha} \in h^{\bullet}(Y)$ and $\hat{\beta} \in \hat{h}^{\bullet}(X, \partial X)$:
	\[f_{!!}(f^{*}\hat{\beta} \cdot \hat{\alpha}) = \hat{\beta} \cdot f_{!}\hat{\alpha}.
\]
	\item Given another neat $\hat{h}^{\bullet}$-oriented map $g \colon Z \to Y$ and endowing $f \circ g$ of the naturally induced orientation (def.\ \ref{OrientedMapComposition}), we have $(f \circ g)_{!!} = f_{!!} \circ g_{!!}$.
	\item Considering the following version of diagram \eqref{DiagramFibInt}:
\[\xymatrix{
	\partial Y \ar@{^(->}[r] \ar[d]_{f\vert_{\partial Y}} & Y \ar[d]^{f} \\
	\partial X \ar@{^(->}[r] & X
}
\]
and applying formulas \eqref{FiberInt1} and \eqref{ModWedge}, we have:
	\begin{equation}\label{RAIntSubmersionRel}
	R(f_{!!}\hat{\alpha}) = \int_{Y/X} \Td(\hat{u}) \wedge R(\hat{\alpha}) \qquad f_{!!}a(\omega, \eta) = a \biggl( \int_{Y/X} \Td(\hat{u}) \wedge (\omega, \eta) \biggr).
	\end{equation}
\end{itemize}
\end{Theorem}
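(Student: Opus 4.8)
The plan is to reproduce, mutatis mutandis, the proof of Theorem \ref{fpDifProperties} for the absolute case (given in \cite[Lemmas 3.24 and 3.27]{FR} and \cite[Problems 4.219 and 4.233]{Bunke}), replacing everywhere the Thom morphism by the \emph{relative} Thom morphism $T \colon \hat h^{\bullet}(\,\cdot\,, \partial) \to \hat h^{\bullet+N}_{\cpt}(\,\cdot\,, \partial)$ recalled at the beginning of this section, and carrying along the data induced on the boundary. First I would fix a \emph{proper} representative $(\iota, \hat u, \varphi)$ of the orientation of $f$, which exists and determines a well-defined orientation $[\iota, \hat u]$ by Lemma \ref{SubProp} and Corollary \ref{HomotTHomOr}; by neatness of $f$, the embedding $\iota$, the tubular neighbourhood and $\varphi$ all restrict to the boundary, giving the induced representative $\partial(\iota, \hat u, \varphi)$, so that $f_{!!}$ is genuinely built from $T$ on $N_{\iota(Y)}(X \times \R^{N})$, the pushforward along $i \circ \varphi$, and the iterated integration \eqref{IntSn}. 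Independence of the chosen representative is then the same argument as in the absolute case: two proper representatives are joined by a representative of an orientation of $\id \times f$, and the homotopy formula \eqref{HomFormula2}, applied to the relative classes involved, shows that the two Gysin maps differ by an $a$-term which vanishes. Running the same computation with a smooth neat homotopy $F \colon I \times Y \to I \times X$ in place of $\id \times f$ --- which by Remark \ref{RmkHomBd} restricts to a homotopy between $\partial f$ and $\partial f'$ --- gives the homotopy invariance asserted in the first item.

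For the two projection formulas (items two and three) I would argue as in \cite[Lemma 3.24]{FR}: by Definition \ref{MultDiffExt} and the compatibility of $T$ with products, the identity $f_{!!}(\hat\alpha \cdot f^{*}\hat\beta) = f_{!!}\hat\alpha \cdot \hat\beta$ is reduced to a rearrangement of $\hat u \cdot \pi^{*}(\hat\alpha \cdot f^{*}\hat\beta)$ on the normal bundle, using $\pi^{*} \circ f^{*} = (i \circ \varphi \circ \pi)^{*}$ on the relevant pull-back of $\hat\beta$, after which the pushforward $i_{*}\varphi_{*}$ and the $S^{1}$-integrations in \eqref{IntSn} are module maps exactly as in the absolute case. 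The mixed identity $f_{!!}(f^{*}\hat\beta \cdot \hat\alpha) = \hat\beta \cdot f_{!}\hat\alpha$, with $\hat\beta \in \hat h^{\bullet}(X, \partial X)$ and $\hat\alpha \in \hat h^{\bullet}(Y)$, is the same manipulation, now using on both sides the relative--absolute module structure of Definition \ref{MultDiffExt} and the fact that, away from the boundary, the relative and absolute Thom morphisms agree. Functoriality under composition (item four) follows from the $2 \times 3$ principle for oriented submersions (Lemma \ref{Rule23Subm}) and Definition \ref{OrientedMapComposition}, as in \cite[Lemma 3.27]{FR}: with the stacked embedding $\xi = (\kappa, \id_{\R^{N}}) \circ \iota$, the relative Thom class $\hat w$ on the total normal bundle is the product of the ones on the factors, so the composite of two relative Thom morphisms followed by the two pushforwards and integrations equals the relative Thom morphism of $g \circ f$ followed by its pushforward and integration; neatness again makes all restrictions to the boundaries compatible.

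Finally, for the curvature formulas \eqref{RAIntSubmersionRel} I would combine the relative curvature map \eqref{CurvatureMapRel}, decomposed via \eqref{RRelSum} into its interior and boundary components, with the commutativity of diagram \eqref{CurvAMapsRel}. Since the representative is proper, formula \eqref{IntProp} applied on $Y$ together with \eqref{RBdTodd} applied on the boundary give $R^{rel}_{(\iota, \hat u, \varphi)}(\omega, \eta) = \int_{Y/X} \Td(\hat u) \wedge (\omega, \eta)$, the right-hand side being understood via \eqref{FiberInt1} and \eqref{ModWedge}; then the two relations $R(f_{!!}\hat\alpha) = R^{rel}_{(\iota, \hat u, \varphi)}(R(\hat\alpha))$ and $f_{!!}a(\omega, \eta) = a(R^{rel}_{(\iota, \hat u, \varphi)}(\omega, \eta))$, which are read off from axioms (A1)--(A2) and the naturality of $R$ and $a$ with respect to $T$, the pushforward and the $S^{1}$-integration, yield \eqref{RAIntSubmersionRel}. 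The main obstacle I anticipate is purely bookkeeping: one must verify that the relative Thom morphism is compatible with the two module structures in play and that this compatibility survives the pushforward $i_{*}\varphi_{*}$ and the iterated $S^{1}$-integration of \eqref{IntSn}, while invoking neatness repeatedly so that restriction to the boundary commutes with every operation in sight; once the absolute Theorem \ref{fpDifProperties} and the relative Thom isomorphism are granted, none of the individual steps is conceptually new.
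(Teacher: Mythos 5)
Your proposal is correct and follows essentially the same route as the paper, which itself only remarks that equations \eqref{RAIntSubmersionRel} follow from formula \eqref{IntProp} in the relative setting together with the commutativity of diagram \eqref{CurvAMapsRel}, and otherwise defers to the absolute case (Theorem \ref{fpDifProperties}, proved in \cite{FR} and \cite{Bunke}) with the relative Thom morphism substituted throughout. Your write-up is in fact more detailed than the paper's, but every step --- proper representatives, the homotopy formula for well-definedness and homotopy invariance, the projection formulas via the module axioms, the $2\times 3$ principle for composition, and the curvature computation --- matches the intended argument.
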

Equations \eqref{RAIntSubmersionRel} follows from formula \eqref{IntProp} (in the relative setting) and the commutativity of diagram \eqref{CurvAMapsRel}. 

\SkipTocEntry \subsection{Flat classes}

The relative Gysin map $f_{!!} \colon \hat{h}^{\bullet}(Y, \partial Y) \to \hat{h}^{\bullet - n}(X, \partial X)$, defined in the previous section, depends on the $\hat{h}^{\bullet}$-orientation of $f$, but, if we restrict it to flat classes, it only depends on the topological $h^{\bullet}$-orientation. The reason is the same of section \ref{FlGysin}, applying the relative Thom isomorphism $T_{\fl} \colon \hat{h}^{\bullet}_{\fl}(X, \partial X) \to \hat{h}^{\bullet + n}_{\fl,\cpt}(E, \partial E)$. Lemma \ref{fpTopRelProperties} keeps on holding (for any $f$, not necessarily a submersion). The relative versions of lemma \ref{GysinC10} (in the case of a submersion, the right-hand side of equation \eqref{RAIntSubmersionRel}) and corollary \ref{GysinC10Cor} hold with the same proof.

\SkipTocEntry \subsection{Integration to the point - Manifolds without boundary}

If $X$ is an $h^{\bullet}$-oriented manifold of dimension $n$ without boundary, the integration $(p_{X})_{!} \colon h^{\bullet}(X) \to \mathfrak{h}^{\bullet-n}$ is well-defined applying \eqref{GysinMapTop}. The same holds about the differential extension, defining $(p_{X})_{!} \colon \hat{h}^{\bullet}(X) \to \hat{\mathfrak{h}}^{\bullet-n}$ through \eqref{GysinMapDif}. Since $p_{X}$ is a submersion, formula \eqref{RAIntSubmersion} becomes the following in this case:
\begin{equation}\label{CurvPtTop}
	R((p_{X})_{!}(\hat{\alpha})) = \int_{X} \Td(X) \wedge R(\hat{\alpha}) \qquad (p_{X})_{!}(a(\omega)) = a \biggl( \int_{X} \Td(\hat{u}) \wedge \omega \biggr).
\end{equation}
As a particular case, $X$ can be the boundary of another manifold. Equivalently, we consider a manifold with non-empty boundary $X$ and the integration to the point $(p_{\partial X})_{!}$. We start from the following preliminary lemma, then we will show the behaviour of $(p_{\partial X})_{!}$.

\begin{Lemma}\label{ARoofBoundaryLemma} Let $X$ be a $\hat{h}^{\bullet}$-oriented manifold with non-empty boundary and $\Phi \colon X \to I$ a defining function for the boundary, as a part of the orientation of $X$ (see def.\ \ref{OrientedManifoldBoundary}). For any $\hat{\alpha} \in \hat{h}^{\bullet}(X)$, we have:
\begin{equation}\label{ARoofBoundary}
	\int_{0}^{1}R(\Phi_{!}\hat{\alpha}) = \int_{X} \Td(X) \wedge R(\hat{\alpha}).
\end{equation}
\end{Lemma}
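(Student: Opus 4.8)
The plan is to unwind the definitions: formula \eqref{ARoofBoundary} is nothing but the computation \eqref{RBdTodd} of the boundary curvature map, read off the curvature of $\Phi_{!}\hat{\alpha}$ via the identity $R^{\partial}_{(\iota,\hat{u},\varphi)}=\int_{0}^{1}\circ\,R_{(\iota,\hat{u},\varphi)}$ that is built into the two definitions \eqref{CurvatureMap} and \eqref{CurvMapBd}.

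First I would fix a representative $(\iota,\hat{u},\varphi)$ of the $\hat{h}^{\bullet}$-orientation of the defining function $\Phi\colon X\to I$ that is part of the chosen orientation of $X$; by remark \ref{OrManBdRmk} I may take $\iota\colon X\hookrightarrow I\times\R^{N}$ to be a neat embedding with $\pi_{I}\circ\iota=\Phi$ and image in $[0,1)\times\R^{N}\simeq\Hh^{N+1}$, with $\hat{u}$ a differential Thom class of $N_{\iota(X)}(I\times\R^{N})$ and $\varphi$ a diffeomorphism onto a neat tubular neighbourhood. Although $\Phi$ is not a submersion, the Gysin map $\Phi_{!}\colon\hat{h}^{\bullet}(X)\to\hat{h}^{\bullet-n+1}(I)$ is defined for an arbitrary $\hat{h}^{\bullet}$-oriented neat map via \eqref{GysinMapDif}, and the curvature compatibility recorded just before diagram \eqref{CurvAMaps} gives $R(\Phi_{!}\hat{\alpha})=R_{(\iota,\hat{u},\varphi)}(R(\hat{\alpha}))$. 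Spelling this out by \eqref{CurvatureMap}, with $I$ in the role of the target, $R(\Phi_{!}\hat{\alpha})=\int_{I\times\R^{N}/I}i_{*}\varphi_{*}\bigl(R(\hat{u})\wedge\pi^{*}R(\hat{\alpha})\bigr)$ as a closed form on $I$.

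Next I would integrate this form over $I$. Comparing with the definition \eqref{CurvMapBd} of the boundary curvature map, the result is precisely $R^{\partial}_{(\iota,\hat{u},\varphi)}(R(\hat{\alpha}))$, because $R^{\partial}_{(\iota,\hat{u},\varphi)}$ is by construction $\int_{0}^{1}$ of $R_{(\iota,\hat{u},\varphi)}$ formed from the same embedding and Thom class. Finally I would invoke formula \eqref{RBdTodd}, which rewrites $R^{\partial}_{(\iota,\hat{u},\varphi)}(R(\hat{\alpha}))=\int_{X}\Td(X)\wedge R(\hat{\alpha})$, yielding \eqref{ARoofBoundary}.

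The only point needing genuine care --- the ``obstacle'', such as it is --- is bookkeeping: one must check that the curvature formula $R(f_{!}\hat{\alpha})=R_{(\iota,\hat{u},\varphi)}(R(\hat{\alpha}))$ really applies to this non-submersion $\Phi$ landing in a manifold with boundary (it does, being an immediate consequence of the axioms as stated before \eqref{CurvAMaps}), and that the iterated fibre integrations in \eqref{CurvatureMap} and \eqref{CurvMapBd} agree in order and sign so that $R^{\partial}_{(\iota,\hat{u},\varphi)}=\int_{0}^{1}\circ\,R_{(\iota,\hat{u},\varphi)}$ on the nose. Since the lemma fixes a single representative $\Phi$ ``as a part of the orientation of $X$'', the orientation of the map $\Phi$, the Todd form $\Td(X)=\Td(\hat{u})$ and the curvature map $R^{\partial}_{(\iota,\hat{u},\varphi)}$ are all built from the same data, so no compatibility between different representatives is required and the argument is coherent. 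No further input beyond the cited formulas is needed; the lemma is a direct consequence of the definitions of $\Phi_{!}$, $R^{\partial}$ and $\Td(X)$.
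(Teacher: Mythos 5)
Your proof is correct and coincides with the paper's own argument: fix a representative $(\iota,\hat{u},\varphi)$ of the orientation of $\Phi$, use $R(\Phi_{!}\hat{\alpha})=R_{(\iota,\hat{u},\varphi)}(R(\hat{\alpha}))$ from diagram \eqref{CurvAMaps}, identify $\int_{0}^{1}\circ R_{(\iota,\hat{u},\varphi)}$ with $R^{\partial}_{(\iota,\hat{u},\varphi)}$ via definition \eqref{CurvMapBd}, and conclude by formula \eqref{RBdTodd}. No changes needed.
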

\begin{proof} Let $(\iota, \hat{u}, \varphi)$ be any representative of the orientation of $\Phi$. Because of the commutativity of diagram \eqref{CurvAMaps}, we have that $R(\Phi_{!}\hat{\alpha}) = R_{(\iota, \hat{u}, \varphi)}(R(\hat{\alpha}))$. It follows from definition \eqref{CurvMapBd} that $\int_{0}^{1}R(\Phi_{!}\hat{\alpha}) = R^{\partial}_{(\iota, \hat{u}, \varphi)}(R(\hat{\alpha}))$, hence the result follows from formula \eqref{RBdTodd}.
\end{proof}

\begin{Theorem} Let $X$ be a $\hat{h}^{\bullet}$-oriented manifold with non-empty boundary. For any $\hat{\alpha} \in \hat{h}^{\bullet}(X)$, considering the induced $\hat{h}^{\bullet}$-orientation on $\partial X$, we have:
\begin{equation}\label{StokesPt}
	(p_{\partial X})_{!}(\hat{\alpha}\vert_{\partial X}) = -a \biggl( \int_{X} \Td(X) \wedge R(\hat{\alpha}) \biggr).
\end{equation}
In particular, in the topological framework, $(p_{\partial X})_{!}(\alpha\vert_{\partial X}) = 0$.
\end{Theorem}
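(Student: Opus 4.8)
The plan is to push $\hat\alpha$ forward to the interval along the defining function and exploit the fact that the resulting class on $I$ is topologically trivial. First I would fix an $\hat h^{\bullet}$-oriented neat defining function $\Phi\colon X\to I$ representing the orientation of $X$, so that $\partial X=\Phi^{-1}(\{0\})$ and $\Phi^{-1}(\{1\})=\emptyset$; by Remark~\ref{OrInducedBd} the orientation it induces on $\partial\Phi\colon\partial X\to\{0\}\approx\{pt\}$ is exactly the one used to define $(p_{\partial X})_!$. Put $\hat\beta:=\Phi_!\hat\alpha$ and let $i_0,i_1\colon\{pt\}\to I$ be the two endpoint inclusions.

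The first step is to show that $\hat\beta$ lies in the image of $a$. Applying the restriction-to-boundary formula \eqref{RestrictionBoundaryGysin} to $\Phi$ (which holds for the differential Gysin map of a neat $\hat h^{\bullet}$-oriented map) and reading off the two components of $\partial I=\{0\}\sqcup\{1\}$ gives
\[ i_0^*\hat\beta=(p_{\partial X})_!(\hat\alpha\vert_{\partial X}), \qquad i_1^*\hat\beta=0, \]
the second identity because $\Phi^{-1}(\{1\})=\emptyset$ and $\hat h^{\bullet}(\emptyset)=0$. Applying $I$ to $i_1^*\hat\beta=0$ and using that $i_1$ is a homotopy equivalence we obtain $I(\hat\beta)=0$, so by the exactness of \eqref{ExSeqDC} we may write $\hat\beta=a(\theta)$ for a suitable $\h^{\bullet}_{\R}$-valued form $\theta$ on $I$; then $R(\hat\beta)=d\theta$ by axiom~(A1).

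The second step is a bare Stokes computation on $I$: since $\theta$ has de Rham degree at most $1$, one has $\int_0^1 d\theta=i_1^*\theta-i_0^*\theta$ (the degree-$0$ part gives the fundamental theorem of calculus, the degree-$1$ part is $d$-closed on $I$ and restricts to $0$ at the endpoints). Applying $a$, using its naturality and $i_1^*\hat\beta=0$,
\[ a\Bigl(\int_0^1 R(\hat\beta)\Bigr)=i_1^*\hat\beta-i_0^*\hat\beta=-(p_{\partial X})_!(\hat\alpha\vert_{\partial X}). \]
Now Lemma~\ref{ARoofBoundaryLemma} identifies $\int_0^1 R(\Phi_!\hat\alpha)$ with $\int_X\Td(X)\wedge R(\hat\alpha)$, which yields \eqref{StokesPt}. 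For the topological statement one lifts $\alpha\in h^{\bullet}(X)$ to some $\hat\alpha$ with $I(\hat\alpha)=\alpha$ (surjectivity of $I$ in \eqref{ExSeqDC}); since $I$ commutes with restriction and with the Gysin map (diagram \eqref{CurvAMaps}) and $I\circ a=0$, applying $I$ to \eqref{StokesPt} gives $(p_{\partial X})_!(\alpha\vert_{\partial X})=0$.

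The main obstacle is the very first step: one must check that \eqref{RestrictionBoundaryGysin} genuinely applies to $\Phi$, which is a neat $\hat h^{\bullet}$-oriented map but not a submersion, and that the orientation bookkeeping is correct — namely that $\partial\Phi$ with its induced orientation really is $p_{\partial X}$ with the orientation of $\partial X$, and that the empty fibre over $\{1\}$ is handled so as to give $i_1^*\hat\beta=0$. Once $\hat\beta=\Phi_!\hat\alpha$ is known to be topologically trivial, everything else is formal, the essential input being Lemma~\ref{ARoofBoundaryLemma}.
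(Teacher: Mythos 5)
Your argument is correct and follows essentially the same route as the paper: fix the oriented defining function $\Phi$, use \eqref{RestrictionBoundaryGysin} to identify the two endpoint restrictions of $\Phi_{!}\hat{\alpha}$ (the one over $\{1\}$ vanishing because $\Phi^{-1}\{1\}=\emptyset$), and conclude via Lemma \ref{ARoofBoundaryLemma}. The only difference is that where the paper invokes the homotopy formula \eqref{HomFormula} on $I$ directly, you re-derive it in the special case at hand by first observing $I(\Phi_{!}\hat{\alpha})=0$ and writing $\Phi_{!}\hat{\alpha}=a(\theta)$; this is a harmless expansion of the same step, and your worry about \eqref{RestrictionBoundaryGysin} applying to the non-submersion $\Phi$ is unfounded, since the paper states that formula for arbitrary neat oriented maps.
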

\begin{proof} Let $\Phi$ be a defining function for the boundary, as a part of the orientation of $X$. Since $\Phi^{-1}\{1\} = \emptyset$, the map $\partial \Phi \colon \partial X \to \partial I$ can be identified with $p_{\partial X} \colon \partial X \to \{0\}$. Thanks to formula \eqref{RestrictionBoundaryGysin}, one has $(p_{\partial X})_{!}(\hat{\alpha}\vert_{\partial X}) = (\Phi_{!}\hat{\alpha})\vert_{\{0\}}$. Since $(\Phi_{!}\hat{\alpha})\vert_{\{1\}} = 0$, because $\Phi^{-1}(1) = \emptyset$, from the homotopy formula \eqref{HomFormula} we have:
\begin{align*}
	(p_{\partial X})_{!}(\hat{\alpha}\vert_{\partial X}) &= - \bigl( (\Phi_{!}\hat{\alpha})\vert_{\{1\}} - (\Phi_{!}\hat{\alpha})\vert_{\{0\}} \bigr) \overset{\eqref{HomFormula}}= -a\biggl( \int_{I} R(\Phi_{!}\hat{\alpha}) \biggr).
\end{align*}
The result follows from formula \eqref{ARoofBoundary}.
\end{proof}

\SkipTocEntry \subsection{Integration to the point - Manifolds with boundary}

When $X$ has a boundary, neither of the two Gysin maps $(p_{X})_{!}$ and $(p_{X})_{!!}$ is well defined, since $p_{X}$ is not neat. Nevertheless, we can define the integration map to the point for classes relative to the boundary, that we denote anyway by $(p_{X})_{!!}$, in the following way. In the topological framework, we set:
\begin{equation}\label{GysinPointBd}
\begin{split}
	(p_{X})_{!!} \colon & h^{\bullet}(X, \partial X) \to \mathfrak{h}^{\bullet - n} \\
	& \alpha \mapsto \int_{S^{1}} \Phi_{!!}(\alpha),
\end{split}
\end{equation}
where the map $\Phi \colon (X, \partial X) \to (I, \partial I)$ is provided by the orientation of $X$ (see def.\ \ref{OrientedManifoldBoundary}) and the integration over $S^{1}$ is defined as follows. Since
\begin{equation}\label{IsoIS1}
	h^{\bullet + 1 - n}(I, \partial I) \simeq h^{\bullet + 1 - n}(I/\partial I, \partial I/\partial I) \simeq h^{\bullet + 1 - n}(S^{1}, *),
\end{equation}
`$*$' being a marked point on $S^{1}$, we apply the suspension isomorphism
	\[\int_{S^{1}} \colon h^{\bullet + 1 - n}(S^{1}, *) \overset{\!\simeq}\longrightarrow h^{\bullet - n}(S^{0}, **) \simeq \mathfrak{h}^{\bullet - n},
\]
`$**$' being a marked point on $S^{0}$.

In order to define the integration map for differential classes, we can apply a formula analogous to \eqref{GysinPointBd}, but we have to define the integration over $S^{1}$, since the isomorphism \eqref{IsoIS1} does not apply any more. We first show how to integrate a parallel class defined on $(I, \partial I)$. The idea is that, glueing the two boundary points, we get a class on $(S^{1}, *)$ as in the topological case. Nevertheless, we have to take care of the smoothness condition when glueing the two extrema, hence we need a class that vanishes not only on $\partial I$, but also in an open neighbourhood $[0, \varepsilon) \cup (1-\varepsilon, 1]$. In order to achieve this condition, we consider a smooth function $\xi \colon (I, \partial I) \to (I, \partial I)$ such that $\xi\vert_{[0, \varepsilon)} = 0$ and $\xi\vert_{(1-\varepsilon, 1]} = 1$.\footnote{It is natural to think of $\xi$ as an increasing function, but we will see that it is not necessary, since in any case it is smoothly homotopic to the identity of $I$ relatively to $\partial I$.} Given $\hat{\alpha} \in \hat{h}^{\bullet}_{\ppar}(I, \partial I)$, we consider its pull-back $\xi^{*}\hat{\alpha} \in \hat{h}^{\bullet}_{\ppar}(I, \partial I)$. Thanks to the following lemma, the class $\xi^{*}\hat{\alpha}$ induces a well-defined class on $(S^{1}, *)$, that we can integrate, getting a class on the point. Finally, we will have to prove that the latter is independent of the choice of $\xi$.
\begin{Not} We set $I' := [0, \varepsilon) \cup (1-\varepsilon, 1]$ for a fixed $\varepsilon$. Moreover, we denote by $\pi \colon (I, \partial I) \to (S^{1}, *)$ the natural projection and we set $S' := \pi(I')$. We think of $\pi$ as a map of pairs $\pi \colon (I, I') \to (S^{1}, S')$.
\end{Not}
\begin{Lemma}\label{UniqueS1} The pull-back $\pi^{*} \colon \hat{h}_{\ppar}^{\bullet}(S^{1}, S') \to \hat{h}_{\ppar}^{\bullet}(I, I')$ is an isomorphism.
\end{Lemma}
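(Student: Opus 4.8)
The plan is to deduce the lemma from \emph{excision for parallel classes} (Lemma \ref{ParExcision}), using the fact that $\pi$ is a diffeomorphism once one removes a small closed arc around the collapsed point on both sides. The collapsing — the two boundary points $0,1\in\partial I$ going to the single interior point $*\in S^{1}$ — happens entirely inside those small arcs, which we excise away; on what is left $\pi$ is an honest diffeomorphism of pairs, and excision identifies the remaining groups with the original ones.

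First I would fix the smaller neighbourhoods. On $I$ set $Z_{I}:=[0,\tfrac{\varepsilon}{2}]\cup[1-\tfrac{\varepsilon}{2},1]$; this is closed in $I$ and contained in the open set $I'=[0,\varepsilon)\cup(1-\varepsilon,1]$, so, since $I'$ is open, $\overline{Z_{I}}=Z_{I}\subset\operatorname{int}_{I}(I')$ and the hypothesis of Lemma \ref{ParExcision} is satisfied with $X=I$, $A=I'$. On $S^{1}$ set $Z_{S}:=\pi(Z_{I})$, the closed arc through $*$ with endpoints $e^{\pm\pi i\varepsilon}$; again $\overline{Z_{S}}=Z_{S}\subset S'=\operatorname{int}_{S^{1}}(S')$. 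Hence by Lemma \ref{ParExcision} the pullbacks along the inclusions of pairs
\[
\iota_{I}\colon(I\setminus Z_{I},\,I'\setminus Z_{I})\hookrightarrow(I,I'),\qquad
\iota_{S}\colon(S^{1}\setminus Z_{S},\,S'\setminus Z_{S})\hookrightarrow(S^{1},S')
\]
induce isomorphisms $\iota_{I}^{*}$ and $\iota_{S}^{*}$ on $\hat h^{\bullet}_{\ppar}$ (it is understood that we allow the non-compact manifolds $I',S',I\setminus Z_{I},\dots$ among the objects here).

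Next I would check that $\pi$ restricts to a diffeomorphism of pairs
\[
\pi'\colon(I\setminus Z_{I},\,I'\setminus Z_{I})\;\xrightarrow{\ \sim\ }\;(S^{1}\setminus Z_{S},\,S'\setminus Z_{S}).
\]
Indeed $I\setminus Z_{I}=(\tfrac{\varepsilon}{2},1-\tfrac{\varepsilon}{2})\subset(0,1)$, on which $t\mapsto e^{2\pi it}$ is an injective immersion and an open map onto the complementary open arc $S^{1}\setminus Z_{S}$, and it carries $I'\setminus Z_{I}=(\tfrac{\varepsilon}{2},\varepsilon)\cup(1-\varepsilon,1-\tfrac{\varepsilon}{2})$ exactly onto $S'\setminus Z_{S}$. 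Since $\hat h^{\bullet}_{\ppar}\colon\mathcal M_{2}^{\op}\to\mathcal A_{\Z}$ is a functor, $(\pi')^{*}$ is an isomorphism. Finally the inclusions and $\pi$ fit into a commutative square in $\mathcal M_{2}$ (both composites $(I\setminus Z_{I},I'\setminus Z_{I})\to(S^{1},S')$ agree), so applying $\hat h^{\bullet}_{\ppar}$ gives a commutative square
\[
\iota_{I}^{*}\circ\pi^{*}=(\pi')^{*}\circ\iota_{S}^{*}
\]
in which $\iota_{I}^{*},\iota_{S}^{*},(\pi')^{*}$ are all isomorphisms; therefore $\pi^{*}\colon\hat h^{\bullet}_{\ppar}(S^{1},S')\to\hat h^{\bullet}_{\ppar}(I,I')$ is an isomorphism.

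The proof is essentially formal once one has Lemma \ref{ParExcision}; the only points that need genuine care — and the closest thing to an obstacle — are (i) verifying the excision hypothesis, which is exactly why one passes from $\varepsilon$ to $\tfrac{\varepsilon}{2}$ so that the excised closed arcs sit \emph{inside} the open neighbourhoods $I'$ and $S'$, and (ii) checking that $\pi$ really is a diffeomorphism on the complement, being careful that the non-neat behaviour at $0,1\in\partial I$ and at the interior point $*\in S^{1}$ has been pushed entirely into $Z_{I}$ and $Z_{S}$.
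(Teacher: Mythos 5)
Your proof is correct, but it takes a genuinely different route from the paper's. The paper argues directly from the axioms: it lifts $\hat{\alpha}$ by first solving the problem topologically (where $\pi^{*}$ is an isomorphism), choosing any parallel refinement $\hat{\beta}'$ of the topological solution, and then correcting the error term $a(\eta,0)$ using the key observation that a form $\eta$ with $\eta\vert_{I'}=0$ descends uniquely to $(S^{1},S')$; uniqueness is handled by a similar chase through the exact sequence \eqref{ExSeqDC} and the Chern character. You instead package the same geometric fact --- that the identification $0\sim 1$ happens entirely inside the region where everything relevant vanishes --- into a single application of Lemma \ref{ParExcision}: excise the closed arcs $Z_{I}\subset I'$ and $Z_{S}=\pi(Z_{I})\subset S'$, observe that $\pi$ restricts to a diffeomorphism of pairs on the complements, and conclude from the commuting square. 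Your hypotheses check out ($Z_{I}$ and $Z_{S}$ are closed and sit inside the open sets $I'$ and $S'$, which are their own interiors, and Lemma \ref{ParExcision} requires only embeddings with the closure/interior condition --- indeed it is applied to open embeddings $A=X\setminus K\hookrightarrow X$ in section \ref{DiffCohCpt} as well), and the non-compactness of $I'$, $S'$ and the excised complements is no worse than what the statement of the lemma already involves. What each approach buys: yours is shorter and more conceptual, but it leans on the full strength of differential excision for parallel classes, which was itself proved by the five lemma from flat excision plus a form-level argument --- essentially the same ingredients the paper's hands-on proof uses directly; the paper's version has the side benefit of exhibiting explicitly the descended form $\bar{\eta}$, which is in the spirit of the later computations with $\pi_{*}=(\pi^{*})^{-1}$. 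Both arguments carry over verbatim to the parametrized version $\pi^{*}\colon\hat{h}_{\ppar}^{\bullet}(S^{1}\times J, S'\times J)\to\hat{h}_{\ppar}^{\bullet}(I\times J, I'\times J)$ needed for \eqref{GysinIBdDifPar}.
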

\begin{proof} Given $\hat{\alpha} \in \hat{h}_{\ppar}^{\bullet}(I, I')$, we have to show that there exists a unique class $\hat{\beta} \in \hat{h}_{\ppar}^{\bullet}(S^{1}, S')$ such that $\pi^{*}(\hat{\beta}) = \hat{\alpha}$. We set $\alpha := I(\hat{\alpha})$. Since $\pi^{*}$ is an isomorphism in (topological) cohomology, there exists a unique class $\beta \in h^{\bullet}(S^{1}, S')$ such that $\pi^{*}\beta = \alpha$. We choose any parallel differential refinement $\hat{\beta}' \in \hat{h}^{\bullet}_{\ppar}(S^{1}, S')$ of $\beta$. It follows that $\pi^{*}\hat{\beta}' = \hat{\alpha} + a(\eta, 0)$, with $\eta\vert_{I'} = 0$. There exists a unique form $\bar{\eta}$ on $(S^{1}, S')$ such that $\pi^{*}\bar{\eta} = \eta$, thus we set $\hat{\beta} := \hat{\beta}' - a(\bar{\eta}, 0)$ and we get $\pi^{*}\hat{\beta} = \hat{\alpha}$. About the uniqueness, let us suppose that $\hat{\beta}''$ is another parallel class such that $\pi^{*}\hat{\beta}'' = \hat{\alpha}$. Then $\pi^{*}(\hat{\beta} - \hat{\beta}'') = 0$, thus, since $\pi^{*}$ is an isomorphism in cohomology, $I(\hat{\beta} - \hat{\beta}'') = 0$. It follows that $\hat{\beta} - \hat{\beta}'' = a(\xi, 0)$, with $\pi^{*}a(\xi, 0) = 0$, thus $[(\pi^{*}\xi, 0)] = \ch u$. Since $\pi^{*}$ is an isomorphism in cohomology, there exists $v$ such that $u = \pi^{*}v$, hence $\pi^{*}[(\xi, 0)] = \pi^{*}(\ch v)$. Again since $\pi^{*}$ is an isomorphism in cohomology, it follows that $[(\xi, 0)] = \ch v$, hence $a(\xi, 0) = 0$, therefore $\hat{\beta} = \hat{\beta}''$.
\end{proof}

\begin{Not} Considering the statement of lemma \ref{UniqueS1}, we set $\pi_{*} := (\pi^{*})^{-1}$.
\end{Not}

Summarizing, given a class $\hat{\alpha} \in \hat{h}^{\bullet}_{\ppar}(I, \partial I)$ and a smooth function $\xi \colon (I, \partial I) \to (I, \partial I)$ such that $\xi\vert_{[0, \varepsilon)} = 0$ and $\xi\vert_{(1-\varepsilon, 1]} = 1$, we get $\xi^{*}\hat{\alpha} \in \hat{h}^{\bullet}_{\ppar}(I, I')$. Applying lemma \ref{UniqueS1}, we get $\pi_{*}\xi^{*}\hat{\alpha} \in \hat{h}_{\ppar}^{\bullet}(S^{1}, S')$. We can think of $\pi_{*}\xi^{*}\hat{\alpha}$ as an absolute class on $S^{1}$, applying the pull-back with respect to the natural morphism $\id_{S^{1}} \colon (S^{1}, \emptyset) \to (S^{1}, S')$, therefore we can integrate $\pi_{*}\xi^{*}\hat{\alpha}$ on $S^{1}$. We get the following integration map:
\begin{equation}\label{GysinPointBdDifPar}
\begin{split}
	\int_{I} \colon \; & \hat{h}^{\bullet}_{\ppar}(I, \partial I) \to \hat{\h}^{\bullet - 1} \\
	& \hat{\alpha} \mapsto \int_{S^{1}} \pi_{*}\xi^{*}\hat{\alpha}.
\end{split}
\end{equation}

\begin{Lemma}\label{IndepXi} The integration map \eqref{GysinPointBdDifPar} is independent of the choice of $\xi$.
\end{Lemma}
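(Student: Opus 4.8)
The plan is to reduce, by functoriality, to the case in which $\xi_{0}$ and $\xi_{1}$ share the same $\varepsilon$, and then to compare the classes $\pi_{*}\xi_{0}^{*}\hat{\alpha}$ and $\pi_{*}\xi_{1}^{*}\hat{\alpha}$ by means of the homotopy formula \eqref{HomFormula2}: I expect their difference, once integrated over $S^{1}$, to be the image under $a$ of a differential form whose degree on the point is too high to be non-zero. For the reduction, note that if a smooth $\xi$ satisfies $\xi\vert_{[0,\varepsilon)}=0$ and $\xi\vert_{(1-\varepsilon,1]}=1$ then it satisfies the same conditions for every $\varepsilon'\le\varepsilon$, and, writing $I'_{\varepsilon},S'_{\varepsilon}$ for the sets $I',S'$ attached to $\varepsilon$, we have $I'_{\varepsilon'}\subset I'_{\varepsilon}$ and $S'_{\varepsilon'}\subset S'_{\varepsilon}$. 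The inclusions $I'_{\varepsilon'}\hookrightarrow I'_{\varepsilon}$ and $S'_{\varepsilon'}\hookrightarrow S'_{\varepsilon}$ define morphisms of $\mathcal{M}_{2}$ forming a commutative square with $\pi$, so the induced pull-backs commute with $\pi^{*}$; since $\pi^{*}$ is an isomorphism for each $\varepsilon$ (lemma \ref{UniqueS1}), the maps $\pi_{*}$ are compatible with these restrictions, as is the forgetful morphism $(S^{1},\emptyset)\to(S^{1},S'_{\varepsilon})$ used to regard a class on $(S^{1},S'_{\varepsilon})$ as an absolute class on $S^{1}$. Because $\xi^{*}\hat{\alpha}$ computed with $I'_{\varepsilon'}$ is the restriction of the one computed with $I'_{\varepsilon}$, the resulting absolute class on $S^{1}$, hence $\int_{S^{1}}\pi_{*}\xi^{*}\hat{\alpha}$, is unchanged by shrinking $\varepsilon$. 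Thus, given arbitrary $\xi_{0},\xi_{1}$ with parameters $\varepsilon_{0},\varepsilon_{1}$, I would compute both integrals with $\varepsilon:=\min(\varepsilon_{0},\varepsilon_{1})$, so from now on $I'=I'_{\varepsilon}$ is fixed.

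Next I would compare the two classes. Each $\xi_{i}$, being constant near $\partial I$ with values in $\partial I$, is a morphism $\xi_{i}\colon(I,I')\to(I,\partial I)$ of $\mathcal{M}_{2}$, and $\xi_{i}^{*}\hat{\alpha}\in\hat{h}^{\bullet}_{\ppar}(I,I')$. Let $I_{t}$ be a copy of $I$ used as homotopy parameter; the affine map $\Xi\colon I_{t}\times I\to I$, $\Xi(t,s):=(1-t)\xi_{0}(s)+t\,\xi_{1}(s)$, is smooth, takes values in $I$, and over $[0,\varepsilon)$ (resp.\ $(1-\varepsilon,1]$) equals $0$ (resp.\ $1$) for every $t$; hence $(\Xi,\Xi\vert_{I_{t}\times I'})$ is a homotopy in $\mathcal{M}_{2}$ between $\xi_{0}$ and $\xi_{1}$. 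Writing $R(\hat{\alpha})=(\omega,0)$ (the second component vanishes since $\hat{\alpha}$ is parallel) and using \eqref{FiberInt2}, the homotopy formula \eqref{HomFormula2} gives
\[
	\xi_{1}^{*}\hat{\alpha}-\xi_{0}^{*}\hat{\alpha}=a(\zeta,0),\qquad \zeta:=\tint_{(I_{t}\times I)/I}\Xi^{*}\omega .
\]
As $\Xi$ is $t$-independent over $I'$, the form $\zeta$ vanishes on $I'$, so it descends to $\bar{\zeta}\in\Omega^{\bullet-1}(S^{1};\h^{\bullet}_{\R})$ with $\bar{\zeta}\vert_{S'}=0$ and $\pi^{*}\bar{\zeta}=\zeta$; by the construction of $\pi_{*}$ in lemma \ref{UniqueS1} we get $\pi_{*}\bigl(\xi_{1}^{*}\hat{\alpha}-\xi_{0}^{*}\hat{\alpha}\bigr)=a(\bar{\zeta},0)$. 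Regarding both sides as absolute classes on $S^{1}$ and integrating, the commutativity of diagram \eqref{CommS1} yields
\[
	\int_{S^{1}}\pi_{*}\xi_{1}^{*}\hat{\alpha}-\int_{S^{1}}\pi_{*}\xi_{0}^{*}\hat{\alpha}=a\Bigl(\int_{S^{1}}\bar{\zeta}\Bigr).
\]

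Finally I would check that $\int_{S^{1}}\bar{\zeta}=0$, which I expect to be the main point of the argument (beyond it, only the functoriality bookkeeping of the first step remains, and that is routine): a priori the discrepancy between two choices of $\xi$ is controlled only up to the image of $a$, so the degree count below is what actually kills it. Since $I$ is one-dimensional, every homogeneous component of $\omega\in\Omega^{\bullet}(I;\h^{\bullet}_{\R})$ has form-degree $0$ or $1$ along $I$, the degree-$1$ part being $\h^{\bullet-1}_{\R}$-valued; fibre-integrating $\Xi^{*}\omega$ over the one-dimensional interval $I_{t}$ therefore produces $\zeta\in\Omega^{0}(I;\h^{\bullet-1}_{\R})$, and likewise $\bar{\zeta}\in\Omega^{0}(S^{1};\h^{\bullet-1}_{\R})$. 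Hence $\int_{S^{1}}\bar{\zeta}\in\Omega^{-1}(\{pt\};\h^{\bullet-1}_{\R})=0$, so the two integrals coincide and the map \eqref{GysinPointBdDifPar} is independent of $\xi$.
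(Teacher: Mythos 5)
Your proof is correct and follows essentially the same route as the paper's: connect $\xi_{0}$ and $\xi_{1}$ by a homotopy constant near $\partial I$, apply the homotopy formula, and kill the resulting $a(\cdot)$ term by a degree count (the curvature of a parallel class on $(I,\partial I)$ has no component of form-degree $\geq 2$, so integrating over the two-dimensional parameter space $\times$ fibre gives zero). The only addition is your preliminary reduction to a common $\varepsilon$, a piece of functoriality bookkeeping the paper leaves implicit; it is correct and harmless.
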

\begin{proof} Let us fix two maps $\xi$ and $\xi'$, vanishing on $[0, \varepsilon) \cup (1-\varepsilon, 1]$ and $[0, \varepsilon') \cup (1-\varepsilon', 1]$ respectively. We call $\int_{I}$ and $\int'_{I}$ the corresponding integration maps \eqref{GysinPointBdDifPar}. We set $J := I = [0, 1]$, in order to distinguish the two components of $I \times I = I \times J$ (this notation will make clearer the fibre-wise integrations). Let us consider a homotopy $\Xi \colon (I, \partial I) \times J \to (I, \partial I)$ between them. By formula \eqref{HomFormula2}, thinking of $\xi$ and $\xi'$ as relative maps $(\xi, \xi\vert_{\partial I}), (\xi', \xi'\vert_{\partial I}) \colon (I, \partial I) \to (I, \partial I)$, we get
	\[\xi^{*}\hat{\alpha} - (\xi')^{*}\hat{\alpha} = a \biggl( \int_{I \times J/I} R(\Xi^{*}\hat{\alpha}) \biggr).
\]
It follows that:
\begin{align*}
	\int'_{I} \hat{\alpha} - \int_{I} \hat{\alpha} & = \int_{S^{1}} \pi_{*}(\xi^{*}\hat{\alpha} - (\xi')^{*}\hat{\alpha}) = \int_{S^{1}} \pi_{*} a \biggl( \int_{I \times J/I} R(\Xi^{*}\hat{\alpha}) \biggr) \\
	& = a \biggl( \int_{S^{1}} \pi_{*} \int_{I \times J/I} R(\Xi^{*}\hat{\alpha}) \biggr) = a \biggl( \int_{I \times J} \Xi^{*}R(\hat{\alpha}) \biggr).
\end{align*}
Since $\hat{\alpha} \in \hat{h}^{n}_{\ppar}(I, \partial I)$, it follows that $R(\hat{\alpha}) \in \Omega^{n}(I; \h^{\bullet}_{\R}) = \Omega^{0}(I; \h^{n}_{\R}) \oplus \Omega^{1}(I; \h^{n-1}_{\R})$. Integrating over $I \times J$, only the components of degree $2$ or more are meaningful, therefore we get $0$. This shows that $\int_{I} \hat{\alpha} = \int'_{I} \hat{\alpha}$.
\end{proof}

Now we can give the following definition, for an $\hat{h}^{\bullet}$-oriented manifold with boundary $X$ of dimension $n$:
\begin{equation}\label{GysinPointBdDif}
\begin{split}
	(p_{X})_{!!} \colon & \hat{h}^{\bullet}(X, \partial X) \to \hat{\h}^{\bullet - n} \\
	& \hat{\alpha} \mapsto \int_{I} \Psi_{1} \bigl( \Phi_{!!}(\hat{\alpha}) \bigr),
\end{split}
\end{equation}
$\Psi_{1}$ being the first component of the isomorphism \eqref{IsoPsiI} with $Y = \{pt\}$. For any representative $(\iota, \hat{u}, \varphi)$ of an orientation of $\Phi$, we call $\partial (\iota, \hat{u}, \varphi)$ the induced representative on $\partial X$ and we define the following curvature map:
\begin{equation}\label{CurvatureMapPtBd}
\begin{split}
	R^{pt}_{(\iota, \hat{u}, \varphi)} \colon & \Omega^{\bullet}(X, \partial X; \h^{\bullet}_{\R}) \to \Omega^{\bullet-n}(pt; \h^{\bullet}_{\R}) \\
	& (\omega, \rho) \mapsto R^{\partial}_{(\iota, \hat{u}, \varphi)}(\omega) + R_{\partial(\iota, \hat{u}, \varphi)}(\rho).
\end{split}
\end{equation}
It follows from formulas \eqref{IntProp} and \eqref{RBdTodd} that:
\begin{equation}\label{CurvatureMapPtBdTodd}
	R^{pt}_{(\iota, \hat{u}, \varphi)}(\omega, \rho) = \int_{X} \Td(X) \wedge \omega + \int_{\partial X} \Td(\partial X) \wedge \rho.
\end{equation}

\begin{Theorem} The following diagram is commutative:\footnote{In the diagram, observe that $\Omega^{n}(pt; \h^{\bullet}_{\R}) = \Omega^{0}(pt; \h^{n}_{\R}) \simeq \h^{n}_{\R}$. Moreover, every form on the point is closed and only the zero one (in any degree) is exact.}
\begin{footnotesize}
\begin{equation}\label{CurvAMapsPt}
	\xymatrix{
	\Omega^{\bullet-1}(X, \partial X; \h^{\bullet}_{\R})/\IIm(d) \ar[r]^(.63){a} \ar[d]^{R^{pt}_{(\iota, \hat{u}, \varphi)}} & \hat{h}^{\bullet}(X, \partial X) \ar[r]^{I} \ar[d]^{(p_{X})_{!!}} \ar@/^2pc/[rr]^{R} & h^{\bullet}(X, \partial X) \ar[d]^{(p_{X})_{!!}} & \Omega_{\cl}^{\bullet}(X, \partial X; \h^{\bullet}_{\R}) \ar[d]^{R^{pt}_{(\iota, \hat{u}, \varphi)}} \\
		\h^{\bullet-n-1}_{\R} \ar[r]^{a} & \hat{\h}^{\bullet-n} \ar[r]^{I} \ar@/_2pc/[rr]_{R} & \h^{\bullet-n} & \h^{\bullet-n}_{\R}.
	}
\end{equation}
\end{footnotesize}
\end{Theorem}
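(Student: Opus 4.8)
The plan is to reduce the commutativity of \eqref{CurvAMapsPt} to statements we have already established, namely the commutativity of \eqref{CurvAMapsRel} together with the known behaviour of the one-dimensional integration maps $\int_{I}$ on forms and on parallel classes. The key observation is that, by the very definition \eqref{GysinPointBdDif}, the map $(p_{X})_{!!}$ factors as
\[
	\hat{h}^{\bullet}(X, \partial X) \xrightarrow{\;\Phi_{!!}\;} \hat{h}^{\bullet-n+1}(I, \partial I) \xrightarrow{\;\Psi_{1}\;} \hat{h}^{\bullet-n+1}_{\ppar}(I, \partial I) \xrightarrow{\;\int_{I}\;} \hat{\h}^{\bullet-n},
\]
and similarly $R^{pt}_{(\iota,\hat{u},\varphi)} = \bigl(\int_{0}^{1} \circ\, \pi_{1}\bigr) \circ R^{rel}_{(\iota,\hat{u},\varphi)}$, where $\pi_{1}$ picks out the $I$-component and $\int_{0}^{1}$ is ordinary integration of one-forms — this last identity is exactly \eqref{CurvatureMapPtBdTodd} read through \eqref{RRelSum}, \eqref{IntProp} and \eqref{RBdTodd}. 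So each square of \eqref{CurvAMapsPt} decomposes as a square of the form \eqref{CurvAMapsRel} (with target $(I,\partial I)$) stacked on top of a square expressing the compatibility of $\int_{I}$ with $R$, $I$ and $a$.

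**Steps in order.** First I would verify the right-hand square (the one with $R$). Starting from $\hat{\alpha} \in \hat{h}^{\bullet}(X,\partial X)$, the outer boundary of diagram \eqref{CurvAMapsRel} gives $R(\Phi_{!!}\hat{\alpha}) = R^{rel}_{(\iota,\hat{u},\varphi)}(R(\hat{\alpha}))$, a relative closed form on $(I,\partial I)$. Applying $\Psi_{1}$ changes the class by an $a$ of a form, hence does not change the curvature of the parallel representative modulo the explicit correction term $t\eta_{1}+(1-t)\eta_{0}$ appearing in \eqref{IsoPsiI}; one then checks that integrating $R$ over $S^{1}$ after $\pi_{*}\xi^{*}$, as in the construction of \eqref{GysinPointBdDifPar}, recovers $\int_{0}^{1}$ of the $I$-component, which by \eqref{CurvatureMapPtBdTodd} equals $R^{pt}_{(\iota,\hat{u},\varphi)}(R(\hat{\alpha}))$. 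Second, the middle square with $I$: apply $I$ throughout, use that $I \circ \Phi_{!!} = \Phi_{!!} \circ I$ (topological vs.\ differential Gysin) and that $I$ kills the $a$-correction in $\Psi_{1}$, then invoke the fact — implicit in \eqref{GysinPointBdDif} via \eqref{IsoIS1} and the suspension isomorphism — that $I \circ \int_{I}$ on parallel classes agrees with the topological $(p_{X})_{!!}$ from \eqref{GysinPointBd}. Third, the left square with $a$: chase a form $(\omega,\rho) \in \Omega^{\bullet-1}(X,\partial X;\h^{\bullet}_{\R})/\IIm(d)$ using $\Phi_{!!}\circ a = a \circ R^{rel}_{(\iota,\hat{u},\varphi)}$ from \eqref{CurvAMapsRel}, then Lemma \ref{IndepXi} (independence of $\xi$) together with the computation at the end of its proof, which shows that $\int_{I} \circ\, a$ on a relative $(n{-}1)$-form equals $a$ of the ordinary integral, i.e.\ $a(R^{pt}_{(\iota,\hat{u},\varphi)}(\omega,\rho))$.

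**Main obstacle.** The delicate point is keeping the bookkeeping of the isomorphism $\Psi$ of \eqref{IsoPsiI} under control: $(p_{X})_{!!}$ is defined only on parallel classes after passing through $\Psi_{1}$, and the correction term $a(t\eta_{1}+(1-t)\eta_{0},0)$ must be shown to contribute nothing to the final integral over the point. Here the degree count from the proof of Lemma \ref{IndepXi} is essential — on $(I,\partial I)$ any relative form of top degree for our purposes has components only in $\Omega^{0}(I)\oplus\Omega^{1}(I)$, so after the further integration to the point all correction terms land in negative degree or are exact and hence vanish. A secondary subtlety is that $\Phi$ is only a defining function, not a submersion, so one cannot invoke Theorem \ref{fpDifPropertiesRel} directly for $\Phi_{!!}$; instead one must use the general commutativity of \eqref{CurvAMapsRel}, which holds for any neat $\hat{h}^{\bullet}$-oriented map, and then the special structure of $(I,\partial I)$ to finish. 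Once these two points are handled, all three squares close by direct substitution, and the theorem follows.
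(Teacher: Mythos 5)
Your overall strategy is the same as the paper's: factor $(p_{X})_{!!}$ as $\int_{I}\circ\Psi_{1}\circ\Phi_{!!}$, use the commutativity of \eqref{CurvAMapsRel} for $\Phi_{!!}$, and then track what $\Psi_{1}$ and the $S^{1}$-integration do. However, there is a genuine error in how you handle the correction term of $\Psi_{1}$, and it is not a bookkeeping detail: you claim that $a(t\eta_{1}+(1-t)\eta_{0},0)$ ``must be shown to contribute nothing to the final integral over the point,'' and correspondingly you write $R^{pt}_{(\iota,\hat{u},\varphi)}=\bigl(\int_{0}^{1}\circ\,\pi_{1}\bigr)\circ R^{rel}_{(\iota,\hat{u},\varphi)}$, keeping only the $I$-component. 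Both statements drop the boundary summand. By \eqref{CurvatureMapPtBd} one has $R^{pt}_{(\iota,\hat{u},\varphi)}(\omega,\rho)=R^{\partial}_{(\iota,\hat{u},\varphi)}(\omega)+R_{\partial(\iota,\hat{u},\varphi)}(\rho)$, i.e.\ there is a second term $\int_{\partial X}\Td(\partial X)\wedge\rho$ in \eqref{CurvatureMapPtBdTodd}, and in the paper's computation this term is produced \emph{exactly} by the correction: writing $\hat{\beta}=\Phi_{!!}\hat{\alpha}$ with $\cov(\hat{\beta})$ supported at the end of $I$ corresponding to $\partial X$ and equal to $\eta'=R_{\partial(\iota,\hat{u},\varphi)}(\eta)$, the curvature of the correction class contains a $dt\wedge\eta'$ leg, and $\int_{I}dt\wedge\eta'=\eta'\neq 0$ in general. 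If the correction really contributed nothing, the right-hand square would yield only $\int_{X}\Td(X)\wedge R'(\hat{\alpha})$ and would fail to commute for any class with $\cov(\hat{\alpha})\neq 0$; the same boundary term $-\int_{\partial I}\tilde{\eta}'$ is needed in the left-hand square.

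The degree-count you import from Lemma \ref{IndepXi} does not apply here. In that lemma one integrates the pull-back of a form living on the one-dimensional $I$ over the two-dimensional $I\times J$, so only the (absent) degree-$2$ components could survive. In the present situation the correction term's curvature $dt\wedge\eta'$ is an honest $\h^{\bullet}_{\R}$-valued $1$-form on $I$ being integrated over the one-dimensional $I$, and it survives. Once you replace your factorization of $R^{pt}$ by the correct two-term formula \eqref{CurvatureMapPtBd} and carry the $dt\wedge\eta'$ contribution through the right and left squares (the middle square is fine as you argue, since $I$ kills all $a$-terms and the differential integration refines the topological one of \eqref{GysinPointBd}), your argument closes and coincides with the paper's proof.
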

\begin{proof} Let us consider $(\omega, \eta) \in \Omega^{\bullet-1}(X, \partial X; \h^{\bullet}_{\R})$. Let us compute $(p_{X})_{!!} \circ a(\omega, \eta)$. Applying $(\Phi_{X})_{!!} \circ a(\omega, \eta)$, because of the commutativity of diagram \eqref{CurvAMapsRel}, we get $a(\omega', 0 \sqcup \eta')$, where $\omega' = R_{(\iota, \hat{u}, \varphi)}(\omega)$ and $\eta' = R_{\partial(\iota, \hat{u}, \varphi)}(\eta)$. We extend $0 \sqcup \eta'$ to $\tilde{\eta}'$ on $I$, so that $a(\omega', \eta') = a((\omega', \eta') - d(\tilde{\eta}', 0)) = a((\omega', \eta') - (d\tilde{\eta}', 0 \sqcup \eta)) = a(\omega' - d\tilde{\eta}', 0)$. Since $\cov(a(\omega' - d\tilde{\eta}', 0)) = 0 \sqcup \omega'\vert_{0})$, by definition, $(p_{X})_{!!} \circ a(\omega, \eta) = \int_{S^{1}}\pi_{*}\xi^{*}a(\omega' - d\tilde{\eta}' - (1-t)\omega'\vert_{0}) = a(\int_{I}(\omega' - d\tilde{\eta}' - (1-t)\omega'\vert_{0}))$. The integral of $(1-t)\omega'\vert_{0}$ vanishes since there is not $dt$ leg, hence we get $a(\int_{I}\omega' - \int_{\partial I}\tilde{\eta}') = a(\int_{I}\omega' + \eta') = a(R^{\partial}_{(\iota, \hat{u}, \varphi)}(\omega) + R_{\partial(\iota, \hat{u}, \varphi)}(\eta)) = a(R^{pt}_{(\iota, \hat{u}, \varphi)}(\omega, \eta))$.

The commutativity with $I$ follows from the construction of $(p_{X})_{!!}$, since the differential integration to the point is a refinement of the topological one.

With respect to the curvature, let us consider $\hat{\alpha} \in \hat{h}^{\bullet}(X, \partial X)$, with $R(\hat{\alpha}) = (\omega, \eta)$. Applying $(\Phi_{X})_{!!}$ we get $\hat{\beta} := (\Phi_{X})_{!!}\hat{\alpha}$. Because of the commutativity of diagram \eqref{CurvAMapsRel}, we have that $R(\hat{\beta}) = (\omega', 0 \sqcup \eta')$, where $\omega' = R_{(\iota, \hat{u}, \varphi)}(\omega)$ and $\eta' = R_{\partial(\iota, \hat{u}, \varphi)}(\eta)$. By definition, $(p_{X})_{!!}(\hat{\alpha}) = \int_{S^{1}}\pi_{*}\xi^{*}(\hat{\beta} - a((1-t)\eta', 0))$, whose curvature is $\int_{I}(R'(\hat{\beta}) + dt \wedge \eta') = (\int_{I} \omega') + \eta' = R^{\partial}_{(\iota, \hat{u}, \varphi)}(\omega) + R_{\partial(\iota, \hat{u}, \varphi)}(\eta) = R^{pt}_{(\iota, \hat{u}, \varphi)}(\omega, \eta)$.
\end{proof}

As a particular case, $X$ can be one component of a manifold with split boundary. Equivalently, we consider a manifold with split boundary $(X, M, N)$, with $M \neq \emptyset$, and the integration to the point $(p_{M})_{!!}$. We set again $J := I$, in order to distinguish the two components of $I \times J = I \times I$. In order to achieve a result analogous to formula \eqref{StokesPt} in the relative case, we consider a defining function for the boundary $\Phi \colon X \to I \times J$, we call $\pi_{J} \colon I \times J \to J$ the projection and we set $\Phi' := \pi_{J} \circ \Phi \colon X \to J$. It follows that ${\Phi'}^{-1}\{0\} = M$, hence $\Phi'\vert_{M} = p_{M}$, and ${\Phi'}^{-1}\{1\} = \emptyset$. Of course $\Phi'$ is not neat, for the same reason why $p_{M}$ is not.

Topologically, we can define the following integration map:
\begin{equation}\label{GysinIBd}
\begin{split}
	\Phi'_{!!} \colon & h^{\bullet}(X, N) \to h^{\bullet - n + 1}(J) \\
	& \alpha \mapsto \int_{S^{1} \times J/J} \Phi_{!!}(\alpha),
\end{split}
\end{equation}
considering $\Phi \colon (X, N) \to (I \times J, \partial I \times J)$. The map $\Phi_{!!} \colon h^{\bullet}(X, N) \to h^{\bullet-n+2}(I \times J, \partial I \times J)$ is defined by the same construction of the relative Gysin map \eqref{GysinTopBd}, using the Thom isomorphism relative to $N$. The integration over $S^{1}$ is defined observing that $h^{\bullet - n + 2}(I \times J, \partial I \times J) \simeq h^{\bullet - n + 2}(S^{1} \times J, \{*\} \times J) \simeq \tilde{h}^{\bullet - n + 2}(S^{1} \wedge (J_{+}))$ and applying the suspension isomorphism $\tilde{h}^{\bullet - n + 2}(S^{1} \wedge (J_{+})) \simeq \tilde{h}^{\bullet - n + 1}(J_{+}) \simeq h^{\bullet - n + 1}(J)$.

In order to define the analogous integration map in the differential framework, we consider a smooth function $\xi \colon (I, \partial I) \to (I, \partial I)$ such that $\xi\vert_{[0, \varepsilon)} = 0$ and $\xi\vert_{(1-\varepsilon, 1]} = 1$, and we think of it as a function $\xi \colon (I \times J, \partial I \times J) \to (I \times J, \partial I \times J)$, constant on $J$. Since lemma \ref{UniqueS1} keeps on holding, with respect to the pull-back $\pi^{*} \colon \hat{h}_{\ppar}^{\bullet}(S^{1} \times J, S' \times J) \to \hat{h}_{\ppar}^{\bullet}(I \times J, I' \times J)$, we get the integration map:
\begin{equation}\label{GysinIBdDifPar}
\begin{split}
	\int_{I \times J/J} \colon \; & \hat{h}^{\bullet}_{\ppar}(I \times J, \partial I \times J) \to \hat{h}^{\bullet - 1}(J) \\
	& \hat{\alpha} \mapsto \int_{S^{1}} \pi_{*}\xi^{*}\hat{\alpha}.
\end{split}
\end{equation}
Therefore, we define:
\begin{equation}\label{GysinIBdDif}
\begin{split}
	\Phi'_{!!} \colon & \hat{h}^{\bullet}(X, N) \to \hat{h}^{\bullet - n + 1}(J) \\
	& \hat{\alpha} \mapsto \int_{I \times J/J} \Psi_{1} \bigl( \Phi_{!!}(\hat{\alpha}) \bigr),
\end{split}
\end{equation}
$\Psi_{1}$ being the first component of the isomorphism \eqref{IsoPsiI} with $Y = J$.

\begin{Rmk} The construction of $\Phi'_{!!}$, that we have shown, is completely analogous to the one of $(p_{X})_{!!}$, but there is only one difference, concerning the proof of lemma \ref{IndepXi}. In order to show that the integration map is independent of $\xi$, let us consider a homotopy $\Xi \colon (I, \partial I) \times J' \to (I, \partial I)$ between $\xi$ and $\xi'$, inducing the homotopy $\Xi \colon (I, \partial I) \times J \times J' \to (I, \partial I) \times J$ which is constant along $J$. With the same proof we get $\int'_{I \times J/J} \hat{\alpha} - \int_{I \times J/J} \hat{\alpha} = a \bigl( \int_{I \times J \times J'/J} \Xi^{*}R(\hat{\alpha}) \bigr)$. Now $R(\hat{\alpha})$, being defined on $I \times J$, has also a component of degree $2$, that could be non-vanishing after integrating along $J'$ and $I$. Nevertheless, such an integral is a $0$-form, whose value at $t \in J$ is $\int_{I \times J'} \Xi_{t}^{*}R(\hat{\alpha}\vert_{I \times \{t\}})$. The restriction $R(\hat{\alpha}\vert_{I \times \{t\}})$ of the degree-2 component is a 2-form on $I \times \{t\}$, hence it vanishes.
\end{Rmk}

It follows from the construction that
\begin{equation}\label{RestrictionBoundaryGysinSplit}
	\Phi'_{!!}(\hat{\alpha})\vert_{\{0\}} = (p_{M})_{!!}(\hat{\alpha}\vert_{(M, \partial M)}),
\end{equation}
since all the tools and the operations involved in the definition of $\Phi'_{!!}$ restrict on $M$ to the corresponding ones for $(p_{M})_{!!}$.

\begin{Lemma} Let $(X, M, N)$ be a $\hat{h}^{\bullet}$-oriented manifold with split boundary and $\Phi \colon X \to I \times J$ a defining function for the boundary, as a part of the orientation of $X$. We call $R'$ and $\cov$ the two components of the curvature $R$. For any $\hat{\alpha} \in \hat{h}^{\bullet}(X, N)$, we have:
\begin{equation}\label{ARoofSplitBoundary}
	\int_{J} R(\Phi'_{!!}\hat{\alpha}) = \int_{X} \Td(X) \wedge R'(\hat{\alpha}) + \int_{N} \Td(N) \wedge \cov(\hat{\alpha}).
\end{equation}
\end{Lemma}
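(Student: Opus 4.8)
The plan is to follow the pattern of the proof of lemma~\ref{ARoofBoundaryLemma} and of the curvature part of the theorem preceding diagram~\eqref{CurvAMapsPt}, unwinding the definition~\eqref{GysinIBdDif} of $\Phi'_{!!}$ and transporting the curvature through its three constituents: the relative Gysin map $\Phi_{!!}$, the splitting $\Psi_{1}$ of~\eqref{IsoPsiI}, and the integration $\int_{I\times J/J}=\int_{S^{1}}\pi_{*}\xi^{*}$ of~\eqref{GysinIBdDifPar}. First I would fix a representative $(\iota,\hat{u},\varphi)$ of the orientation of the defining function $\Phi\colon X\to I\times J$ and set $\hat{\beta}:=\Phi_{!!}(\hat{\alpha})\in\hat{h}^{\bullet-n+2}(I\times J,\partial I\times J)$, so that $\Phi'_{!!}(\hat{\alpha})=\int_{I\times J/J}\Psi_{1}(\hat{\beta})$ with $\Psi_{1}$ the first component of~\eqref{IsoPsiI} for $Y=J$. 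By the commutativity of diagram~\eqref{CurvAMapsRel} applied to $\Phi_{!!}$ together with~\eqref{RRelSum}, the curvature is $R(\hat{\beta})=(\omega',\eta_{0}\sqcup\eta_{1})$ with $\omega'=R_{(\iota,\hat{u},\varphi)}(R'(\hat{\alpha}))$ and $\eta_{0}\sqcup\eta_{1}=R_{\partial(\iota,\hat{u},\varphi)}(\cov(\hat{\alpha}))$; since $\Phi^{-1}(\{1\}\times J)=\emptyset$ one has $\eta_{1}=0$, so $\eta_{0}$ is the fibre-wise push-forward of $\cov(\hat{\alpha})$ along $\Phi|_{N}\colon N\to\{0\}\times J\cong J$, the manifold $N=\Phi^{-1}(\{0\}\times I)$ carrying the orientation that $(X,M,N)$ induces on it.

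Next I would compute $R(\Phi'_{!!}(\hat{\alpha}))$. From~\eqref{IsoPsiI}, $\Psi_{1}(\hat{\beta})=\hat{\beta}-a((1-t)\eta_{0},0)$, and computing $R\circ a$ via axiom~(A1) gives $R(\Psi_{1}(\hat{\beta}))=(\sigma,0)$ with $\sigma=\omega'+dt\wedge\eta_{0}-(1-t)\,d\eta_{0}$, which is consistent as $\omega'\vert_{\{0\}\times J}=d\eta_{0}$ and $\omega'\vert_{\{1\}\times J}=0$. Since $\xi^{*}$ and $\pi_{*}=(\pi^{*})^{-1}$ commute with $R$ and $\int_{S^{1}}$ commutes with $R$ (first and last columns of~\eqref{CommS1}), the curvature of $\int_{I\times J/J}\Psi_{1}(\hat{\beta})$ equals the fibre-wise integral over the $I$-factor of $\xi^{*}\sigma$; and because $\xi$ restricts on $I$ to a degree-one self-map of $(I,\partial I)$ while $\sigma$ vanishes near $\partial I\times J$, this integral is just $\int_{I}\sigma$ — this is exactly the invariance underlying lemma~\ref{IndepXi}. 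As $d\eta_{0}$ carries no $dt$-leg, fibre integration over $I$ kills the last summand and $\int_{I}(dt\wedge\eta_{0})=\eta_{0}$, so
\[
  R(\Phi'_{!!}(\hat{\alpha}))=\int_{I}\omega'+\eta_{0}.
\]

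Finally I would apply $\int_{J}$ to both sides. On the one hand $\int_{J}\int_{I}\omega'=\int_{J}\int_{I}R_{(\iota,\hat{u},\varphi)}(R'(\hat{\alpha}))$ is, by the very definition~\eqref{CurvMapBd} of the curvature map of a manifold with (split) boundary and formula~\eqref{RBdTodd} in that setting, equal to $\int_{X}\Td(X)\wedge R'(\hat{\alpha})$. On the other hand $\Phi|_{N}\colon N\to J$ is itself a defining function for $\partial N=M\cap N$ with empty fibre over $\{1\}$, so $\int_{J}\eta_{0}=R^{\partial}_{\partial(\iota,\hat{u},\varphi)}(\cov(\hat{\alpha}))=\int_{N}\Td(N)\wedge\cov(\hat{\alpha})$ by~\eqref{RBdTodd} applied to $N$. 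Adding the two contributions yields~\eqref{ARoofSplitBoundary}. I expect the main obstacle to be the second paragraph: determining the precise form of $R(\Psi_{1}(\hat{\beta}))$ and checking carefully that $\int_{S^{1}}\pi_{*}\xi^{*}$ reduces, on curvatures, to ordinary fibre-wise integration over $I$ so that all dependence on the auxiliary function $\xi$ disappears; once this is settled, composing the fibre integrations and invoking~\eqref{RBdTodd} twice is routine, although one must keep track of the sign conventions distinguishing $\int$ from $\tint$ as in notation~\ref{NotIntegral}.
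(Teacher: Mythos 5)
Your proposal is correct and follows essentially the same route as the paper's proof: unwind $\Phi'_{!!}$ into $\int_{S^{1}}\pi_{*}\xi^{*}\circ\Psi_{1}\circ\Phi_{!!}$, compute $R(\Phi_{!!}\hat{\alpha})$ via diagram \eqref{CurvAMapsRel} and formula \eqref{RRelSum} with $\eta_{1}=0$, observe that the $(1-t)\,d\eta_{0}$ term dies under fibre integration over $I$ while $dt\wedge\eta_{0}$ integrates to $\eta_{0}$, dispose of $\xi$ by the invariance behind lemma \ref{IndepXi} (the paper simply chooses $\xi$ to be a diffeomorphism of $(\varepsilon,1-\varepsilon)$ onto $(0,1)$, which is equivalent to your degree-one argument), and conclude with \eqref{RBdTodd} applied to $X$ and to $N$. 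The only cosmetic difference is that you compute $R(\Phi'_{!!}\hat{\alpha})$ as a form on $J$ before integrating, whereas the paper integrates over $I\times J$ in one step.
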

\begin{proof} By formulas \eqref{GysinIBdDif} and \eqref{GysinIBdDifPar} we have that
\begin{align}
	\int_{J} R(\Phi'_{!!}\hat{\alpha}) &= \int_{J} R \biggl( \int_{S^{1}} \pi_{*}\xi^{*} \Psi_{1} \bigl( \Phi_{!!}(\hat{\alpha}) \bigr) \biggr) = \int_{J} \int_{S^{1}} \pi_{*}\xi^{*} R \bigl( \Psi_{1} \bigl( \Phi_{!!}(\hat{\alpha}) \bigr) \bigr) \nonumber \\
	& = \int_{I \times J} \xi^{*} R \bigl( \Psi_{1} \bigl( \Phi_{!!}(\hat{\alpha}) \bigr) \bigr) \overset{(\star)}= \int_{I \times J} R \bigl( \Psi_{1} \bigl( \Phi_{!!}(\hat{\alpha}) \bigr) \bigr). \label{SplitBd1}
\end{align}
In order to prove the equality $(\star)$, it is enough to choose $\xi$ as a diffeomorphism from $(\varepsilon, 1-\varepsilon)$ to $(0, 1)$. When we apply $\Psi_{1}$, defined in formula \eqref{GysinIBdDif}, to $\Phi_{!!}(\hat{\alpha})$, we have that $\eta_{1} = 0$, thus $\eta_{0} = \cov(\Phi_{!!}(\hat{\alpha}))$. It follows that
\begin{align}
	& \Psi_{1}(\Phi_{!!}(\hat{\alpha})) = \Phi_{!!}(\hat{\alpha}) - a \bigl( (1-t)\cov(\Phi_{!!}(\hat{\alpha})), 0 \bigr) \nonumber \\
	& R \bigl( \Psi_{1}(\Phi_{!!}(\hat{\alpha})) \bigr) = R' \bigl( \Phi_{!!}(\hat{\alpha}) \bigr) + dt \wedge \cov(\Phi_{!!}(\hat{\alpha})) - (1-t)d\cov(\Phi_{!!}(\hat{\alpha})) \nonumber \\
	& \int_{I \times J} R \bigl( \Psi_{1}(\Phi_{!!}(\hat{\alpha})) \bigr) = \int_{I \times J} R' \bigl( \Phi_{!!}(\hat{\alpha}) \bigr) + \int_{J} \cov(\Phi_{!!}(\hat{\alpha})). \label{SplitBd2}
\end{align}
The term $(1-t)d\cov(\Phi_{!!}(\hat{\alpha}))$ vanishes when integrated on $I$, since there is no $dt$ component. Joining \eqref{SplitBd1} and \eqref{SplitBd2} we get:
\begin{equation}\label{SplitBd3}
	\int_{J} R(\Phi'_{!!}\hat{\alpha}) = \int_{I \times J} R' \bigl( \Phi_{!!}(\hat{\alpha}) \bigr) + \int_{J} \cov(\Phi_{!!}(\hat{\alpha})).
\end{equation}
Let $(\iota, \hat{u}, \varphi)$ be any representative of the orientation of $\Phi$. Because of the commutativity of diagram \eqref{CurvAMapsRel} and formula \eqref{RRelSum}, on $(I \times J, \partial I \times J)$ we have that
	\[R(\Phi_{!!}\hat{\alpha}) \overset{\eqref{CurvAMapsRel}}= R^{rel}_{(\iota, \hat{u}, \varphi)}(R(\hat{\alpha})) \overset{\eqref{RRelSum}}= \bigl( R_{(\iota, \hat{u}, \varphi)}(R'(\hat{\alpha})), R_{(\iota, \hat{u}, \varphi)\vert_{N}}(\cov(\hat{\alpha})) \bigr).
\]
Therefore:
\begin{align*}
	& \int_{I \times J} R' \bigl( \Phi_{!!}(\hat{\alpha}) \bigr) = \int_{I \times J} R_{(\iota, \hat{u}, \varphi)}(R'(\hat{\alpha})) \overset{(\#)}= \int_{X} \Td(X) \wedge R'(\hat{\alpha}) \\
	& \int_{J} \cov(\Phi_{!!}(\hat{\alpha})) = \int_{J} R_{(\iota, \hat{u}, \varphi)\vert_{N}}(\cov(\hat{\alpha})) \overset{\eqref{CurvMapBd}}= R^{\partial}_{(\iota, \hat{u}, \varphi)\vert_{N}}(\cov(\hat{\alpha})) \overset{\eqref{RBdTodd}}= \int_{N} \Td(N) \wedge \cov(\hat{\alpha}).
\end{align*}
The equality $(\#)$ follows again from formula \eqref{RBdTodd}, adapted to the case of a manifold with split boundary.
\end{proof}

\begin{Theorem}\label{StokesPtRelThm} Let $(X, M, N)$ be a $\hat{h}^{\bullet}$-oriented manifold with split boundary. For any $\hat{\alpha} \in \hat{h}^{\bullet}(X, N)$, considering the induced $\hat{h}^{\bullet}$-orientation on $M$, we have:
\begin{equation}\label{StokesPtRel}
	(p_{M})_{!!}(\hat{\alpha}\vert_{(M, \partial M)}) = -a \biggl( \int_{X} \Td(X) \wedge R'(\hat{\alpha}) + \int_{N} \Td(N) \wedge \cov(\hat{\alpha}) \biggr).
\end{equation}
In particular, in the topological framework, $(p_{M})_{!!}(\alpha\vert_{(M, \partial M)}) = 0$.
\end{Theorem}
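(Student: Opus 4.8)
The plan is to mirror the proof of the companion statement \eqref{StokesPt} (which is the case $N=\emptyset$), replacing the defining function $\Phi\colon X\to I$ by the second projection $\Phi':=\pi_J\circ\Phi\colon X\to J$ of a defining function $\Phi\colon X\to I\times J$ for the split boundary which is part of the orientation of $X$, and using the relative tools already constructed: the Gysin map $\Phi'_{!!}\colon\hat{h}^{\bullet}(X,N)\to\hat{h}^{\bullet-n+1}(J)$ of \eqref{GysinIBdDif}, the restriction formula \eqref{RestrictionBoundaryGysinSplit}, and the curvature identity \eqref{ARoofSplitBoundary}.

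First I would recall that, by the definition of a defining function for a split boundary, ${\Phi'}^{-1}\{0\}=M$ (so that $\Phi'\vert_{M}=p_{M}$, with the $\hat{h}^{\bullet}$-orientation of $M$ induced by that of $(X,M,N)$) and ${\Phi'}^{-1}\{1\}=\emptyset$. Restricting $\hat{\alpha}$ as needed, formula \eqref{RestrictionBoundaryGysinSplit} gives $\Phi'_{!!}(\hat{\alpha})\vert_{\{0\}}=(p_{M})_{!!}(\hat{\alpha}\vert_{(M,\partial M)})$. The extra input is the vanishing $\Phi'_{!!}(\hat{\alpha})\vert_{\{1\}}=0$: the restriction of $\Phi'_{!!}(\hat{\alpha})$ to $\{1\}\in J$ depends only on the data of $\Phi_{!!}(\hat{\alpha})$ over $I\times\{1\}$, and since $\Phi$ is neat with $\Phi^{-1}(I\times\{1\})=\emptyset$, all of the structures involved (the embedding, the relative Thom class, the tubular neighbourhood, and the $\xi$- and $\pi_{*}$-constructions of \eqref{GysinIBdDifPar}) restrict over $\{1\}$ to those of the integration map of the empty manifold, hence to $0$; this is precisely the reasoning behind \eqref{RestrictionBoundaryGysinSplit}, applied to the component $\{1\}$ instead of $\{0\}$.

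Next I would apply the homotopy formula \eqref{HomFormula} to the class $\Phi'_{!!}(\hat{\alpha})\in\hat{h}^{\bullet-n+1}(J)$, with $J=I$ and $\iota_{0},\iota_{1}$ the inclusions of the two endpoints, exactly as in the proof of \eqref{StokesPt}. This yields
\begin{equation*}
(p_{M})_{!!}(\hat{\alpha}\vert_{(M,\partial M)})=\Phi'_{!!}(\hat{\alpha})\vert_{\{0\}}=-\bigl(\Phi'_{!!}(\hat{\alpha})\vert_{\{1\}}-\Phi'_{!!}(\hat{\alpha})\vert_{\{0\}}\bigr)=-a\biggl(\int_{J}R(\Phi'_{!!}\hat{\alpha})\biggr),
\end{equation*}
and formula \eqref{ARoofSplitBoundary} rewrites the right-hand side as $-a\bigl(\int_{X}\Td(X)\wedge R'(\hat{\alpha})+\int_{N}\Td(N)\wedge\cov(\hat{\alpha})\bigr)$, which is \eqref{StokesPtRel}. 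For the topological statement the same chain of equalities holds in $h^{\bullet}$, where the homotopy formula degenerates to homotopy invariance along $J$, so $(p_{M})_{!!}(\alpha\vert_{(M,\partial M)})=\Phi'_{!!}(\alpha)\vert_{\{0\}}=\Phi'_{!!}(\alpha)\vert_{\{1\}}=0$.

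The step I expect to be the main obstacle is the vanishing $\Phi'_{!!}(\hat{\alpha})\vert_{\{1\}}=0$: one must check that the somewhat layered definition of $\Phi'_{!!}$ — routed through the relative Gysin map $\Phi_{!!}$ into $\hat{h}^{\bullet}(I\times J,\partial I\times J)$, the splitting isomorphism $\Psi_{1}$ of \eqref{IsoPsiI}, and the fibrewise integration \eqref{GysinIBdDifPar} built from an auxiliary function $\xi$ — is genuinely compatible with restriction to a point of the base $J$, so that restricting over $\{1\}$ really reduces to the empty fibre $\Phi^{-1}(I\times\{1\})$. Everything else is a transcription of the proof of \eqref{StokesPt}, with \eqref{RestrictionBoundaryGysinSplit} and \eqref{ARoofSplitBoundary} supplying the bookkeeping that there was handled by \eqref{RestrictionBoundaryGysin} and \eqref{ARoofBoundary}.
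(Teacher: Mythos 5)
Your proposal is correct and follows essentially the same route as the paper's proof: restrict via \eqref{RestrictionBoundaryGysinSplit}, observe that $\Phi'_{!!}(\hat{\alpha})\vert_{\{1\}}=0$ because $\Phi^{-1}(I\times\{1\})=\emptyset$, apply the homotopy formula \eqref{HomFormula} along $J$, and conclude with \eqref{ARoofSplitBoundary}. The only difference is that you spell out the verification of the vanishing over $\{1\}$ in more detail than the paper, which states it in one clause.
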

\begin{proof} Let $\Phi$ be a defining function for the boundary, as a part of the orientation of $X$. The map $\Phi_{M} \colon M \to I$ can be identified with a defining function for the boundary of $M$. Since $(\Phi'_{!!}\hat{\alpha})\vert_{\{1\}} = 0$, because $\Phi^{-1}(I \times \{1\}) = \emptyset$, from formula \eqref{RestrictionBoundaryGysinSplit} and the homotopy formula \eqref{HomFormula} we have:
\begin{align*}
	(p_{M})_{!!}(\hat{\alpha}\vert_{(M, \partial M)}) &= - \bigl( (\Phi'_{!!}\hat{\alpha})\vert_{\{1\}} - (\Phi'_{!!}\hat{\alpha})\vert_{\{0\}} \bigr) \overset{\eqref{HomFormula}}= -a\biggl( \int_{J} R(\Phi'_{!!}\hat{\alpha}) \biggr).
\end{align*}
The result follows from formula \eqref{ARoofSplitBoundary}.
\end{proof}
Finally, we remark that, since the flat theory is a (topological) cohomology theory, we can integrate a flat class over the point just using \eqref{GysinPointBd}. We get the integration map $(p_{X})_{!!} \colon \hat{h}^{\bullet}_{\fl}(X, \partial X) \to \hat{\mathfrak{h}}^{\bullet - n}_{\fl}$, that only depends on the topological $h^{\bullet}$-orientation of $X$.

\section{Flat pairing and generalized Cheeger-Simons characters}\label{SecFlatPCS}

We are going to define the relative version of generalized Cheeger-Simons characters, starting from flat classes.

\SkipTocEntry \subsection{Relative homology}

We extend to the relative case the geometric model for the dual homology theory $h_{\bullet}$, described in \cite{Jakob}. When we say ``relative'', we consider the cohomology of any smooth map, not necessarily the embedding of the boundary as in the previous section. The following definition generalizes the one given in \cite{FR}.
\begin{Def}\label{DualHomology} Given a continuous map $\rho: A \rightarrow X$, between spaces having the homotopy type of a finite CW-complex, we define:
\begin{itemize}
	\item the group of \emph{$n$-precycles} as the free abelian group generated by the quintuples $(M, u, \alpha, f, g)$, with:
\begin{itemize}
	\item $(M, u)$ a smooth compact manifold, possibly with boundary, with $h^{\bullet}$-orientation $u$, whose connected components $\{M_{i}\}$ have dimension $n+q_{i}$, with $q_{i}$ arbitrary;
	\item $\alpha \in h^{\bullet}(M)$, such that $\alpha\vert_{M_{i}} \in h^{q_{i}}(M)$;
	\item $f \colon M \to X$ a continuous function;
	\item $g \colon \partial M \to A$ a continuous function such that $\rho \circ g = f\vert_{\partial M}$;
\end{itemize}
	\item the group of \emph{$n$-cycles}, denoted by $z_{n}(\rho)$, as the quotient of the group of $n$-precycles by the free subgroup generated by elements of the form:
\begin{itemize}
	\item $(M, u, \alpha + \beta, f, g) - (M, u, \alpha, f, g) - (M, u, \beta, f, g)$;
	\item $(M, u, \alpha, f, g) - (M_{1}, u\vert_{M_{1}}, \alpha\vert_{M_{1}}, f\vert_{M_{1}}, g\vert_{\partial M_{1}}) - (M_{2}, u\vert_{M_{2}}, \alpha\vert_{M_{2}}, f\vert_{M_{2}}, g\vert_{\partial M_{2}})$, for $M = M_{1} \sqcup M_{2}$;
	\item $(M, u, \varphi_{!}\alpha, f, g) - (N, v, \alpha, f \circ \varphi, g \circ \varphi\vert_{\partial N})$ for $\varphi \colon N \to M$ a neat submersion, oriented via the 2x3 principle, and $\varphi_{!}$ the Gysin map for \emph{absolute} classes ($\alpha$ is not relative to the boundary);
\end{itemize}
	\item the group of \emph{$n$-boundaries}, denoted by $b_{n}(\rho)$, as the subgroup of $z_{n}(\rho)$ generated by the cycles which are representable by a pre-cycle $(M, u, \alpha, f, g)$ such that there exists a quintuple $((W, M, N), U, A, F, G)$, where $(W, M, N)$ is a manifold with split boundary, $U$ is an $h^{\bullet}$-orientation of $W$ and $U\vert_{M} = u$, $A \in h^{\bullet}(W)$ such that $A\vert_{M} = \alpha$, $F \colon W \rightarrow X$ is a smooth map satisfying $F\vert_{M} = f$ and $G \colon N \rightarrow A$ is a smooth map satisfying $\rho \circ G = F\vert_{N}$ and $G\vert_{\partial N} = g$.
\end{itemize}
We define $h_{n}(\rho) := z_{n}(\rho) / b_{n}(\rho)$.
\end{Def}
There is a natural map:
\begin{equation}\label{HomCohom}
\begin{split}
	\xi^{n} \colon & h^{n}(\rho) \to \Hom_{\h^{\bullet}}(h_{n-\bullet}(\rho), \h^{\bullet}) \\
	& \alpha \mapsto \bigl( \, [M, u, \beta, f, g] \mapsto (p_{M})_{!!}(\beta \cdot (f, g)^{*}\alpha) \bigr),
\end{split}
\end{equation}
where $p_{M} \colon M \to \{pt\}$ (see def.\ \eqref{GysinPointBdDif}) and $(f, g)$ is the following morphism:
	\[\xymatrix{
	\partial M \ar@{^(->}[r]^{\iota} \ar[d]_{g} & M \ar[d]^{f} \\
	A \ar[r]^{\rho} & X.
}\]
In order to multiply $\beta$ and $(f, g)^{*}\alpha$, we used the module structure \eqref{AbsRelMod}, since $\beta$ is an absolute class on $M$, while $(f, g)^{*}\alpha$ is relative to the boundary. We verify that \eqref{HomCohom} is well-defined. If we consider a neat submersion $\varphi \colon N \rightarrow M$ and two representatives $(M, u, \varphi_{!}\beta, f, g)$ and $(N, v, \beta, f \circ \varphi, g \circ \varphi\vert_{\partial N})$ of the homology class, we have:
\begin{align*}
	\xi^{n}(\alpha)[N, v, &\beta, f \circ \varphi, g \circ \varphi\vert_{\partial N}] = (p_{N})_{!!}(\beta \cdot (\varphi, \varphi\vert_{\partial N})^{*} (f,g)^{*}\alpha) \\
	&= (p_{M})_{!!} (\varphi, \varphi\vert_{\partial N})_{!!}(\beta \cdot (\varphi, \varphi\vert_{\partial N})^{*} (f,g)^{*}\alpha) \\
	&= (p_{M})_{!!} (\varphi_{!}\beta \cdot (f,g)^{*}\alpha) = \xi^{n}(\alpha)[M, u, \varphi_{!}\beta, f, g].
\end{align*}
If $(M, u, \beta, f, g) = \partial ((W, M, N), U, B, F, G)$, then, by theorem \ref{StokesPtRelThm}, $(p_{M})_{!!}(\beta \cdot (f, g)^{*}\alpha) = 0$, thus $\xi^{n}(\alpha)$ is well-defined on homology classes. Finally, the image of $\alpha$ is a $\mathfrak{h}^{\bullet}$-module homomorphism, since, for $\gamma \in \mathfrak{h}^{t}$:
	\[\begin{split}
	\xi^{n}(\alpha)([(M, u, \beta, f, g)] \cap \gamma) &= \xi^{n}(\alpha)[M, u, \beta \cdot (p_{M})^{*}\gamma, f, g] = (p_{M})_{!!}(\beta \cdot (f, g)^{*}\alpha \cdot (p_{M})^{*}\gamma) \\
	& = (p_{M})_{!!}(\beta \cdot (f, g)^{*}\alpha) \cdot \gamma = \xi^{n}(\alpha)[M, u, \beta, f, g] \cdot \gamma.
\end{split}\]
Tensorizing with $\R$, we get the isomorphism:
\begin{equation}\label{HomCohomR}
	\xi^{n}_{\R} \colon h^{n}(\rho) \otimes_{\Z} \R \overset{\!\simeq}\longrightarrow \Hom_{\h^{\bullet}}(h_{n-\bullet}(\rho), \h^{\bullet}_{\R}).
\end{equation}
Moreover, thanks to the structure of $h^{\bullet}$-module on $h^{\bullet}(\,\cdot\,; \R/\Z)$, we get the following map:
\begin{equation}\label{HomCohomRZ1}
\begin{split}
	\xi^{n}_{\R/\Z} \colon & h^{n}(\rho; \R/\Z) \to \Hom_{\h^{\bullet}}(h_{n-\bullet}(\rho), \h^{\bullet}_{\R/\Z}) \\
	& \alpha \mapsto \bigl( \, [M, u, \beta, f, g] \mapsto (p_{M})_{!!}(\beta \cdot (f, g)^{*}\alpha) \bigr).
\end{split}
\end{equation}

\SkipTocEntry \subsection{Flat pairing}\label{FlatPairing}

We define the natural $\hat{\h}^{\bullet}_{\fl}$-valued pairing for a map $\rho \colon A \to X$ between $\hat{h}^{\bullet}_{\fl}$ and $h_{\bullet}$, that, in the case of singular differential cohomology, reduces to the holonomy of a flat relative Deligne cohomology class. When $\hat{h}^{\bullet}_{\fl} \simeq h^{\bullet}(\,\cdot\,; \R/\Z)$, such a pairing coincides with formula \eqref{HomCohomRZ1}.
\begin{Def} For $\rho \colon A \to X$ a smooth map (not necessarily neat), we have the following natural pairing:
\begin{equation}\label{FlatPairing2}
\begin{split}
	\xi^{n}_{\fl} \colon & \hat{h}^{n}_{\fl}(\rho) \to \Hom_{\mathfrak{h}^{\bullet}} (h_{n-\bullet}(\rho), \hat{\h}^{\bullet}_{\fl}) \\
	& \hat{\alpha} \mapsto \bigl( \, [M, u, \beta, f, g] \mapsto (p_{M})_{!!}(\beta \cdot (f,g)^{*}\hat{\alpha}) \bigr).
\end{split}
\end{equation}
The invariance by $\h^{\bullet}$ is defined by:
\begin{equation}\label{PairingInvariance}
	\xi^{n}_{\fl}(\hat{\alpha})([M, u, \beta, f, g] \cdot \gamma) = \xi^{n}_{\fl}(\hat{\alpha})([M, u, \beta, f, g]) \cdot \gamma.
\end{equation}
\end{Def}
\vspace{0.2cm}
In order to show that \eqref{FlatPairing2} is well-defined, i.e.\ that it does not depend on the representative $(M, u, \beta, f, g)$, and that formula \eqref{PairingInvariance} holds, we apply the same argument used about \eqref{HomCohom}.
\begin{Lemma}\label{MorphismComplexes} We have the following morphism of complexes of $\h^{\bullet}$-modules (the lower one not being exact in general):
\begin{scriptsize}
	\[\xymatrix{
	\cdots \ar[r]^(.3){r} & h^{n}(\rho) \otimes_{\Z} \R \ar[r]^{a} \ar[d]^{\xi^{n}_{\R}} & \hat{h}^{n+1}_{\fl}(\rho) \ar[r]^{I} \ar[d]^{\xi^{n+1}_{\fl}} & h^{n+1}(\rho) \ar[r]^(.55){r} \ar[d]^{\xi^{n+1}} & \cdots \\
	\cdots \ar[r]^(.3){r'} & \Hom_{\h^{\bullet}}(h_{n-\bullet}(\rho), \h^{\bullet}_{\R}) \ar[r]^(.47){a'} & \Hom_{\h^{\bullet}}(h_{n+1-\bullet}(\rho), \hat{\h}^{\bullet}_{\fl}) \ar[r]^{I'} & \Hom_{\h^{\bullet}}(h_{n+1-\bullet}(\rho), \h^{\bullet}) \ar[r]^(.75){r'} & \cdots
}\]
\end{scriptsize}
\end{Lemma}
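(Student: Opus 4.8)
The plan is to recognize both rows as (the image under a $\Hom$-functor of) essentially the same long exact sequence, and then to verify the squares one type at a time.

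First I would treat the lower row. The maps $r'$, $a'$, $I'$ are, by construction, post-composition with the coefficient maps of \eqref{ExSeqFlat} evaluated at the point, i.e.\ of the exact sequence $\cdots \to \h^{\bullet} \to \h^{\bullet}_{\R} \to \hat{\h}^{\bullet+1}_{\fl} \to \h^{\bullet+1} \to \cdots$ (whose first map is $\ch$ on a point, namely the canonical map $\h^{\bullet}\to\h^{\bullet}\otimes_{\Z}\R$). Since $\Hom_{\h^{\bullet}}(h_{\bullet}(\rho),-)$ is an additive functor on graded $\h^{\bullet}$-modules and the coefficient sequence is in particular a complex, its image is a complex of $\h^{\bullet}$-modules; its failure of exactness is exactly the failure of $\Hom_{\h^{\bullet}}(h_{\bullet}(\rho),-)$ to be right exact (visible already for $\rho$ a point when $h_{\bullet}(\rho)$ is not free). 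That $r'$, $a'$, $I'$ and the three vertical maps are $\h^{\bullet}$-module homomorphisms is the invariance property \eqref{PairingInvariance}, established for $\xi^{n}$ and $\xi^{n}_{\R}$ exactly as for \eqref{HomCohom} and for $\xi^{n}_{\fl}$ as noted after \eqref{FlatPairing2}.

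Next I would dispose of the top row: after identifying $H^{\bullet}_{\dR}(\rho;\h^{\bullet}_{\R})$ with $h^{\bullet}(\rho)\otimes_{\Z}\R$ via the isomorphism $\ch$ (and using \eqref{HomCohomR} for the leftmost vertical arrow), the top row is just \eqref{ExSeqFlat} rewritten, the arrow labelled $r$ becoming the canonical map $h^{\bullet}(\rho)\to h^{\bullet}(\rho)\otimes_{\Z}\R$. It therefore suffices to check commutativity of the three types of squares (the rest being periodic repeats): the $r$-square relating $\xi^{n}$ and $\xi^{n}_{\R}$, the $a$-square relating $\xi^{n}_{\R}$ and $\xi^{n+1}_{\fl}$, and the $I$-square relating $\xi^{n+1}_{\fl}$ and $\xi^{n+1}$. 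For this I would fix a cycle $c=[M,u,\beta,f,g]$, with $(f,g)\colon\iota\to\rho$ as in \eqref{HomCohom}, and unwind \eqref{HomCohom}, \eqref{HomCohomR} and \eqref{FlatPairing2}. The $I$-square uses that $I$ is a natural transformation (so it commutes with $(f,g)^{*}$), that it is multiplicative in the sense of Definition \ref{MultDiffExt} (so $I(\beta\cdot(f,g)^{*}\hat\alpha)=\beta\cdot(f,g)^{*}I(\hat\alpha)$, the flat module structure over $h^{\bullet}$ depending only on $I$ of a differential lift of $\beta$, cf.\ section \ref{FlGysin}), and that it commutes with $(p_{M})_{!!}$ by the middle square of \eqref{CurvAMapsPt}; chaining these gives $\xi^{n+1}(I\hat\alpha)(c)=I\bigl(\xi^{n+1}_{\fl}(\hat\alpha)(c)\bigr)=I'(\xi^{n+1}_{\fl}\hat\alpha)(c)$. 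The $r$-square is the same chase with the change of coefficients $\Z\to\R$ in place of $I$, all maps involved being $\Z$-linear and natural. For the $a$-square, naturality of $a$ gives $(f,g)^{*}a(x)=a\bigl((f,g)^{*}x\bigr)$; Definition \ref{MultDiffExt} together with section \ref{FlGysin} gives $\beta\cdot a\bigl((f,g)^{*}x\bigr)=a\bigl(\ch\beta\wedge(f,g)^{*}x\bigr)$ (well defined as $a$ of a closed form, since $x$ is closed); diagram \eqref{CurvAMapsPt} gives $(p_{M})_{!!}\circ a=a\circ R^{pt}_{(\iota,\hat u,\varphi)}$; and the topological real integration $(p_{M})_{!!}(\beta\cdot(f,g)^{*}x)$ equals $R^{pt}_{(\iota,\hat u,\varphi)}(\ch\beta\wedge(f,g)^{*}x)$ by \eqref{CurvatureMapPtBdTodd} and \eqref{ChIntegral}. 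Hence both composites, evaluated at $c$, reduce to $a\bigl(R^{pt}_{(\iota,\hat u,\varphi)}(\ch\beta\wedge(f,g)^{*}x)\bigr)$, so the square commutes. Well-definedness on homology classes of all the maps in sight is inherited from the cases already treated in \eqref{HomCohom} and \eqref{FlatPairing2} (using Theorem \ref{StokesPtRelThm} to kill boundary cycles).

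The hard part will be the bookkeeping of the several module structures: the product on $M$ of the absolute topological class $\beta$ with the various pulled-back classes $(f,g)^{*}\alpha$, $(f,g)^{*}x$, $(f,g)^{*}\hat\alpha$ (topological–relative, real, differential–flat), and the verification that each such product is compatible with pull-back, with the coefficient maps $r$, $a$, $I$, and with the relative integration-to-the-point $(p_{M})_{!!}$. Once these compatibilities are recorded — equivalently, once diagram \eqref{CurvAMapsPt} and its split-boundary variant are chased through the module structures — the three squares commute by direct substitution, and the remaining verifications are routine.
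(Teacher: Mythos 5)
Your proposal is correct and follows essentially the same route as the paper: the paper's proof likewise reduces everything to the square under $a$ and settles it with the identity $a(\ch\alpha)\cdot\beta=a(\ch(\alpha\beta))$, deduced from the axiom $a(\omega)\cdot\hat\beta=a(\omega\wedge R(\hat\beta))$ applied to a differential refinement $\hat\beta$ of $\beta$ --- which is exactly your key step $\beta\cdot a\bigl((f,g)^{*}x\bigr)=a\bigl(\ch\beta\wedge(f,g)^{*}x\bigr)$, the remaining squares and the module-structure bookkeeping being treated as immediate.
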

\begin{proof} We only have to prove the commutativity of the square under the map $a$. It easily follows from the fact that, for $\alpha \in h^{\bullet}(\rho) \otimes_{\Z} \R$ and $\beta \in h^{\bullet}(X)$:
	\[a(\ch\alpha) \cdot \beta = a(\ch(\alpha\beta)).
\]
That's because, for any differential refinement $\hat{\beta}$ of $\beta$, we have $a(\ch\alpha) \cdot \hat{\beta} = a(\ch\alpha \cdot R(\hat{\beta})) = a(\ch\alpha \cdot \ch\beta) = a(\ch(\alpha\beta))$. \end{proof}

We call $\h^{n}_{\Z}$ the image of the Chern character $\ch \colon \h^{n} \to H^{n}_{\dR}(pt; \h^{\bullet}_{\R}) \simeq \h^{n}_{\R}$, which coincides with $\alpha \mapsto \alpha \otimes_{\Z} \R$.
\begin{Theorem}\label{NoTorsionRZ} If $\h^{\bullet}$ has no torsion, the pairing \eqref{FlatPairing2} is an isomorphism and $\hat{\h}^{\bullet}_{\fl} \simeq \h^{\bullet-1}_{\R}/\h^{\bullet-1}_{\Z}$.
\end{Theorem}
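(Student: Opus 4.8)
The plan is to deduce the two assertions in turn, starting with the computation of $\hat{\h}^\bullet_{\fl}$, since the statement about the pairing rests on it. First I would feed the map $\emptyset\to\{pt\}$ into the exact sequence \eqref{ExSeqFlat}: at a point the Chern character $\ch\colon\h^\bullet\to H^\bullet_{\dR}(pt;\h^\bullet_\R)\simeq\h^\bullet_\R$ is the map $\alpha\mapsto\alpha\otimes_\Z 1$, whose kernel is the torsion subgroup of $\h^\bullet$; as this is trivial by hypothesis, $\ch$ is injective and hence $I\colon\hat{\h}^\bullet_{\fl}\to\h^\bullet$ in \eqref{ExSeqFlat} vanishes. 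Then $a\colon\h^{\bullet-1}_\R\to\hat{\h}^\bullet_{\fl}$ is surjective with kernel $\IIm(\ch)=\h^{\bullet-1}_\Z$, which gives the (natural) isomorphism $\hat{\h}^\bullet_{\fl}\simeq\h^{\bullet-1}_\R/\h^{\bullet-1}_\Z$. I record that this group is divisible, hence injective as a $\Z$-module.

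For the pairing I would compare the two rows of the morphism of complexes of Lemma \ref{MorphismComplexes} by the five lemma. The vertical maps $\xi^\bullet_\R$ are isomorphisms by \eqref{HomCohomR}. Since $\h^\bullet$ is torsion-free, the coefficient exact sequence breaks into the short exact sequences $0\to\h^{\bullet-1}\to\h^{\bullet-1}_\R\to\hat{\h}^\bullet_{\fl}\to 0$ whose two right-hand terms are injective; the bottom row, being $\Hom_{\h^\bullet}(h_{n-\bullet}(\rho),-)$ of this sequence after reindexing, is then exact, and the integral topological pairing $\xi^\bullet$ is surjective with torsion kernel by the universal coefficient theorem (available because $\h^\bullet$ has no torsion). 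Chasing the five lemma at the position $\hat{h}^n_{\fl}(\rho)$ yields surjectivity of $\xi^n_{\fl}$ together with its injectivity on $\IIm(a)$ — indeed a class $a(\theta)$ in the kernel forces $\xi^{n-1}_\R(\theta)$ to take values in $\h^\bullet_\Z$, so $\theta\in\IIm(\ch)$ and $a(\theta)=0$. Full injectivity I would then reduce to the non-degeneracy of the torsion linking pairing of $h^\bullet(\rho)$ with $h_\bullet(\rho)$ valued in $\Q/\Z\subset\hat{\h}^\bullet_{\fl}$: if $\xi^n_{\fl}(\hat{\alpha})=0$, then $I(\hat{\alpha})$ is a torsion class annihilated by that pairing, hence $0$, so $\hat{\alpha}\in\IIm(a)$ and the previous case applies. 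Naturality is inherited from that of all the maps involved.

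The hard part will be this last reduction. Unlike $\xi^\bullet_\R$, the integral pairing $\xi^\bullet$ is not an isomorphism even for torsion-free $\h^\bullet$: its kernel and cokernel are the — in general non-trivial — $\textnormal{Ext}$ and $\Tor$ terms of the universal coefficient theorem, so the five lemma cannot simply be quoted. One must genuinely check that on passing to the flat theory, whose coefficient group $\hat{\h}^\bullet_{\fl}$ is divisible, these obstruction terms pair off perfectly — equivalently, that the torsion of $h^\bullet(\rho)$ and the torsion of $h_\bullet(\rho)$ are in perfect $\Q/\Z$-duality. This is exactly where the torsion-freeness of $\h^\bullet$, which guarantees the relevant universal coefficient theorem, enters essentially; with it in hand the rest is routine diagram chasing with \eqref{ExSeqFlat} and the diagram of Lemma \ref{MorphismComplexes}.
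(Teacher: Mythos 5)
Your first paragraph is fine: feeding $\emptyset\to\{pt\}$ into \eqref{ExSeqFlat} and using injectivity of $\ch$ on torsion-free coefficients does give $\hat{\h}^{\bullet}_{\fl}\simeq\h^{\bullet-1}_{\R}/\h^{\bullet-1}_{\Z}$, and your overall plan of comparing the two rows of Lemma \ref{MorphismComplexes} is the route the paper has in mind --- note that the paper's own proof consists entirely of the citation ``same of \cite[Theorem 5.5]{FR}'', so the argument to be reproduced is the absolute one transported to relative classes. The difficulty is that your execution of the second half rests on two assertions that are not consequences of the hypothesis. First, the exactness of the bottom row: the groups there are $\Hom_{\h^{\bullet}}(h_{n-\bullet}(\rho),-)$, i.e.\ graded \emph{$\h^{\bullet}$-module} homomorphisms, so what you need is injectivity of $\h^{\bullet}_{\R}$ and $\hat{\h}^{\bullet}_{\fl}$ as graded $\h^{\bullet}$-modules; divisibility gives only injectivity over $\Z$, and the actual obstruction to surjectivity of $a'$ is $\textnormal{Ext}^{1}_{\h^{\bullet}}(h_{\bullet}(\rho),\h^{\bullet})$, which does not vanish merely because the target is divisible (already over a polynomial ring, a divisible module such as $\R[x]$ fails Baer's criterion for the ideal $(x)$).

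Second, and more seriously, the step you yourself isolate as ``the hard part'' is not routine and does not follow from torsion-freeness. There is no universal coefficient theorem for a generalized cohomology theory whose coefficient ring is merely torsion-free: the UCT spectral sequence $\textnormal{Ext}^{s}_{\h_{\bullet}}(h_{\bullet}(X),\h_{\bullet})\Rightarrow h^{\bullet}(X)$ collapses to the familiar short exact sequence only under extra hypotheses (e.g.\ $\h_{\bullet}$ of global dimension at most one), and torsion-free coefficient rings of infinite global dimension exist, e.g.\ $MU_{*}=\Z[x_{1},x_{2},\ldots]$. Hence ``$\xi^{\bullet}$ is surjective with torsion kernel by the universal coefficient theorem'' is an unproved claim, and your closing reduction to the ``perfect $\Q/\Z$-duality of the torsion of $h^{\bullet}(\rho)$ and $h_{\bullet}(\rho)$'' is the same unproved claim in another guise. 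This gap is not peripheral: as you set things up, both the surjectivity of $\xi^{n}_{\fl}$ (via the four lemma) and your injectivity argument on $\IIm(a)$ (where the map $\psi$ with values in $\h^{\bullet}_{\Z}$ must be lifted to an integral cohomology class) require surjectivity of the integral pairing $\xi^{\bullet}$. To close the proof you must either establish the relevant UCT/duality statement under the stated hypothesis or extract the actual mechanism of \cite[Theorem 5.5]{FR} and verify that it applies verbatim to $h_{\bullet}(\rho)$ as defined in \ref{DualHomology}; as written, the argument does not go through.
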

\begin{proof} Same of \cite[Theorem 5.5]{FR}.
\end{proof}

\SkipTocEntry \subsection{Homology via differential cycles}\label{DiffCycles}

We can define pre-cycles, cycles and boundaries as in definition \ref{DualHomology}, but refining each orientation and each cohomology class to a differential one. We call $\hat{z}_{n}(\rho)$ and $\hat{b}_{n}(\rho)$ the corresponding groups of cycles and boundaries. It follows that $\hat{z}_{n}(\rho)$ is generated by classes of the form $[(M, \hat{u}, \hat{\alpha}, f, g)]$, and $\hat{b}_{n}(\rho)$ is generated by cycles with a representative such that $(M, \hat{u}, \hat{\alpha}, f, g) = \partial ((W, M, N), \hat{U}, \hat{A}, F, G)$. We define $h'_{n}(\rho) := \hat{z}_{n}(\rho) / \hat{b}_{n}(\rho)$.

\begin{Theorem}\label{TopDiffIso} The natural group morphism:
	\[\begin{split}
	\Phi \colon & h'_{\bullet}(\rho) \to h_{\bullet}(\rho) \\
	& [(M, \hat{u}, \hat{\alpha}, f, g)] \to [(M, I(\hat{u}), I(\hat{\alpha}), f, g)]
\end{split}\]
is an isomorphism.
\end{Theorem}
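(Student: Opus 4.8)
The plan is to prove separately that $\Phi$ is surjective and injective, relying throughout on the fact that the forgetful transformation $I$ is surjective (axiom (A3) and its compactly supported analogue from Section~\ref{DiffCohCpt}) and is compatible with every piece of structure entering Definition~\ref{DualHomology}. Well-definedness of $\Phi$ is the observation that $I$ is additive, commutes with restriction to connected components, intertwines the differential and topological Gysin maps (the left square of diagram~\eqref{CurvAMaps}), and sends the differential bordism data of Section~\ref{DiffCycles} to topological bordism data; thus $\Phi$ descends to $h'_{\bullet}(\rho)\to h_{\bullet}(\rho)$. The whole argument parallels the absolute case treated in \cite{FR}, with the relative null-bordisms over manifolds with split boundary playing the role of the absolute ones.

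For surjectivity, start with a topological precycle $(M,u,\alpha,f,g)$. Fix a neat embedding representing the $h^{\bullet}$-orientation $u$; the corresponding topological Thom class on the normal bundle lifts to a differential Thom class $\hat u$ with $I(\hat u)=u$, because $I\colon\hat h^{\bullet}_{\cpt}\to h^{\bullet}_{\cpt}$ is surjective. Likewise lift $\alpha\in h^{\bullet}(M)$ to $\hat\alpha\in\hat h^{\bullet}(M)$ using surjectivity of $I$ on $M$. Then $\Phi[(M,\hat u,\hat\alpha,f,g)]=[(M,u,\alpha,f,g)]$, so $\Phi$ is surjective already on precycles.

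For injectivity, let $x=[(M,\hat u,\hat\alpha,f,g)]$ with $\Phi(x)=0$, so that $(M,I(\hat u),I(\hat\alpha),f,g)$ bounds a topological datum $((W,M,N),U,A,F,G)$. Lift $A$ to $\hat A_{0}\in\hat h^{\bullet}(W)$ with $I(\hat A_{0})=A$; since $I(\hat A_{0}|_{M}-\hat\alpha)=0$, exactness of \eqref{ExSeqDC} gives $\hat A_{0}|_{M}-\hat\alpha=a(\eta)$ for some $\eta$. Because $M$ is a closed subset of $W$ sitting inside $\partial W=M\cup N$, a collar construction (neat with respect to the split boundary) extends $\eta$ to a form $\tilde\eta$ on $W$ vanishing near $N$, and $\hat A:=\hat A_{0}-a(\tilde\eta)$ satisfies $\hat A|_{M}=\hat\alpha$ while keeping the compatibility with $G$ over $N$ and $\partial N$. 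Repeating the same correction for the differential Thom class over the normal bundle of $W$ produces $\hat U$ with $I(\hat U)=U$ and $\hat U|_{M}=\hat u$. Hence $(M,\hat u,\hat\alpha,f,g)=\partial\big((W,M,N),\hat U,\hat A,F,G\big)$ and $x=0$.

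The main obstacle is the last step: one must ensure that the corrected differential Thom class $\hat U$ restricts to a representative that is genuinely equivalent (not merely topologically equal) to the chosen orientation $\hat u$ of $M$, i.e.\ that the extension $\tilde\eta$ can be chosen so that the induced change of the Todd form is compatible with the homotopy relation of Definition~\ref{HomotopyThom}. This is handled by the $2\times 3$ principle for differential Thom classes (Lemma~\ref{Rule23Diff}) together with the canonical differential orientation of trivial bundles (Lemma~\ref{CanOrientTriv}), exactly as in the construction of induced orientations; the remaining bookkeeping over the corner $\partial M=M\cap N$ is routine and only requires that all extensions be performed neatly.
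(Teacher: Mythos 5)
Your argument is correct in outline, but it follows a different route from the one the paper actually takes. The paper's proof is a one-line reduction: it invokes the absolute-case isomorphism \cite[Theorem 6.2]{FR} and applies the five lemma to the morphism that $\Phi$ induces between the long exact sequences in geometric homology associated to $\rho$ (that is, $\cdots \to h'_{n}(A) \to h'_{n}(X) \to h'_{n}(\rho) \to h'_{n-1}(A) \to \cdots$ and its topological counterpart). That route completely avoids the step you rightly single out as the main obstacle --- lifting the bounding datum, and in particular correcting the differential Thom class of $W$ so that it restricts to the prescribed $\hat u$ on $M$ across the corner $\partial M = M \cap N$ --- at the cost of having to know that both $h'_{\bullet}$ and $h_{\bullet}$ admit compatible long exact sequences and that $\Phi$ commutes with the connecting homomorphisms. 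Your direct surjectivity/injectivity argument is precisely the ``alternative'' the paper itself mentions (adapting the proof of \cite[Theorem 6.2]{FR} to the relative case); the surjectivity half is complete as you state it, since lifting the Thom class and the class $\alpha$ only uses surjectivity of $I$.

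One step in your injectivity argument is looser than it should be: $\Phi(x)=0$ means that the image cycle lies in $b_{n}(\rho)$, which is only \emph{generated} by cycles admitting a bounding representative; it does not directly hand you a null-bordism of the chosen representative $(M, I(\hat u), I(\hat\alpha), f, g)$ itself. One must first rewrite the class, using the precycle relations of Definition \ref{DualHomology} (additivity in $\alpha$, disjoint union, and the Gysin relation for submersions), as a single precycle that literally bounds, and then lift each of those relations to the differential level alongside the bounding datum. This is exactly the bookkeeping that the five-lemma argument sidesteps, and it is where adapting \cite[Theorem 6.2]{FR} to the relative setting requires real work beyond what your sketch records.
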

\begin{proof} It follows from the same result about absolute classes \cite[Theorem 6.2]{FR} and the five lemma applied to the long exact sequence in homology associated to $\rho$. Alternatively, one can adapt to the relative case the same proof of \cite[Theorem 6.2]{FR}.
\end{proof}

\SkipTocEntry \subsection{Cheeger-Simons characters}

The following definition generalizes to any cohomology theory the one of \cite{BT} and \cite{FR3} (type II).

\begin{Def}\label{GeneralizedCS} A \emph{Cheeger-Simons differential $\hat{h}^{\bullet}$-character} of degree $n$ on $\rho \colon A \to X$ is a triple $(\chi_{n}, \omega_{n}, \eta_{n-1})$, where:
\begin{equation}\label{GeneralizedCSDef}
	\chi_{n} \in \Hom_{\hat{\h}^{\bullet}} (\hat{z}_{n-\bullet}(\rho), \hat{\h}^{\bullet}) \quad\qquad (\omega_{n}, \eta_{n-1}) \in \Omega^{n}(\rho; \h^{\bullet}_{\R})
\end{equation}
such that, if $(M, \hat{u}, \hat{\beta}, f, g) = \partial((W, M, N), \hat{U}, \hat{B}, F, G)$, then:
\begin{equation}\label{CSFormula}
	\chi_{n}[M, \hat{u}, \hat{\beta}, f, g] = -a \biggl( \int_{W} \Td(W) \wedge R(\hat{B}) \wedge F^{*}\omega_{n} + \int_{N} \Td(N) \wedge R(\hat{B}\vert_{N}) \wedge G^{*}\eta_{n-1} \biggr).
\end{equation}
The $\hat{\h}^{\bullet}$-invariance is defined by:
\begin{equation}\label{CSInvariance}
	\chi_{n}(\hat{\alpha})([M, \hat{u}, \hat{\beta}, f, g] \cdot \hat{\gamma}) = \chi_{n}(\hat{\alpha})[M, \hat{u}, \hat{\beta}, f, g] \cdot \hat{\gamma}.
\end{equation}
We denote by $\check{h}^{n}(\rho)$ the group of characters of degree $n$.
\end{Def}

We briefly comment on formula \eqref{CSFormula}. Let us suppose that $[M, \hat{u}, \hat{\beta}, f, g] \in \hat{z}_{n-k}(X)$ and that $M$ is connected. Then $\dim(M) = n - k + q$ and $\hat{\beta} \in \hat{h}^{q}(M)$, hence $\dim(W) = n - k + q + 1$ and $\hat{B} \in \hat{h}^{q}(W)$. Thus, in the r.h.s.\ of \eqref{CSFormula}, we integrate on $W$ a $\h^{\bullet}_{\R}$-valued form of degree $0 + q + n$, hence we get a form on the point of degree $q + n - (n - k + q + 1) = k - 1$. Applying $a$, we get a class belonging to $\hat{\h}^{k}$, as desired.
\begin{Theorem}\label{CShnThm} There is a natural graded-group morphism:
\begin{equation}\label{CShn}
\begin{split}
	CS_{\hat{h}}^{\bullet} \colon & \hat{h}^{\bullet}(\rho) \to \check{h}^{\bullet}(\rho) \\
	& \hat{\alpha} \mapsto (\chi, R(\hat{\alpha})),
\end{split}
\end{equation}
where $\chi$ is defined, for $[M, \hat{u}, \hat{\beta}, f, g] \in \hat{z}_{n-k}(\rho)$, by:
	\[\chi[M, \hat{u}, \hat{\beta}, f, g] := (p_{M})_{!!}(\hat{\beta} \cdot (f,g)^{*}\hat{\alpha}).
\]
\end{Theorem}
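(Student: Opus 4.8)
The plan is to verify that $CS_{\hat{h}}^{\bullet}$ is a well-defined graded-group morphism, which amounts to three checks: first, that for each differential cycle $[M, \hat{u}, \hat{\beta}, f, g]$ the value $\chi[M, \hat{u}, \hat{\beta}, f, g] = (p_{M})_{!!}(\hat{\beta} \cdot (f,g)^{*}\hat{\alpha})$ depends only on the equivalence class of the cycle in $\hat{z}_{n-k}(\rho)$; second, that the pair $(\chi, R(\hat{\alpha}))$ satisfies the compatibility relation \eqref{CSFormula} defining a Cheeger-Simons character, so that it actually lands in $\check{h}^{\bullet}(\rho)$; and third, that $\hat{\alpha} \mapsto (\chi, R(\hat{\alpha}))$ is additive and natural with respect to pull-backs.

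For the first check I would argue exactly as in the verification of \eqref{HomCohom} and \eqref{FlatPairing2} earlier in the paper: the relations generating $\hat{z}_{n-k}(\rho)$ are additivity in $\hat{\beta}$, compatibility with disjoint unions, and the submersion relation $(M, \hat{u}, \varphi_{!}\hat{\beta}, f, g) \sim (N, \hat{v}, \hat{\beta}, f \circ \varphi, g \circ \varphi\vert_{\partial N})$. Additivity and disjoint unions are immediate from linearity of the Gysin map and of the module product. For the submersion relation I would use the functoriality $(p_{M})_{!!} \circ \varphi_{!!} = (p_{M} \circ \varphi)_{!!} = (p_{N})_{!!}$ (here $\varphi$ is a neat submersion, so one is composing absolute and relative Gysin maps; the relevant composition law is the third item of Theorem \ref{fpDifPropertiesRel} together with the projection formula $\varphi_{!!}(\hat{\beta} \cdot \varphi^{*}\xi) = \varphi_{!}\hat{\beta} \cdot \xi$), which reduces $\xi^{n}_{\fl}$-style computation verbatim to the new setting with $\hat{h}^{\bullet}$-coefficients instead of $\hat{h}^{\bullet}_{\fl}$-coefficients. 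The $\hat{\h}^{\bullet}$-invariance \eqref{CSInvariance} follows the same way from the module-homomorphism property of $(p_{M})_{!!}$.

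The heart of the proof is the second check, the compatibility formula \eqref{CSFormula}. Suppose $(M, \hat{u}, \hat{\beta}, f, g) = \partial((W, M, N), \hat{U}, \hat{B}, F, G)$. Then $\chi[M, \hat{u}, \hat{\beta}, f, g] = (p_{M})_{!!}\bigl((\hat{B} \cdot (F,G)^{*}\hat{\alpha})\vert_{(M, \partial M)}\bigr)$, and I would apply Theorem \ref{StokesPtRelThm} (integration to the point over a manifold with split boundary) to the relative class $\hat{B} \cdot (F,G)^{*}\hat{\alpha} \in \hat{h}^{\bullet}(W, N)$. That theorem gives
\[
	(p_{M})_{!!}\bigl((\hat{B} \cdot (F,G)^{*}\hat{\alpha})\vert_{(M, \partial M)}\bigr) = -a\biggl( \int_{W} \Td(W) \wedge R'(\hat{B} \cdot (F,G)^{*}\hat{\alpha}) + \int_{N} \Td(N) \wedge \cov(\hat{B} \cdot (F,G)^{*}\hat{\alpha}) \biggr).
\]
Now I expand the curvature of the product using multiplicativity ($R$ is a ring map, compatible with the relative module structure \eqref{ModWedge}): since $\hat{B}$ is an absolute class on $W$ with curvature $R(\hat{B})$ and $(F,G)^{*}\hat{\alpha}$ is relative with curvature $(F^{*}\omega_{n}, G^{*}\eta_{n-1})$ where $(\omega_{n}, \eta_{n-1}) = R(\hat{\alpha})$, we get $R(\hat{B} \cdot (F,G)^{*}\hat{\alpha}) = (R(\hat{B}) \wedge F^{*}\omega_{n},\, R(\hat{B}\vert_{N}) \wedge G^{*}\eta_{n-1})$. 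Substituting $R' = R(\hat{B}) \wedge F^{*}\omega_{n}$ and $\cov = R(\hat{B}\vert_{N}) \wedge G^{*}\eta_{n-1}$ into the displayed formula yields exactly the right-hand side of \eqref{CSFormula}. The main obstacle I anticipate is bookkeeping of signs and degrees — making sure the module structure \eqref{ModWedge} is applied in the correct order (the relative class $(F,G)^{*}\hat{\alpha}$ is a right module element over the absolute ring, matching the convention in which $(p_M)_{!!}$ acts on $\hat h^\bullet(W,N)$ via the module structure \eqref{AbsRelMod}) and that the Todd-class factors and the orientations on $W$ and $N$ induced by the orientation of $W$ in Definition \ref{DualHomology} are precisely the ones Theorem \ref{StokesPtRelThm} uses.

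Finally, additivity of $CS_{\hat{h}}^{\bullet}$ in $\hat{\alpha}$ is immediate from bilinearity of the module product and linearity of the Gysin map, while naturality under a morphism $(k,h) \colon \eta \to \rho$ in $\M_2$ follows by pulling cycles back and using that Gysin maps and pull-backs commute in the standard way; these are routine and I would dispatch them in a sentence. The statement that $CS_{\hat{h}}^{\bullet}$ is \emph{graded} simply records the degree count already carried out in the paragraph preceding the theorem. This completes the construction, so $CS_{\hat{h}}^{\bullet} \colon \hat{h}^{\bullet}(\rho) \to \check{h}^{\bullet}(\rho)$ is well-defined.
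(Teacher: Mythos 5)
Your proposal is correct and follows essentially the same route as the paper: well-definedness on differential cycles via the projection formula and functoriality of the relative Gysin map (exactly as for \eqref{HomCohom} and \eqref{FlatPairing2}), the compatibility relation \eqref{CSFormula} obtained by applying Theorem \ref{StokesPtRelThm} to $W$ with $\hat{\alpha}$ replaced by the product class, and the $\hat{\h}^{\bullet}$-invariance from the module property of $(p_{M})_{!!}$. The only difference is that you spell out the multiplicative expansion of the curvature of $\hat{B}\cdot(F,G)^{*}\hat{\alpha}$, which the paper leaves implicit.
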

\begin{proof} If we consider two representatives $(M, u, \varphi_{!}\beta, f, g)$ and $(N, v, \beta, f \circ \varphi, g \circ \varphi\vert_{\partial N})$ of the same homology class, we have:
\begin{align*}
	\chi[N, \hat{v}, &\hat{\beta}, f \circ \varphi, g \circ \varphi\vert_{\partial N}] = (p_{N})_{!!}(\hat{\beta} \cdot (\varphi, \varphi\vert_{\partial N})^{*} (f,g)^{*}\hat{\alpha}) \\
	&= (p_{M})_{!!} (\varphi, \varphi\vert_{\partial N})_{!!}(\hat{\beta} \cdot (\varphi, \varphi\vert_{\partial N})^{*} (f,g)^{*}\hat{\alpha}) \\
	&= (p_{M})_{!!} (\varphi_{!}\hat{\beta} \cdot (f,g)^{*}\hat{\alpha}) = \chi[M, \hat{u}, \varphi_{!}\hat{\beta}, f, g].
\end{align*}
Let us now suppose that $(M, \hat{u}, \hat{\beta}, f, g) = \partial ((W, M, N), \hat{U}, \hat{B}, F, G)$. From formula \eqref{StokesPtRel}, replacing $X$ by $W$ and $\hat{\alpha}$ by $\hat{\beta} \cdot (f,g)^{*}\hat{\alpha}$, we get formula \eqref{CSFormula}. Finally:
	\[\begin{split}
	\chi([(M, \hat{u}, \hat{\beta}, f, g)] \cap \hat{\gamma}) &= \chi[M, \hat{u}, \hat{\beta} \cdot (p_{M})^{*}\hat{\gamma}, f, g] = (p_{M})_{!!}(\hat{\beta} \cdot (f, g)^{*}\hat{\alpha} \cdot (p_{M})^{*}\hat{\gamma}) \\
	& = (p_{M})_{!!}(\hat{\beta} \cdot (f, g)^{*}\hat{\alpha}) \cdot \hat{\gamma} = \chi[M, \hat{u}, \hat{\beta}, f, g] \cdot \hat{\gamma}. \qedhere
\end{split}\]
\end{proof}

The proof of the following theorem is straightforward from the previous definition.
\begin{Theorem}\label{HolonomyFlat} When $\hat{\alpha}$ is flat, the value of the associated Cheeger-Simons character over $[M, \hat{u}, \hat{\beta}, f, g]$ coincides with the value of \eqref{FlatPairing2} on the corresponding homology class.
\end{Theorem}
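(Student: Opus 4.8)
The plan is to unwind each side against its defining formula and observe that, when $\hat{\alpha}$ is flat, the two sides are literally the same expression. By Theorem \ref{CShnThm}, the character $\chi = CS_{\hat{h}}^{\bullet}(\hat{\alpha})$ sends a differential cycle $[M, \hat{u}, \hat{\beta}, f, g] \in \hat{z}_{n-\bullet}(\rho)$ to $(p_{M})_{!!}(\hat{\beta} \cdot (f,g)^{*}\hat{\alpha})$, the product being the differential relative module structure of Definition \ref{MultDiffExt} refining \eqref{AbsRelMod}, and $(p_{M})_{!!}$ the relative integration to the point \eqref{GysinPointBdDif}. On the other side, \eqref{FlatPairing2} sends $\hat{\alpha} \in \hat{h}^{n}_{\fl}(\rho)$ to the $\h^{\bullet}$-homomorphism $[M, u, \beta, f, g] \mapsto (p_{M})_{!!}(\beta \cdot (f,g)^{*}\hat{\alpha})$ on $h_{n-\bullet}(\rho)$.

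First I would use flatness to replace the differential data $(\hat{u}, \hat{\beta})$ by topological data. If $\hat{\alpha}$ is flat, so is $(f,g)^{*}\hat{\alpha} \in \hat{h}^{n}_{\fl}(M, \partial M)$, hence $R(\hat{\beta} \cdot (f,g)^{*}\hat{\alpha}) = R(\hat{\beta}) \wedge R((f,g)^{*}\hat{\alpha}) = 0$ and the product is a flat class. By the argument of section \ref{FlGysin} in its relative form (valid for the module $\hat{h}^{\bullet}(M) \otimes \hat{h}^{\bullet}_{\fl}(M,\partial M) \to \hat{h}^{\bullet}_{\fl}(M,\partial M)$), this product depends only on $I(\hat{\beta})$: if $I(\hat{\beta}) = 0$ then $\hat{\beta} = a(\omega)$ and $a(\omega) \cdot (f,g)^{*}\hat{\alpha} = a(\omega \wedge R((f,g)^{*}\hat{\alpha})) = 0$ by Definition \ref{MultDiffExt}. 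Thus $\hat{\beta} \cdot (f,g)^{*}\hat{\alpha} = I(\hat{\beta}) \cdot (f,g)^{*}\hat{\alpha}$, using now the module structure of $\hat{h}^{\bullet}_{\fl}(M,\partial M)$ over $h^{\bullet}(M)$. Likewise, restricted to flat classes, the map $(p_{M})_{!!}$ of \eqref{GysinPointBdDif} coincides with the integration $\hat{h}^{\bullet}_{\fl}(M,\partial M) \to \hat{\h}^{\bullet-\dim M}_{\fl}$ obtained by applying \eqref{GysinPointBd} to the flat theory regarded as a topological cohomology theory, since the differential construction refines the topological one; this is the one point where I would argue most carefully, invoking the commutativity of \eqref{CurvAMapsPt} with $I$ and the vanishing of the $a$-correction on flat inputs, so that in particular $(p_{M})_{!!}$ depends only on the topological orientation $I(\hat{u})$.

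Combining the two reductions gives $\chi[M, \hat{u}, \hat{\beta}, f, g] = (p_{M})_{!!}(I(\hat{\beta}) \cdot (f,g)^{*}\hat{\alpha})$, an expression depending only on $[M, I(\hat{u}), I(\hat{\beta}), f, g] \in h_{n-\bullet}(\rho)$, i.e.\ on the image $\Phi([M, \hat{u}, \hat{\beta}, f, g])$ of the differential cycle under the isomorphism of Theorem \ref{TopDiffIso}. Comparing with \eqref{FlatPairing2} we read off $\chi[M, \hat{u}, \hat{\beta}, f, g] = \xi^{n}_{\fl}(\hat{\alpha})\bigl(\Phi([M, \hat{u}, \hat{\beta}, f, g])\bigr)$, which is the assertion; in particular this already shows that the flat character descends from $\hat{z}_{n-\bullet}(\rho)$ to $h_{n-\bullet}(\rho)$, consistently with the fact that for flat $\hat{\alpha}$ the right-hand side of \eqref{CSFormula} vanishes, as $R(\hat{\alpha}) = 0$. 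The matching of the $\hat{\h}^{\bullet}$-invariance \eqref{CSInvariance} with the $\h^{\bullet}$-invariance \eqref{PairingInvariance} is obtained in the same way, using that for flat $\hat{\alpha}$ the assignment $\hat{\gamma} \mapsto \chi[\,\cdot\,] \cdot \hat{\gamma}$ factors through $I(\hat{\gamma})$. No step presents a genuine difficulty; the content is exactly the substitution just described, which is why the statement is essentially immediate.
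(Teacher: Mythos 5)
Your proposal is correct and is exactly the unwinding the paper has in mind: the paper dismisses the proof as ``straightforward from the previous definition,'' and the content of that remark is precisely your two reductions --- that for flat $\hat{\alpha}$ the product $\hat{\beta}\cdot(f,g)^{*}\hat{\alpha}$ depends only on $I(\hat{\beta})$ (the argument of the flat-classes subsections) and that $(p_{M})_{!!}$ on flat classes depends only on the topological orientation $I(\hat{u})$, so both sides become the same expression evaluated on $\Phi([M,\hat{u},\hat{\beta},f,g])$. Nothing in your write-up deviates from or adds to what the paper intends; it simply makes the ``straightforward'' step explicit.
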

Considering the pairing \eqref{FlatPairing2}, we have the following embedding:
	\[j \colon \Hom_{\mathfrak{h}^{\bullet}} (h_{n-\bullet}(\rho), \hat{\h}^{\bullet}_{\fl}) \hookrightarrow \check{h}^{n}(\rho).
\]
In fact, a morphism $\varphi_{n} \in \Hom_{\mathfrak{h}^{\bullet}} (h_{n-\bullet}(\rho), \hat{\h}^{\bullet}_{\fl})$ determines a unique morphism $\chi_{n} \colon \hat{z}_{n-\bullet}(\rho) \to \hat{\h}^{\bullet}$ defined by $\chi_{n}[M, \hat{u}, \hat{\beta}, f, g] := \varphi_{n}[M, I(\hat{u}), I(\hat{\beta}), f, g]$, and we define $j(\varphi_{n}) := (\chi_{n}, 0, 0)$. It follows from formula \eqref{CSFormula} that the image of $j$ is the subgroup of generalized Cheeger-Simons characters with vanishing curvature, that we call $\check{h}^{n}_{\fl}(\rho)$. Let us consider the embedding $i: \hat{h}^{\bullet}_{\fl}(\rho) \hookrightarrow \hat{h}^{\bullet}(\rho)$. The following diagram commutes:
	\[\xymatrix{
	\hat{h}^{n}_{\fl}(\rho) \ar[r]^(.3){\xi^{n}_{\fl}} \ar@{^(->}[d]_{i} & \Hom_{\mathfrak{h}^{\bullet}} (h_{n-\bullet}(X), \hat{\h}^{\bullet}_{\fl}) \ar@{^(->}[d]^{j} \\
	\hat{h}^{n}(\rho) \ar[r]^{CS^{n}_{\hat{h}}} & \check{h}^{n}(\rho).
}\]
Therefore $i$ restricts to the embedding $i' \colon \Ker(\xi_{\fl}^{n}) \hookrightarrow \Ker(CS^{n}_{\hat{h}})$, and $j$ restricts to the embedding $j' \colon \IIm(\xi^{n}_{\fl}) \hookrightarrow \IIm(CS^{n}_{\hat{h}})$. Because of $j$ and $j'$ we can construct a morphism $b \colon \Coker(\xi_{\fl}^{n}) \to \Coker(CS^{n}_{\hat{h}})$. We now show that actually $i'$ and $b$ are isomorphisms.
\begin{Theorem}\label{KerCokerEqual} The following canonical isomorphisms hold:
	\[\Ker(\xi_{\fl}^{n}) \simeq \Ker(CS^{n}_{\hat{h}}), \qquad \Coker(\xi_{\fl}^{n}) \simeq \Coker(CS^{n}_{\hat{h}}).
\]
\end{Theorem}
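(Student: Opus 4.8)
The plan is to reduce both isomorphisms to the commutative square relating $\xi^{n}_{\fl}$ and $CS^{n}_{\hat{h}}$, together with two observations: that a class $\hat{\alpha} \in \hat{h}^{n}(\rho)$ can lie in $\Ker(CS^{n}_{\hat{h}})$, or map to a \emph{flat} character, only if $\hat{\alpha}$ is itself flat; and that the curvature of an arbitrary character lies in $\Omega^{n}_{\ch}(\rho)$.

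First I treat the kernels. Since $CS^{n}_{\hat{h}}(\hat{\alpha}) = (\chi, R(\hat{\alpha}))$, a class in $\Ker(CS^{n}_{\hat{h}})$ has $R(\hat{\alpha}) = 0$, hence lies in $i(\hat{h}^{n}_{\fl}(\rho))$; writing $\hat{\alpha} = i(\hat{\alpha}_{0})$, theorem \ref{HolonomyFlat} together with the identification $h'_{\bullet}(\rho) \simeq h_{\bullet}(\rho)$ of theorem \ref{TopDiffIso} gives exactly $CS^{n}_{\hat{h}}(i(\hat{\alpha}_{0})) = j(\xi^{n}_{\fl}(\hat{\alpha}_{0}))$, i.e.\ the commutativity of the square. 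Since $j$ is injective, this vanishes iff $\xi^{n}_{\fl}(\hat{\alpha}_{0}) = 0$, so $\Ker(CS^{n}_{\hat{h}}) = i(\Ker(\xi^{n}_{\fl}))$ and, as $i$ is injective, the induced map $i'$ is an isomorphism. The same ingredients give the injectivity of $b$: if $\varphi \in \Hom_{\mathfrak{h}^{\bullet}}(h_{n-\bullet}(\rho), \hat{\h}^{\bullet}_{\fl})$ satisfies $j(\varphi) = CS^{n}_{\hat{h}}(\hat{\alpha})$ for some $\hat{\alpha}$, then the curvature of $CS^{n}_{\hat{h}}(\hat{\alpha})$ equals that of the flat character $j(\varphi)$, namely $0$, so $\hat{\alpha} = i(\hat{\alpha}_{0})$ is flat, hence $j(\varphi) = j(\xi^{n}_{\fl}(\hat{\alpha}_{0}))$ and $\varphi = \xi^{n}_{\fl}(\hat{\alpha}_{0}) \in \IIm(\xi^{n}_{\fl})$; thus $[\varphi] = 0$ in $\Coker(\xi^{n}_{\fl})$.

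It remains to prove the surjectivity of $b$. Given a character $(\chi_{n}, \omega_{n}, \eta_{n-1}) \in \check{h}^{n}(\rho)$, I claim its curvature $(\omega_{n}, \eta_{n-1})$ belongs to $\Omega^{n}_{\ch}(\rho)$. Granting this, the lemma after diagram \eqref{CommHex1} produces $\hat{\alpha} \in \hat{h}^{n}(\rho)$ with $R(\hat{\alpha}) = (\omega_{n}, \eta_{n-1})$, so $(\chi_{n}, \omega_{n}, \eta_{n-1}) - CS^{n}_{\hat{h}}(\hat{\alpha})$ is a character with vanishing curvature, hence equals $j(\varphi)$ for a unique $\varphi$ by definition of $\check{h}^{n}_{\fl}(\rho) = \IIm(j)$; therefore $[(\chi_{n}, \omega_{n}, \eta_{n-1})] = b([\varphi])$ in $\Coker(CS^{n}_{\hat{h}})$, which gives surjectivity of $b$. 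The claim itself is handled in two steps. Closedness of $(\omega_{n}, \eta_{n-1})$ follows by applying formula \eqref{CSFormula} to a cycle that bounds in more than one way and differentiating: varying the filling data $(\hat{B}, F, G)$ and using the relative Stokes theorem \eqref{RelativeStokes1}--\eqref{RelativeStokes2}, the well-definedness of $\chi_{n}$ on $\hat{z}_{n-\bullet}(\rho)$ forces $d(\omega_{n}, \eta_{n-1}) = 0$. That the de Rham class then lies in the image of $\ch \colon h^{n}(\rho) \to H^{n}_{\dR}(\rho; \h^{\bullet}_{\R})$ is shown by pairing $(\omega_{n}, \eta_{n-1})$ with differential cycles via fibre-wise integration to the point: this integration functional is well defined on $h_{n-\bullet}(\rho)$ (boundaries integrate to $0$ by the topological case of theorem \ref{StokesPtRelThm}), it coincides, through theorems \ref{HolonomyFlat} and \ref{TopDiffIso}, with the functional induced by $\chi_{n}$, hence it takes values in the image of $I \colon \hat{\h}^{\bullet} \to \h^{\bullet}$ (pushed into $\h^{\bullet}_{\R}$), and under the isomorphism $\xi^{n}_{\R}$ of \eqref{HomCohomR} this is precisely the condition that the class be the Chern character of an integral class. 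This last verification is the main obstacle, being the only place where the definition of a character — rather than merely a closed relative form equipped with a compatible functional — is genuinely used.
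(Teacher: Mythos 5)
Your proof follows the paper's argument essentially step for step: the kernels coincide because a non-flat class has non-zero curvature and hence non-zero character, while on flat classes $CS^{n}_{\hat{h}} = j \circ \xi^{n}_{\fl}$ with $j$ injective; $b$ is injective because $\check{h}^{n}_{\fl}(\rho) \cap \IIm(CS^{n}_{\hat{h}}) = \IIm(\xi^{n}_{\fl})$; and $b$ is surjective because subtracting $CS^{n}_{\hat{h}}(\hat{\alpha})$ for a class $\hat{\alpha}$ with $R(\hat{\alpha}) = (\omega_{n}, \eta_{n-1})$ leaves a character with vanishing curvature. The one place you go beyond the paper is in justifying that such an $\hat{\alpha}$ exists, i.e.\ that the curvature of an arbitrary character lies in $\Omega^{n}_{\ch}(\rho)$ so that the exact sequence \eqref{SeqExCurtaCurv} applies: the paper simply asserts the existence of $\hat{\alpha}$ (implicitly deferring to the analogous absolute statement in \cite{FR}), and your identification of this as the genuinely non-trivial step is accurate, although your closedness-and-integrality argument is, as you acknowledge, only a sketch.
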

\begin{proof} If $\hat{\alpha} \in \hat{h}^{n}(\rho)$ is not flat, then $CS^{n}_{\hat{h}}(\hat{\alpha}) \neq 0$, since $CS^{n}_{\hat{h}}(\hat{\alpha}) = (\chi_{n}, R(\hat{\alpha}))$ and $R(\hat{\alpha}) \neq 0$. Hence $\Ker(CS^{n}_{\hat{h}}) \subset \Ker(\xi_{\fl}^{n})$ and the equality follows. Moreover, $\check{h}^{n}_{\fl}(\rho) \cap \IIm(CS^{n}_{\hat{h}}) = \IIm(\xi_{\fl}^{n})$, hence $b \colon \Coker(\xi_{\fl}^{n}) \to \Coker(CS^{n}_{\hat{h}})$ is an embedding. If $(\chi_{n}, \omega_{n}, \eta_{n-1}) \in \check{h}^{n}(\rho)$, we consider a class $\hat{\alpha} \in \hat{h}^{n}(\rho)$ such that $R(\hat{\alpha}) = (\omega_{n}, \eta_{n-1})$, and we call $(\chi'_{n}, \omega_{n}, \eta_{n-1}) := CS_{\hat{h}}(\hat{\alpha})$. Then $(\chi'_{n} - \chi_{n}, 0, 0) \in \check{h}^{n}_{\fl}(\rho)$, and, in $\Coker(CS^{n}_{\hat{h}})$, one has $[(\chi_{n}, \omega_{n}, \eta_{n-1})] = [(\chi'_{n} - \chi_{n}, 0, 0)] \in \IIm\,b$. Therefore $b$ is also surjective.
\end{proof}
\begin{Corollary}\label{CorCSIsom} If $\h^{\bullet}$ has no torsion, \eqref{CShn} is an isomorphism.
\end{Corollary}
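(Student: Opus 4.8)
The plan is to deduce this directly from Theorem \ref{KerCokerEqual} together with Theorem \ref{NoTorsionRZ}, so almost no new work is required. Fix a degree $n$. Since $CS_{\hat h}^{\bullet}$ is a morphism of graded groups, it suffices to prove that $CS^{n}_{\hat h} \colon \hat h^{n}(\rho) \to \check h^{n}(\rho)$ is injective and surjective for every $n$, i.e.\ that $\Ker(CS^{n}_{\hat h}) = 0$ and $\Coker(CS^{n}_{\hat h}) = 0$.

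First I would invoke Theorem \ref{NoTorsionRZ}: under the hypothesis that $\h^{\bullet}$ has no torsion, the flat pairing $\xi^{n}_{\fl} \colon \hat h^{n}_{\fl}(\rho) \to \Hom_{\h^{\bullet}}(h_{n-\bullet}(\rho), \hat\h^{\bullet}_{\fl})$ of \eqref{FlatPairing2} is an isomorphism, hence $\Ker(\xi^{n}_{\fl}) = 0$ and $\Coker(\xi^{n}_{\fl}) = 0$. Then I would apply Theorem \ref{KerCokerEqual}, which provides canonical isomorphisms $\Ker(\xi_{\fl}^{n}) \simeq \Ker(CS^{n}_{\hat h})$ and $\Coker(\xi_{\fl}^{n}) \simeq \Coker(CS^{n}_{\hat h})$. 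Combining the two, $\Ker(CS^{n}_{\hat h}) = 0$ and $\Coker(CS^{n}_{\hat h}) = 0$, so $CS^{n}_{\hat h}$ is an isomorphism; letting $n$ vary gives that \eqref{CShn} is an isomorphism of graded groups.

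There is essentially no obstacle at this stage: the substantive content has already been carried out in the proof of Theorem \ref{KerCokerEqual} (the commutative diagram relating $i, j, \xi^{n}_{\fl}$ and $CS^{n}_{\hat h}$, and the surjectivity argument for $b$ using that any prescribed curvature $(\omega_{n}, \eta_{n-1})$ is realized by some $\hat\alpha \in \hat h^{n}(\rho)$) and in the torsion-free comparison of Theorem \ref{NoTorsionRZ} (which in turn rests on the dual-homology isomorphisms \eqref{HomCohomR} and the five lemma applied to the exact sequence \eqref{ExSeqFlat}). The only point worth double-checking in writing the final argument is that the isomorphisms of Theorem \ref{KerCokerEqual} are the canonical ones induced by $i$ and $j$, so that vanishing of kernel and cokernel on the $\xi^{n}_{\fl}$ side transports verbatim to the $CS^{n}_{\hat h}$ side; this is immediate from the construction of $i'$ and $b$ recalled just before Theorem \ref{KerCokerEqual}.
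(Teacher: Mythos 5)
Your proposal is correct and follows exactly the paper's own argument: the paper proves this corollary by noting it is immediate from Theorems \ref{KerCokerEqual} and \ref{NoTorsionRZ}, precisely the combination you describe. Nothing is missing.
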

\begin{proof} It immediately follows from theorems \ref{KerCokerEqual} and \ref{NoTorsionRZ}.
\end{proof}

Corollary \ref{CorCSIsom} holds in particular for ordinary differential cohomology.\footnote{The expression ``ordinary differential cohomology'' is commonly used to indicate the differential refinement of singular cohomology.} In this case differential characters can be defined on smooth singular cycles, without enriching them with a differential cohomology class. In the absolute setting this is the definition of (ordinary) Cheeger-Simons character, introduced in \cite{CS}, and historically it is one of the starting points of the whole theory of differential cohomology.

\section{Integration relative to the boundary}\label{SecIntBd}

Let us consider a smooth fibre bundle $f \colon Y \to X$, such that $X$ is a manifold without boundary and $Y$ with boundary. It follows that the typical fibre is a manifold with boundary $M$. Moreover, the restriction of $f$ to the boundary, that we call $\partial f \colon \partial Y \to X$, is a fibre bundle too, with typical fibre $\partial M$. Of course $f$ is not neat, therefore we cannot apply the integration map as previously defined, but we can define the following integration map for classes relative to the boundary:
\begin{equation}\label{IntMapRelBd}
	f_{!!} \colon \hat{h}^{\bullet}(Y, \partial Y) \to \hat{h}^{\bullet-m}(X),
\end{equation}
$m$ being the dimension of $M$. When $X$ is a point, we get \eqref{GysinPointBdDif} as a particular case. The map \eqref{IntMapRelBd} generalizes to any cohomology theory the one described in \cite{FR3}.

\SkipTocEntry \subsection{Topological integration} Let us start with the notion of orientation. The idea is the following. We choose a neat embedding $\iota \colon Y \hookrightarrow X \times \Hh^{N}$, such that $\pi_{X} \circ \iota = f$ (restricting $\iota$ to the boundary, we get the embedding $\partial \iota \colon \partial Y \hookrightarrow X \times \R^{N-1}$). This is always possible: for example, we can choose a neat embedding $\kappa \colon Y \hookrightarrow \Hh^{N}$ and define $\iota := f \times \kappa$. Then we choose a Thom class on the normal bundle and a neat tubular neighbourhood, as always. We think of $\iota$ as a map to $X \times \R^{N-1} \times I$, through the embedding $[0, +\infty) \approx [0, 1) \subset I$. In this way, we can first integrate on $\R^{N-1}$, getting a class in $X \times I$, relative to $X \times \partial I$. This is equivalent to getting a class in $X \times S^{1}$, therefore we can integrate on $S^{1}$ and obtain the result.

Let us define this integration map using the same language of sections \ref{SecOrientInt} and \ref{RelIntSec}. Given a fibre bundle $f \colon Y \to X$, such that $\partial X = \emptyset$, a \emph{defining function for the boundary} is a smooth neat map $\Phi \colon Y \to X \times I$ such that $\partial Y = \Phi^{-1}(X \times \{0\})$ (by neatness, it follows that $\Phi^{-1}\{1\} = \emptyset$). In particular, the restriction of $\Phi$ to a fibre $Y_{x} := \pi^{-1}\{x\}$ is a defining function for the boundary of $Y_{x}$.
\begin{Def}\label{OrientedFibreBoundary} An \emph{$h^{\bullet}$-orientation} on $f \colon Y \to X$ is a homotopy class of $h^{\bullet}$-oriented defining functions for the boundary.
\end{Def}
Remarks analogous to \ref{OrManBdRmk}, \ref{OrInducedBd} and \ref{BdPartCase} hold in this case. In particular, the remark analogous to \ref{OrManBdRmk} shows that the idea we sketched at the beginning of this section corresponds to definition \ref{OrientedFibreBoundary}. We set:
\begin{equation}\label{IntegrationRelBd}
\begin{split}
	f_{!!} \colon & h^{\bullet}(Y, \partial Y) \to h^{\bullet-m}(X) \\
	& \alpha \mapsto \int_{S^{1}} \Phi_{!!}(\alpha),
\end{split}
\end{equation}
where the map $\Phi \colon (Y, \partial Y) \to (X \times I, X \times \partial I)$ is provided by the orientation of $f$ and the integration over $S^{1}$ is defined as follows. Since $h^{\bullet + 1 - m}(X \times I, X \times \partial I) \simeq h^{\bullet + 1 - m}(X \times S^{1}, X \times \{*\}) = \tilde{h}^{\bullet + 1 - m}(X_{+} \wedge S^{1})$, `$*$' being a marked point on $S^{1}$, we apply the suspension isomorphism $\tilde{h}^{\bullet + 1 - m}(X_{+} \wedge S^{1}) \simeq \tilde{h}^{\bullet - m}(X_{+}) \simeq h^{\bullet - m}(X)$ and we get the result. The same construction holds for differential integration of flat classes.

\SkipTocEntry \subsection{Differential integration}

We generalize the curvature map \eqref{CurvMapBd} in the following natural way:
\begin{equation}\label{CurvMapFibreBd}
\begin{split}
	R^{\partial}_{(\iota, \hat{u}, \varphi)} \colon & \Omega^{\bullet}(Y; \h^{\bullet}_{\R}) \to \Omega^{\bullet-m}(X; \h^{\bullet}_{\R}) \\
	& \omega \mapsto \int_{X \times \R^{N-1} \times I/X} i_{*}\varphi_{*}(R(\hat{u}) \wedge \pi^{*}\omega)
\end{split}
\end{equation}
and we define:
\begin{equation}\label{CurvatureMapPtFibreBd}
\begin{split}
	R^{pt}_{(\iota, \hat{u}, \varphi)} \colon & \Omega^{\bullet}(Y, \partial Y; \h^{\bullet}_{\R}) \to \Omega^{\bullet-m}(X; \h^{\bullet}_{\R}) \\
	& (\omega, \eta) \mapsto R^{\partial}_{(\iota, \hat{u}, \varphi)}(\omega) + R_{\partial(\iota, \hat{u}, \varphi)}(\eta).
\end{split}
\end{equation}
Requiring that the orientation is proper, i.e. that the fibre of the normal bundle of $\iota(Y)$ in $\iota(y) = (x, t)$ is sent by $\varphi$ to a subset of $\{x\} \times \Hh^{n}$, it follows from formulas analogous to \eqref{IntProp} and \eqref{RBdTodd} that:
\begin{equation}\label{CurvatureMapPtFibreBdTodd}
	R^{pt}_{(\iota, \hat{u}, \varphi)}(\omega, \eta) = \int_{Y/X} \Td(\hat{u}) \wedge \omega + \int_{\partial Y/X} \Td(\hat{u}\vert_{\partial Y}) \wedge \eta.
\end{equation}

\begin{Def}\label{OrientedDiffManifoldFibreBoundary} An \emph{$\hat{h}^{\bullet}$-orientation} on $f \colon Y \to X$ is a homotopy class of $\hat{h}^{\bullet}$-oriented defining functions for the boundary, considering the curvature map \eqref{CurvMapFibreBd} (equivalently, \eqref{CurvatureMapPtFibreBd}) in the definition of homotopy.
\end{Def}

Corollary \ref{HomotThomOr} and lemma \ref{Rule23ManBd} hold with the same proof. Applying the isomorphism \eqref{IsoPsiI} with $Y = X$, we can integrate on $\R^{N}$ and apply the following integration map analogous to \eqref{GysinPointBdDifPar}:
\begin{equation}\label{GysinPointBdDifPar2}
\begin{split}
	\int_{I} \colon \; & \hat{h}^{\bullet}_{\ppar}(X \times I, X \times \partial I) \to \hat{h}^{\bullet - 1}(X) \\
	& \hat{\alpha} \mapsto \int_{X \times S^{1}} \pi_{*}\xi^{*}\hat{\alpha}.
\end{split}
\end{equation}
This defines \eqref{IntMapRelBd}. The following lemma, that generalize \cite[Theorem 47 p.\ 170]{BB} to any cohomology theory, shows the naturality of this integration map.
\begin{Theorem} Let us consider the following diagram, whose rows are segments of the long exact sequences \eqref{LongExact2} associated respectively to $i \colon \partial Y \hookrightarrow Y$ and $\id_{X} \colon X \to X$:
	\[\xymatrix{
	\hat{h}^{\bullet-1}(\partial Y) \ar[r]^{B_{1}} \ar[d]_(.38){(\partial f)_{!}} & \hat{h}^{\bullet}(Y, \partial Y) \ar[r]^(.55){\pi_{1}^{*}} \ar[dl]_(.53){f_{!!}} & \hat{h}^{\bullet}(Y) \ar[dl]_(.55){f_{\#}} \ar[r]^{i^{*}} & \hat{h}^{\bullet}(\partial Y) \ar[dl]_(.55){(\partial f)_{!}} \\
	\hat{h}^{\bullet-m}(X) \ar[r]^(.45){B_{2}} & \hat{h}^{\bullet-m+1}(\id_{X}) \ar[r]^{\pi_{2}^{*}} & \hat{h}^{\bullet-m+1}(X).
}\]
We set $f_{\#}(\hat{\alpha}) := -a\bigl(\int_{Y/X} \Td(Y/X) \wedge  R(\hat{\alpha}), 0 \bigr)$. The left triangle and the right parallelogram of the diagram commute. The central parallelogram commutes provided that we restrict $\hat{h}^{\bullet}(Y, \partial Y)$ to the subgroup of parallel classes.
\end{Theorem}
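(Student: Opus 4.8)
The plan is to establish first the curvature and $a$-compatibility of the integration map $f_{!!}$ of Section \ref{SecIntBd} — that is, the analogues of diagrams \eqref{CurvAMapsRel} and \eqref{CurvAMapsPt}, namely $R\circ f_{!!} = R^{pt}_{(\iota,\hat{u},\varphi)}\circ R$ and $f_{!!}\circ a = a\circ R^{pt}_{(\iota,\hat{u},\varphi)}$, which come out of the construction of $f_{!!}$ exactly as in the proofs of those diagrams — and then to treat the three commutativity statements separately. In each case I would reduce the identity to a comparison of curvatures, of the values on topologically trivial classes $a(\omega,\eta)$, and (only for the left triangle) of the underlying maps in $h^{\bullet}$, assembling the pieces through the exact sequences \eqref{ExSeqDC} and \eqref{SeqExCurtaCurv}. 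The ingredients are the curvature formulas \eqref{RAIntSubmersion} and \eqref{CurvatureMapPtFibreBdTodd}, the Bockstein identities \eqref{CurvBockstein} and \eqref{ABockstein}, the homotopy formula \eqref{HomFormula}, the relative Stokes theorem \eqref{RelativeStokes1}, the restriction formulas \eqref{RestrictionBoundaryGysin}, \eqref{RBdTodd}, \eqref{ARoofBoundary}, and the theorem containing \eqref{StokesPt}.

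\emph{Central parallelogram.} Since $h^{\bullet}(\id_{X})=0$, the argument in the proof of \eqref{RhoBockstein} shows that every class of $\hat{h}^{\bullet}(\id_{X})$ has the form $a(\nu,0)$ and is determined by the second component of its curvature; hence it is enough to compare the curvatures of $f_{\#}\circ\pi_{1}^{*}$ and $B_{2}\circ f_{!!}$ on a parallel class $\hat{\alpha}$. Writing $R(\hat{\alpha})=(\omega,0)$ with $\omega$ closed and $i^{*}\omega=0$, one gets $R(\pi_{1}^{*}\hat{\alpha})=\omega$, so $f_{\#}(\pi_{1}^{*}\hat{\alpha})=-a\bigl(\int_{Y/X}\Td(Y/X)\wedge\omega,\,0\bigr)$, while the vanishing of $\cov(\hat{\alpha})$ kills the boundary term in \eqref{CurvatureMapPtFibreBdTodd}, giving $R(f_{!!}\hat{\alpha})=\int_{Y/X}\Td(Y/X)\wedge\omega$ and, by \eqref{CurvBockstein}, $R(B_{2}f_{!!}\hat{\alpha})=\bigl(0,\,-\int_{Y/X}\Td(Y/X)\wedge\omega\bigr)$. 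These agree because the first component $-\,d\!\int_{Y/X}\Td(Y/X)\wedge\omega$ of $R(f_{\#}\pi_{1}^{*}\hat{\alpha})$ vanishes by \eqref{RelativeStokes1}, $\Td$ and $\omega$ being closed and $\omega\vert_{\partial Y}=0$. For a non-parallel class the surviving term $\int_{\partial Y/X}\Td(\partial Y/X)\wedge\cov(\hat{\alpha})$ is precisely the obstruction, which is why the restriction to parallel classes is necessary.

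\emph{Right parallelogram.} This is the fibrewise version of the theorem containing \eqref{StokesPt}. On one side $\pi_{2}^{*}(f_{\#}(\hat{\alpha}))=-a\bigl(\int_{Y/X}\Td(Y/X)\wedge R(\hat{\alpha})\bigr)$, since $\pi_{2}^{*}$ sends $a(\nu,0)$ to $a(\nu)$. On the other side, choosing a defining function $\Phi\colon Y\to X\times I$ for the fibrewise boundary as part of the orientation of $f$, its restriction to $\partial Y$ is identified with $\partial f$ into $X\times\{0\}$, and since $\Phi^{-1}(X\times\{1\})=\emptyset$ we have $(\Phi_{!}\hat{\alpha})\vert_{X\times\{1\}}=0$; the fibrewise analogue of \eqref{RestrictionBoundaryGysin} gives $(\partial f)_{!}(i^{*}\hat{\alpha})=(\Phi_{!}\hat{\alpha})\vert_{X\times\{0\}}$, and the homotopy formula \eqref{HomFormula} along the $I$-factor turns this into $-a\bigl(\int_{(X\times I)/X}R(\Phi_{!}\hat{\alpha})\bigr)$, which by the fibrewise versions of \eqref{RBdTodd} and \eqref{ARoofBoundary} equals $-a\bigl(\int_{Y/X}\Td(Y/X)\wedge R(\hat{\alpha})\bigr)$. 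So the two composites coincide, by essentially the same computation as in the proof of \eqref{StokesPt}.

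\emph{Left triangle, and the main obstacle.} Here the target $\hat{h}^{\bullet-m}(X)$ is a genuine group and neither $I$ nor $R$ of $f_{!!}\circ\Bock$ and $(\partial f)_{!}$ vanishes, so the topological level genuinely enters. On a class $a(\eta)$ one uses $\Bock\circ a(\eta)=a(0,\eta)$ from \eqref{ABockstein} to get $f_{!!}(\Bock(a(\eta)))=a\bigl(R^{pt}_{(\iota,\hat{u},\varphi)}(0,\eta)\bigr)=a\bigl(\int_{\partial Y/X}\Td(\partial Y/X)\wedge\eta\bigr)$, which matches $(\partial f)_{!}(a(\eta))$ by \eqref{RAIntSubmersion}, and curvatures are compared in the same way via \eqref{CurvBockstein} and \eqref{CurvatureMapPtFibreBdTodd}. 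What remains is the topological identity $f_{!!}\circ\Bock=(\partial f)_{!}$ for $h^{\bullet}$, i.e.\ the compatibility of the relative Gysin map with the connecting homomorphism of the pair $(Y,\partial Y)$; through the model defining $f_{!!}$ (neat embedding, relative Thom isomorphism, $S^{1}$-integration) this reduces to the fact that the relative Thom isomorphism and $S^{1}$-integration each commute with the relevant Bockstein maps — the generalization to arbitrary cohomology theories of the content behind \cite[Theorem 47 p.\ 170]{BB}. The general case then follows from the flat case (on flat classes both sides depend only on the topological orientation, and the statement is the topological one applied to $\hat{h}^{\bullet}_{\fl}\simeq h^{\bullet}(\,\cdot\,;\R/\Z)$) together with \eqref{SeqExCurtaCurv}, as in the proofs of Sections \ref{LongExSeqSec} and \ref{RelIntSec}. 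The delicate point throughout — and especially for the left triangle — is to establish this Gysin/connecting-homomorphism compatibility in the present generality and to keep careful track of the signs attached to the orientation induced on a boundary (Remarks \ref{OrInducedBdFunction} and \ref{OrInducedBd}) and to the relative Stokes formula, so that the curvature identities hold on the nose rather than up to a factor $-1$.
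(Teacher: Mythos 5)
Your treatment of the central parallelogram and of the right parallelogram matches the paper's: the paper likewise identifies $\hat{h}^{\bullet}(\id_{X})$ with $\Omega^{\bullet-1}(X)$ via $\nu \mapsto a(\nu,0)$, computes $B_{2}(\hat{\beta}) = a(-R(\hat{\beta}),0)$ from \eqref{CurvBockstein}, and invokes \eqref{CurvatureMapPtFibreBdTodd} to see that the boundary term $\int_{\partial Y/X}\Td(\partial Y/X)\wedge\cov(\hat{\alpha})$ is exactly the obstruction killed by parallelism; and the right parallelogram is, as you say, formula \eqref{StokesPt} run fibrewise. Those parts are fine.

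The gap is in the left triangle, and it is precisely at the point you flag as ``the main obstacle''. Your plan is to verify $I\circ(f_{!!}\circ B_{1}) = I\circ(\partial f)_{!}$, $R\circ(f_{!!}\circ B_{1}) = R\circ(\partial f)_{!}$, agreement on $\IIm(a)$ via \eqref{ABockstein}, and agreement on flat classes, and then to ``assemble the pieces through the exact sequences \eqref{ExSeqDC} and \eqref{SeqExCurtaCurv}''. This assembly does not close: if $D$ denotes the difference of the two composites, the four checks only show that $D$ kills $\IIm(a)+\hat{h}^{\bullet-1}_{\fl}(\partial Y)$ and takes values in $\Ker(I)\cap\Ker(R) = a\bigl(H^{\bullet-m-1}_{\dR}(X;\h^{\bullet}_{\R})\bigr)$. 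Since $\IIm(a)=\Ker(I)$ and $I(\hat{h}_{\fl}) = \Ker(\ch)$, the quotient on which $D$ lives is (essentially) $\IIm\bigl(\ch\colon h^{\bullet-1}(\partial Y)\to H^{\bullet-1}_{\dR}\bigr)$, and a homomorphism from that into $H_{\dR}/\IIm(\ch)$ need not vanish — this is exactly the ambiguity that makes uniqueness of differential extensions a theorem rather than a triviality. A map into differential cohomology is \emph{not} determined by its $I$, its $R$, and its values on topologically trivial and flat classes.

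The paper avoids this entirely by arguing at the level of the construction of the Bockstein map: fixing a defining map $\Phi\colon Y\to X\times I$ representing the orientation, it observes that each of the steps (S1)--(S6) of section \ref{ConstrBockSec} commutes with the relative Gysin map $\Phi_{!!}$, so that $\Phi_{!!}(B_{1}(\hat{\alpha})) = B_{I}((\partial f)_{!}(\hat{\alpha}))$ with $B_{I}$ the Bockstein for $X\times\partial I\hookrightarrow X\times I$; it then identifies $\pi_{*}\xi^{*}\Psi_{1}(B_{I}(\hat{\alpha}'))$ with the unique class $\hat{\beta}$ of Lemma \ref{LiftCurvS1} satisfying $\int_{S^{1}}\pi^{*}\hat{\beta}=\hat{\alpha}'$ and $R'(\hat{\beta})=\pi_{1}^{*}R(\hat{\alpha}')\wedge dt$, so that $\int_{I}\Psi_{1}\circ B_{I}=\id$ and hence $f_{!!}\circ B_{1}=(\partial f)_{!}$ on the nose. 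If you want to complete your proof along your own lines, you would need either to supply a Bunke--Schick-type vanishing argument for the exotic difference $D$, or (better) to replace the indirect assembly by this direct step-by-step commutation with the Gysin map.
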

\begin{proof} Let us fix a defining map $\Phi \colon Y \to X \times I$ that represent the fixed orientation of $f$. It follows that $\partial f = \Phi\vert_{\partial Y} \colon \partial Y \to X \times \{0\}$. Moreover, we call $B_{I} \colon \hat{h}^{\bullet-1}(X \times \partial I) \to \hat{h}^{\bullet}(X \times I)$ the Bockstein map of the sequence \eqref{LongExact2} associated to the embedding $X \times \partial I \hookrightarrow X \times I$. About the left triangle, since all of the steps (S1)--(S6) in section \ref{ConstrBockSec} commute with the Gysin map, it follows that $\Phi_{!!}(B_{1}(\hat{\alpha})) = B_{I}((\partial f)_{!}(\hat{\alpha}))$. Moreover, applying the isomorphism \eqref{IsoPsiI} and the integration map \eqref{GysinPointBdDifPar2} to a class of the form $B_{I}(\hat{\alpha}')$, the class $\pi_{*}\xi^{*}\Psi_{1}(B_{I}(\hat{\alpha}')) \in \hat{h}^{\bullet}(X \times S^{1}, X \times S')$ coincides with the class $\hat{\beta}$ provided by lemma \ref{LiftCurvS1} such that $\int_{X \times S^{1}} \hat{\beta} = \hat{\alpha}'$ and $R'(\hat{\beta}) = \pi_{1}^{*}R(\hat{\alpha}') \wedge dt$. It follows that
\begin{align*}
	f_{!!}(B_{1}(\hat{\alpha})) &= \int_{I} \Psi_{1}\Phi_{!!}(B_{1}(\hat{\alpha})) = \int_{I} \Psi_{1}B_{I}((\partial f)_{!}(\hat{\alpha})) \\
	& = \int_{S^{1}} \pi_{*}\xi^{*}\Psi_{1}(B_{I}((\partial f)_{!}(\hat{\alpha}))) = (\partial f)_{!}(\hat{\alpha}).
\end{align*}
The commutativity of the right parallelogram is essentially formula \eqref{StokesPt} in the case of a fibre bundle. About the central parallelogram, since $\Omega^{\bullet-1}(X) \simeq \hat{h}^{\bullet}(\id_{X})$, $\omega \simeq a(\omega, 0)$, and $\omega = \cov(a(\omega, 0))$, it follows from formula \eqref{CurvBockstein} that $B_{2}(\hat{\beta}) = a(-R(\hat{\beta}))$. Since the integration map commutes with the curvature \eqref{CurvatureMapPtFibreBd}, it follows from formula \eqref{CurvatureMapPtFibreBdTodd} that
	\[B_{2}(f_!!(\hat{\alpha})) = -a\left( \int_{Y/X} \Td(Y/X) \wedge R'(\hat{\alpha}) + \int_{\partial Y/X} \Td(\partial Y/X) \wedge \cov(\hat{\alpha}), 0 \right).
\]
If $\cov(\hat{\alpha}) = 0$, we get precisely $f_{\#}\pi_{2}^{*}(\hat{\alpha})$.
\end{proof}

%%%%%%%%%%%%%%%%%%%%%%%%%%%%%%%%%%%%%%%%%%%%%%%%%%%%%%%%%%%%%%%%%%%%%%%%%%%%%%%%%%%%%%%%%%%%%%%%%

\end{document}